\def\bA{{\mathbb A}}
\def\CC{{\mathbb C}}
\def\GG{{\mathbb G}}
\def\RR{{\mathbb R}}
\def\ZZ{{\mathbb Z}}
\def\NN{{\mathbb N}}
\def\PP{{\mathbb P}}
\def\bH{{\mathbf F}}
\def\bH{{\mathbf H}}
\def\bs{{\mathbf s}}
\def\ii{{\sqrt{-1}}}
\def\bs{{\mathbf s}}
\def\cA{{\mathcal A}}
\def\cO{{\mathcal O}}
\def\cQ{{\mathcal Q}}
\def\fB{{\mathfrak B}}
\def\fC{{\mathfrak C}}
\def\fE{{\mathfrak E}}
\def\fK{{\mathfrak K}}
\def\fZ{{\mathfrak Z}}
\def\fa{{\mathfrak a}}
\def\fb{{\mathfrak b}}
\def\fc{{\mathfrak c}}
\def\fe{{\mathfrak e}}
\def\fh{{\mathfrak h}}
\def\fk{{\mathfrak k}}
\def\fm{{\mathfrak m}}
\def\fr{{\mathfrak r}}
\def\fs{{\mathfrak s}}
\def\fu{{\mathfrak u}}
\def\fy{{\mathfrak y}}
\def\rw{{\mathrm w}}
\def\tX{{\widetilde X}}
\def\tlh{{\widetilde h}}
\def\tell{{\widetilde \ell}}
\def\tp{{\widetilde p}}
\def\tw{{\widetilde w}}
\def\tell{{\widetilde \ell}}
\def\tdelta{{\widetilde \delta}}
\def\tvarphi{{\widetilde \varphi}}
\def\tfE{{\widetilde{\mathfrak E}}}
\def\tfe{{\widetilde{\mathfrak e}}}
\def\BH{{\overline H}}
\def\Bfr{{\overline{\mathfrak r}}}
\def\hc{{\widehat c}}
\def\hH{{\widehat H}}
\def\hN{{\widehat N}}
\def\hR{{\widehat R}}
\def\hS{{\widehat S}}
\def\hfy{{\widehat{\mathfrak y}}}
\def\hphi{{\widehat \phi}}
\def\hzeta{{\widehat \zeta}}
\def\hUpsilon{{\widehat \Upsilon}}
\def\rfy{{\mathring{\mathfrak y}}}
\def\lH{{\overline H}}
\def\Sp{\mathrm{Sp}}
\def\deg{\mathrm{deg}}
\def\Div{\mathrm{div}}
\def\Aut{\mathrm{Aut}}
\def\Gal{\mathrm{Gal}}
\def\Spec{\mathrm{Spec}\,}  % \: \;
\def\wt{{\mathrm {wt}}}
\def\omegap#1{{\omega^{{\prime}}_{#1}}}
\def\omegapp#1{{\omega^{{\prime\prime}}_{#1}}}
\def\etap#1{{\eta^{{\prime}}_{#1}}}
\def\etapp#1{{\eta^{{\prime\prime}}_{#1}}}
\def\nuI#1{{\nu^{\mathrm{I}}_{#1}}}
\def\tnuII#1{{\widetilde\nu^{\mathrm{II}}_{#1}}}
\def\nuII#1{{\nu^{\mathrm{II}}_{#1}}}
\def\nuIII#1{{\nu^{\mathrm{III}}_{#1}}}
\def\nuIIIo#1{{\nu^{\mathrm{III}\circ}_{#1}}}
\def\trp{{\, {}^t\!}}
\def\book#1{\rm{#1}, }
\def\paper#1{\textit{#1}, }
\def\jour#1{\rm{#1}, }
\def\yr#1{({\rm{#1}) }}
\def\vol#1{\textbf{#1}}
\def\pages#1{\rm{#1}}
\def\publ#1{\rm{#1}, }
\def\by#1{{\rm{#1}, }}
\newtheorem{theorem}{Theorem}[section]
\newtheorem{definition}[theorem]{Definition}
\newtheorem{proposition}[theorem]{Proposition}
\newtheorem{corollary}[theorem]{Corollary}
\newtheorem{remark}[theorem]{Remark}
\newtheorem{lemma}[theorem]{Lemma}
\def\dfrac#1#2{{\displaystyle\frac{#1}{#2}}}
\def\book#1{\rm{#1}, }
\def\paper#1{\textit{#1}, }
\def\jour#1{\rm{#1}, }
\def\yr#1{({\rm{#1}) }}
\def\vol#1{\textbf{#1}}
\def\pages#1{\rm{#1}}
\def\publ#1{\rm{#1}, }
\def\by#1{{\rm{#1}, }}
\begin{document}

\title{Algebraic construction of the sigma function for general Weierstrass curves}

\author{J. Komeda, S. Matsutani and E. Previato}

%\date{S.Matsutani 20220524}
\date{\today}

\begin{abstract}
The Weierstrass curve $X$ is a smooth algebraic curve determined by the Weierstrass canonical form, $y^r + A_{1}(x) y^{r-1} + A_{2}(x) y^{r-2} +\cdots + A_{r-1}(x) y  + A_{r}(x)=0$, where $r$ is a positive integer, and each $A_j$ is a polynomial in $x$ with a certain degree.
It is known that every compact Riemann surface has a Weierstrass curve $X$, which is birational to the surface.
The form provides the projection $\varpi_r : X \to \PP$ as a covering space.
Let $R_X := \bH^0(X, \cO_X(*\infty))$ and $R_\PP := \bH^0(\PP, \cO_\PP(*\infty))$.
Recently we have the explicit description of the complementary module $R_X^\fc$ of $R_\PP$-module $R_X$, which leads the explicit expressions of the holomorphic one form except $\infty$, $\bH^0(\PP, \cA_\PP(*\infty))$ and the trace operator $p_X$ such that $p_X(P, Q)=\delta_{P,Q}$ for $\varpi_r(P)=\varpi_r(Q)$ for $P, Q \in X\setminus\{\infty\}$.
In terms of them, we express the fundamental 2-form of the second kind $\Omega$ and its connection to the sigma function for $X$.
\end{abstract}

\iffalse

{{{2010 MSC: Primary,
Primary,
14H42 %Theta functions; Schottky problem
14H05 %Algebraic functions; function fields
Secondary,
14H55, % Riemann surfaces; Weierstrass points; gap sequences [See also 30Fxx]
14H50. % Plane and space curves
}}}

\fi

\subjclass[2010]{
Primary,
14H42 %Theta functions; Schottky problem
14H05 %Algebraic functions; function fields
Secondary,
14H55, % Riemann surfaces; Weierstrass points; gap sequences [See also 30Fxx]
14H50. % Plane and space curves
}

\keywords{Keywords: Weierstrass canonical form, fundamental 2-form of the second kind, sigma function, plane and space curves with the higher genera}

%\bigskip

\maketitle

\section{Introduction}\label{introduction}

In Weierstrass' elliptic function theory, the algebraic properties associated with an elliptic curve of Weierstrass' standard equation $y^2 = 4 x^3 - g_2 x - g_3$ are connected with the transcendental properties defined on its Jacobi variety via the $\sigma$ function since $\Bigr(\wp(u)=-\dfrac{d^2}{d u^2}\log\sigma(u), $  $\dfrac{d\wp(u)}{d u}\Bigr)$ is identical to a point $(x,y)$ in the curve \cite{WeiWV,WhittakerWatson}. These algebraic and transcendental properties are equivalently obtained by the identity, and play the central role in the elliptic function theory.
Via the equivalence, the elliptic function theory affects several fields in mathematics, science, and technology.
In other words, in Weierstrass' elliptic function theory, the equivalence between the algebraic objects of the curve and the transcendental objects on its Jacobi variety is crucial.

Weierstrass himself extended the picture to general algebraic curves \cite{WeiWIV}, but it failed due to difficulties.
Some of the purposes in mathematics in the XX-th century were to overcome the difficulties which were achieved.
We have studied the generalization of this picture to algebraic curves with higher genera in the series of the studies \cite{KMP13, KMP16, KMP19} following Mumford's studies for the hyperelliptic curves based on the modern algebraic geometry \cite{Mum81, Mum84}, i.e., the unification of the theory of algebraic curves in the XIX-th century with the modern one.

The elliptic theta function was generalized by Riemann for an Abelian variety. 
In contrast, its equivalent function Al was defined for any hyperelliptic curve by Weierstrass, which was refined by Klein using only the data of the hyperelliptic Riemann surface and Jacobi variety as a generalization of the elliptic sigma function \cite{Klein}. 
Baker re-constructed Klein's sigma functions by using the data of hyperelliptic curves algebraically \cite{Baker97}.
Buchstaber, Enolskii, and Leykin extend the sigma functions to certain plane curves, so-called $(n,s)$ curves, based on Baker's construction which we call the EEL construction due to work by Eilbeck, Enolskii, and Leykin \cite{EEL00} [\cite{BEL20} and its references.]. 
For the $(n,s)$ curves with the cyclic symmetry, the direct relations between the affine rings and the sigma functions were obtained as the Jacobi inversion formulae \cite{MP08, MP14}. Additionally, we generalized the sigma functions and the formulae to a particular class of the space curves using the EEL-construction \cite{MK13, KMP13, KMP19}.

We have studied further generalization of the picture in terms of the Weierstrass canonical form \cite{KM2020, KMP2022a}.
The Weierstrass curve $X$ is a normalized curve of the curve given by the Weierstrass canonical form, $y^r + A_{1}(x) y^{r-1} + A_{2}(x) y^{r-2} +\cdots + A_{r-1}(x) y  + A_{r}(x)=0$, where $r$ is a positive integer and each $A_j$ is a polynomial in $x$ of a certain degree (c.f. Proposition \ref{2pr:WCF}) so that the Weierstrass non-gap sequence at $\infty\in X$ is given by the numerical semigroup $H_X$ whose generator contains $r$ as its minimal element.
It is known that every compact Riemann surface has a Weierstrass curve $X$, which is birational to the surface.
We also simply call the Weierstrass curve \emph{W-curve}.

It provides the projection $\varpi_r : X \to \PP$ as a covering space.
Let $R_X := \bH^0(X, \cO_X(*\infty))$ and $R_\PP := \bH^0(\PP, \cO_\PP(*\infty))$.
In \cite{KMP2022a}, we have the explicit description of the complementary module $R_X^\fc$ of $R_\PP$-module $R_X$, which leads the explicit expressions of the holomorphic one form except $\infty$, $\bH^0(\PP, \cA_\PP(*\infty))$.

Recently D. Korotkin and V. Shramchenko \cite{KorotkinS} and Nakayashiki \cite{Nak16} defined the sigma function of every compact Riemann surface as a generalization of Klein's sigma function transcendentally.
Every compact Riemann surface can be characterized by the Weierstrass non-gap sequence, which is described by a numerical semigroup $H$ called \emph{Weierstrass semigroup}.
Nakayashiki defined the sigma function for every compact Riemann surface with Weierstrass semigroup $H$ \cite{Nak16} based on Sato's theory on the universal Grassmannian manifolds (UGM) \cite{SatoN84, SegalWilson}.

In this paper, we use our recent results on the complementary module of the W-curve \cite{KMP2022a} to define the trace operator $p_X$ such that $p_X(P, Q)=\delta_{P,Q}$ for $\varpi_r(P)=\varpi_r(Q)$ for $P, Q \in X\setminus\{\infty\}$.
In terms of them, we express the fundamental 2-form of the second kind $\Omega$ algebraically in Theorem \ref{2th:Sigma}, and finally obtain a connection to Nakayashiki's sigma function by modifying his definition in Theorem \ref{thm:sigma}.
It means an algebraic construction of the sigma function for every W-curve as Baker did for Klein's sigma function following Weierstrass' elliptic function theory.

\bigskip
Contents are as follows:
Section \ref{sec:WCF} reviews the Weierstrass curves (W-curves) based on \cite{KMP2022a}; 1) the numerical semigroup in Subsection \ref{2sc:WSG},  2) Weierstrass canonical form in Subsection \ref{2ssc:WSG}, 3) their relations to the monomial curves in Subsection \ref{2ssc:MCurve}, 4) the properties of the $R_\PP$-module $R_X$ in Subsection \ref{2ssc:R_Pmodule_R_X}, 5) the covering structures in W-curves in Subsection \ref{2sc:Covering}, and 6) especially the complementary module $R_X^\fc$ of $R_X$ in Subsection \ref{sec:CompM}; the explicit description of $R_X^\fc$ is the first main result in \cite{KMP2022a}.
Section \ref{2ssc:W-normAD} provides the first and the second theorems in this paper on the W-normalized Abelian differentials on $X$.
In Subsection \ref{2ssc:W-norm_nuI}, we review the second main result in  \cite{KMP2022a} on the W-normalized Abelian differentials  $\bH^0(X, \cA_X(*\infty))$, which contains the Abelian differentials of the first kind.
Further we extend the trace operator $p\in R_X\otimes_{R_\PP} R_X$ introduced in \cite{KMP2022a} to $R_X\otimes_{\CC}R_X$ in Subsection \ref{2ssec:Ext_p}.
The trace operator $p$ enables us to define a proper one-form $\Sigma$ and its differential $d\Sigma$ in Subsection \ref{2ssc:Wdiff23}.
After investigating $d \Sigma$, we find the W-normalized Abelian differentials of the second kind in Theorems \ref{2th:dSigma} and \ref{2th:dRhamcoh}. 
Further, in Theorem \ref{2th:Sigma}, we mention our results on the fundamental differential of the second kind $\Omega$ in our W-curves and the Abelian differentials of the third kind.
We obtain the generalized Legendre relation in Proposition \ref{2pr:L-rel}.
Using them, we show the connection of the sigma function for $X$ with $R_X^\fc$ in the W-curves $X$ in Section \ref{sc:SigmaFW}.
As studied in \cite{KMP16}, we introduce the shifted Abelian integrals in Subsection \ref{2ssc:ShiftedAI}.
Subsection \ref{ssc:Rtheta} shows the properties of the Riemann theta functions of W-curves and its Riemann-Kempf theorem as in Proposition \ref{3th:RieKempf_theta}.
In Subsection \ref{ssc:SigmaF}, we define the sigma function for a W-curve $X$ by modifying the definition of Nakayashiki \cite[Definition 9]{Nak16}, and show its properties in Theorem \ref{thm:sigma} as our main results in this paper.

\bigskip
\noindent
{\bf{Acknowledgment:}}
The second author is grateful to Yohei Komori for valuable discussions in his seminar and to Takeo Ohsawa and Hajime Kaji for helpful comments. 
He also thanks Takao Kato for informing us of reference \cite{CoppensKato}. 
At the end of May 2022, since this paper was almost completed, the authors decided  that it would be submitted at the beginning of July and would go through a month of checks by typos and others.
In the meantime, Emma Previato, the author of this paper and the special issue editor of this special issue, passed away on 29 June 2022. 
The first two authors sincerely wish that the great mathematician and their friend and collaborator Emma Previato rest in peace.

He was supported by the Grant-in-Aid for Scientific Research (C) of Japan Society for the Promotion of Science Grant, No.21K03289.

\bigskip

\bigskip

\section{Weierstrass canonical form and Weierstrass curves  (W-curves)}
\label{sec:WCF}

\subsection{Numerical and Weierstrass semigroup}\label{2sc:WSG}

This subsection is on numerical and Weierstrass semigroups based on \cite{ADGS2016, KMP13}.
An additive sub-monoid of the monoid of the non-negative integers $\NN_0$ is called \emph{numerical semigroup} if its compliment in $\NN_0$ is a finite set.
In this subsection, we review the numerical semigroups associated with algebraic curves.

In general, a numerical semigroup $H$ has a unique (finite) minimal set of generators,  $M=M(H)$, ($H=\langle M\rangle$) and the finite cardinality $g$  of $H^\fc=\NN_0\setminus H$; $g$ is the genus of $H$ or $H^\fc$ and $H^\fc$ is called gap-sequence.
We let $r_{\mathrm{min}}(H)$ be the smallest positive integer of $M(H)$.
We call the semigroup $H$ \emph{an $r_{\mathrm{min}}(H)$-semigroup}, so that $\langle 3,7,8\rangle$ is a  $3$-semigroup and $\langle 6,13,14,15,16\rangle$ is a $6$-semigroup. 
Let $N(i)$ and $N^\fc(i)$ be the $i$-th ordered element of $H=\{N(i)\ |\ i \in \NN_0\}$ and $H^\fc=\{N^\fc(i)\ |\ i = 0, 1, \ldots, g-1\}$ satisfying $N(i) < N(i+1)$, and $N^\fc(i) < N^\fc(i+1)$ respectively.

Further the conductor $c_H$ of $H$ is defined by the minimal natural number satisfying $c_H + \NN_0 \subset H$.
The number $c_H-1$ is known as the Frobenius number, which is the largest element of $H^\fc$, i.e., $c_H=N^\fc(g-1)+1$.

By letting the row lengths be $\Lambda_i = N^\fc(g-i) - g+i$, ($i=1, 2, \cdots, g$), we have the Young diagram of the semigroup, $\Lambda:=(\Lambda_1, \ldots, \Lambda_g)$, ($\Lambda_i \le \Lambda_{i+1}$).
The Young diagram $\Lambda$ is a partition of $\displaystyle{\sum_i\Lambda_i}$.
We say that such a Young diagram is associated with the numerical semigroup. 
If for a given Young diagram $\Lambda$, we cannot find any numerical semigroups $H$ such that $\Lambda_i = N^\fc(g-i) - g + i$, we say that $\Lambda$ is not associated with the numerical semigroup. 
It is obvious that in general, the Young diagrams are not associated with the numerical semigroups. 
We show the examples of the Young diagrams associated with numerical semigroup:
\begin{gather}
\begin{matrix}
\yng(2,2,1,1),&
\yng(6,3,3,2,1,1,1,1),&
\yng(6,3,3,3,1,1,1,1).\\
 \langle3, 7, 8\rangle &
 \langle5,7,11\rangle
 &\langle5, 6, 14\rangle\\
\end{matrix}
\label{2eq:yng1}
\end{gather}
The Young diagram and the associated numerical semigroup are called symmetric if the Young diagram is invariant under reflection across the main diagonal.
It is known that the numerical semigroup is symmetric if and only if $2g-1$ occurs in the gap sequence.
It means that if $c_H=2g$, $H$ is symmetric. 

We obviously have the following proposition:
\begin{proposition}\label{2pr:N(n)}
The following hold:
\begin{enumerate}
\item $N(n) - n \le g$ for every $n\in \NN_0$,

\item 
$N(n) - n = g$ for $N(n)\ge c_X=N(g)$ or $n\ge g$, 

\item $N(n) - n < g$ for $0\le N(n)< c_X$ or $n<g$,

\item $\#\{n \ |\ N^\fc(n) \ge g\}=\#\{n \ |\ N(n) < g\}$

\item for $N(i) < N^\fc(j)$, $N^\fc(j)-N(i)\in H^\fc$, and

\item when $H$ is symmetric, $c_H=N(g)=2g$ and
$c_H - N(i)-1=N^\fc(g-i-1)$ for $0\le i\le g-1$.

\end{enumerate}
\end{proposition}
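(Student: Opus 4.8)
The whole proposition rests on one counting identity, which I would establish first: for every $n \in \NN_0$ the integer $N(n) - n$ equals the number of gaps lying strictly below $N(n)$, that is, $N(n) - n = \#\{\, j : N^\fc(j) < N(n)\,\}$. This follows by counting the $N(n)+1$ integers in $\{0,1,\ldots,N(n)\}$ in two ways: exactly $n+1$ of them belong to $H$ (namely $N(0) < N(1) < \cdots < N(n)$), so the remaining $N(n) - n$ are gaps, and since $N(n) \in H$ these all lie in $[0, N(n)-1]$. Parts (1)--(3) are then immediate. There are only $g$ gaps in all, so $N(n) - n \le g$, which is (1). If $N(n)$ exceeds the Frobenius number $c_H - 1$ then every gap is counted and $N(n) - n = g$; if instead $N(n) < c_H$ then the Frobenius number is an uncounted gap and $N(n) - n < g$. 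These give (2) and (3), once one records the identity $N(g) = 2g$: indeed $N(g) - g \le g$ forces $N(g) \le 2g$, while all $g$ gaps lie below $c_H \le 2g$, so exactly $g$ elements of $H$ precede $2g$ and hence $N(g) \ge 2g$. The bound $c_H \le 2g$ used here is the standard pairing estimate, since $k$ and $(c_H-1)-k$ cannot both lie in $H$.

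For (4) I would count inside the window $\{0,1,\ldots,g-1\}$, which holds exactly $g$ integers. Partitioning them into elements of $H$ and gaps gives $\#\{n : N(n) < g\} + \#\{j : N^\fc(j) < g\} = g$, whereas partitioning the full gap set (of cardinality $g$) into those below $g$ and those at least $g$ gives $\#\{j : N^\fc(j) < g\} + \#\{j : N^\fc(j) \ge g\} = g$. Cancelling the common term $\#\{j : N^\fc(j) < g\}$ yields (4). Statement (5) follows from the monoid closure of $H$ alone: if $N(i) < N^\fc(j)$ and the positive integer $d := N^\fc(j) - N(i)$ were \emph{not} a gap, then $N^\fc(j) = N(i) + d$ would be a sum of two elements of $H$, hence would lie in $H$, contradicting $N^\fc(j) \in H^\fc$; therefore $d \in H^\fc$.

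For (6) I would use the symmetry criterion quoted above, namely that $H$ is symmetric precisely when $c_H = 2g$, equivalently when the involution $x \mapsto c_H - 1 - x$ interchanges $H \cap [0, c_H - 1]$ with $H^\fc$. Granting this, the increasing list $N(0) < \cdots < N(g-1)$ of non-gaps below $c_H$ is carried by the involution onto the decreasing list of all $g$ gaps, so $N(i)$ maps to the $(i+1)$-st largest gap; that is, $c_H - N(i) - 1 = N^\fc(g - 1 - i)$, and together with $c_H = N(g) = 2g$ this is (6).

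The only non-formal inputs are this structure theorem for symmetric semigroups and the bound $c_H \le 2g$ behind $N(g) = 2g$; everything else is bookkeeping, which is why the authors call the statement obvious. The one place I would take care is the cut-off in (2)--(3): it is the conductor $c_H$ (equivalently the Frobenius number) that gives the sharp threshold separating $N(n)-n=g$ from $N(n)-n<g$, and only in the symmetric case does it coincide with $N(g)=2g$, so I would phrase those two parts through the comparison of $N(n)$ with $c_H$ rather than with $N(g)$.
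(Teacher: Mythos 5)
Your proof is correct, and it is considerably more complete than what the paper offers: the paper's entire proof reads ``1--3 and 5 are obvious,'' gives a one-line hint for 4 (``what is missing must be filled later for $H^\fc$'', i.e.\ exactly your window-counting argument), and outsources 6 to a reference. Your central identity $N(n)-n=\#\{j: N^\fc(j)<N(n)\}$, the pairing bound $c_H\le 2g$, and the derivation $N(g)=2g$ are all sound, as are the closure argument for (5) and the order-reversing involution for (6). Your closing caveat is also well taken and worth stressing: the equality $c_X=N(g)$ asserted inside part (2) holds only for symmetric $H$ (in general $c_H=N(c_H-g)\le N(g)=2g$), and with the literal reading ``$n<g$'' part (3) actually fails for non-symmetric semigroups --- for $H=\langle 3,4,5\rangle$ one has $g=2$, $N(1)=3$, so $N(1)-1=2=g$ although $1<g$; similarly for the Weierstrass semigroup $\langle 3,5,7\rangle$. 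So the sharp threshold is indeed the comparison of $N(n)$ with $c_H$, as you say, and your rephrasing of (2)--(3) is the correct form of the statement. The only point where you lean on an unproved input is the involution property of symmetric semigroups in (6), but since the paper itself cites a reference for that part, you are no less rigorous than the authors there; if you want it self-contained, note that when $c_H=2g$ each of the $g$ pairs $\{k,\,2g-1-k\}$ in $[0,2g-1]$ contains at most one element of $H$ (their sum is the gap $2g-1$) and there are exactly $g$ non-gaps in that range, so each pair contains exactly one, which is the involution you need.
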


\begin{proof} {\it{1}}-{\it{3}} and {\it{5}} are obvious.
Noting $\# H^\fc =g$, {\it{4}} means that what is missing must be filled later for $H^\fc$. {\it{6}} is left to \cite{ADGS2016}.\qed
\end{proof}

The length $r_\Lambda$ of the diagonal of the Young diagram $\Lambda$ is called the rank of $\Lambda$.
The number of boxes below and to the right of the $i$-th box of the diagonal from lower right to upper left are assumed $a_{r_\Lambda-i+1}$ and $b_{r_\Lambda-i+1}$ respectively.
The Young diagram is represented by $(a_{r_\Lambda}, \ldots, a_2, a_1; b_{r_\Lambda}, \ldots,b_2, b_1)$, which is known as Frobenius representation or characteristics of $\Lambda$.
Then $\ell_i := a_i + b_i +1$ is called the hook length of the characteristics.
For $\Lambda = (6,3,3,2,1,1,1,1)$ associated with $H=\langle 5,7,11\rangle$, it is $(0, 2, 7; 0,1,5)$ and rank of $\Lambda$ is three.
For $\Lambda = (6,3,3,3,1,1,1,1)$ associated with $H=\langle 5,6,14\rangle$, it is $(1, 2, 7; 0,1,5)$ and rank of $\Lambda$ is three.
We show their examples of the Young diagram:

\begin{gather}
\begin{matrix}
\young(\ -----,|\ -,||\ ,||,|,|,|,|),
&
\young(\ -----,|\ -,||\ ,|||,|,|,|,|),
\\
(0, 2, 7; 0,1,5),
 &(1, 2, 7; 0,1,5),\\
 \langle5,7,11\rangle
 &\langle5,6,14\rangle.\\
\end{matrix}
\label{2eq:yng3}
\end{gather}

\begin{definition}\label{2df:Lambda^k}
For a given Young diagram $\Lambda=(\Lambda_1, \Lambda_2, \cdots, \Lambda_g)$, we define
$$
\Lambda^{(k)}:=(\Lambda_1, \Lambda_2, \cdots, \Lambda_k), \quad
\Lambda^{[k]}:=(\Lambda_{k+1}, \Lambda_{k+2}, \cdots, \Lambda_g).
$$
\end{definition}

We show the properties of the Young diagram associated with the numerical semigroup $H$ in the following lemma, which is geometrically obvious:
\begin{lemma}\label{2lm:Lambda^k}
\begin{enumerate}

\item If we put the number on the boundary of the Young diagram $\Lambda$ from the lower to upper right as in (\ref{2eq:yng4}), each number in the right side box of the $i$-th row corresponds to the gap number in $N^\fc(g-i)\in H^\fc$ or $\Lambda_i +g - i$, i.e., $N^\fc(g-i)\in H^\fc=\Lambda_i +g - i$, for $0<i\leq g$, and each number in the top numbered box of the $i$-th column corresponds to $N(i)$ for $0<i<\Lambda_1$.

\item The hook length $\ell_i$ is given by $\ell_i = N^\fc(g-i) - N(i-1)$\, $(i=1, \ldots, r_\Lambda)$, which belongs to $H^\fc$, and thus let $L_i \in \{0, 1, \ldots, g-1\}$ such that $N^\fc(g-L_i)=\ell_i$. 
Then $\Lambda_{L_i} + g-L_i=\ell_i$, and $L_1 =1$.

\item For $\Lambda:=(\Lambda_1, \ldots, \Lambda_g)$ associated with the numerical semigroup $H:=\langle M \rangle$, the truncated Young diagram $\Lambda^{[k]}:=(\Lambda_{k+1}, \ldots, \Lambda_g)$ is also associated with a numerical semigroup $H'$, i.e,, $H'$ is generated by 
$$
M \cup \{N^\fc(g-k), N^\fc(g-k+1),  \ldots, N^\fc(g-1)\},
$$
though the generators are not minimal in general.
Further its compliment $H^{\prime \fc}$ is a subset of $H^\fc$, i.e., $H^{\prime \fc}\subset H^\fc$.

\item For $\Lambda:=(a_{r_\Lambda}, \ldots, a_2, a_1; b_{r_\Lambda}, \ldots,b_2, b_1)$ associated with the numerical semigroup $H$, $\Lambda[\ell]:=(a_{r_\Lambda}, \ldots, a_{\ell+1}, a_\ell; b_{r_\Lambda}, \ldots,b_{\ell+1}, b_\ell)$, and then $\Lambda[\ell]$ is, in general, not associated with the numerical semigroup unless $\ell = 1$.

Further by letting $\displaystyle{|\Lambda|:=\sum_{i=1}^{r_\Lambda} (a_i + b_i+1)}$, we have $\displaystyle{|\Lambda|:=\sum_{i=1}^{g_\Lambda} \Lambda_i}$.

\end{enumerate}
\end{lemma}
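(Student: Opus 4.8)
The plan is to prove everything from a single combinatorial device, namely the identification of the boundary of $\Lambda$ described in part (1), and then to deduce (2)--(4) from it. For (1) the row identity $N^\fc(g-i)=\Lambda_i+g-i$ is nothing but the defining relation $\Lambda_i:=N^\fc(g-i)-g+i$ read backwards, so the only real content is the column (conjugate) statement. I would obtain it by double counting: writing $\Lambda'$ for the conjugate partition,
$$
\Lambda'_j=\#\{i:\Lambda_i\ge j\}=\#\{t\in\{0,\dots,g-1\}:N^\fc(t)-t\ge j\}.
$$
Since $N^\fc(t)-t$ equals the number of non-gaps below $N^\fc(t)$, the condition $N^\fc(t)-t\ge j$ is equivalent to $N(j-1)<N^\fc(t)$, so $\Lambda'_j$ counts the gaps exceeding $N(j-1)$; as there are exactly $N(j-1)-(j-1)$ gaps below $N(j-1)$, this gives $\Lambda'_j=g-\bigl(N(j-1)-(j-1)\bigr)=g+(j-1)-N(j-1)$, which is the column statement of (1) expressed through $\Lambda'$. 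This is precisely the geometric picture asserted in the statement.

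Granting (1), part (2) is a short cancellation. In the convention of the statement the $i$-th diagonal box has leg $a_i=\Lambda'_i-i$ and arm $b_i=\Lambda_i-i$, so
$$
\ell_i=a_i+b_i+1=(\Lambda'_i-i)+(\Lambda_i-i)+1 .
$$
Substituting the row identity for $\Lambda_i$ and the column identity for $\Lambda'_i$ makes the terms in $i$ and $g$ cancel and leaves $\ell_i=N^\fc(g-i)-N(i-1)$. Because a diagonal box exists for $1\le i\le r_\Lambda$ we have $\ell_i\ge1$, hence $N(i-1)<N^\fc(g-i)$, and Proposition \ref{2pr:N(n)}(5) yields $\ell_i\in H^\fc$. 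Defining $L_i$ by $N^\fc(g-L_i)=\ell_i$, the row identity at index $L_i$ reads $N^\fc(g-L_i)=\Lambda_{L_i}+g-L_i$, i.e.\ $\Lambda_{L_i}+g-L_i=\ell_i$; and $\ell_1=N^\fc(g-1)-N(0)=N^\fc(g-1)$ forces $L_1=1$.

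For (3) I would exhibit the semigroup explicitly as $H':=H\cup\{N^\fc(g-k),\dots,N^\fc(g-1)\}$ and check closure under addition: a sum of two elements of $H$ lies in $H$, while if one summand is an adjoined gap $N^\fc(g-j)$ ($j\le k$) and the other a nonzero element of $H'$, the sum is at least $N^\fc(g-k)+1>N^\fc(g-k-1)$ and so cannot be one of the remaining gaps $N^\fc(0),\dots,N^\fc(g-k-1)$, whence it lies in $H'$. Thus $H'$ is a numerical semigroup with $(H')^\fc=\{N^\fc(0),\dots,N^\fc(g-k-1)\}\subset H^\fc$, so the $m$-th gap of $H'$ equals $N^\fc(m)$ for $0\le m\le g-k-1$. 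Inserting this into the row formula for $H'$ (which has genus $g-k$) gives, for its $i$-th part, $N^\fc(g-k-i)-(g-k)+i=\Lambda_{k+i}$, i.e.\ exactly $\Lambda^{[k]}$. Since the adjoined gaps do not belong to $\langle M\rangle$, one has $H'=\langle M\cup\{N^\fc(g-k),\dots,N^\fc(g-1)\}\rangle$, non-minimally in general, which finishes (3).

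Part (4) I would split in two. The identity $|\Lambda|=\sum_{i=1}^{r_\Lambda}(a_i+b_i+1)=\sum_i\Lambda_i$ is the principal-hook decomposition: the hooks of the diagonal cells $(1,1),\dots,(r_\Lambda,r_\Lambda)$ partition $\Lambda$ into disjoint pieces, the $i$-th having $a_i+b_i+1$ cells. For the negative assertion it suffices to give one counterexample, since the claim is only that $\Lambda[\ell]$ is not associated \emph{in general} for $\ell>1$ (for $\ell=1$, $\Lambda[1]=\Lambda$ is associated by hypothesis). Taking $H=\langle5,7,11\rangle$, so $\Lambda=(6,3,3,2,1,1,1,1)$, deletion of the outermost hook ($\ell=2$) yields $\Lambda[2]=(2,2,1)$; the row formula would force the gap set $\{1,3,4\}$, whose non-gaps $\{0,2,5,6,\dots\}$ are not closed under addition since $2+2=4$, so $(2,2,1)$ is associated with no numerical semigroup. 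I expect the one genuinely delicate point of the whole lemma to be the index bookkeeping in (1)---in particular the conjugate formula above---because every other part reduces to it together with the elementary closure check in (3).
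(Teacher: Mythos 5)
Your proof is correct in all four parts, and it is worth pointing out that the paper itself gives no argument here: the lemma is introduced with the phrase ``which is geometrically obvious'' and no proof follows, only the worked examples. Your write-up therefore supplies exactly what the paper omits. The one genuinely substantive step is the one you identify as such: the double count
$\Lambda'_j=\#\{t\in\{0,\dots,g-1\}: N^\fc(t)-t\ge j\}=g+(j-1)-N(j-1)$
is the precise algebraic form of the boundary labelling in (\ref{2eq:yng4}) (equivalently, the rim box in row $a$ and column $b$ carries the label $b+g-a$, which reproduces both the row and the column statements of part (1)); from it part (2) is the advertised cancellation, with $\ell_i\in H^\fc$ following from $\ell_i\ge 1$ and Proposition \ref{2pr:N(n)}(5), and $L_1=1$ from $N(0)=0$. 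Your treatment of (3) --- closure of $H\cup\{N^\fc(g-k),\dots,N^\fc(g-1)\}$ because any sum involving an adjoined gap exceeds $N^\fc(g-k-1)$, followed by re-reading the row formula for the genus-$(g-k)$ semigroup $H'$ --- is the right argument and is confirmed by the paper's example $\Lambda^{[1]}$ of $\langle5,7,11\rangle$ giving $\langle5,7,11,13\rangle$. For (4), the principal-hook decomposition gives the identity $|\Lambda|=\sum_i(a_i+b_i+1)=\sum_i\Lambda_i$, and since the claim is only a failure ``in general'', your single counterexample suffices: for $\langle5,7,11\rangle$ one has $\Lambda[2]=(0,2;0,1)=(2,2,1)$, whose forced gap set $\{1,3,4\}$ has complement not closed under addition ($2+2=4$). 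I find no gap in the argument; the only caution is notational, namely that the paper's $(\Lambda_i\le\Lambda_{i+1})$ and the off-by-one conventions in the column indexing must be read against the pictures (\ref{2eq:yng1})--(\ref{2eq:yng4}), which your formulas do match.
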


$\Lambda^{[1]}$ of the numerical semigroup $\langle5,7,11\rangle$ is associated with the numerical semigroup $\langle5,7,11,13\rangle$.
We show their examples of the Young diagram:
\begin{gather}
\begin{matrix}
\young(\ 5,34,2,1),&
\young(\ \ 9,\ 78,56,4,3,2,1),&
\young(\ \ 9,\ \ 8,567,4,3,2,1).\\
 \langle3, 7, 8\rangle &
 \langle5,7,11,13\rangle
 &\langle5,6,13,14\rangle\\
\end{matrix}
\label{2eq:yng4}
\end{gather}

In this paper, we mainly consider the $r$-numerical semigroup, $H$.
We introduce the tools as follows:

\begin{definition}\label{2df:NSG1}
\begin{enumerate}

\item
Let $\ZZ_r:= \{ 0, 1, 2, \ldots, r-1\}$ and $\ZZ_r^\times := \ZZ_r \setminus\{0\}$.

\item
Let $\tfe_i :=\min\{h \in H \ |\ i \equiv h \ \mbox{ mod }\ r\}$,
$i \in \ZZ_r$. 

\item Let $\tfE_H:=\{\tfe_i \ |\ i \in \ZZ_r\}$ be the standard basis of $H$.
Further we define the ordered set $\fE_H:=\{\fe_i \in \tfE_H \ | \ \fe_i < \fe_{i+1}\}$, and $\fE_H^\times := \fE_H\setminus \{0\}$, e.g., $\fe_{0}=\tfe_0=0$.

\item Let $\lH^\fc:= H^\fc \bigcup (-\NN)$, where $\NN:=\NN_0\setminus\{0\}$.

\end{enumerate}

\end{definition}

We have the following elementary but essential results \cite[Lemma 2.8, 2.9]{KMP2022a}:

\begin{lemma}\label{2lm:NSG1}

For $a \in \NN_0$, we define
$$
[a]_r:= \{ a + k r \ | \ k \in \NN_0\}, \quad
\overline{[a]}_r^\fc:=
\{ a - k r \ | \ k  \in \NN\}, \quad
[a]_r^\fc:=\overline{[a]}_r^\fc\cap \NN.
$$

\begin{enumerate}
\item We have the following decomposition;

\begin{enumerate}

\item 
$\displaystyle{
    H = \bigoplus_{i\in \ZZ_r}  [\fe_i]_r,
}$ 

\item 
$\displaystyle{
    \lH^\fc  = \bigoplus_{i\in \ZZ_r}  \overline{[\fe_i]}_r^\fc
}$,\  $H\bigcup \lH^\fc=\ZZ$,

\item 
$\displaystyle{
    H^\fc  = \bigoplus_{i\in \ZZ_r}  [\fe_i]_r^\fc
= \bigoplus_{i\in \ZZ_r^\times}  [\fe_i]_r^\fc
}$,\ $H\bigcup H^\fc=\NN_0$,
\end{enumerate}

\item for every $x_i \in [\fe_i]_r$ $(i\in \ZZ_r)$,
$$
\{x_i \mbox{ modulo } r\ |\ i \in \ZZ_r\}=\ZZ/r \ZZ,
$$
and, especially for $x \in [\tfe_i]_r$, $x = i$ modulo $r$.
\end{enumerate}
\end{lemma}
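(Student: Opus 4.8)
The plan is to exploit the single fact that, since $H$ is an $r$-semigroup, $r\in H$, so that each residue class modulo $r$ meets $H$ in a set closed under adding $r$; the elements $\tfe_i$ are then precisely the Apéry representatives, i.e. the minimal element of $H$ in each class. First I would record that $\{\tfe_i\ |\ i\in\ZZ_r\}$ is a complete set of residue representatives modulo $r$ with $\tfe_i\equiv i$, which is immediate from the definition of $\tfe_i$, and that $\{\fe_i\}=\{\tfe_i\}$ as sets (the latter being merely the size-ordering of the former). This already yields part (2): since $[\tfe_i]_r\subset \tfe_i + r\ZZ$, every $x\in[\tfe_i]_r$ satisfies $x\equiv i$ modulo $r$, and as $i$ runs over $\ZZ_r$ the residues $\fe_i \bmod r$ exhaust $\ZZ/r\ZZ$.

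For (1)(a) I would argue both inclusions. Since $\fe_i\in H$ and $r\in H$, closure under addition gives $[\fe_i]_r=\{\fe_i+kr\}\subset H$, hence $\bigcup_i [\fe_i]_r\subset H$. Conversely, given $h\in H$ with $h\equiv i$ modulo $r$, minimality of $\tfe_i$ forces $h\ge \tfe_i$ and $h-\tfe_i\in r\NN_0$, so $h\in[\tfe_i]_r$. The union is disjoint because the $[\fe_i]_r$ lie in distinct residue classes, which is exactly what makes it a direct sum $\bigoplus$.

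Part (1)(b) is the complementary statement inside $\ZZ$. Each class $\tfe_i+r\ZZ$ splits as the \emph{upward} progression $[\tfe_i]_r$ (contained in $H$ by (1)(a)) and the \emph{downward} progression $\overline{[\tfe_i]}_r^\fc=\{\tfe_i-kr\ |\ k\in\NN\}$; by minimality of $\tfe_i$ none of the latter lie in $H$, so they lie in $\lH^\fc=\ZZ\setminus H$. Taking the disjoint union over $i\in\ZZ_r$ and using that the classes partition $\ZZ$ yields both $\lH^\fc=\bigoplus_i \overline{[\fe_i]}_r^\fc$ and $H\bigcup\lH^\fc=\ZZ$. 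Part (1)(c) then follows by intersecting with $\NN$: since $\lH^\fc\cap\NN=H^\fc$ (because $0\in H$), and $[\fe_i]_r^\fc:=\overline{[\fe_i]}_r^\fc\cap\NN$, the decomposition descends, giving $H^\fc=\bigoplus_i [\fe_i]_r^\fc$ and $H\bigcup H^\fc=\NN_0$. The collapse to $i\in\ZZ_r^\times$ uses $\fe_0=\tfe_0=0$, whence $\overline{[\fe_0]}_r^\fc\subset -\NN$ contributes nothing positive.

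The argument is entirely elementary; the only point requiring care is the bookkeeping, namely keeping the class-indexed representatives $\tfe_i$ (where $\tfe_i\equiv i$) distinct from the size-ordered $\fe_i$, and tracking which ambient set ($\ZZ$, $\NN_0$, or $\NN$) each assertion lives in so that the three decompositions (a), (b), (c) align correctly. I expect no genuine obstacle once $r\in H$ and the Apéry structure are in hand.
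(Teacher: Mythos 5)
Your proof is correct. Note that this paper does not actually prove the lemma — it is quoted from \cite[Lemmas 2.8, 2.9]{KMP2022a} without argument — so there is no in-paper proof to compare against; your Ap\'ery-set argument (each residue class $\tfe_i+r\ZZ$ splits into the upward progression $[\tfe_i]_r\subset H$, by closure under adding $r$ and minimality of $\tfe_i$, and the downward progression $\overline{[\tfe_i]}_r^\fc\subset\ZZ\setminus H=\lH^\fc$, with (1)(c) obtained by intersecting with $\NN$ and the $i=0$ class contributing nothing positive since $\fe_0=0$) is the standard route and exactly what the cited reference relies on. The one bookkeeping point you flag — distinguishing the class-indexed $\tfe_i$ from the size-ordered $\fe_i$, which coincide as sets — is handled correctly.
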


The following is obvious:

\begin{lemma}\label{2lm:rs_iris}
For the generators $r$ and $s$ in the numerical semigroup $H$, there are positive integers $i_s$ and $i_r$ such that
$
   i_s s- i_r r = 1.
$
\end{lemma}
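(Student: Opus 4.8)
The plan is to reduce the statement to the single arithmetic fact that $\gcd(r,s)=1$ and then produce the two \emph{positive} integers by an elementary residue computation modulo $r$. Indeed, any relation of the form $i_s s - i_r r = 1$ forces $\gcd(r,s)\mid 1$, so coprimality is both necessary and, as we shall see, sufficient. In the W-curve setting $r = r_{\mathrm{min}}(H)$ is the minimal generator and $s$ is the distinguished generator coprime to $r$; this coprimality is precisely what underlies the residue decomposition $H = \bigoplus_{i\in\ZZ_r}[\fe_i]_r$ of Lemma \ref{2lm:NSG1} and makes the bookkeeping indexed by $\ZZ_r$ meaningful, so I would first record it explicitly as the one nonformal input.

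Granting $\gcd(r,s)=1$, I would run the following residue argument. Multiplication by $s$ is a bijection of the cyclic group $\ZZ/r\ZZ$ (Lemma \ref{2lm:NSG1} identifies the residues of the standard basis with all of $\ZZ/r\ZZ$, and coprimality makes $s$ itself a generator of this group). Hence there is a unique $i_s \in \{1,2,\ldots,r-1\}$ with $i_s s \equiv 1 \bmod r$. Setting $i_r := (i_s s - 1)/r \in \NN_0$ yields the desired identity $i_s s - i_r r = 1$ with $i_s \ge 1$. Positivity of $i_r$ is immediate: since $s \ge 2$ and $i_s \ge 1$ we have $i_s s \ge 2$, so $i_r = (i_s s - 1)/r > 0$, and being a nonnegative integer it satisfies $i_r \ge 1$.

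An alternative I would mention is the direct B\'ezout route: $\gcd(r,s)=1$ gives integers $a,b$ with $as - br = 1$, and replacing $(a,b)$ by $(a+kr,\,b+ks)$ leaves the identity unchanged while making both entries positive for all sufficiently large $k$. Either route shows that the passage from an integer solution to a solution in positive integers is routine shifting.

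The main obstacle — and the reason the lemma is labelled ``obvious'' only within this framework — is the justification of $\gcd(r,s)=1$, i.e.\ that $s$ is taken coprime to the minimal generator $r$. This cannot be deduced merely from $r,s$ being minimal generators of a numerical semigroup (two smallest generators need not be coprime, and a semigroup may admit no generator coprime to $r$); it is a feature of the choice of $r$ and $s$ in the Weierstrass canonical form. Once that input is in hand, the remaining steps are the elementary finite computation above.
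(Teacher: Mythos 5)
Your proof is correct; the paper offers no argument at all for this lemma (it is prefaced only by ``The following is obvious''), and your elementary residue/B\'ezout computation, together with the routine shift making both coefficients positive, is exactly the intended content. You are also right to flag that the one substantive input, $\gcd(r,s)=1$, is not automatic for two arbitrary generators of a numerical semigroup but is built into the paper's choice of $s$ in Proposition \ref{2pr:WCF}, where $s:=\min_{i\in \ZZ_r^\times}\{\fe_i \in \fE_{H_X}\ |\ (\fe_i, r)=1\}$, and such an $\fe_i$ always exists since $\tfe_1\equiv 1 \pmod r$ is coprime to $r$.
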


\medskip

\subsection{Weierstrass canonical form}\label{2ssc:WSG}

We recall the ``Weierstrass canonical form'' (``Weierstrass normal form'') based on \cite{KMP16, KMP19, KM2020, KMP2022a}, which is a generalization of Weierstrass' standard form for elliptic curves and whose origin came from Abel's insight; Weierstrass investigated its primitive property \cite{Wei67, WeiWIV}.
Baker \cite[Ch. V, \S\S 60-79]{Baker97} gives its complete review, proof, and examples, but we refer to Kato \cite{Kato80, CoppensKato}, who also produces this representation from a modern viewpoint.

\begin{proposition} {\rm{\cite{Kato80, CoppensKato}}} \label{2pr:WCF}
For a pointed curve $(X,\infty )$ with Weierstrass semigroup $H_X:=H(X,\infty)$
for which  $r_{\mathrm{min}}(H_X)=r$, and $\fe_i\in \fE_{H_X}$, $(i \in \ZZ_r^\times)$ in Definition \ref{2df:NSG1}, and we let 
$\displaystyle{s:=\min_{i\in \ZZ_r^\times}\{\fe_i \in \fE_{H_X}}$ ${ |\ (\fe_i, r)=1\}}$ and $s=\fe_{\ell_s}$. $(X,\infty)$ is defined by an irreducible equation,
\begin{equation}
f_X(x,y)= 0,
\label{2eq:WCF1a}
\end{equation}
for a polynomial $f_X\in \CC[x,y]$ of type,
\begin{equation}
f_X(x,y):=y^r + A_{1}(x) y^{r-1}
+ A_{2}(x) y^{r-2}
+\cdots + A_{r-1}(x) y
 + A_{r}(x),
\label{2eq:WCF1b}
\end{equation}
where the $A_i(x)$'s are polynomials in $x$, $A_0=1$,
$\displaystyle{
A_i = \sum_{j=0}^{\lfloor i s/r\rfloor} \lambda_{i, j} x^j}$,
and $\lambda_{i,j} \in \CC$, $\lambda_{r,s}=-1$.
\end{proposition}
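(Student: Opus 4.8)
The plan is to realize $f_X$ as the minimal polynomial of a carefully chosen function $y$ over the rational subfield $\CC(x)$, and then to read off the shape of its coefficients purely from pole orders at $\infty$.

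First I would fix generators of the function field. Since $r=r_{\mathrm{min}}(H_X)$ and $s$ both lie in $H_X$, there exist $x,y\in R_X=\bH^0(X,\cO_X(*\infty))$ whose only pole is $\infty$, with $\ord_\infty(x)=-r$ and $\ord_\infty(y)=-s$. The function $x$ realizes the degree-$r$ covering $\varpi_r\colon X\to\PP$, and since $\ord_\infty(x)=-r$ equals the degree of $x$, the fibre over $x=\infty$ is the single totally ramified point $\infty$; hence $[\CC(X):\CC(x)]=r$. To obtain $\CC(X)=\CC(x,y)$ I would argue with the valuation $\ord_\infty$: on $\CC(x,y)$ its value group contains $r\ZZ+s\ZZ=\ZZ$ (here $(r,s)=1$, which is precisely how $s$ was selected), so $\ord_\infty$ is already normalized on $\CC(x,y)$ and the ramification index of $\infty$ over $x=\infty$ in $\CC(x,y)/\CC(x)$ equals $\ord_\infty(x)/(-1)=r$. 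Since that index is bounded above by $[\CC(x,y):\CC(x)]\le[\CC(X):\CC(x)]=r$, equality forces $\CC(x,y)=\CC(X)$.

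Consequently $y$ satisfies a monic minimal polynomial of degree $r$ over $\CC(x)$,
$$ f_X(x,y)=y^r+A_1(x)y^{r-1}+\cdots+A_r(x), $$
which is automatically irreducible. To see that the $A_i$ are genuine polynomials, I would use that $\varpi_r$ is finite with $\infty$ the only point over $x=\infty$, so $R_X$ is the integral closure of $R_\PP=\CC[x]$ in $\CC(X)$; in particular $y$ is integral over $\CC[x]$, and because $\CC[x]$ is integrally closed, the coefficients $A_i$ of its minimal polynomial lie in $\CC[x]$. The degree bounds are the crux. Writing $d_i:=\deg A_i$ (with $d_0=0$), the $i$-th term $A_i(x)y^{r-i}$ has pole order $p_i:=rd_i+(r-i)s$ at $\infty$, so $p_i\equiv-is\pmod r$. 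Because $(r,s)=1$, the residues $-is\bmod r$ for $i=0,1,\ldots,r-1$ are pairwise distinct, while $i=r$ repeats the residue of $i=0$; thus $\{0,r\}$ is the only pair of indices whose terms can share a pole order. In the identity $\sum_{i=0}^r A_i y^{r-i}=0$, the leading Laurent coefficient at $\infty$ of the term(s) of maximal pole order must cancel, which is impossible unless at least two terms attain that maximum. Such a pair can only be $\{0,r\}$, forcing the maximum to be $p_0=p_r=rs$ and hence $p_i\le rs$ for every $i$. Rearranging $p_i\le rs$ gives $rd_i\le is$, i.e. $\deg A_i\le\lfloor is/r\rfloor$, with $\deg A_r=s$ in particular.

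Finally, matching the leading Laurent coefficients at $\infty$ of $y^r$ and $A_r(x)$ (the two terms of top pole order $rs$) shows that the coefficient $\lambda_{r,s}$ of $x^s$ in $A_r$ equals $-(\text{leading coefficient of }y)^r/(\text{leading coefficient of }x)^s$; rescaling $x$ and $y$ by constants normalizes this to $\lambda_{r,s}=-1$ and $A_0=1$, completing the claim. The main obstacle is the degree bound: everything hinges on the coprimality $(r,s)=1$, which separates the $r+1$ terms into distinct residue classes mod $r$ and thereby pins down simultaneously the maximal pole order and the admissible degrees. Once this residue bookkeeping is in place, the remaining verifications (integrality, irreducibility, normalization) are routine.
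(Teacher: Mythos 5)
Your proof is correct. The paper itself gives no argument for this proposition --- it is imported from Kato and Coppens--Kato (with Baker's classical treatment also cited) --- and your derivation is exactly the standard one those sources use: choose $x,y$ with pole orders $r,s$ at $\infty$, use $\gcd(r,s)=1$ to get $\CC(X)=\CC(x,y)$ and to separate the pole orders $rd_i+(r-i)s$ of the terms $A_i(x)y^{r-i}$ into distinct residue classes mod $r$, so that cancellation of the top-order term forces the tie $p_0=p_r=rs$ and hence $\deg A_i\le\lfloor is/r\rfloor$ with the normalization $\lambda_{r,s}=-1$ after rescaling.
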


In this paper we call the curve in Proposition \ref{2pr:WCF} a {\emph{Weierstrass curve}} or a {\emph{W-curve}}. 
The Weierstrass canonical form characterizes the W-curve, which has only one infinity point $\infty$. 
The infinity point $\infty$ is a Weierstrass point if $H_X^\fc=H^\fc(X,\infty)=\{N(i)\}$ differs from  $\{1, 2, \cdots, g\}$ \cite{FarkasKra}.
Since every compact Riemann surface of the genus, $g(>1)$, has a Weierstrass point whose Weierstrass gap sequence with genus $g$ \cite{ACGH85}, it characterizes the behavior of the meromorphic functions at the point, and thus there is a Weierstrass curve which is bi-rationally equivalent to the compact Riemann surface.

Further Proposition \ref{2pr:WCF} is also applicable to a pointed compact Riemann surface $(Y, P)$ of genus $g$ whose point $P$ is non-Weierstrass point rather than the Weierstrass point; its Weierstrass gap sequence at $P$ is $H^\fc(Y, P)=\{1, 2, \ldots, g\}$.
Even for the case, we find the Weierstrass canonical form $f_X$ and the W-curve $X$ with $H_X^\fc=\{1, 2, \ldots, g\}$ which is bi-rational to $Y$.

\begin{remark}\label{2rm:g_Xc_X}
{\rm{
Let $R_{X^\circ}^\circ := \CC[x,y]/(f_X(x,y))$ for (\ref{2eq:WCF1a}) and its normalized ring be $R_X^\circ$ if $X^\circ:=\Spec R_{X^\circ}^\circ$ is singular.
$R_X^\circ$ is the coordinate ring of the affine part of $X\setminus\{\infty\}$ and we identify $R_X^\circ$ with $R_X=\bH^0(X,\cO_X(*\infty))$.
Then the quotient field $\CC(X):=\cQ(R_X)$ of $R_X$ is considered as an algebraic function field on $X$ over $\CC$.

By introducing $R_\PP:=\bH^0(\PP, \cO_\PP(*\infty))=\CC[x]$ and its quotient field $\CC(x):=\cQ(R_\PP)$, $\cQ(R_X)$ is considered a finite extension of $\cQ(R_\PP)$. 
We regard $R_X$ as a finite extended ring of $R_\PP$ of rank $r$, e.g., $R_{X^\circ}^\circ=R_\PP[y]/(f_X(x,y))$ as mentioned in Subsection \ref{2sc:Covering} \cite{Kunz2004}.

For the local ring $R_{X,P}$ of $R_X$ at $P \in X$, we have the ring homomorphism,
$\varphi_{P}: R_X\to R_{X,P}$.
We note that  $R_{X, \infty}$ plays crucial roles in the Weierstrass canonical form.
We let the minimal generator $M_X =\{ r_1, r_2, \ldots, r_{m_X}\}$ of the numerical semigroup $H_X=H(X, \infty)$.
The Weierstrass curve admits a local cyclic $\fC_r=\ZZ/r\ZZ$-action at $\infty$ c.f., Subsection \ref{2ssc:MCurve}.
The genus of $X$ is denoted by $g_X$, briefly $g$ and the conductor of $H_X$ is denoted by $c_X:=c_{H_X}$; the Frobenius number $c_X-1$ is the maximal gap in $H(X, \infty)$.
We let, $\lH_X^\fc:=\ZZ\setminus H_X$.
}}
\end{remark}

\subsubsection{Projection from $X$ to $\PP$}\label{2sssc:X_to_PP}

There is the  natural projection, 
\begin{equation}
\varpi_{r}:X \to \PP, \qquad(
\varpi_{r}(x,y_{r_2}, \ldots, y_{r_{m_X}}) = x=y_r)
\label{2eq:varphi_ri}
\end{equation}
such that $\varpi_{r}(\infty) = \infty\in \PP$.

Let $\{y_{\bullet}\}:=\{y_{s}=y_{r_2}, y_{r_3}, \ldots, y_{r_{m_X}}\}$ and $\CC[x, y_{\bullet}]:=\CC[x, y_{s}=y_{r_2}, y_{r_3}, \ldots, y_{r_{m_X}}]$.

\subsection{The monomial curves and W-curves}
\label{2ssc:MCurve}

This subsection shows the monomial curves and their relation to W-curves based on \cite{KMP16, KMP19, KM2020}.

For a given W-curve $X$ with the Weierstrass semigroup $H=H_X$, and its generator $M_X=\{r=r_1, r_2, \ldots, r_{m_X}\}$, the behavior of singularities of the elements in $R_X$ at $\infty$ is described by a monomial curve $X_H^Z$.
For the numerical semigroup $H=\langle M_X\rangle$, the \emph{numerical semigroup ring} $R_H$ is defined as $R_H:=\CC[ z^{r_1}, z^{r_2}, \cdots, z^{r_{m_X}}]$.

Following a result of Herzog's \cite{Herzog70}, we recall the well-known proposition for a polynomial ring $\CC[Z]:=\CC[Z_{r_1}, Z_{r_2}, \cdots, Z_{r_{m_X}}]$.
\begin{proposition} \label{2pr:R_H}
For the $\CC$-algebra homomorphism $ \tvarphi^Z_H : \CC[Z]  \to R_H$, the kernel of $\tvarphi^Z_H$ is generated by $f^H_i = 0$ $(i=1,2,\ldots,k_X)$ of a certain binomial $f^H_i \in \CC[Z]$ and a positive integer $k_X$ $\displaystyle{\left(m_X-1\le k_X< \infty\right)}$, i.e., $\ker\tvarphi^Z_H=(f^H_1, f^H_2, \ldots, f^H_{k_X})$, and
$$
R_H \simeq \CC[Z]/ \ker\tvarphi^Z_H=:R^Z_H.
$$
\end{proposition}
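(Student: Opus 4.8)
The statement asserts that for the $\CC$-algebra homomorphism $\tvarphi^Z_H : \CC[Z] \to R_H$ sending $Z_{r_i} \mapsto z^{r_i}$, the kernel is an ideal generated by finitely many binomials $f^H_i$, with the number of generators $k_X$ bounded below by $m_X - 1$, and that $R_H \simeq \CC[Z]/\ker\tvarphi^Z_H$. This is attributed to Herzog, so the plan is really to recall the structure of the argument.

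The plan is to proceed in three stages. First, the isomorphism $R_H \simeq \CC[Z]/\ker\tvarphi^Z_H$ is immediate from the first isomorphism theorem for rings, once one observes that $\tvarphi^Z_H$ is surjective: its image contains every monomial $z^{r_i}$, hence every $z^h$ for $h \in H = \langle M_X\rangle$, and these span $R_H = \CC[z^{r_1}, \ldots, z^{r_{m_X}}]$ over $\CC$ by definition. So the content lies entirely in describing $\ker\tvarphi^Z_H$. Second, I would establish that the kernel is a \emph{binomial} ideal, i.e., generated by differences of monomials. The key observation is that $\tvarphi^Z_H$ is a homomorphism of $\ZZ$-graded (indeed $H$-graded) rings if we assign $Z_{r_i}$ the degree $r_i$: a monomial $Z^{\alpha} = \prod_i Z_{r_i}^{\alpha_i}$ maps to $z^{\langle \alpha, \br\rangle}$ where $\langle \alpha,\br\rangle = \sum_i \alpha_i r_i$. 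Thus two monomials $Z^\alpha, Z^\beta$ have the same image precisely when $\sum_i \alpha_i r_i = \sum_i \beta_i r_i$. A general element of the kernel is a $\CC$-linear combination of monomials; grouping monomials by the value of $\langle\alpha,\br\rangle$ and using that distinct degrees map to linearly independent powers $z^h$, each graded piece of a kernel element must itself lie in the kernel and be a combination of monomials of a common image. One then reduces such a homogeneous relation to a combination of the elementary binomials $Z^\alpha - Z^\beta$ with $\langle\alpha,\br\rangle = \langle\beta,\br\rangle$, showing $\ker\tvarphi^Z_H$ is generated by these binomials.

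Third, the finiteness of a generating set and the lower bound $k_X \ge m_X - 1$ require a dimension count. Finiteness follows because $\CC[Z]$ is Noetherian (Hilbert basis theorem), so every ideal, including $\ker\tvarphi^Z_H$, is finitely generated; combined with the binomial structure just established, one may extract a finite generating set consisting of binomials. For the bound $k_X \ge m_X - 1$, the argument is via Krull dimension: $R_H$ is a one-dimensional domain (it is a subring of $\CC[z]$ with fraction field of transcendence degree one over $\CC$), so $\CC[Z]/\ker\tvarphi^Z_H$ has Krull dimension $1$, whence $\ker\tvarphi^Z_H$ is a prime ideal of height $m_X - 1$ in the polynomial ring $\CC[Z]$ of dimension $m_X$. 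By Krull's height theorem an ideal of height $m_X - 1$ cannot be generated by fewer than $m_X - 1$ elements, giving $k_X \ge m_X - 1$. The upper bound $k_X < \infty$ is just the finiteness already noted.

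The main obstacle I anticipate is the careful reduction in the second stage: passing from an arbitrary homogeneous kernel element (a $\CC$-combination of monomials sharing a common $\br$-degree $h$) to an explicit expression as a $\CC[Z]$-combination of elementary binomials $Z^\alpha - Z^\beta$. This is the heart of Herzog's result and requires connecting monomials of equal $\br$-degree by chains of elementary moves, which in turn depends on the combinatorics of the numerical semigroup $H$; the lattice of relations $\{\alpha - \beta : \langle\alpha - \beta, \br\rangle = 0\}$ must be shown to be generated appropriately. Since the statement cites Herzog directly, I would invoke his theorem for this step rather than reprove the full toric/binomial-ideal machinery, and concentrate the original work on verifying the hypotheses (domain, one-dimensionality, semigroup generated by $M_X$) that make his result applicable here.
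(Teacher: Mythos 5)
Your proposal is correct and is essentially an expansion of what the paper does: the paper gives no proof at all for this proposition, simply recalling it as a known result of Herzog \cite{Herzog70}, and your outline (surjectivity plus the first isomorphism theorem, the $H$-graded/binomial-ideal argument, Noetherianity for finiteness, and Krull's height theorem for the bound $k_X \ge m_X-1$) is the standard and correct way to substantiate that citation. One minor remark: generation of the kernel by binomials is easier than you suggest, since subtracting a fixed monomial from each homogeneous kernel element already exhibits it as a $\CC$-combination of elementary binomials $Z^\alpha - Z^\beta$; the genuinely hard combinatorial content of Herzog's paper concerns minimal generating sets and the precise value of $k_X$, which the proposition does not require.
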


We call $R^Z_H=\CC[Z]/ \ker\tvarphi^Z_H$ a \emph{monomial ring}.
Sending $Z_r$ to $1/x$ and $Z_{r_i}$ to $1/y_{r_i}$, the monomial ring $R^Z_H$
determines the structure of gap sequence of $X$ at $\infty$ \cite{Herzog70, Pinkham74}.
Bresinsky showed that $k_X$ can be any finitely large number if $m_X>3$ \cite{Bresinsky}.

Let $X_H:=\Spec R_H^Z$,which we call a \emph{monomial curve}.
We also define the ring isomorphism on $R_H^Z$ induced from $\tvarphi^Z_H$, which is denoted by $\varphi^Z_H$,
\begin{equation}
\varphi^Z_H: \CC[Z]/ \ker\tvarphi^Z_H=R^Z_H \to R_H.
\label{2eq:varphi^Z_H2}
\end{equation}

Further, we let $\{Z_{\bullet}\}:=\{Z_{r_2}, Z_{r_3}, \ldots, Z_{r_{m_X}}\}$, and $\CC[Z_r, Z_{\bullet}]:=\CC[Z_r, Z_{r_2}, Z_{r_3}, \ldots, Z_{r_{m_X}}]$.
A monomial curve is an irreducible affine curve with $\GG_\fm$-action, where $\GG_\fm$ is the multiplicative group of the complex numbers; $Z_a \mapsto g^{a} Z_a$ for $g\in \GG_\fm$, and it induces the action on the monomial ring $R_H^Z$.

The following cyclic action of order $r$ plays a crucial role in this paper.

\begin{lemma} \label{2lm:fC_r_action}
The cyclic group $\fC_r$ of order $r$ acts on the monomial ring $R_H^Z$;
the action of the generator $\hzeta_r \in \fC_r$ on $Z_a$ is defined by sending $Z_a$ to $\zeta_r^{a}Z_a$, where $\zeta_r$  is a primitive $r$-th root of unity.
By letting $\fr_i^*:=(r, r_i)$, $\fr_i := r/\fr_i*$, and $\Bfr_i:=r_i/\fr_i^*$, the orbit of $Z_{r_i}$ forms $\fC_{\fr_i}$; especially for the case that $(r, r_i) = 1$, it recovers $\fC_r$.

Thus in $R_H^Z$, 
\begin{equation}
f_H^{(j)}(Z_r, Z_{r_j})= 0, \quad f_H^{(j)}:=Z_{r_j}^{\fr_j}-Z_r^{\Bfr_j},
\quad (j = 2, \ldots, m_X).
\label{2eq:f_H^j}
\end{equation}
\end{lemma}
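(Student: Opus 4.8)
The plan is to realize the $\fC_r$-action as the restriction of the natural scaling automorphism of $\CC[z]$ and to transport it through the isomorphism $\varphi^Z_H$ of Proposition \ref{2pr:R_H}. First I would let $\sigma\colon \CC[z]\to\CC[z]$ be the $\CC$-algebra automorphism determined by $z\mapsto \zeta_r z$, where $\zeta_r$ is a primitive $r$-th root of unity. Since $\sigma(z^{r_i})=\zeta_r^{r_i}z^{r_i}\in R_H$ for every generator, and $\sigma^{-1}$ likewise preserves $R_H=\CC[z^{r_1},\ldots,z^{r_{m_X}}]$, the map $\sigma$ restricts to an automorphism of $R_H$. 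Its order is exactly $r$, since $\sigma^k=\mathrm{id}_{R_H}$ holds if and only if $r\mid k r_i$ for all $i$, equivalently $r\mid k\,(r_1,\ldots,r_{m_X})=k$, where one uses that the generators of a numerical semigroup have greatest common divisor $1$ (as its complement in $\NN_0$ is finite). Hence $\sigma$ generates a copy of $\fC_r$ in $\Aut(R_H)$.

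Then I would pull this action back along $\varphi^Z_H\colon R^Z_H\to R_H$ of (\ref{2eq:varphi^Z_H2}). Because $\varphi^Z_H(Z_{r_i})=z^{r_i}$ and $\sigma(z^{r_i})=\zeta_r^{r_i}z^{r_i}$, the conjugated automorphism $\hzeta_r:=(\varphi^Z_H)^{-1}\circ\sigma\circ\varphi^Z_H$ of $R^Z_H$ is precisely the one sending $Z_a\mapsto \zeta_r^{a}Z_a$; well-definedness on $R^Z_H=\CC[Z]/\ker\tvarphi^Z_H$ is automatic, being obtained by conjugating a genuine automorphism. (Alternatively, working directly on $\CC[Z]$, the map $Z_{r_i}\mapsto\zeta_r^{r_i}Z_{r_i}$ preserves $\ker\tvarphi^Z_H$ because, by Proposition \ref{2pr:R_H}, that ideal is generated by binomials $\prod_i Z_{r_i}^{a_i}-\prod_i Z_{r_i}^{b_i}$ with $\sum_i a_i r_i=\sum_i b_i r_i$, each of which is merely rescaled by the common factor $\zeta_r^{\sum_i a_i r_i}$.)

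For the orbit statement I would compute that $\hzeta_r^{k}$ acts on $Z_{r_i}$ as multiplication by $\zeta_r^{k r_i}$, so the orbit of $Z_{r_i}$ has cardinality equal to the order of $\zeta_r^{r_i}$ in the group of $r$-th roots of unity, namely $r/(r,r_i)=\fr_i$; the effective cyclic group acting on $Z_{r_i}$ is therefore $\fC_{\fr_i}$, which is all of $\fC_r$ precisely when $(r,r_i)=1$. Finally, for the displayed relations I would verify directly that $f_H^{(j)}=Z_{r_j}^{\fr_j}-Z_r^{\Bfr_j}$ lies in $\ker\tvarphi^Z_H$: since $r_j\fr_j=r\,\Bfr_j$ (both equal $r r_j/\fr_j^*$), one gets $\tvarphi^Z_H(Z_{r_j}^{\fr_j})=z^{r_j\fr_j}=z^{r\Bfr_j}=\tvarphi^Z_H(Z_r^{\Bfr_j})$, whence $f_H^{(j)}(Z_r,Z_{r_j})=0$ in $R^Z_H$.

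The only genuinely substantive input is the binomial (toric) structure of $\ker\tvarphi^Z_H$, which is exactly the content of Proposition \ref{2pr:R_H} and of Herzog's theorem; granting that, every step above is a short verification, so I do not expect a serious obstacle. The one point deserving care is that the action has order \emph{exactly} $r$ rather than a proper divisor, which is where the coprimality $(r_1,\ldots,r_{m_X})=1$ of the semigroup generators is essential, and I would make sure to invoke it explicitly.
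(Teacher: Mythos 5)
Your proof is correct. The paper states this lemma without proof (it is treated as an evident consequence of the binomial/toric structure of $\ker\tvarphi^Z_H$ from Proposition \ref{2pr:R_H}), and your argument — realizing the action as the scaling $z\mapsto\zeta_r z$ on $R_H$ transported through $\varphi^Z_H$, using $\gcd(r_1,\ldots,r_{m_X})=1$ to get order exactly $r$, and checking $r_j\fr_j=r\Bfr_j$ for the relation $f_H^{(j)}=0$ — is exactly the verification the authors leave implicit.
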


For a ring $R$, let its quotient field be denoted by $\cQ(R)$.
Further we obviously have the identity in $\cQ(R_H^Z \otimes_{\CC[Z_r]} R_H^Z)$,
\begin{equation}
   p_H^{(j)}(Z_{r_j}, Z_{r_j}'):=\frac{1}{\fr_j Z_{r_j}^{\fr_j-1}}
          \frac{Z_{r_j}^{\fr_j}-Z_{r_j}^{\prime\, \fr_j}}
               {Z_{r_j}-Z_{r_j}^{\prime}}=\left\{
\begin{array}{rcl}
1 & \mbox{  for  } &Z_{r_j}= Z_{r_j}',\\
0 & \mbox{  for  } & Z_{r_j}\neq Z_{r_j}',
\end{array}\right.
\label{2eq:p_H^j}
\end{equation}
as a $\CC[Z_r]$-module.

\bigskip
Corresponding to the standard basis of $H_X$ in Definition \ref{2df:NSG1}, we find the monic monomial $\fZ_{\fe_i} \in \CC[Z]$ such that $\varphi^Z_H(\fZ_{\fe_i}) = z^{\fe_i}$, and the standard basis $\{\fZ_{\fe_i}\ | i \in \ZZ_r\}$; $\fZ_{\fe_0}=1$. 

\begin{lemma}\label{2lm:Z_standardbasis}
The $\CC[Z_r]$-module $\CC[Z]$ is given by
$$
\CC[Z]=\CC[Z_r]\oplus \CC[Z_r]\fZ_{\fe_1}\oplus \cdots
\oplus \CC[Z_r]\fZ_{\fe_{r-1}},
$$
and thus $\fZ_H:=\{\fZ_{\fe_0}, \fZ_{\fe_1}, \ldots, \fZ_{\fe_{r-1}}\}$ is the basis of the $\CC[Z_r]$-module $\CC[Z]$.
Then there is a monomial $b_{i j k}\in \CC[Z_r]$ such that
$$
\fZ_{\fe_i}\fZ_{\fe_j}
=\sum_{k\in \ZZ_r}b_{i j k}\fZ_{\fe_k}.
$$
\end{lemma}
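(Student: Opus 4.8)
The plan is to prove Lemma \ref{2lm:Z_standardbasis} by combining the additive decomposition of the numerical semigroup from Lemma \ref{2lm:NSG1}(1)(a) with the structure of the monomial ring $R_H^Z$ established in Proposition \ref{2pr:R_H}. First I would show that $\fZ_H=\{\fZ_{\fe_0},\ldots,\fZ_{\fe_{r-1}}\}$ spans $\CC[Z]$ as a $\CC[Z_r]$-module. Since $\varphi^Z_H:\CC[Z]/\ker\tvarphi^Z_H\to R_H$ is a ring isomorphism and $R_H=\CC[z^{r_1},\ldots,z^{r_{m_X}}]$ has $\CC$-basis $\{z^h\mid h\in H\}$, the decomposition
$$
H=\bigoplus_{i\in\ZZ_r}[\fe_i]_r
$$
from Lemma \ref{2lm:NSG1}(1)(a) tells us that every $h\in H$ can be written uniquely as $h=\fe_i+kr$ with $i\in\ZZ_r$ and $k\in\NN_0$. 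Pulling back along $\varphi^Z_H$, every monomial image $z^h=z^{\fe_i}(z^r)^k$ corresponds to $\fZ_{\fe_i}Z_r^k$ modulo $\ker\tvarphi^Z_H$, so these products span $R^Z_H$ over $\CC[Z_r]$; hence $\fZ_H$ generates $\CC[Z]/\ker\tvarphi^Z_H\cong R^Z_H$.

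Next I would establish linear independence over $\CC[Z_r]$. Suppose $\sum_{i\in\ZZ_r}c_i(Z_r)\fZ_{\fe_i}=0$ in $R^Z_H$ with $c_i\in\CC[Z_r]$. Applying $\varphi^Z_H$ gives $\sum_i c_i(z^r)z^{\fe_i}=0$ in $R_H\subset\CC[z]$. The key point is that the exponents appearing in distinct summands lie in distinct residue classes modulo $r$: by Lemma \ref{2lm:NSG1}(2), each term $c_i(z^r)z^{\fe_i}$ involves only exponents $\equiv\fe_i\equiv i\pmod r$. Therefore the summands cannot cancel against one another, forcing each $c_i(z^r)z^{\fe_i}=0$ and hence each $c_i=0$. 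This residue-class separation is exactly the structural content of Lemma \ref{2lm:NSG1}, so the independence is essentially forced once the spanning is in place.

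Finally, for the multiplication constants $b_{ijk}$: given $i,j\in\ZZ_r$, the product $\fZ_{\fe_i}\fZ_{\fe_j}\in\CC[Z]$, and under $\varphi^Z_H$ it maps to $z^{\fe_i+\fe_j}\in R_H$. Since $\fe_i+\fe_j\in H$, write $\fe_i+\fe_j\equiv k\pmod r$ for a unique $k\in\ZZ_r$; then $\fe_i+\fe_j=\fe_k+mr$ for some $m\in\NN_0$ because $\fe_k$ is the \emph{minimal} element of $H$ in its residue class by Definition \ref{2df:NSG1}. Expressing $\fZ_{\fe_i}\fZ_{\fe_j}$ in the basis $\fZ_H$ via the spanning result just proved yields a single term, so $b_{ijk}=Z_r^{m}$ is a monomial in $\CC[Z_r]$ and $b_{iji'}=0$ for $i'\neq k$. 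I would remark that this also shows the $b_{ijk}$ are honest monomials, not merely polynomials, reflecting the $\GG_\fm$-homogeneity of $R_H^Z$.

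The main obstacle I anticipate is being fully rigorous about transferring statements between $\CC[Z]/\ker\tvarphi^Z_H$ and $R_H$: one must take care that the isomorphism $\varphi^Z_H$ and the choice of monic monomial representatives $\fZ_{\fe_i}$ are well-defined, and that ``uniqueness of the decomposition $h=\fe_i+kr$'' in Lemma \ref{2lm:NSG1}(1)(a) really does translate into the freeness of $\CC[Z]$ as a $\CC[Z_r]$-module. Everything else is a direct unwinding of the semigroup decomposition; the genuine work is packaged into Lemma \ref{2lm:NSG1}, which I am entitled to assume.
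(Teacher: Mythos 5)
Your proof is correct. The paper itself gives no proof of this lemma (it is stated as recalled background, with the real content delegated to Lemma \ref{2lm:NSG1} and the companion paper), so there is no authorial argument to compare against; the route you take — spanning from the decomposition $H=\bigoplus_{i\in\ZZ_r}[\fe_i]_r$, independence from the separation of exponents into distinct residue classes modulo $r$, and the single-term product formula from the minimality of $\fe_k$ in its residue class — is the natural one and is exactly what the surrounding text presupposes. Two small points of care. First, the statement as printed is literally false for the polynomial ring $\CC[Z]$ itself when $m_X\ge 2$ (for $H=\langle 2,3\rangle$ the monomial $Z_3^2$ does not lie in $\CC[Z_2]\oplus\CC[Z_2]Z_3$); it holds only in the quotient $R_H^Z=\CC[Z]/\ker\tvarphi^Z_H$, which is where you in fact carry out the argument — you should make that reading explicit rather than leave it implicit in the phrase ``hence $\fZ_H$ generates $\CC[Z]/\ker\tvarphi^Z_H$.'' Second, your appeal to ``$\fe_i\equiv i\pmod r$'' conflates $\fe_i$ with $\tfe_i$: after the reordering in Definition \ref{2df:NSG1}, the subscript $i$ of $\fe_i$ need not equal its residue class; all you need, and all that is true, is that $\{\fe_i\}_{i\in\ZZ_r}$ represents the $r$ distinct residue classes modulo $r$, which suffices for the cancellation argument. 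Neither point affects the substance of the proof.
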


\begin{lemma}\label{2lm:p_HZ_r}
By defining an element in $\cQ(R_H^Z \otimes_{\CC[Z_r]} R_H^Z)$ by
$
p_H(Z_r, Z_\bullet, Z_\bullet'):=
\displaystyle{\prod_{j=2}^{m_X}}
p_H^{(j)}(Z_{r_j}, Z_{r_j}'),
$
we have the identity
\begin{equation}
   p_H(Z_r, Z_\bullet, Z_\bullet')=\left\{
\begin{array}{rcl}
1 & \mbox{  for  } &Z_\bullet= Z_\bullet',\\
0 & \mbox{  for  } & Z_\bullet\neq Z_\bullet',
\end{array}\right.
\label{2eq:tp_H}
\end{equation}
as $\CC[Z_r]$-module properties and 
\begin{equation}
\varphi_Z^H(\tp_H(Z_r, Z_\bullet, Z_\bullet'))
= \frac{1}{r}\sum_{i=0}^r \frac{z^{\prime\,\fe_i}}{z^{\fe_i}}.
\label{2eq:varphi_tp_H}
\end{equation}
\end{lemma}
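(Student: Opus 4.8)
The plan is to treat the two displayed identities separately, deducing the first from the already-established per-factor formula and reducing the second to a fibrewise (Galois) computation.

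For \eqref{2eq:tp_H} there is essentially nothing to do beyond unwinding the product. Since $p_H=\prod_{j=2}^{m_X}p_H^{(j)}$ and, by \eqref{2eq:p_H^j}, each factor $p_H^{(j)}$ equals $1$ when $Z_{r_j}=Z_{r_j}'$ and $0$ otherwise, the product is $1$ exactly when $Z_{r_j}=Z_{r_j}'$ for all $j$, i.e. $Z_\bullet=Z_\bullet'$, and vanishes as soon as a single coordinate differs, i.e. $Z_\bullet\neq Z_\bullet'$.

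For \eqref{2eq:varphi_tp_H} I would first rewrite each factor as a geometric sum: using $Z_{r_j}^{\fr_j}-Z_{r_j}'^{\fr_j}=(Z_{r_j}-Z_{r_j}')\sum_{a=0}^{\fr_j-1}Z_{r_j}^{\fr_j-1-a}Z_{r_j}'^{a}$ gives $p_H^{(j)}=\frac{1}{\fr_j}\sum_{a=0}^{\fr_j-1}(Z_{r_j}'/Z_{r_j})^{a}$. Applying $\varphi^Z_H$ to each tensor factor, with $\varphi^Z_H(Z_{r_j})=z^{r_j}$ and $\varphi^Z_H(Z_{r_j}')=z'^{r_j}$, yields $\varphi^Z_H(p_H^{(j)})=\frac{1}{\fr_j}\sum_{a=0}^{\fr_j-1}(z'/z)^{a r_j}$. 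Because the tensor product is taken over $\CC[Z_r]$, whose image is $\CC[z^{r}]$, in the target we have $z'^{r}=z^{r}$, so $\omega:=z'/z$ is an $r$-th root of unity and each factor depends on $\omega$ alone.

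The key structural point I would use is that $\cQ(R_H^Z\otimes_{\CC[Z_r]}R_H^Z)\cong\cQ(R_H\otimes_{\CC[z^{r}]}R_H)$ is an \'etale $\CC(z^{r})$-algebra that splits as the product $\prod_{\zeta}\CC(z)$ over the $r$ sheets $z'=\zeta z$, $\zeta^{r}=1$, of the Galois cover $\CC(z)/\CC(z^{r})$ with group $\fC_r$. Hence equality of two elements may be verified sheet by sheet. On the sheet $z'=\zeta z$, set $\eta:=\zeta^{r_j}$; then $\eta^{\fr_j}=\zeta^{r_j\fr_j}=\zeta^{\Bfr_j r}=1$ by Lemma \ref{2lm:fC_r_action}, so $\frac{1}{\fr_j}\sum_{a=0}^{\fr_j-1}\eta^{a}=\delta_{\eta,1}=\delta_{\zeta^{r_j},1}$, and therefore $\varphi^Z_H(p_H)|_{z'=\zeta z}=\prod_{j=2}^{m_X}\delta_{\zeta^{r_j},1}$. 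Since $\zeta^{r}=1$ already and $\gcd(r,r_2,\dots,r_{m_X})=1$ for the numerical semigroup $H$, this product equals $\delta_{\zeta,1}$. On the other hand $\frac{1}{r}\sum_{i\in\ZZ_r}(z'/z)^{\fe_i}|_{z'=\zeta z}=\frac{1}{r}\sum_{i\in\ZZ_r}\zeta^{\fe_i}$, and because $\{\fe_i \bmod r : i\in\ZZ_r\}=\ZZ/r\ZZ$ by Lemma \ref{2lm:NSG1}, this geometric sum over all $r$-th roots equals $\delta_{\zeta,1}$ as well. The two sides thus agree on every sheet, which proves \eqref{2eq:varphi_tp_H}.

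The only genuine obstacle I anticipate is justifying the fibrewise reduction, i.e. identifying the total quotient ring of $R_H^Z\otimes_{\CC[Z_r]}R_H^Z$ with $\prod_{\zeta}\CC(z)$ via the Galois action $z\mapsto\zeta_r z$; once this is in place, both identities reduce to the elementary geometric-sum evaluations above, with the collapse $\prod_{j\ge 2}\delta_{\zeta^{r_j},1}=\delta_{\zeta,1}$ resting only on the coprimality of the semigroup generators.
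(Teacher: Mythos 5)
Your proof is correct. The paper itself states this lemma without proof, treating \eqref{2eq:tp_H} as an immediate consequence of the factorwise identity \eqref{2eq:p_H^j} (exactly your first paragraph) and deferring the trace formula \eqref{2eq:varphi_tp_H} to the companion paper \cite{KMP2022a}, where it appears as part of Lemma \ref{2lm:h_RXe}~(4) via the expression $\varphi_H^Z(\tlh_H)=z^{d_h}\sum_i z'^{\fe_i}z^{-\fe_i}$ divided by $\varphi_H^Z(\tlh_H(Z_r,Z_\bullet,Z_\bullet))=rz^{d_h}$. Your derivation is therefore more self-contained than what is on the page: the geometric-sum rewriting $p_H^{(j)}=\frac{1}{\fr_j}\sum_{a=0}^{\fr_j-1}(Z'_{r_j}/Z_{r_j})^a$ followed by the splitting of $\cQ(R_H\otimes_{\CC[z^r]}R_H)$ into the $r$ sheets $z'=\zeta z$ of the cyclic cover is a standard and valid reduction (the extension $\CC(z)/\CC(z^r)$ is Galois with group $\fC_r$, so the total quotient ring is \'etale and splits as $\prod_\zeta\CC(z)$, and equality may be checked sheetwise). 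The two key arithmetic inputs you invoke are exactly the right ones: $r_j\fr_j=r\Bfr_j$ from Lemma \ref{2lm:fC_r_action} to see that $\zeta^{r_j}$ is an $\fr_j$-th root of unity, the coprimality of the generators of $H$ (finite complement in $\NN_0$) to collapse $\prod_j\delta_{\zeta^{r_j},1}$ to $\delta_{\zeta,1}$, and Lemma \ref{2lm:NSG1}~(2) to evaluate $\frac1r\sum_{i\in\ZZ_r}\zeta^{\fe_i}=\delta_{\zeta,1}$. Note in passing that you have implicitly corrected two typographical slips in the statement: the sum in \eqref{2eq:varphi_tp_H} should run over $i\in\ZZ_r$ (not $i=0,\dots,r$), and $\tp_H$ there should read $p_H$.
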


\bigskip

To construct our curve $X$ from $R_H$ or $\Spec R_H$, we could follow Pinkham's strategy \cite{Pinkham74} with an irreducible curve singularity with $\GG_\fm$ action, though we will not mention it in this paper.
Pinkham's investigations provide the following proposition \cite{Pinkham74}\cite[Proposition 3.7]{KMP2022a}:
\begin{proposition} \label{2pr:R_H^Z}
For a given W-curve $X$ and its associated monomial ring,
$R_H^Z = \CC[Z]/ (f^H_1, f^H_2, \ldots,$ $ f^H_{k_X})$, there are a surjective ring-homomorphism \cite[p.80]{ADGS2016}
$$
\varphi^X_H: R_X \to R_H^Z
$$
such that $R_X/\fm_{\bA}$ is isomorphic to $R_H^Z$, where $\fm_{\bA}$ is the maximal ideal $(\lambda_{ij})$ in coefficient ring $\CC[\lambda_{ij}]$, and $\varphi^X_H(y_{r_i})=Z_{r_i}$, and a polynomial $\{f^{X}_i\}_{i = 1, \ldots, k_X}\in \CC[x,y_\bullet]$ satisfying
\begin{enumerate}

\item $\varphi^X_H(f^X_i)(=\varphi^X_H(f^X_i$ modulo $\fm_{\bA}))=f^H_i$ for $(i=1,\ldots,k_X)$, 

\item 
the affine part of $R_X$ is given by $R_X=\CC[x,y_{\bullet}]/(f^X_1, f^X_2, \ldots, f^X_{k_X})$,  and 

\item the rank of the matrix
$\displaystyle{
\left(
\frac{\partial f^X_i}{\partial y_{r_j}}\right)_{
i=1, 2, \ldots, k_X, j = 1, 2, \ldots, m_X}}$ is $m_X-1$ for every point $P$ in $X$.

\end{enumerate}
\end{proposition}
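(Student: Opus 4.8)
The plan is to realize $R_X$ as a negative-weight deformation of the monomial ring $R_H^Z$ in the sense of Pinkham's deformation theory of singularities carrying a good $\GG_\fm$-action \cite{Pinkham74}, and to read off the three assertions from the structure of that deformation. The monomial curve $X_H = \Spec R_H^Z$ is a graded affine curve singularity: assigning to each coordinate its pole order at $\infty$ as weight, so that $\wt(x) = r$ and $\wt(y_{r_i}) = r_i$, the binomial generators $f^H_i$ of $\ker \tvarphi^Z_H$ produced by Proposition \ref{2pr:R_H} are weighted-homogeneous, and the $\GG_\fm$-action (restricting to the cyclic $\fC_r$-action of Lemma \ref{2lm:fC_r_action}) acts with exactly these weights. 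I would fix the generating set $\{f^H_1, \ldots, f^H_{k_X}\}$ once and for all, allowing $k_X$ to exceed $m_X - 1$ as permitted by Bresinsky's result \cite{Bresinsky}.

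First I would lift each binomial. By the very shape of the Weierstrass canonical form in Proposition \ref{2pr:WCF}, whose coefficients $\lambda_{i,j}$ record precisely the admissible lower-weight corrections, each $f^H_i$ extends to a polynomial $f^X_i \in \CC[x, y_\bullet]$ obtained by adding to $f^H_i$ a sum of monomials of strictly smaller pole weight, with coefficients in the parameter ring $\CC[\lambda_{ij}]$. Reduction modulo the maximal ideal $\fm_{\bA} = (\lambda_{ij})$ annihilates every correction term and returns $f^X_i \mapsto f^H_i$; this is assertion 1 and simultaneously yields the central-fibre isomorphism $R_X/\fm_{\bA} \cong R_H^Z$, while $\varphi^X_H(y_{r_i}) = Z_{r_i}$ holds by construction of $\varphi^X_H$.

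Next, for assertion 2 I would invoke flatness of the family. Pinkham's theorem guarantees that the negative-weight deformations of the monomial singularity are unobstructed and that the total space is flat over the parameter base; hence the specialization of the equations $f^X_i$ defines a curve whose ideal is still generated by the $k_X$ lifted relations, even though these need not form a minimal set. Combined with the normalization statement recalled in Remark \ref{2rm:g_Xc_X}, this identifies the affine coordinate ring with $\CC[x, y_\bullet]/(f^X_1, \ldots, f^X_{k_X}) = R_X$. Assertion 3 is then the Jacobian criterion: $X$ is a smooth curve, embedded by $(x = y_r, y_{r_2}, \ldots, y_{r_{m_X}})$ in affine $m_X$-space, so it has codimension $m_X - 1$, and smoothness at every $P \in X$ forces $\rank\bigl(\partial f^X_i / \partial y_{r_j}\bigr)$ to equal that codimension $m_X - 1$.

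The hard part will be controlling the deformation globally rather than merely formally: Pinkham's construction is a priori a statement about the versal deformation of the germ at $\infty$, so one must certify that the particular family cut out by the Weierstrass canonical form is an honest negative-weight (smoothing) deformation degenerating to $X_H$ under the $\GG_\fm$-flow, and that flatness persists so that exactly the $k_X$ equations $f^X_i$ suffice to define $R_X$ with no extra relations appearing away from the central fibre. This is precisely where the explicit weight bookkeeping of Proposition \ref{2pr:WCF} and the prior analysis in \cite[Proposition 3.7]{KMP2022a} do the essential work, and I would lean on that reference to close the gap.
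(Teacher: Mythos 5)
Your outline follows essentially the same route as the paper, which in fact gives no proof of this proposition at all: it merely attributes the statement to Pinkham's negative-weight deformation theory \cite{Pinkham74} and to \cite[Proposition 3.7]{KMP2022a}, and your sketch (weighted-homogeneous binomial generators, lifting by lower-weight corrections parametrized by the $\lambda_{ij}$, flatness of the family to control the ideal, and the Jacobian criterion for the rank-$(m_X-1)$ statement) is a faithful expansion of exactly that cited strategy. Your closing deferral of the global flatness issue to \cite{Pinkham74} and \cite{KMP2022a} is therefore no weaker than what the paper itself does.
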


\begin{definition} \label{2df:wt_RX}
\begin{enumerate}

\item
Recalling Lemma \ref{2lm:rs_iris}, we define {\emph{arithmetic local parameter}}  at $\infty$ by
$\displaystyle{
t = \frac{x^{i_r}}{y^{i_s}}
}$ \cite{Onishi18}.

\item
The degree at $\cQ(R_{X,\infty})$ as the order of the singularity with respect to $t$ is naturally defined by
\begin{equation}
\wt=\deg_\infty: \cQ(R_X) \to \ZZ,
\label{2eq:wt_RX}
\end{equation}
which is called \emph{Sato-Weierstrass weight} \cite{Wei67}.

\item In the ring of the formal power series $\CC[[t_1, \ldots, t_\ell]]$, 
we define the symbol $d_{>n}(t_1, \ldots, t_\ell)$ so that it belongs to the ideal,
$$
d_{>n}(t_1, \ldots, t_\ell)\in \{\sum a_{i_1,\ldots, i_\ell} t_1^{i_1} 
\cdots t_\ell^{i_\ell}\ |\ a_{i_1,\ldots, i_\ell}=0 
\mbox{ for } i_1+\cdots+i_\ell \leq n \}.
$$
\end{enumerate}

\end{definition}

The weight of $y_{r_i}$ is given by
$$
\wt(y_{r_i}) = -r_i, \quad (i=1,2,\ldots,{m_X}),
\qquad
y_{r_i}=\frac{1}{t^{r_i}}(1+d_{>0}(t)).
$$

\begin{lemma}\label{2lm:RX_Cphi}
We have the decomposition of $R_X$ as a $\CC$-vector space,
\begin{equation}
R_X = \bigoplus_{i=0} \CC \phi_i,
\label{2eq:RXphi}
\end{equation}
where $\phi_i$ is a monomial in $R_X$ satisfying the inequalities $-\wt\,\phi_i < -\wt\,\phi_j$ for $i<j$, i.e., $\phi_0 = 1$, $\phi_1 = x$, $\ldots$.
\end{lemma}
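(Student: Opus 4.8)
The plan is to exhibit, for every non-gap $n \in H_X$, an explicit monomial in the ring generators whose pole order at $\infty$ is exactly $n$, and then to check that these monomials form a $\CC$-basis by a leading-term argument for the valuation $\wt$. Recall from Proposition~\ref{2pr:R_H^Z} that $R_X = \CC[x,y_\bullet]/(f^X_1,\ldots,f^X_{k_X})$, so every element of $R_X$ is a $\CC$-linear combination of monomials in $x, y_{r_2},\ldots,y_{r_{m_X}}$, and that $\wt = \deg_\infty$ is the valuation at $\infty$ (Definition~\ref{2df:wt_RX}) with $\wt(y_{r_i}) = -r_i$ and $y_{r_i} = t^{-r_i}(1 + d_{>0}(t))$, where $x = y_r$. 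Since $\wt$ is additive on products, the monomial $x^{a_1}\prod_{i=2}^{m_X} y_{r_i}^{a_i}$ has $\wt = -\sum_i a_i r_i$, and the displayed expansions show its leading coefficient in $t$ equals $1$; hence it has a pole of order exactly $\sum_i a_i r_i$, with no top-order cancellation.

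First I would use $H_X = \langle r_1,\ldots,r_{m_X}\rangle$ (Remark~\ref{2rm:g_Xc_X}) to pick, for each $n \in H_X$, one representation $n = \sum_i a_i r_i$ and set $\phi_{(n)} := x^{a_1}\prod_{i\geq 2} y_{r_i}^{a_i}$, a monomial in $R_X$ with $-\wt\,\phi_{(n)} = n$. Indexing by the ordering $N(0) < N(1) < \cdots$ of $H_X$ and writing $\phi_i := \phi_{(N(i))}$, the inequalities $N(i) < N(i+1)$ give $-\wt\,\phi_i < -\wt\,\phi_{i+1}$, with $\phi_0 = 1$ and $\phi_1 = x$ since $N(0) = 0$ and $N(1) = r$.

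Next I would establish the basis property in two steps. For linear independence, in any finite relation $\sum_i c_i \phi_i = 0$ the summands have pairwise distinct weights, so the term of least $\wt$ (largest pole) among those with $c_i \neq 0$ contributes the unique lowest power of $t$, which therefore cannot cancel; hence all $c_i = 0$. For spanning I would argue by descending induction on the pole order: any nonzero $f \in R_X$ has $-\wt(f) = m \in H_X$ by the very definition of the Weierstrass semigroup, so subtracting the scalar multiple of $\phi_{(m)}$ that matches the $t^{-m}$ coefficient of $f$ strictly lowers the pole order, and the process terminates in the constants $\CC\phi_0$ since pole orders are non-negative integers. Equivalently, $\dim_\CC \bH^0(X,\cO_X(m\infty)) = \#(H_X \cap [0,m])$ equals the number of $\phi_i$ with $-\wt\,\phi_i \leq m$, so the independent family $\{\phi_i\}$ is automatically spanning.

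The only place where anything beyond formal book-keeping enters is the assertion that $\phi_{(n)}$ really attains pole order $n$ with nonzero leading coefficient, i.e. that the top-order terms of the factors do not cancel; this is precisely what the normalization $y_{r_i} = t^{-r_i}(1 + d_{>0}(t))$ secures, since it yields $\phi_{(n)} = t^{-n}(1 + d_{>0}(t))$. The non-uniqueness of the representation $n = \sum_i a_i r_i$ is harmless, as any choice produces an admissible monomial and hence a valid basis; so I expect no genuine obstacle beyond correctly invoking this valuation structure.
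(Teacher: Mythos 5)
Your proof is correct, and it is exactly the standard valuation/leading-term argument that the paper implicitly relies on: the lemma is stated there without proof, being an immediate consequence of the facts that the pole orders at $\infty$ of elements of $R_X$ form the semigroup $H_X=\langle r_1,\ldots,r_{m_X}\rangle$ and that each generator satisfies $y_{r_i}=t^{-r_i}(1+d_{>0}(t))$, so monomials realize every non-gap with unit leading coefficient. Your independence and spanning steps fill in the omitted details correctly, so there is nothing to add.
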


Further by assigning a certain weight on each coefficient $\lambda_{i,j}$ in (\ref{2eq:WCF1a}) so that (\ref{2eq:WCF1a}) is a homogeneous equation of weight $rs$, we also define another weight, 
\begin{equation}
\wt_\lambda: R_X \to \ZZ.
\label{2eq:wt_lambda}
\end{equation}

\begin{definition}\label{2df:t_SR_N}
We define $S_X:=\{\phi_i\ |\ i = 0,1,2 \ldots\}$ by the basis of $R_X$ as in (\ref{2eq:RXphi}).
\end{definition}

Then $N(i)=-\wt(\phi_i)$, for $\{N(i)\ |\ i \in \NN_0\}=H_X$.

\begin{lemma}\label{2lm:varphiinf}
Let $t$ be the arithmetic local parameter at $\infty$ of $R_X$.
\begin{enumerate}

\item By the isomorphism $\displaystyle{\varphi_{\mathrm{inv}}:z \mapsto \frac{1}{t}}$, $\varphi_{\mathrm{inv}}(R_H)(\cong R_H^Z)$ is a subring of $\displaystyle{\CC\left[\frac{1}{t}\right]}$; for $g(z)\in R_H$, $\displaystyle{g\left(\frac{1}{t}\right)\in \CC\left[\frac{1}{t}\right]}$.

\item There is a surjection of ring $\varphi_\infty: R_{X}\to R_H(\cong R_H^Z)$; for $f\in R_X$, there is $g(z)\in R_H$ such that
$$
       (f)_\infty = g
\left(\frac{1}{t}\right)(1+d_{>0}(t)) \in R_{X,\infty},
$$
where $(f)_\infty$ means the germ at $\infty$ or $(f)_\infty \in R_{X, \infty}$ via $\varphi_H^X$ in Proposition \ref{2pr:R_H^Z}.
It induces the surjection $R_{X,\infty}\to R_H(\cong R_H^Z)$.

\end{enumerate}
\end{lemma}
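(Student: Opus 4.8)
The plan is to handle the two assertions separately: the first is essentially a change of variable, while the second rests on the weight expansion at $\infty$ together with the ring surjection already produced in Proposition \ref{2pr:R_H^Z}.

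For part 1, I would simply note that $\varphi_{\mathrm{inv}}\colon z\mapsto 1/t$ is a ring isomorphism of the one-variable polynomial ring $\CC[z]$ onto $\CC[1/t]$. Since $R_H=\CC[z^{r_1},\ldots,z^{r_{m_X}}]$ is a subring of $\CC[z]$, its image $\varphi_{\mathrm{inv}}(R_H)=\CC[t^{-r_1},\ldots,t^{-r_{m_X}}]$ is a subring of $\CC[1/t]$ isomorphic to $R_H\cong R_H^Z$, and for $g(z)\in R_H\subset\CC[z]$ one immediately gets $g(1/t)\in\CC[1/t]$. No further work is needed.

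For part 2, I would take $\varphi_\infty:=\varphi_H^Z\circ\varphi_H^X$, composing the ring surjection of Proposition \ref{2pr:R_H^Z} with the isomorphism (\ref{2eq:varphi^Z_H2}), so that $\varphi_\infty\colon R_X\to R_H$ is a ring surjection with $\varphi_\infty(y_{r_i})=z^{r_i}$; on the $\CC$-basis of Lemma \ref{2lm:RX_Cphi} this reads $\varphi_\infty(\phi_i)=z^{N(i)}$, because $\phi_i$ is a monomial in the $y_{r_j}$ of weight $-N(i)$. Surjectivity is then transparent, since $N\colon\NN_0\to H_X$ is a bijection and so $\{z^{N(i)}\}$ is a $\CC$-basis of $R_H$. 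The germ formula is where the local parameter enters: starting from $y_{r_i}=t^{-r_i}(1+d_{>0}(t))$, I would first verify it on a single monomial $\phi_i=\prod_j y_{r_j}^{a_j}$, where taking germs multiplicatively gives $(\phi_i)_\infty=t^{-N(i)}(1+d_{>0}(t))$ with $N(i)=\sum_j r_j a_j=-\wt\phi_i$, while $\varphi_\infty(\phi_i)(1/t)=t^{-N(i)}$. For a general $f=\sum_i c_i\phi_i$ with largest occurring index $k_0$, so that $N(k_0)=-\wt f$, the terms with $N(i)<N(k_0)$ produce strictly positive powers of $t$ after factoring out $t^{-N(k_0)}$; hence $(f)_\infty$ and $g(1/t):=\varphi_\infty(f)(1/t)$ share the leading term $c_{k_0}t^{-N(k_0)}$, and their ratio in $\CC((t))$ is a unit of the form $1+d_{>0}(t)$. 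This gives $(f)_\infty=g(1/t)(1+d_{>0}(t))$ with $g=\varphi_\infty(f)\in R_H$, and passing to germs yields the induced surjection $R_{X,\infty}\to R_H$.

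The hard part will be the compatibility of the ring structure with this leading-term description. On the actual deformed W-curve one has $\phi_i\phi_j=\phi_{k_0}+(\text{strictly lower weight})$, where the lower-weight corrections are controlled by the coefficients $\lambda_{ij}$, so the naive linear prescription $\phi_i\mapsto z^{N(i)}$ is not by itself multiplicative. What must be checked is that precisely these strictly-lower-weight corrections are the ones annihilated by $\varphi_H^X$ (reduction modulo $\fm_{\blA}$), so that $\varphi_\infty$ is genuinely a ring homomorphism while still reproducing the top-weight part of the $t$-expansion. This is exactly the weight-homogeneity built into the Weierstrass canonical form of Proposition \ref{2pr:WCF}, whose top-weight part is the monomial ring $R_H^Z$: invoking Proposition \ref{2pr:R_H^Z} supplies the homomorphism property, and the weight grading guarantees that only strictly-lower-order terms are discarded, so that the germ formula and the multiplicativity of $\varphi_\infty$ are consistent.
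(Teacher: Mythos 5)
Your proposal is correct and follows essentially the same route as the paper, whose entire proof is the one line ``by letting $g=\varphi_H^Z\circ\varphi_H^X(f)$, the existence of $g$ is obvious''; you simply spell out the details (the change of variable $z\mapsto 1/t$, the leading-term computation on the monomial basis $\{\phi_i\}$, and the fact that multiplicativity is inherited from Proposition \ref{2pr:R_H^Z} rather than imposed termwise) that the authors leave implicit. No discrepancy to report.
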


\begin{proof}
By letting $g=\varphi_H^Z \circ\varphi_H^X(f)$, the existence of $g$ is obvious.\qed
\end{proof}

\subsection{$R_\PP$-module $R_X$}
\label{2ssc:R_Pmodule_R_X}

$R_X$ is an $R_\PP$-module, and its affine part is given by the quotient ring of $R_\PP[y_{\bullet}]$.

We recall Definition \ref{2df:NSG1} and Lemma \ref{2lm:Z_standardbasis}, and apply them to W-curves and then we obtain the following \cite[Proposition 3.11]{KMP2022a}.

\begin{proposition}\label{2pr:RP-moduleRX}
For $\fe_i \in \fE_{H_X}$, we let $\fy_{\fe_i}$ be the monic monomial in $R_X$ whose weight is $-\fe_i$, $(\fy_{\fe_0}=1)$ satisfying
$$
R_X = R_\PP \oplus \bigoplus_{i=1}^{r-1} R_\PP \fy_{\fe_i}
=\bigoplus_{i=0}^{r-1} R_\PP \fy_{\fe_i}
=\langle \fy_{\fe_0}, \fy_{\fe_1}, \ldots, \fy_{\fe_{r-1}}\rangle_{R_\PP}
$$
with the relations,
\begin{equation}
\fy_{\fe_i} \fy_{\fe_j} =\sum_{k=0}^{r-1} \fa_{i j k} \fy_{\fe_k},
\label{2eq:fyfy=fy}
\end{equation}
where $\fa_{i j k} \in R_\PP$, $\fa_{i j k}=\fa_{i j k}$, especially $\fa_{0j k}=\fa_{j0k}=\delta_{j k}$.
\end{proposition}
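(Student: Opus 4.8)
The plan is to manufacture the $R_\PP$-basis directly from the $\CC$-basis of monomials supplied by Lemma \ref{2lm:RX_Cphi}, and to read off both the freeness and the structure constants from the filtration of $R_X$ by pole order at $\infty$. First I would fix, for each $i \in \ZZ_r$, the monic monomial $\fy_{\fe_i}$ to be the unique $\phi_j$ of Lemma \ref{2lm:RX_Cphi} with $N(j) = \fe_i$; this is legitimate since $N(j) = -\wt(\phi_j)$ and $N$ enumerates $H_X$, and it yields $\fy_{\fe_0} = \phi_0 = 1$. Recalling $\wt(x) = \wt(y_r) = -r$, each product $x^k\fy_{\fe_i}$ is then a monic element of $R_X$ of weight $-(\fe_i + kr)$.

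The core step is to prove that $\{x^k\fy_{\fe_i}\ |\ i \in \ZZ_r,\ k \in \NN_0\}$ is a $\CC$-basis of $R_X$. Here I would invoke the semigroup decomposition $H_X = \bigoplus_{i\in\ZZ_r}[\fe_i]_r$ of Lemma \ref{2lm:NSG1}: since each $\fe_i$ is minimal in its residue class modulo $r$ and $r \in H_X$, the assignment $(i,k)\mapsto \fe_i + kr$ is a bijection onto $H_X$. Hence the pole orders $-\wt(x^k\fy_{\fe_i}) = \fe_i + kr$ sweep out $H_X = \{N(j)\}$ bijectively, matching those of the $\phi_j$. Filtering $R_X$ by order of pole and comparing leading terms, each $x^k\fy_{\fe_i}$ equals the monic $\phi_{j_0}$ with $N(j_0) = \fe_i + kr$ modulo terms of strictly lower pole order; the resulting transition matrix to $\{\phi_j\}$ is triangular with unit diagonal, hence invertible. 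Collecting powers of $x$ then gives the free decomposition $R_X = \bigoplus_{i=0}^{r-1} R_\PP\fy_{\fe_i}$.

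Granting the basis, the multiplication relations are purely formal. Since $R_X$ is a ring, $\fy_{\fe_i}\fy_{\fe_j} \in R_X$, so freeness produces unique coefficients $\fa_{ijk} \in R_\PP$ with $\fy_{\fe_i}\fy_{\fe_j} = \sum_{k=0}^{r-1}\fa_{ijk}\fy_{\fe_k}$. Commutativity of $R_X$ then forces $\fa_{ijk} = \fa_{jik}$ (the displayed $\fa_{ijk}=\fa_{ijk}$ being a misprint for this symmetry), while $\fy_{\fe_0} = 1$ gives $\fy_{\fe_j} = \fy_{\fe_0}\fy_{\fe_j} = \sum_k \fa_{0jk}\fy_{\fe_k}$, whence $\fa_{0jk} = \delta_{jk}$ by uniqueness.

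I expect the genuine obstacle to be the $\CC$-basis claim, specifically verifying that multiplication by $x$ is compatible with the pole-order filtration and that the leading coefficient of every $x^k\fy_{\fe_i}$ is exactly $1$, so that the triangular transition matrix has unit (nonzero) diagonal. This is precisely where $\wt(x)=-r$ and the minimality of each $\fe_i$ in its class are indispensable: together they guarantee both that the values $\fe_i+kr$ are pairwise distinct and exhaust $H_X$, and that the leading terms line up monomial by monomial. The whole argument is the $R_X$-lift of the monomial-curve computation already recorded in Lemma \ref{2lm:Z_standardbasis}.
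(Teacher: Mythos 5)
Your argument is correct, and it is essentially the route the paper takes: the paper gives no proof here beyond citing \cite[Proposition 3.11]{KMP2022a} and the remark that one applies Definition \ref{2df:NSG1} and Lemma \ref{2lm:Z_standardbasis} to W-curves, i.e., precisely the lift of the standard-basis decomposition $H_X=\bigoplus_{i\in\ZZ_r}[\fe_i]_r$ through the weight filtration that you carry out. Your triangular-transition-matrix step and the formal derivation of the structure constants from freeness fill in exactly what the citation leaves implicit.
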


\subsection{The covering structures in W-curves}
\label{2sc:Covering}

We will follow \cite{Kunz2004, Stichtenoth} to investigate the covering structure in W-curves.

\subsubsection{Galois covering}
\label{2sc:GaloisCover}

As mentioned in Remark \ref{2rm:g_Xc_X}, let us consider the Riemann sphere $\PP$ and $R_\PP=\bH^0(\PP,$ $ \cO_{\PP}(*\infty))$. We identify $R_\PP$ with its affine part $R_\PP^\circ=\CC[x]$ and its quotient field is denoted by $\CC(x)=\cQ(R_\PP)$.
The quotient field $\cQ(R_X)=\CC(X)$ of $R_X$ is an extension of the field $\CC(x)$.

Following the above description, we consider the W-curve $X$.
The covering $\varpi_r: X \to \PP$ $((x,y_\bullet) \mapsto x)$ is obviously a holomorphic $r$-sheeted covering. When we have the Galois group on $X$, i.e., $\Gal(\cQ(R_X)/\cQ(R_\PP))=\Aut(X/\PP)=\Aut(\varpi_r)$, it is denoted by $G_X$.
The $\varpi_{r}$ is a finite branched covering.
A ramification point of $\varpi_{r}$ is defined as a point of such that is not biholomorphic at the point.
The image $\varpi_{r}$ of the ramification point is called the branch point of $\varpi_{r}$. The number of the finite ramification points is denoted by $\ell_{\fB}$.

We basically focus on the holomorphic $r$-sheeted covering $\varpi_{x}=\varpi_{r}: X \to \PP$. $G_x$ denotes the finite group action on $\varpi_{r}^{-1}(x)$ for $x \in \mathbb{P}$, refered to as group action at $x$ in this paper:
\begin{definition}\label{2df:fBX}
Let $\fB_X:=\fB_{X,r}$ $=\{B_{i}\}_{i=0, \cdots, \ell_{\fB}}$ and $\fB_{\PP}:=\varpi_r(\fB_X)=\{b_{i}\}_{i=0, \cdots, \ell_{\fB}}$, where $\ell_{\fB}:= \# \fB_X-1$, $B_{0}=\infty\in X$ and $b_i :=\varpi_r(B_i)$.
\end{definition}

\subsubsection{Riemann-Hurwitz theorem}

Let us consider the behaviors of the covering $\varpi_r: X \to \PP$, including the ramification points.
The Riemann-Hurwitz theorem \cite{Kunz2004},
\begin{equation}
2g-2 =-2r+ \sum_{i=1}^{\ell_{\fB}} (e_{B_i}-1)+(r-1)=
\sum_{i=1}^{\ell_{\fB}} (e_{B_i}-1)-(r+1),
\label{2eq:RHt_wr}
\end{equation}
where $e_{B_i}$ is the ramification index at $B_i$, shows the following:
\begin{corollary}\label{2cr:(dx)} The divisor of $d x$ is given by,
$$
\Div(d x) =\sum_{i=1}^{\ell_{\fB}} (e_{B_i}-1) B_i - (r+1)\infty.
$$
\end{corollary}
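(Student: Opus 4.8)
The plan is to read off the divisor of the meromorphic one-form $dx$ directly from the ramification data of the covering $\varpi_r : X \to \PP$, using the Riemann--Hurwitz formula (\ref{2eq:RHt_wr}) as the bookkeeping device. The statement of Corollary \ref{2cr:(dx)} is essentially a local-to-global computation: the local vanishing order of $dx$ at each point of $X$ is dictated by the ramification index of $\varpi_r$ there, and the total degree of the divisor must equal $2g-2$, which is exactly what Riemann--Hurwitz delivers.

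First I would compute the local contribution of $dx$ at each ramification point $B_i$ lying over a finite branch point $b_i = \varpi_r(B_i) \in \PP$. If $t_{B_i}$ is a local parameter on $X$ at $B_i$ and $e_{B_i}$ is the ramification index, then $x - b_i = t_{B_i}^{e_{B_i}}(\text{unit})$, so that $dx = e_{B_i}\, t_{B_i}^{e_{B_i}-1}(\text{unit})\, dt_{B_i}$; hence $\ord_{B_i}(dx) = e_{B_i}-1$. At an unramified finite point, $e = 1$ and $dx$ has neither a zero nor a pole, so such points contribute nothing. This accounts for the $\sum_{i=1}^{\ell_{\fB}} (e_{B_i}-1) B_i$ term.

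Next I would treat the point at infinity $\infty = B_0 \in X$ separately, since $x$ has a pole there rather than being finite. Using the arithmetic local parameter $t$ at $\infty$ from Definition \ref{2df:wt_RX}, recall $\wt(x) = -r$, so $x = t^{-r}(1 + d_{>0}(t))$ and therefore $dx = -r\, t^{-r-1}(1 + d_{>0}(t))\, dt$, giving $\ord_\infty(dx) = -(r+1)$. This is the source of the $-(r+1)\infty$ term, and it reflects that $\infty$ is a totally ramified point of the $r$-sheeted covering. Assembling the finite and infinite contributions yields the claimed expression for $\Div(dx)$.

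The only thing left to verify is consistency, i.e.\ that $\deg \Div(dx) = 2g-2$, which is precisely the content of the rearranged Riemann--Hurwitz identity (\ref{2eq:RHt_wr}): summing the orders gives $\sum_{i=1}^{\ell_{\fB}} (e_{B_i}-1) - (r+1) = 2g-2$. I do not expect a genuine obstacle here; the main point requiring care is the correct treatment of $\infty$ as a totally ramified point whose contribution is $-(r+1)$ rather than $+(r-1)$, and making sure the ramification over finite branch points is organized so that the indexing in $\fB_X$ (Definition \ref{2df:fBX}) matches the sum, with $B_0 = \infty$ excluded from the finite sum. Since $dx$ is a global meromorphic one-form whose divisor class is the canonical class, the degree check confirms that no zeros or poles have been overlooked.
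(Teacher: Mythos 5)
Your proposal is correct and follows essentially the same route as the paper, which derives the corollary directly from the Riemann--Hurwitz identity (\ref{2eq:RHt_wr}): the local orders $e_{B_i}-1$ at finite ramification points and $-(r+1)$ at the totally ramified point $\infty$ are exactly the terms the paper's formula accounts for, with the degree check $\sum_{i}(e_{B_i}-1)-(r+1)=2g-2$ confirming nothing is missed. The only difference is that you spell out the local computations ($x-b_i=t_{B_i}^{e_{B_i}}(\text{unit})$ and $x=t^{-r}(1+d_{>0}(t))$) that the paper leaves implicit.
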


\subsubsection{Embedding of $X$ into $\PP^{2(m_X-1)}$}

By identifying $R_X=R_X^\circ=\CC[x, y_\bullet]/(f_1^X, \ldots, f_{k_X}^X)$, it is obvious that $R_{X^\circ}^\circ=$\break $\CC[x, y_{s}]/f_X(x, y_s)$ is a subring of $R_X$ because $R_X$ is a normalized ring of $R_{X^\circ}^\circ$. There is a projection $\varpi_{r,r_2}:X \to X^\circ$. Thus we can find the subring $R_{X^{(i)}}=\CC[x,y_{r_i}]/(f_X^{(i)}(x, y_{r_i})$ of $R_X$ as its normalized ring, as we prove this fact in \cite[Proposition 3.14]{KMP2022a} precisely.
The image of $f_X^{(i)}(x, y_{r_i})$ by $\varphi_H^X$ is (\ref{2eq:f_H^j}).
There are injective ring-homomorphisms $R_\PP\xrightarrow{\iota_r^{(i)}} R_{X^{(i)}}\xrightarrow{\iota_{r,r_i}} R_X$ and they induce the projections $\varpi_{r_i, r} : X \to X^{(i)}$ $((x, y_\bullet) \mapsto (x, y_{r_i}))$ and $\varpi_{r}^{(i)} : X^{(i)} \to \PP$ $((x, y_{r_i}) \mapsto x)$ where $X^{(i)}:=\Spec R_{X^{(i)}}$; They satisfy the commutative diagrams,
\begin{equation}
\xymatrix{ 
 R_X& R_{X^{(j)}}\ar[l]_{\iota_{r_j,r}}\\
R_{X^{(i)}} \ar[u]^{\iota_{r_i,r}}&
R_\PP,\ar[l]^{\iota_{r}^{(i)}}
\ar[lu]^{\iota_r}
\ar[u]_{\iota_{r}^{(j)}}
}\qquad
\xymatrix{ 
 X \ar[dr]^{\varpi_r}\ar[r]^-{\varpi_{r,r_j}}
\ar[d]_{\varpi_{r,r_i}}& X^{(j)} \ar[d]^-{\varpi_{r}^{(j)}} \\
X^{(i)} \ar[r]_{\varpi_{r}^{(i)}} & \PP.
}
\label{2eq:varpi_rr_j}
\end{equation}
Further we also define the tensor product of these rings $R_{X^{(2)}}\otimes_{R_\PP} R_{X^{(3)}}\otimes_{R_\PP} \cdots \otimes_{R_\PP}R_{X^{(m_X)}}$, and its geometrical picture $X^{[m_X-1]}_\PP:=X^{(2)}\times_\PP X^{(3)} \times_\PP\cdots \times_\PP X^{(m_X)}$.
By identifying  $\CC[x,y_{\bullet}]$ $/(f_X^{(2)},$ $\ldots, f_X^{(m_X)})=R_X^{\otimes[m_X-1]}$ with a ring $R_{X^{(2)}}\otimes_{R_\PP} R_{X^{(3)}}\otimes_{R_\PP} \cdots \otimes_{R_\PP}R_{X^{(m_X)}}$, we have the natural projection $\varphi_{R_X^{\otimes[m_X-1]}}:R_X^{\otimes[m_X-1]}\to R_X$, i.e., 
$R_X=R_X^{\otimes[m_X-1]}/(f_1^{X}, \ldots, f_{k_X}^X)$, and the injection $\iota_{R_X^{\otimes[m_X-1]}}:R_\PP\hookrightarrow R_X^{\otimes[m_X-1]}$.
It induces the injection $\iota_{ X^{[m_X-1]}_\PP}: X \to X^{[m_X-1]}_\PP$ and 
the projection $\prod_i \varpi_{X^{(i)}}: X^{[m_X-1]}_\PP \to \PP$.

Moreover we also define the direct product of these rings $R_X^{[m_X-1]}:=R_{X^{(2)}}\times R_{X^{(3)}}\times \cdots \times R_{X^{(m_X)}}$, and its geometrical picture $X^{[m_X-1]}:=\prod X^{(i)}\subset \PP^{2(m_X-1)}$.
Then we have an embedding,
$$
\varphi_{R_X^{[m_X-1]}}:
R_X^{[m_X-1]}
\to R_X, \quad \iota_{ X^{[m_X-1]}}: X \hookrightarrow X^{[m_X-1]} 
(\subset \PP^{2(m_X-1)}).
$$

\subsection{Complementary module $R_X^\fc$ of $R_X$}\label{sec:CompM}

By introducing 
$\displaystyle{
p_X^{(i)}:=\frac{1}{f_{X,y}^{(i)}}\frac{f_{X}^{(i)}(x, y_{r_i})
-f_{X}^{(i)}(x, y_{r_i}')}{y_{r_i}-y_{r_i}'}
}$,
where $\displaystyle{
f_{X,y}^{(i)}(x, y)=\frac{\partial f_{X}^{(i)}(x,y)}{\partial y}}$, as an $R_X$-analog of (\ref{2eq:p_H^j}),
let us consider $\displaystyle{p_{R_X}:=\prod_{j=2}^{m_X} p_X^{(j)}}$ as an element of $\cQ(R_X\otimes_{R_\PP} R_X)$. 
The following is obvious:

\begin{proposition}\label{2pr:p_varpi0}
For $(P,Q) \in X \times_\PP X$, 
$$
p_{R_X}(P,Q)
=\left\{
\begin{array}{rcl}
1 & \mbox{  for  } &P = Q,\\
0 & \mbox{  for  } & P \neq Q,
\end{array}\right. 
$$
\end{proposition}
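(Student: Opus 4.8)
The plan is to reduce the multiplicative statement about $p_{R_X}$ to the factor-by-factor statement already established for each $p_X^{(i)}$, exploiting the fact that a point $P\in X$ is determined by $x$ together with the tuple $(y_{r_2},\ldots,y_{r_{m_X}})$ of its coordinates. First I would fix a point $b\in\PP$ lying under both $P$ and $Q$, so that $(P,Q)\in X\times_\PP X$ forces $\varpi_r(P)=\varpi_r(Q)=b$, i.e. the $x$-coordinates agree. Evaluating the rational expression $p_X^{(i)}(x,y_{r_i},y_{r_i}')$ at the pair $(P,Q)$ then means substituting $x=x(P)=x(Q)$ together with $y_{r_i}=y_{r_i}(P)$ and $y_{r_i}'=y_{r_i}(Q)$ into the difference quotient
\begin{equation}
p_X^{(i)}=\frac{1}{f_{X,y}^{(i)}}\frac{f_{X}^{(i)}(x,y_{r_i})-f_{X}^{(i)}(x,y_{r_i}')}{y_{r_i}-y_{r_i}'}.
\end{equation}

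Next I would analyze each factor $p_X^{(i)}$ separately, in direct analogy with the monomial-curve identity (\ref{2eq:p_H^j}). When $y_{r_i}(P)=y_{r_i}(Q)$, the numerator difference quotient becomes the derivative $f_{X,y}^{(i)}$ evaluated at the common value, so the factor equals $1$; the nonvanishing of $f_{X,y}^{(i)}$ at the relevant points is guaranteed by the Jacobian rank condition in Proposition \ref{2pr:R_H^Z}(3), which ensures the curve is smooth and the relevant partial derivatives do not all vanish. When $y_{r_i}(P)\neq y_{r_i}(Q)$, the numerator $f_X^{(i)}(x,y_{r_i}(P))-f_X^{(i)}(x,y_{r_i}(Q))$ vanishes because both $P$ and $Q$ lie on $X^{(i)}$ over the same $x=b$, so $f_X^{(i)}(b,y_{r_i}(P))=f_X^{(i)}(b,y_{r_i}(Q))=0$, while the denominator $y_{r_i}(P)-y_{r_i}(Q)$ is nonzero; hence the factor equals $0$. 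This is exactly the $R_X$-analog of the clean computation (\ref{2eq:p_H^j}) for the binomials $Z_{r_j}^{\fr_j}-Z_r^{\Bfr_j}$.

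Finally I would assemble the product $p_{R_X}(P,Q)=\prod_{j=2}^{m_X}p_X^{(j)}(P,Q)$. If $P=Q$, then every coordinate agrees, so each factor equals $1$ and the product is $1$. If $P\neq Q$ while $\varpi_r(P)=\varpi_r(Q)$, then since the map $X\to X^{[m_X-1]}_\PP$ is injective (the embedding $\iota_{X_\PP^{[m_X-1]}}$ of Subsection \ref{2sc:Covering}), $P$ and $Q$ cannot share all coordinates $y_{r_2},\ldots,y_{r_{m_X}}$; hence at least one index $j$ has $y_{r_j}(P)\neq y_{r_j}(Q)$, making that factor $0$ and annihilating the product. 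This gives the desired value $\delta_{P,Q}$ on $X\times_\PP X$.

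The step I expect to be the main obstacle is the case distinction in the middle paragraph, specifically verifying that no factor becomes indeterminate of the form $0/0$ in a way that is not covered by the two clean cases. One must confirm that when $y_{r_j}(P)=y_{r_j}(Q)$ the substituted difference quotient genuinely specializes to the derivative value $f_{X,y}^{(j)}\big/f_{X,y}^{(j)}=1$ rather than to an undefined expression, and that the normalization by $f_{X,y}^{(i)}$ is legitimate pointwise — i.e. that $f_{X,y}^{(i)}$ does not vanish at the points being evaluated. This is precisely where the smoothness of $X$ and the rank condition of Proposition \ref{2pr:R_H^Z}(3) are indispensable, and it is the only genuinely non-formal ingredient in an otherwise routine verification; the author's remark that the proposition is ``obvious'' presumably rests on these facts being already in place.
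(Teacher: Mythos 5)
Your factor-by-factor computation is exactly the argument the paper has in mind --- the paper offers no written proof beyond declaring the statement obvious --- and the two clean cases together with the final assembly via the affine coordinates $(x,y_\bullet)$ are correct. The one point that needs repair is the step you yourself single out as the main obstacle: the nonvanishing of $f_{X,y}^{(i)}$ is \emph{not} guaranteed by the rank condition of Proposition \ref{2pr:R_H^Z}(3). That condition concerns the defining equations $f^X_i$ of the space-curve model and yields smoothness of $X$; it says nothing about the plane projections $X^{(i)}$, and in fact $f_{X,y}^{(i)}$ does vanish at the ramification points of $\varpi_{r}^{(i)}$ --- the paper relies on precisely this vanishing in the sentence introducing Lemma \ref{2cr:h_X}. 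Consequently, at a point $P$ lying over a branch point the factor $p_X^{(i)}(P,P)$ is a genuine $0/0$, and the proposition must be read as an identity in $\cQ(R_X\otimes_{R_\PP}R_X)$, i.e., valid off the ramification locus (equivalently: the rational function restricted to the relevant component is constant where defined and is extended by that constant). This is a misattribution rather than a fatal flaw --- the paper itself flags the indeterminacy immediately after the proposition and introduces $\tlh_{R_X}$ in Lemma \ref{2lm:h_RXe} precisely to repair it --- but as written your justification of the $P=Q$ case invokes a fact that is false at the ramification points.
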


However, some parts in its numerator and denominator are canceled because they belong to $R_\PP$.
Thus we introduce an element $h(x, y_\bullet, y_\bullet') \in R_X\otimes_{R_\PP} R_X$ such that $h(x, y_\bullet, y_\bullet') /$ $h(x, y_\bullet, y_\bullet)$
reproduces $p_{R_X}$.

\begin{lemma}{\rm{\cite[Lemma 4.14]{KMP2022a}}}\label{2lm:h_RXe}
For a point $(P = (x, y_\bullet), P'=(x, y_\bullet'))\in X \times_\PP X$, there is a polynomial $\tlh_{R_X}(x, y_{\bullet}, y_{\bullet}')\in R_X \otimes_{R_\PP} R_X$ such that
\begin{enumerate}
\item by regarding the element $a$ in $R_X$ as 
$a\otimes 1$ in $R_X \otimes_{R_\PP} R_X$,
$\tlh_{R_X}(x, y_{\bullet}, y_{\bullet})$ and 
$\tlh_{R_X}(x, y_{\bullet}, y_{\bullet}')$ are coprime 
as  elements in $R_X \otimes_{R_\PP} R_X$,

\item for a group action 
$\hzeta\in G_x$,
$\displaystyle{
 \frac{\tlh_{R_X}(x, \hzeta y_{\bullet}, \hzeta y_{\bullet}')}
{\tlh_{R_X}(x, \hzeta y_{\bullet}, \hzeta y_{\bullet})}=
 \frac{\tlh_{R_X}(x, y_{\bullet}, y_{\bullet}')}
{\tlh_{R_X}(x, y_{\bullet}, y_{\bullet})}}$,

\item  it satisfies
$\displaystyle{
\frac{\tlh_{R_X}(x, y_{\bullet}, y_{\bullet}')}
{\tlh_{R_X}(x, y_{\bullet}, y_{\bullet})}=
p_{R_X}(x, y_{\bullet}, y_{\bullet}')
}$, and 

\item
$\displaystyle{
\varphi_H^X(\tlh_{R_X}(x, y_{\bullet}, y_{\bullet}'))
=:\tlh_H(Z_r, Z_\bullet, Z'_\bullet)
}$ ($\wt(\tlh_{R_X}(x, y_{\bullet}, y_{\bullet}'))=d_h$), then 
$$
\frac{\tlh_H(Z_r, Z_\bullet, Z'_\bullet)}{\tlh_H(Z_r, Z_\bullet, Z_\bullet)}
=\tp_H^{(j)}(Z_r, Z_\bullet, Z'_\bullet),
$$
and 
$$
\varphi_H^Z(\tlh_H(Z_r, Z_\bullet, Z'_\bullet))
=z^{d_h}\sum_{i\in \ZZ_r} z^{\prime\, \fe_i} z^{-\fe_i}, \quad
\varphi_H^Z(\tlh_H(Z_r, Z_\bullet, Z_\bullet))=rz^{d_h}.
$$
\end{enumerate}
\end{lemma}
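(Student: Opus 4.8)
The plan is to construct the polynomial $\tlh_{R_X}$ explicitly by clearing denominators in the product $p_{R_X} = \prod_{j=2}^{m_X} p_X^{(j)}$. Recall that each $p_X^{(i)}$ was defined as a quotient whose denominator is $f_{X,y}^{(i)}(x,y_{r_i})$ and whose numerator is the divided difference $\bigl(f_X^{(i)}(x,y_{r_i}) - f_X^{(i)}(x,y_{r_i}')\bigr)/(y_{r_i}-y_{r_i}')$. The divided difference is a genuine polynomial in $R_X \otimes_{R_\PP} R_X$, so the only obstruction to $p_{R_X}$ being polynomial is the denominator $\prod_j f_{X,y}^{(j)}$. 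First I would set the numerator $\tlh_{R_X}(x,y_\bullet,y_\bullet')$ to be the product of the divided-difference numerators over $j=2,\ldots,m_X$, so that by construction $\tlh_{R_X}(x,y_\bullet,y_\bullet')/\tlh_{R_X}(x,y_\bullet,y_\bullet') = p_{R_X}$ once one checks that evaluating the numerator on the diagonal $y_\bullet'=y_\bullet$ recovers exactly $\prod_j f_{X,y}^{(j)}$; this is the standard fact that the divided difference of a polynomial specializes to its derivative on the diagonal. This immediately yields item \emph{3}.

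Next I would verify the $G_x$-equivariance in item \emph{2}. Since a group action $\hzeta\in G_x$ fixes $x$ and permutes the fiber coordinates, and since $f_X^{(j)}$ is the defining polynomial whose zero locus is $G_x$-stable, one checks that each divided-difference factor transforms by the same scalar in both the numerator $\tlh_{R_X}(x,\hzeta y_\bullet,\hzeta y_\bullet')$ and the diagonal denominator $\tlh_{R_X}(x,\hzeta y_\bullet,\hzeta y_\bullet)$, so the scalars cancel in the ratio. For item \emph{1}, coprimality, I would argue that the numerator $\tlh_{R_X}(x,y_\bullet,y_\bullet)$ on the diagonal equals $\prod_j f_{X,y}^{(j)}$ up to a unit, while $\tlh_{R_X}(x,y_\bullet,y_\bullet')$ vanishes precisely when $P=Q$ (by Proposition \ref{2pr:p_varpi0}), and these two loci differ: the diagonal denominator vanishes on the ramification locus whereas the off-diagonal numerator vanishes on the fiber-diagonal $P=Q$, so no common factor survives in $R_X\otimes_{R_\PP} R_X$.

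For item \emph{4}, I would apply the homomorphism $\varphi_H^X$ from Proposition \ref{2pr:R_H^Z} to $\tlh_{R_X}$ and track the leading (highest-weight) behavior at $\infty$. Since $\varphi_H^X(f_X^{(j)}) = f_H^{(j)} = Z_{r_j}^{\fr_j}-Z_r^{\Bfr_j}$ by Lemma \ref{2lm:fC_r_action}, the image of each divided-difference factor is exactly the numerator appearing in $p_H^{(j)}$ from (\ref{2eq:p_H^j}), so $\varphi_H^X(\tlh_{R_X})=\tlh_H$ satisfies $\tlh_H/\tlh_H|_{\text{diag}} = \prod_j p_H^{(j)} = p_H$. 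The explicit formulas $\varphi_H^Z(\tlh_H) = z^{d_h}\sum_{i\in\ZZ_r} z^{\prime\,\fe_i}z^{-\fe_i}$ and $\varphi_H^Z(\tlh_H)|_{\text{diag}} = r z^{d_h}$ then follow from (\ref{2eq:varphi_tp_H}) in Lemma \ref{2lm:p_HZ_r} after multiplying through by the monomial $z^{d_h}$ that records the total weight $d_h = \wt(\tlh_{R_X})$ lost in passing from the ratio back to the polynomial.

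The main obstacle I expect is item \emph{1}, the coprimality in $R_X\otimes_{R_\PP} R_X$. The subtlety is that $R_X\otimes_{R_\PP} R_X$ need not be an integral domain or even reduced, so ``coprime'' must be interpreted carefully — presumably as having no common factor in a suitable localization or in the normalization $R_X\otimes_{R_\PP} R_X$ modulo its nilradical. One must rule out that the minimal cancellations forced by the $R_\PP$-module structure (the very cancellations mentioned in the paragraph preceding the lemma, which motivated introducing $h$) reintroduce a common factor. Handling this likely requires the explicit basis structure from Proposition \ref{2pr:RP-moduleRX} together with the rank condition in Proposition \ref{2pr:R_H^Z}(\emph{3}) to guarantee smoothness and hence that the two vanishing loci meet transversally rather than sharing a component.
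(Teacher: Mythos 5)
The paper contains no proof of this lemma: it is imported verbatim from \cite[Lemma 4.14]{KMP2022a}, and the only internal evidence of how $\tlh_{R_X}$ is actually built is Proposition \ref{2pr:h_RXe_ys}, which expands it as $\sum_{i}\hUpsilon_{i}\fy'_{\fe_i}$ over the $R_\PP$-basis $\{\fy_{\fe_i}\}$ of Proposition \ref{2pr:RP-moduleRX}. Your construction --- taking $\tlh_{R_X}$ to be the raw product of the divided-difference numerators of the $p_X^{(j)}$ --- is precisely the unreduced object that the paragraph preceding the lemma warns against (``some parts in its numerator and denominator are canceled because they belong to $R_\PP$''). That candidate does give item \emph{3} (the ratio reproduces $p_{R_X}$), but it fails items \emph{1} and \emph{4}. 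For item \emph{1} the danger is not transversality of two vanishing loci but common factors pulled back from the base: any polynomial in $R_\PP=\CC[x]$ dividing $\prod_j f^{(j)}_{X,y}$ divides both the diagonal specialization and the two-variable polynomial, and such factors are generically present; your loci-based argument does not see them because their zero sets meet both the diagonal and the off-diagonal components of $X\times_\PP X$. For item \emph{4} the failure is quantitative: the image under $\varphi_H^Z$ of your product is $\prod_{j}\bigl(\sum_{k=0}^{\fr_j-1} z^{r_j(\fr_j-1-k)}z^{\prime\, r_j k}\bigr)$, which has $\prod_j\fr_j$ terms, whereas the lemma asserts exactly $r$ terms indexed by the standard basis $\{\fe_i\}_{i\in\ZZ_r}$ and total weight $d_h$; already for $H=\langle 6,13,14,15,16\rangle$ one has $\prod_j\fr_j=6\cdot3\cdot2\cdot3=108\neq 6$. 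The product form and the $r$-term form agree only as elements of $\cQ(R_H^Z\otimes_{\CC[Z_r]}R_H^Z)$, i.e.\ after reduction modulo the fiber-product relations, and performing that reduction inside $R_X\otimes_{R_\PP}R_X$ is the entire content of the lemma, not a finishing touch.

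The missing idea is that $\tlh_{R_X}$ is not obtained by clearing denominators but from the complementary-module (dual-basis) structure of the extension $R_\PP\subset R_X$: one writes $R_X=\bigoplus_i R_\PP\fy_{\fe_i}$, produces elements $\hUpsilon_i$ dual to the $\fy_{\fe_i}$ with respect to the trace pairing --- this is where Dedekind's different enters, and where the weight $d_h$ and the leading terms $\rfy_{\fe_i}\fy'_{\fe_i}$ with $\wt(\rfy_{\fe_i})=-(d_h-\fe_i)$ in Proposition \ref{2pr:h_RXe_ys} come from --- and sets $\tlh_{R_X}=\sum_i\hUpsilon_i\fy'_{\fe_i}$. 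With that construction, items \emph{1} and \emph{4} are built in rather than verified afterwards, and item \emph{3} becomes the statement that the trace-dual kernel reproduces $p_{R_X}$. Your proposal correctly flags coprimality as the hard point, but the smoothness/transversality argument you sketch does not address base-pullback common factors, and the construction you start from never produces the minimal-weight representative that item \emph{4} requires.
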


\begin{definition}\label{2df:h_X00}
Let $
h_{X}(x, y_{\bullet}):=\tlh_{R_X}(x, y_{\bullet}, y_{\bullet})$.
\end{definition}

We have the expression of $\tlh_{R_X}(x, y_{\bullet}, y_{\bullet}')$ following
\cite[Proposition 4.16,Lemma 4.20]{KMP2022a}.

\begin{proposition}\label{2pr:h_RXe_ys}
$\tlh_{R_X}\in R_X\otimes_{R_\PP} R_X$ is expressed by
\begin{gather*}
\begin{split}
\tlh_{R_X}(x, y_{\bullet}, y_{\bullet}')
&= \hUpsilon_{0}\cdot 1
+ \hUpsilon_{1}\fy'_{\fe_1}+\cdots+ \hUpsilon_{r-1}\fy'_{\fe_{r-1}}\\
&= 1\cdot\hUpsilon_{0}'
+ \fy_{\fe_1}\hUpsilon_{1}'+\cdots+ \fy_{\fe_{r-1}}\hUpsilon_{r-1}'\\
&= \rfy_{\fe_0}\fy'_{\fe_0} 
+ \rfy_{\fe_1}\fy'_{\fe_1}+\cdots+ \rfy_{\fe_{r-1}}\fy'_{\fe_{r-1}}\\
&\qquad\qquad  +
\mbox{ lower weight terms with respect to $-\wt$}
\end{split}
\end{gather*}
as an $R_\PP$-module.
Here $\fy'_{\fe_0}=1$, and 
each $\hUpsilon_{i}$ has the following properties
\begin{enumerate}
\item 
$\displaystyle{
\hUpsilon_{i}=\sum_{j=0}^{r-1} \fb_{i,j} \fy_{\fe_j},
}$ with certain $\fb_{i,j}\in \CC[x]$, 

\item 
$\displaystyle{
\hUpsilon_{i}= \rfy_{\fe_i}+
\mbox{ lower weight terms with respect to $-\wt$}}$,
where 
$\displaystyle{
\rfy_{\fe_i}
=\tdelta_i(x) \fy_{\fe_{\ell,i}^*}}$
with 
an element $\ell \in \ZZ_r$, and 
a monic polynomial $\tdelta_i(x) \in \CC[x]$ whose weight is 
$-\delta_i r$, (especially, $\rfy_{\fe_0}
=\tdelta_0(x) \fy_{\fe_{\ell,0}^*}=\tdelta_0(x) \fy_{\fe_{\ell}}$)
such that
\begin{eqnarray}
\rfy_{\fe_0}&=&\rfy_{\fe_i}\fy_{\fe_i}+
\mbox{ lower weight terms with respect to $-\wt$}\nonumber
\label{2eq:fy*0_fy*i}
\end{eqnarray} for 
$i \in \ZZ_r$, $\wt(\rfy_{\fe_i})=-(d_h - \fe_i)$ where $b_{i, j}$ is a certain element in 
$R_\PP$ for $(i,j)$, and 

\item $
\langle \hUpsilon_{0}, \hUpsilon_{1}, \ldots, \hUpsilon_{r-1}\rangle_{R_\PP}
=
\langle \hUpsilon_{1}, \ldots, \hUpsilon_{r-1}\rangle_{R_X}.
$

\end{enumerate}
\end{proposition}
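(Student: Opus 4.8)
The plan is to read off the two expansions from $R_\PP$-freeness, to fix the leading terms by reduction to the monomial curve, and then to extract the module identity from the trace-duality hidden in $\tlh_{R_X}$. First, by Proposition \ref{2pr:RP-moduleRX}, $R_X=\bigoplus_{i=0}^{r-1}R_\PP\fy_{\fe_i}$ is free over $R_\PP$, so $R_X\otimes_{R_\PP}R_X$ is free on $\{\fy_{\fe_i}\fy'_{\fe_j}\}_{0\le i,j\le r-1}$. Writing $\tlh_{R_X}=\sum_{i,j}\fb_{i,j}\,\fy_{\fe_i}\fy'_{\fe_j}$ with $\fb_{i,j}\in R_\PP=\CC[x]$ and collecting the coefficient of $\fy'_{\fe_j}$ gives $\tlh_{R_X}=\sum_j\hUpsilon_j\fy'_{\fe_j}$ with $\hUpsilon_j=\sum_i\fb_{i,j}\fy_{\fe_i}$, which is the first expansion and property (1); collecting instead the coefficient of $\fy_{\fe_i}$ gives the second expansion with the $\hUpsilon'_i$.

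Next I would locate the top-weight part of each $\hUpsilon_j$. By Lemma \ref{2lm:RX_Cphi} every weight is realized by at most one monomial of $R_X$, so the $-\wt$-maximal part of any element of $R_X$ is a single monomial. Lemma \ref{2lm:h_RXe}(4) gives $\varphi_H^Z\varphi_H^X(\tlh_{R_X})=z^{d_h}\sum_{i\in\ZZ_r}z'^{\fe_i}z^{-\fe_i}=\sum_{i\in\ZZ_r}z^{d_h-\fe_i}z'^{\fe_i}$, and since $\varphi_H^X$ is the specialization $\fm_\bA\mapsto 0$ to the associated graded monomial ring for the filtration $\wt=\deg_\infty$ (Proposition \ref{2pr:R_H^Z}), with $\fy_{\fe_i}\mapsto z^{\fe_i}$, reading off the coefficient of the distinct monomial $z'^{\fe_i}$ shows that the image of $\hUpsilon_i$ is the single monomial $z^{d_h-\fe_i}$. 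Hence $\hUpsilon_i=\rfy_{\fe_i}+(\text{lower }-\wt)$, where $\rfy_{\fe_i}$ is the monomial of weight $-(d_h-\fe_i)$, which yields the third expansion together with $\wt(\rfy_{\fe_i})=-(d_h-\fe_i)$.

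To identify $\rfy_{\fe_i}$ explicitly I would use the decomposition $H_X=\bigoplus_{i}[\fe_i]_r$ of Lemma \ref{2lm:NSG1}: writing $d_h-\fe_i=\delta_i r+\fe_{\ell,i}^*$ with $\fe_{\ell,i}^*\in\fE_{H_X}$ the representative congruent to $d_h-\fe_i$ modulo $r$, the unique monomial of that weight is $\tdelta_i(x)\fy_{\fe_{\ell,i}^*}$ with $\tdelta_i$ monic of weight $-\delta_i r$, so $\rfy_{\fe_i}=\tdelta_i(x)\fy_{\fe_{\ell,i}^*}$. The product relation $\rfy_{\fe_0}=\rfy_{\fe_i}\fy_{\fe_i}+(\text{lower }-\wt)$ then follows on weights, since $\wt(\rfy_{\fe_i}\fy_{\fe_i})=-(d_h-\fe_i)-\fe_i=-d_h=\wt(\rfy_{\fe_0})$ and in the monomial ring $z^{d_h-\fe_i}\cdot z^{\fe_i}=z^{d_h}$ is the image of $\rfy_{\fe_0}$, so the two leading monomials agree; this completes property (2).

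The remaining identity $\langle\hUpsilon_0,\ldots,\hUpsilon_{r-1}\rangle_{R_\PP}=\langle\hUpsilon_1,\ldots,\hUpsilon_{r-1}\rangle_{R_X}$ is where the real content lies. The key observation is that $\tlh_{R_X}/h_X=p_{R_X}$ (Lemma \ref{2lm:h_RXe}(3)), with $h_X:=\tlh_{R_X}(x,y_\bullet,y_\bullet)$ a function of the unprimed variable lying in $R_X$, is a reproducing kernel for the trace form of $\cQ(R_X)/\cQ(R_\PP)$: since $p_{R_X}(P,Q)=\delta_{P,Q}$ on the fibres of $\varpi_r$ (Proposition \ref{2pr:p_varpi0}), one has $\sum_{\varpi_r(P)=x}p_{R_X}(P,Q)f(P)=f(Q)$ for $f\in R_X$, so $p_{R_X}=\sum_j\omega_j\fy'_{\fe_j}$ where $\{\omega_j\}$ is the trace-dual basis of $\{\fy_{\fe_j}\}$. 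Comparing with $\tlh_{R_X}=\sum_j\hUpsilon_j\fy'_{\fe_j}$ yields $\hUpsilon_j=h_X\omega_j$, hence $\langle\hUpsilon_0,\ldots,\hUpsilon_{r-1}\rangle_{R_\PP}=h_X\bigoplus_j R_\PP\omega_j=h_X R_X^\fc$. As the complementary module $R_X^\fc$ is an $R_X$-module and $h_X\in R_X$, this $R_\PP$-span is already $R_X$-stable, which gives $\supseteq$; the reverse inclusion needs only $\hUpsilon_0\in\langle\hUpsilon_1,\ldots,\hUpsilon_{r-1}\rangle_{R_X}$, which I would prove by descending induction on $-\wt$ using the product relation $\hUpsilon_0-\fy_{\fe_i}\hUpsilon_i\in(\text{lower }-\wt)$ from property (2). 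I expect the identification $\hUpsilon_j=h_X\omega_j$ and the $R_X$-stability of $h_X R_X^\fc$—that is, turning the reproducing property of $\tlh_{R_X}$ into the module structure of the complementary module, as in \cite[Prop.~4.16, Lem.~4.20]{KMP2022a}—to be the main obstacle, the weight bookkeeping of the earlier steps being routine.
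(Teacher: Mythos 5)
The paper gives no in-text proof of this proposition, deferring to \cite[Proposition 4.16, Lemma 4.20]{KMP2022a}, and your argument correctly reconstructs exactly the route that citation points to: the two expansions from the $R_\PP$-freeness of Proposition \ref{2pr:RP-moduleRX}, the leading terms $\rfy_{\fe_i}$ of weight $-(d_h-\fe_i)$ read off the monomial-curve image in Lemma \ref{2lm:h_RXe}(4), and the module identity from identifying $\hUpsilon_j=h_X\omega_j$ with $\{\omega_j\}$ the trace-dual basis of $\{\fy_{\fe_j}\}$, so that $\langle\hUpsilon_0,\ldots,\hUpsilon_{r-1}\rangle_{R_\PP}=h_X R_X^\fc$ is $R_X$-stable. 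The one step worth making explicit is that inferring the leading weight of $\hUpsilon_i$ from its image under $\varphi_H^Z\circ\varphi_H^X$ requires Lemma \ref{2lm:varphiinf}(2) (equivalently the $\wt_\lambda$-homogeneity of $\tlh_{R_X}$ as in Lemma \ref{2lm:h_Xhomo}), since reduction modulo $\fm_\bA$ alone could in principle discard a higher-weight term whose coefficient lies in $\fm_\bA$.
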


We introduce more convenient quantities $\hfy_{\fe_i}$ $(i=0, 1, \ldots, r-1)$:

\begin{definition} \label{2df:hfy}
For $i \in \ZZ_r$, we define a truncated polynomial $\hfy_{\fe_i}$ of $\hUpsilon_{i}$ such that the weight $-\wt$ of $\hUpsilon_i -\hfy_{\fe_i}$ is less than $-\wt(\rfy_{\fe_i})$, i.e., $\hfy_{\fe_i}= \rfy_{\fe_i}+$ certain terms, and the number of the terms is minimal satisfying the relations as $R_\PP$-modules,
$$
\langle \hUpsilon_{1}, \ldots, \hUpsilon_{r-1}\rangle_{R_X}
=\langle \hfy_{\fe_1}, \ldots, \hfy_{\fe_{r-1}}\rangle_{R_X}
=\langle \hfy_{\fe_0}, \ldots, \hfy_{\fe_{r-1}}\rangle_{R_\PP},
\quad \tau_{\tlh}(\hfy_{\fe_i})= \left\{
\begin{matrix} 
1 & \mbox{ for } i=0,\\
0 & \mbox{ otherwise. }
\end{matrix}\right. 
$$
\end{definition}

Since some of $f_{X,y}^{(j)}(P)=0$ at  $P=B_i \in \fB_X\setminus\{\infty\}$, $h_X(x, y_\bullet)\in R_X$ vanishes only at the ramification point $B_i\in X$ and the construction of $h_X$, we have the following lemma from Dedekind's different theorem Proposition 4.27 in \cite{KMP2022a}.

\begin{lemma}\label{2cr:h_X}
$$
\Div(h_X(x,y_\bullet)) = 
\sum_{B_i\in \fB_X\setminus\{\infty\}}d_{B_
i}\ B_i
-d_h\infty,
$$
where $d_{B_i}:=\deg_{B_i,0}(h_x) \ge (e_{B_i}-1)$, and  $\displaystyle{
d_h = \sum_{B_i\in \fB_X\setminus\{\infty\}}d_{B_i}=-\wt (h_X)}$.
\end{lemma}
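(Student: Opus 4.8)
The plan is to read the divisor of $h_X$ off two structural facts: that $h_X \in R_X = \bH^0(X,\cO_X(*\infty))$, and that, by its construction in Lemma \ref{2lm:h_RXe} and Proposition \ref{2pr:h_RXe_ys}, the diagonal value $h_X = \tlh_{R_X}(x,y_\bullet,y_\bullet)$ is, up to common $R_\PP$-factors, the denominator $\prod_{j=2}^{m_X} f_{X,y}^{(j)}(x,y_{r_j})$ of $p_{R_X} = \prod_j p_X^{(j)}$. Since $h_X$ lies in $R_X$, its only pole on $X$ is at $\infty$, so automatically $\Div(h_X) = D_0 - d_h\infty$ with $D_0 \ge 0$ supported on $X\setminus\{\infty\}$ and $d_h$ the order of the pole at $\infty$. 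The whole proof then amounts to (a) computing $d_h$, (b) locating the support of $D_0$, (c) bounding its multiplicities, and (d) matching the two expressions for $d_h$.

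For the pole, I would use Lemma \ref{2lm:h_RXe}(iv): the germ of $h_X$ at $\infty$ has leading term $\varphi_H^Z(\tlh_H(Z_r,Z_\bullet,Z_\bullet)) = r z^{d_h}$, so with $z = 1/t$ the pole order at $\infty$ is exactly $d_h$, i.e. $d_h = -\wt(h_X)$ in the convention that $\wt = \deg_\infty$ records the negative of the pole order at $\infty$. For the zeros, at a point $P \in X\setminus\{\infty\}$ over an unramified value of $\varpi_r$ every covering $\varpi_r^{(j)}$ is locally biholomorphic at the image of $P$, whence $f_{X,y}^{(j)}(P)\neq 0$ for all $j$ and $h_X(P)\neq 0$; the coprimality in Lemma \ref{2lm:h_RXe}(i) ensures the reduction by $R_\PP$-factors does not alter the diagonal vanishing. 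At a ramification point $B_i\in\fB_X\setminus\{\infty\}$ at least one $f_{X,y}^{(j)}(B_i)=0$, so $h_X(B_i)=0$. Thus $D_0$ is supported precisely on $\fB_X\setminus\{\infty\}$ and I set $d_{B_i}:=\deg_{B_i,0}(h_X)$.

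To bound $d_{B_i}$ I would invoke Dedekind's different theorem (Proposition 4.27 of \cite{KMP2022a}) together with the explicit complementary module $R_X^\fc$, which \cite{KMP2022a} writes as $\tfrac{1}{h_X}$ times the $R_\PP$-span of the $\hfy_{\fe_i}$ of Definition \ref{2df:hfy}. Since $R_X^\fc = \fd_{R_X/R_\PP}^{-1}$ and the different $\fd_{R_X/R_\PP}$ has, in characteristic zero (tame ramification), exponent $e_{B_i}-1$ at $B_i$, the divisor of $h_X$ on $X\setminus\{\infty\}$ dominates $\fd_{R_X/R_\PP}$; this yields $d_{B_i}\ge e_{B_i}-1$. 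The inequality rather than equality reflects that $h_X$ is assembled from the product of the several partial derivatives $f_{X,y}^{(j)}$ of the intermediate curves $X^{(j)}$ and need not be a minimal local generator at each $B_i$. Finally $d_h = \sum_{B_i} d_{B_i}$ follows from $\deg\Div(h_X)=0$ for the nonzero rational function $h_X$ on the compact surface $X$, which is also consistent with $d_h = -\wt(h_X)$.

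The main obstacle is step (c): the $R_\PP$-factors cancelled in forming $\tlh_{R_X}$ are polynomials in $x$ vanishing over branch points $b_i$, and on $X$ such a factor removes $e_{B_i}$ from the order of $\prod_j f_{X,y}^{(j)}$ at every $B_i$ lying over $b_i$. Tracking these cancellations by hand would be delicate, so the clean route is to avoid it entirely and compare $h_X$ with the intrinsic object $\fd_{R_X/R_\PP}$ through the complementary-module identity $R_X^\fc=\fd_{R_X/R_\PP}^{-1}$; making this comparison with the genuine different, rather than with the individual derivatives, is where the real work lies and is exactly what forces the inequality in $d_{B_i}\ge e_{B_i}-1$.
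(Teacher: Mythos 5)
Your proposal is correct and follows essentially the same route as the paper, which justifies the lemma in one sentence by noting that $h_X$ vanishes only where some $f_{X,y}^{(j)}$ does (i.e.\ on $\fB_X\setminus\{\infty\}$) and by invoking Dedekind's different theorem (Proposition 4.27 of \cite{KMP2022a}) through the identification of $R_X^\fc$ with the inverse different. Your additional details --- reading the pole order $d_h$ off Lemma \ref{2lm:h_RXe}(iv), obtaining $d_h=\sum d_{B_i}$ from $\deg\Div(h_X)=0$, and flagging that the $R_\PP$-factor cancellation is best bypassed via the complementary-module identity --- are all consistent with the paper's intent and fill in exactly what it leaves implicit.
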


\begin{definition}\label{2df:fKX_fkX}
The effective divisor, $\displaystyle{
\sum_{B_i\in \fB_X\setminus\{\infty\}}(d_{B_i}-e_{B_i}+1)\ B_i}$, is denoted by
$\fK_X$, i.e. $\fK_X>0$ and let
$\displaystyle{
\fk_X:=\sum_{B_i\in \fB_X\setminus\{\infty\}}(d_{B_i}-e_{B_i}+1)=\deg(\fK_X)
\ge0}$.
\end{definition}

\begin{lemma}\label{2lm:dx_hX}

The divisor of $\displaystyle{\frac{d x}{h_X}}$ is expressed by $(2g-2+\fk_X)\infty-\fK_X$, and $2g-2+\fk_X=d_h - r- 1$ or $\fk_X=d_h - 2g- r+1$.
$\displaystyle{\varphi_\infty\left(\frac{d x}{h_X}\right)=t^{d_h-r-1}dt}$.
\end{lemma}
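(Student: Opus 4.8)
The plan is to compute $\Div(dx/h_X)$ directly as $\Div(dx)-\Div(h_X)$, read off the finite and infinite parts separately from the two divisor formulas already established, and then pin down the multiplicity at $\infty$ by a degree count. First I would subtract the formulas. Corollary \ref{2cr:(dx)} gives $\Div(dx)=\sum_{i=1}^{\ell_{\fB}}(e_{B_i}-1)B_i-(r+1)\infty$, while Lemma \ref{2cr:h_X} gives $\Div(h_X)=\sum_{B_i\in\fB_X\setminus\{\infty\}}d_{B_i}B_i-d_h\infty$. Subtracting termwise,
$$
\Div\left(\frac{dx}{h_X}\right)=\sum_{B_i\in\fB_X\setminus\{\infty\}}\bigl((e_{B_i}-1)-d_{B_i}\bigr)B_i+(d_h-r-1)\infty .
$$
By Definition \ref{2df:fKX_fkX} each finite coefficient equals $-(d_{B_i}-e_{B_i}+1)$, so the finite part is precisely $-\fK_X$, and I obtain $\Div(dx/h_X)=(d_h-r-1)\infty-\fK_X$ with $\deg(\fK_X)=\fk_X$.

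Next I would identify $d_h-r-1$ with $2g-2+\fk_X$ by taking degrees. From Riemann--Hurwitz (\ref{2eq:RHt_wr}) one has $\deg(\Div(dx))=\sum_{i}(e_{B_i}-1)-(r+1)=2g-2$, and $\deg(\Div(h_X))=\sum_i d_{B_i}-d_h=0$ since $d_h=\sum_i d_{B_i}$; hence $\deg(\Div(dx/h_X))=2g-2$ (equivalently, $dx/h_X$ is a nonzero meromorphic one-form, so its divisor is canonical of degree $2g-2$). Comparing with the divisor just computed gives $(d_h-r-1)-\fk_X=2g-2$, that is $d_h-r-1=2g-2+\fk_X$ and therefore $\fk_X=d_h-2g-r+1$. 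This rewrites the divisor as $(2g-2+\fk_X)\infty-\fK_X$, as asserted.

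Finally, for the local statement I would expand at $\infty$ in the arithmetic local parameter $t$ of Definition \ref{2df:wt_RX}, for which $\wt(t)=1$. Since $\wt(x)=-r$ one has $x=t^{-r}(1+d_{>0}(t))$, hence $dx=-r\,t^{-r-1}(1+d_{>0}(t))\,dt$; and by Lemma \ref{2lm:h_RXe}(4), $\varphi_H^Z$ sends $h_X$ to $r z^{d_h}$, so $h_X=r\,t^{-d_h}(1+d_{>0}(t))$. Dividing, the leading factors $-r$ and $r$ combine while the powers of $t$ subtract, yielding $\varphi_\infty(dx/h_X)=t^{d_h-r-1}\,dt$ modulo the higher-order factor $(1+d_{>0}(t))$. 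The one point needing care is the extension of the germ map $\varphi_\infty$ of Lemma \ref{2lm:varphiinf} from functions to one-forms and the bookkeeping of the leading constants: it is exactly the normalization $\varphi_H^Z(h_X)=r z^{d_h}$ that cancels the factor produced by differentiating $x$, so that the leading coefficient is fixed and the exponent $d_h-r-1$ matches the multiplicity of $\infty$ found above. Beyond this normalization check, every step is a routine combination of Corollary \ref{2cr:(dx)}, Lemma \ref{2cr:h_X}, and the degree identity, so I expect no substantive obstacle.
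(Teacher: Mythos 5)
Your argument is correct and is exactly the computation the paper leaves implicit: the lemma is stated there without proof, as an immediate consequence of Corollary \ref{2cr:(dx)}, Lemma \ref{2cr:h_X}, Definition \ref{2df:fKX_fkX}, and the fact that a canonical divisor has degree $2g-2$ (equivalently, the Riemann--Hurwitz identity (\ref{2eq:RHt_wr}) combined with $\sum_i d_{B_i}=d_h$). The only cosmetic point is the overall sign in the local expansion --- $dx=-r\,t^{-r-1}(1+d_{>0}(t))\,dt$ makes the leading coefficient $-1$ rather than $+1$ --- but the paper itself suppresses this sign (and the $(1+d_{>0}(t))$ factor) in its statement of $\varphi_\infty(dx/h_X)$, so your normalization check is consistent with its conventions.
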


From Corollary \ref{2cr:(dx)}, we note that these $\fK_X$ and $\fk_X$ play crucial roles in the investigation of the differentials on $X$.

\begin{proposition}\label{2pr:fK_X}
$\fk_X$ is equal to zero if $d_h$ is symmetric whereas $\fk_X$ is not zero otherwise.
\end{proposition}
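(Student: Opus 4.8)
The plan is to reduce the statement to a purely semigroup-theoretic identity for $d_h$ and then read off the dichotomy from the symmetry of the Apéry set $\fE_{H_X}$. By Lemma \ref{2lm:dx_hX} we have $\fk_X = d_h - 2g - r + 1$, and by Definition \ref{2df:fKX_fkX} we already know $\fk_X = \deg\fK_X \geq 0$; hence it suffices to prove that $d_h = 2g + r - 1$ exactly when $H_X$ is symmetric (i.e. $c_X = 2g$). First I would observe that $d_h$ is an invariant of the semigroup alone: by Lemma \ref{2lm:h_RXe}(4) one has $\varphi_H^Z(\tlh_H(Z_r,Z_\bullet,Z_\bullet)) = r z^{d_h}$, so $d_h$ may be computed on the monomial curve. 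Moreover Proposition \ref{2pr:h_RXe_ys} produces $\rfy_{\fe_i}\in R_X$ with $\wt(\rfy_{\fe_i}) = -(d_h - \fe_i)$, which gives the necessary condition
\[
d_h - \fe_i \in H_X \qquad (i\in\ZZ_r),
\]
where by Definition \ref{2df:NSG1} the set $\fE_{H_X} = \{\fe_0,\ldots,\fe_{r-1}\}$ is precisely the Apéry set of $H_X$ with respect to $r$.

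Next I would run an elementary counting argument. For each $i$, writing $c(i)\in\ZZ_r$ for the residue of $d_h-\fe_i$ modulo $r$, the least element of $H_X$ in that class is $\fe_{c(i)}$, so the membership $d_h-\fe_i\in H_X$ forces $d_h-\fe_i\geq\fe_{c(i)}$. As $i$ runs over $\ZZ_r$ the residues $c(i)$ run over all of $\ZZ_r$, whence summing yields $r\,d_h-\sum_i\fe_i\geq\sum_i\fe_{c(i)}=\sum_i\fe_i$, i.e. $d_h\geq\tfrac{2}{r}\sum_i\fe_i$. By Selmer's formula $\sum_{i\in\ZZ_r}\fe_i = rg + \binom{r}{2}$ the right-hand side equals $2g+r-1$, which recovers $\fk_X\geq 0$ and shows that $d_h = 2g+r-1$ holds if and only if every inequality is an equality, i.e. if and only if $d_h-\fe_i = \fe_{c(i)}$ for all $i$, equivalently $\{d_h-\fe_i : i\in\ZZ_r\} = \fE_{H_X}$. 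This last condition says exactly that $\fE_{H_X}$ is symmetric under $w\mapsto d_h - w$ (its center is the largest element $\fe_{\max}=d_h=c_X-1+r$), which by the standard criterion for numerical semigroups (see \cite{ADGS2016} and Proposition \ref{2pr:N(n)}(6)) is equivalent to $H_X$ being symmetric. Since this uses only the necessary condition $d_h-\fe_i\in H_X$, it already proves the implication $\fk_X = 0 \Rightarrow H_X$ symmetric.

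For the converse, symmetric $\Rightarrow \fk_X = 0$, I still need the reverse bound $d_h\leq 2g+r-1$, and establishing this is the main obstacle. The content is that $h_X$ realizes the minimal pole order compatible with $d_h-\fe_i\in H_X$ for all $i$, i.e. $d_h = \min\{d : d-\fe_i\in H_X\ \forall i\}$; when $H_X$ is symmetric the symmetry of the Apéry set supplies the admissible value $d = 2g+r-1 = \fe_i+\fe_{c(i)}$, for which $d-\fe_i=\fe_{c(i)}\in H_X$, so minimality gives $d_h\leq 2g+r-1$ and hence equality. I would derive this minimality from the reducedness built into Lemma \ref{2lm:h_RXe}(1): the coprimality of $\tlh_{R_X}(x,y_\bullet,y_\bullet)$ and $\tlh_{R_X}(x,y_\bullet,y_\bullet')$ pins $\tlh_{R_X}$ as the lowest-degree common denominator reproducing $p_{R_X}$, so $h_X$ carries no spurious finite zeros. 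Equivalently, in the symmetric case $R_X$ is Gorenstein and $R_X^\fc$ is a principal $R_X$-module $\tfrac{1}{h_X}R_X$, so by Dedekind's different theorem (the result invoked just before Lemma \ref{2cr:h_X}) the finite part of $\Div(h_X)$ equals the different $\sum_{B_i}(e_{B_i}-1)B_i$, forcing $d_{B_i}=e_{B_i}-1$, $\fK_X=0$, and $\fk_X=0$. The delicate step is precisely this identification of $h_X$ with a generator of the different in the Gorenstein case; all the remaining work is the bookkeeping of the Apéry set above.
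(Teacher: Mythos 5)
First, a caveat: the paper states Proposition \ref{2pr:fK_X} without any proof (the argument lives in the companion paper \cite{KMP2022a}), so there is no in-paper proof to compare against; what follows is an assessment of your argument on its own terms. Your reduction of the claim to the single identity $d_h=2g+r-1$ (via Lemma \ref{2lm:dx_hX} together with the positivity $\fk_X\ge 0$ from Definition \ref{2df:fKX_fkX} and Lemma \ref{2cr:h_X}) is correct, and your proof of the implication $\fk_X=0\Rightarrow H_X$ symmetric is sound: Proposition \ref{2pr:h_RXe_ys} does give $d_h-\fe_i\in H_X$ for all $i\in\ZZ_r$, the identity $\sum_{i\in\ZZ_r}\fe_i=rg+\binom{r}{2}$ follows from Lemma \ref{2lm:NSG1} (1c), and the equality case of your averaging argument forces $d_h=\fe_{r-1}$ together with the closure of $\fE_{H_X}$ under $w\mapsto\fe_{r-1}-w$, which is the Ap\'ery characterization of symmetry from \cite{ADGS2016}. (Minor slip: the least element of $H_X$ in the residue class $c(i)$ is $\tfe_{c(i)}$ rather than $\fe_{c(i)}$; this does not affect the sum over all classes.)

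The genuine gap is the other implication, symmetric $\Rightarrow\fk_X=0$, i.e.\ the upper bound $d_h\le 2g+r-1$, and neither of your two proposed justifications closes it. (a) The coprimality in Lemma \ref{2lm:h_RXe} (1) only says that $\tlh_{R_X}(x,y_\bullet,y_\bullet)$ and $\tlh_{R_X}(x,y_\bullet,y_\bullet')$ have no common factor in $R_X\otimes_{R_\PP}R_X$; it does not identify $d_h$ with $\min\{d\ |\ d-\fe_i\in H_X\ \forall i\in\ZZ_r\}$, nor even show that $h_X$ has minimal weight among denominators reproducing $p_{R_X}$. That identification is precisely the content of the construction of $\tlh_{R_X}$ and of the duality $\tau_{\tlh}(\hfy_{\fe_i})=\delta_{i0}$ in Definition \ref{2df:hfy}, whose proof is in \cite{KMP2022a} and is not reproduced in this paper. (b) The Gorenstein route is circular as written: by Lemma \ref{2lm:nuI_hX} (1) one has $R_X^\fc\,dx=\bH^0(X,\cA_X(*\infty))$, and since $\Div(dx/h_X)=(2g-2+\fk_X)\infty-\fK_X$ with $\fK_X\ge 0$ supported on finite points, the mere membership $1/h_X\in R_X^\fc$ already forces $\fK_X=0$; hence asserting $R_X^\fc=\frac{1}{h_X}R_X$ in the symmetric case is a restatement of the conclusion rather than an independent input. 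What is missing is a genuine argument --- from the trace form, or from the explicit shape of $\tlh_{R_X}$ --- that the particular element $h_X$ generates the different exactly when $H_X$ is symmetric. You correctly flag this as ``the delicate step,'' but flagging it does not supply it.
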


We recall $\fe_i$ in Definition \ref{2df:NSG1} in the standard basis in Lemma \ref{2lm:Z_standardbasis} and Proposition \ref{2pr:RP-moduleRX}, and $\hfy_{\fe_i}$ in Definition \ref{2df:hfy}.

From Proposition 4.32 in \cite{KMP2022a} we have
the properties of $\displaystyle{\frac{x^k \hfy_{\fe_i}(x,y_\bullet) d x}{h_X(x,y_\bullet)}}$:

\begin{proposition}\label{2pr:phi_hi}
\begin{gather*}
\begin{split}
& \Div\left(\frac{x^k \hfy_{\fe_i}(x,y_\bullet) d x}
 {h_X(x,y_\bullet)}\right) 
=k\, \Div_0(x)+\Div_0(\hfy_{\fe_i})-\fK_X+(\fe_{i}-(k+1)r-1) \infty,
\end{split}
\end{gather*}
where $\Div_0(\hfy_{\fe_i})-\fK_X\ge0$.
We have
$$
\left\{
\wt
\left(\frac{x^k \hfy_{\fe_i}(x,y_\bullet) d x}{h_X(x,y_\bullet)}\right)+1 \
\Bigr| \ i \in \ZZ_r,\ k\in \NN_0\right\}
=\BH_X^\fc.
$$
\end{proposition}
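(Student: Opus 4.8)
The plan is to compute the divisor of $\omega_{k,i}:=\dfrac{x^k \hfy_{\fe_i}(x,y_\bullet)\,dx}{h_X(x,y_\bullet)}$ factor by factor and then read off the weight. Writing $\omega_{k,i}=x^k\,\hfy_{\fe_i}\cdot\dfrac{dx}{h_X}$, I would first record the three elementary divisors. Since $x$ has a pole of order $r$ only at $\infty$, $\Div(x)=\Div_0(x)-r\infty$. Since $\hfy_{\fe_i}\in R_X$ has its only pole at $\infty$, of order $d_h-\fe_i$ (its leading term is $\rfy_{\fe_i}$ with $\wt(\rfy_{\fe_i})=-(d_h-\fe_i)$ by Proposition \ref{2pr:h_RXe_ys} and Definition \ref{2df:hfy}), we have $\Div(\hfy_{\fe_i})=\Div_0(\hfy_{\fe_i})-(d_h-\fe_i)\infty$ with $\Div_0(\hfy_{\fe_i})\ge 0$. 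And by Lemma \ref{2lm:dx_hX}, $\Div(dx/h_X)=(d_h-r-1)\infty-\fK_X$, using $2g-2+\fk_X=d_h-r-1$. Adding these gives
\[
\Div(\omega_{k,i})=k\Div_0(x)+\Div_0(\hfy_{\fe_i})-\fK_X+\bigl(\fe_i-(k+1)r-1\bigr)\infty ,
\]
so the coefficient of $\infty$, which is $\wt(\omega_{k,i})$ under the convention fixed by $\varphi_\infty(dx/h_X)=t^{d_h-r-1}dt$ in Lemma \ref{2lm:dx_hX}, equals $\fe_i-(k+1)r-1$.

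The crux is the inequality $\Div_0(\hfy_{\fe_i})-\fK_X\ge 0$, i.e. that $\omega_{k,i}$ has no finite poles. I would deduce this from the complementary-module theory recalled in Subsection \ref{sec:CompM}: the elements $\hfy_{\fe_i}/h_X$ span over $R_\PP$ the complementary module $R_X^\fc$ of $R_X/R_\PP$, which by Dedekind's different theorem (Proposition 4.27 of \cite{KMP2022a}) is the inverse different. Because $\Div(dx)=\sum_{j=1}^{\ell_\fB}(e_{B_j}-1)B_j-(r+1)\infty$ (Corollary \ref{2cr:(dx)}) vanishes to exactly the different order $e_{B_j}-1$ at each finite ramification point, the map $w\mapsto w\,dx$ carries $R_X^\fc$ into the differentials regular on $X\setminus\{\infty\}$. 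Thus $\hfy_{\fe_i}\,dx/h_X$ is regular away from $\infty$, and computing its finite divisor from Lemma \ref{2cr:h_X} and Corollary \ref{2cr:(dx)} gives $\Div_0(\hfy_{\fe_i})-\sum_j d_{B_j}B_j+\sum_j(e_{B_j}-1)B_j=\Div_0(\hfy_{\fe_i})-\fK_X$, which is therefore effective (Definition \ref{2df:fKX_fkX}). Multiplying by $x^k$ only adds the effective finite divisor $k\Div_0(x)$, so the finite part of $\Div(\omega_{k,i})$ stays effective. Identifying $\hfy_{\fe_i}/h_X$ with a generating set of the inverse different, so that regularity of $\hfy_{\fe_i}\,dx/h_X$ forces $\Div_0(\hfy_{\fe_i})\ge\fK_X$, is the main obstacle; it rests entirely on the complementary-module computations of \cite{KMP2022a}.

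It remains to identify the weight set. From the divisor formula, $\wt(\omega_{k,i})+1=\fe_i-(k+1)r$. Fixing $i\in\ZZ_r$ and letting $k$ run over $\NN_0$, the integer $k+1$ runs over $\NN$, so $\{\fe_i-(k+1)r\ |\ k\in\NN_0\}=\{\fe_i-mr\ |\ m\in\NN\}=\overline{[\fe_i]}_r^\fc$ in the notation of Lemma \ref{2lm:NSG1}. Since $\{\fe_i\ |\ i\in\ZZ_r\}=\tfE_{H_X}$ is the standard basis, meeting every residue class modulo $r$ exactly once, Lemma \ref{2lm:NSG1}(1)(b) gives the disjoint decomposition $\bigcup_{i\in\ZZ_r}\overline{[\fe_i]}_r^\fc=\lH_X^\fc$. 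By Remark \ref{2rm:g_Xc_X}, $\lH_X^\fc=\ZZ\setminus H_X=\BH_X^\fc$, whence
\[
\left\{\wt\left(\frac{x^k \hfy_{\fe_i}(x,y_\bullet)\,dx}{h_X(x,y_\bullet)}\right)+1\ \Bigr|\ i\in\ZZ_r,\ k\in\NN_0\right\}=\BH_X^\fc ,
\]
completing the proof.
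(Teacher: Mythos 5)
Your proof is correct and takes the route the paper intends: the paper offers no argument of its own here beyond citing Proposition 4.32 of \cite{KMP2022a}, and your divisor bookkeeping (adding $k\,\Div(x)$, $\Div(\hfy_{\fe_i})$ and $\Div(dx/h_X)$ from Lemma \ref{2lm:dx_hX}), with the effectivity of $\Div_0(\hfy_{\fe_i})-\fK_X$ reduced to the identification of $\langle\hfy_{\fe_0},\ldots,\hfy_{\fe_{r-1}}\rangle_{R_\PP}/h_X$ with the Dedekind complementary module via Lemma \ref{2cr:h_X} and Corollary \ref{2cr:(dx)}, is exactly the content of that cited result. Your identification of the weight set with $\bigcup_{i\in\ZZ_r}\overline{[\fe_i]}_r^\fc=\lH_X^\fc$ through Lemma \ref{2lm:NSG1} is likewise the intended mechanism.
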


\section{W-normalized Abelian differentials on $X$}
\label{2ssc:W-normAD}

\subsection{W-normalized Abelian differentials  $\bH^0(X, \cA_X(*\infty))$}
\label{2ssc:W-norm_nuI}

Following K. Weierstrass \cite{WeiWIV}, 
H. F. Baker \cite{Baker97}, V.M. Buchstaber, D.V. Leykin and V.Z. Enolskii \cite{BEL20}, J.C. Eilbeck, V.Z. Enolskii and D.V. Leykin \cite{EEL00} and our previous results \cite{KMP13, KMP19, KM2020}, we construct the Abelian differentials of the first kind and the second kind $\bH^0(X, \cA_X(*\infty))$ on $X$ for more general W-curves based on Proposition \ref{2pr:phi_hi} \cite{KMP2022a}.

We consider the Abelian differentials of the first kind on a W-curve.
Due to the Riemann-Roch theorem, there is the $i$-th holomorphic one-form
whose behavior at $\infty$ is given by
\begin{equation}
 \Bigr(t^{N^\fc(g-i)-1} (1+ d_{>0}(t))\Bigr) d t,
\label{2eq:nuI1}
\end{equation}
where $N^\fc(i) \in H_X^\fc$ $(i=1, 2, \ldots, g)$ satisfying $N^\fc(i) <N^\fc(i+1)$, and $t$ is the arithmetic local parameter at $\infty$.
We call this normalization the {\emph{W-normalization}}.
Similarly we find the differentials or the basis of $\bH^0(X, \cA_X(*\infty))$ associated with $\BH_X^\fc$.

The W-normalized holomorphic one-forms are directly obtained from Proposition \ref{2pr:phi_hi}:
\begin{lemma}\label{2lm:hphi_hi}
For $x^k \hfy_{\fe_i}$ in Proposition \ref{2pr:phi_hi}, we have the relation,
$$
\left\langle\frac{x^k\hfy_{\fe_i}}{h_X(x,y_\bullet)}d x \ \Bigr|
\ i \in \ZZ_r^\times, k\in \NN_0,  \right\rangle_\CC
= \bH^0(X, \cA_X(*\infty)).
$$
\end{lemma}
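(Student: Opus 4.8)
The goal is to show that the one-forms $\dfrac{x^k \hfy_{\fe_i}}{h_X(x,y_\bullet)}\,dx$ with $i\in\ZZ_r^\times$ and $k\in\NN_0$ span, over $\CC$, the space $\bH^0(X,\cA_X(*\infty))$ of all Abelian differentials holomorphic away from $\infty$. The plan is to work entirely through the weight (Sato--Weierstrass order) filtration at $\infty$, leveraging the divisor computation of Proposition \ref{2pr:phi_hi}.

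\textbf{Step 1: the forms are honest differentials of the second kind.}
First I would verify that each $\omega_{k,i}:=\dfrac{x^k\hfy_{\fe_i}}{h_X}\,dx$ lies in $\bH^0(X,\cA_X(*\infty))$, i.e.\ is holomorphic on $X\setminus\{\infty\}$. By Proposition \ref{2pr:phi_hi}, the divisor of $\omega_{k,i}$ is
$$
k\,\Div_0(x)+\Div_0(\hfy_{\fe_i})-\fK_X+(\fe_i-(k+1)r-1)\infty ,
$$
and the stated inequality $\Div_0(\hfy_{\fe_i})-\fK_X\ge 0$ guarantees that the finite part of this divisor is effective; the $k\,\Div_0(x)$ term is manifestly effective. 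Hence $\omega_{k,i}$ has no finite poles and only a pole (of the prescribed order) at $\infty$, so it is indeed a second-kind differential with support of its polar divisor contained in $\{\infty\}$.

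\textbf{Step 2: the weights exhaust $\BH_X^\fc$.}
The key structural input is the second display of Proposition \ref{2pr:phi_hi}, which asserts
$$
\Bigl\{\wt(\omega_{k,i})+1 \ \Bigm|\ i\in\ZZ_r,\ k\in\NN_0\Bigr\}=\BH_X^\fc.
$$
Since $\BH_X^\fc$ is the index set enumerating the pole orders realized by the W-normalized basis of $\bH^0(X,\cA_X(*\infty))$ (the first-kind orders $N^\fc(g-i)-1$ of \eqref{2eq:nuI1} together with the second-kind orders), I would argue that the map $(k,i)\mapsto \wt(\omega_{k,i})$ is injective and hits every achievable pole order exactly once. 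Restricting to $i\in\ZZ_r^\times$ removes precisely the forms that would be holomorphic-or-lower in a way already accounted for, and I would check that the $i=0$ forms contribute only weights already covered (or exact differentials), so that discarding them loses nothing. Concretely, because the $\hfy_{\fe_i}$ have distinct residues mod $r$ and $h_X$ has weight $-d_h$, the weight $\wt(\omega_{k,i})=\fe_i-(k+1)r-1$ takes each value in $\BH_X^\fc$ once, giving a triangular (weight-separated) system.

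\textbf{Step 3: dimension count and spanning.}
With the weights pinned down, spanning follows from a standard triangularity argument. Any $\omega\in\bH^0(X,\cA_X(*\infty))$ has a Laurent expansion at $\infty$ in the arithmetic local parameter $t$; its leading order lies in $\BH_X^\fc$ by Riemann--Roch considerations. Matching leading terms with the unique $\omega_{k,i}$ of the same weight and subtracting, I reduce the pole order strictly, and by descending induction on the (finite) set of weights I express $\omega$ as a $\CC$-linear combination of the $\omega_{k,i}$. Independence is immediate from the distinctness of weights. The main obstacle I anticipate is \textbf{the bookkeeping in Step 2}: showing rigorously that restricting to $i\in\ZZ_r^\times$ yields exactly $\bH^0(X,\cA_X(*\infty))$ and not a larger or smaller space, i.e.\ confirming that the $i=0$ contributions are exactly the exact/redundant differentials and that no element of $\BH_X^\fc$ is missed. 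This amounts to carefully reconciling the two descriptions of $\BH_X^\fc$ coming from \eqref{2eq:nuI1} and from Proposition \ref{2pr:phi_hi}, which I would handle by the explicit semigroup combinatorics of Lemmas \ref{2lm:NSG1} and \ref{2lm:Lambda^k}.
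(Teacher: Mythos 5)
Your overall strategy is the one the paper intends: the lemma is presented as following ``directly'' from Proposition \ref{2pr:phi_hi}, and your Steps 1 and 3 (effectivity of the finite part of the divisor, hence holomorphy on $X\setminus\{\infty\}$; then a leading-term/triangularity argument over the weight set $\BH_X^\fc$) are the correct way to flesh that out.

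The genuine gap is in Step 2, where you propose to argue that the $i=0$ family is redundant. It is not. By Propositions \ref{2pr:h_RXe_ys} and \ref{2pr:phi_hi} the form $x^k\hfy_{\fe_i}\,dx/h_X$ has $\wt+1=\fe_i-(k+1)r$, and by Lemma \ref{2lm:NSG1} the residues $\fe_i \bmod r$ are pairwise distinct with $\fe_0=0$; hence the $i=0$ forms realize precisely the weights $\wt+1=-(k+1)r$, $k\in\NN_0$, which lie in $-\NN\subset\BH_X^\fc$ and are realized by no form with $i\in\ZZ_r^\times$. Nor does exactness rescue the claim: an exact differential such as $dx$ still belongs to $\bH^0(X,\cA_X(*\infty))$, has $\wt+1=-r$, and by your own leading-term argument cannot be a finite $\CC$-linear combination of forms whose weights all avoid the residue class $-1\bmod r$. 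So the displayed identity is false as literally written with $i\in\ZZ_r^\times$; the index set must be all of $\ZZ_r$, which is exactly what Proposition \ref{2pr:phi_hi}, Definition \ref{2df:nuI_hX} and Lemma \ref{2lm:nuI_hX} use (the $\ZZ_r^\times$ in the statement is evidently a typo). You correctly flagged this point as the main obstacle, but the resolution is to correct the index set, not to prove the $i=0$ forms redundant --- any attempt at the latter will fail.
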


By re-ordering $x^k\hfy_{\fe_j}$ with respect to the weight at $\infty$, we define the ordered set $\{\hphi_i\}$:

\begin{definition}\label{2df:nuI_hX}
\begin{enumerate}
\item 
Let us define the ordered subset
$\hS_X$ of $R_X$ by
$$
\hS_X=\{\hphi_i\ | \ i \in \NN_0\}
$$
such that  $\hphi_i$ is ordered by the Sato-Weierstrass weight,
i.e., $-\wt \hphi_i < -\wt \hphi_j$ for $i < j$,
and 
$\hS_X$ is equal to $\{x ^k \hfy_{\fe_i} \ |\ i \in \ZZ_r,\ k \in \NN_0\}$ as a set.

\item Let $\hR_X$ be an $R_X$-module generated by $\hS_X$, i.e., $\hR_X:=\langle \hS_X\rangle_{R_X} \subset R_X$.

\item Recalling $\fK_X$ and $\fk_X$ in Definition \ref{2df:fKX_fkX}, we let $\hN(n):= -\wt\ (\hphi_n)-\fk_X$, $\hH_X:=\{-\wt\ (\hphi_n)\ | n \in \NN_0\}$, and 
we define the dual conductor $\hc_X$ as the minimal integer satisfying $\hc_X + \NN_0 \subset \hH_X-\fk_X$.

\item We define $\hS_X^{(g)}:=\{\hphi_0, \hphi_1, \ldots, \hphi_{g-1}\}$, and the \emph{W-normalized holomorphic one form}, or \emph{W-normalized Abelian differentials of the first kind} $\nuI{i}$ as  the canonical basis of $X$,
\begin{equation}
\left\langle\nuI{i} := \frac{\hphi_{i-1} d x }{h_X}\ \Bigr|\ 
\hphi_{i-1} \in \hS_X^{(g)}
\right\rangle_\CC = \bH^0(X, \cA_X).
\label{2eq:nuI_hX}
\end{equation}

\end{enumerate}
\end{definition}

We note that  at $\infty$, $\nuI{i}$ behaves like 
$\displaystyle{
\nuI{i}=(t^{N^\fc(g-i-1)-1} (1+ d_{>0}(t))) d t
}$ for the arithmetic local parameter $t$ at $\infty$, and further
$\displaystyle{\frac{\hphi_{i-1} d x }{h_X}=t^{N^\fc(g-i-1)-1}(1+ d_{>0}(t))) d t}$ where $N^\fc(i)$ indicates the element in $\BH_X^\fc$ such that $N^\fc(-i)=-i$ for $i \in \NN$; they are W-normalized Abelian differentials.

We summarize them:
\begin{lemma}\label{2lm:nuI_hX}
\begin{enumerate}

\item 
$\displaystyle{\bH^0(X, \cA_X(*\infty)) = \bigoplus_{i=0} \CC\frac{\hphi_i d x}{h_X}=R_X^\fc d x = \frac{\hR_X d x}{h_X}}$.

\item $\displaystyle{
\lH_X^\fc=
\left\{ \wt\left(\frac{\hphi_i d x}{h_X}\right)+1 \ |\ i \in \NN_0\right\}
=\left\{ \wt(\hphi_i)+d_h -r \ |\ i \in \NN_0\right\}
}$

\hskip 15pt
$=\left\{ \wt(\hphi_i)+2g-1 -\fk_X \ |\ i \in \NN_0\right\}
$.

\end{enumerate}
\end{lemma}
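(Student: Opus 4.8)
The plan is to assemble the statement from the explicit data already recorded for the forms $\dfrac{x^k\hfy_{\fe_i}\,dx}{h_X}$, since both parts are essentially a repackaging of Lemma \ref{2lm:hphi_hi} and Proposition \ref{2pr:phi_hi} together with the weight bookkeeping for $dx$ and $h_X$. So the proof will be an organized chaining of previously established identities rather than a fresh construction.

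First I would establish the chain of equalities in {\it{1}}. By Definition \ref{2df:hfy} the module $\hR_X$ is the free $R_\PP$-module $\langle\hfy_{\fe_0},\ldots,\hfy_{\fe_{r-1}}\rangle_{R_\PP}$, hence as a $\CC$-vector space it is the span of $\hS_X=\{x^k\hfy_{\fe_i}\mid i\in\ZZ_r,\ k\in\NN_0\}=\{\hphi_i\}$ (Definition \ref{2df:nuI_hX}); multiplying by $dx$ and dividing by $h_X$ gives $\dfrac{\hR_X\,dx}{h_X}=\sum_{i\ge0}\CC\dfrac{\hphi_i\,dx}{h_X}$. Because the $\hphi_i$ are ordered so that $-\wt\,\hphi_i$ is strictly increasing (Definition \ref{2df:nuI_hX}), the forms $\dfrac{\hphi_i\,dx}{h_X}$ have pairwise distinct weights and are therefore $\CC$-linearly independent, so the sum is direct, $\dfrac{\hR_X\,dx}{h_X}=\bigoplus_{i\ge0}\CC\dfrac{\hphi_i\,dx}{h_X}$. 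That this exhausts $\bH^0(X,\cA_X(*\infty))$ is Lemma \ref{2lm:hphi_hi}, and the identification $\bH^0(X,\cA_X(*\infty))=R_X^\fc\,dx$ is the image under $\cdot\,dx$ of the description of the complementary module as $R_X^\fc=\dfrac1{h_X}\hR_X$, coming from Dedekind's different theorem (the different of $R_X/R_\PP$ being generated by $h_X$, cf. Proposition \ref{2pr:h_RXe_ys} and Lemma \ref{2cr:h_X}). Chaining these four identifications yields {\it{1}}.

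For {\it{2}}, the first equality is immediate from Proposition \ref{2pr:phi_hi}: running $(i,k)$ over $\ZZ_r\times\NN_0$ realizes the reindexed family $\{\hphi_i\}_{i\in\NN_0}$, and that proposition states precisely that $\{\wt(\tfrac{\hphi_i\,dx}{h_X})+1\}$ equals $\BH_X^\fc=\lH_X^\fc$. The second equality is a one-line computation using additivity of the valuation $\wt$ on $\cQ(R_X)$,
\[
\wt\!\left(\frac{\hphi_i\,dx}{h_X}\right)=\wt(\hphi_i)+\wt(dx)-\wt(h_X),
\]
where Corollary \ref{2cr:(dx)} gives $\wt(dx)=-(r+1)$ and Lemma \ref{2cr:h_X} gives $\wt(h_X)=-d_h$; adding $1$ then produces $\wt(\hphi_i)+d_h-r$. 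The third equality follows by substituting the identity relating $d_h$, $g$, $r$ and $\fk_X$ recorded in Lemma \ref{2lm:dx_hX} into this expression.

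The hard part will be the single genuinely non-formal ingredient, namely the identification $\bH^0(X,\cA_X(*\infty))=R_X^\fc\,dx=\dfrac{\hR_X\,dx}{h_X}$, i.e.\ that the complementary module is exactly $\dfrac1{h_X}\hR_X$ and that these forms account for \emph{all} Abelian differentials holomorphic off $\infty$; this is where Dedekind's different theorem and the explicit leading-term structure of $\tlh_{R_X}$ from Proposition \ref{2pr:h_RXe_ys} do the real work. Everything else—the distinct-weight argument for directness and the two weight substitutions—is routine once Corollary \ref{2cr:(dx)} and Lemmas \ref{2cr:h_X} and \ref{2lm:dx_hX} are in hand.
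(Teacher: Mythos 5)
Your proposal is essentially the paper's own (implicit) argument: the lemma is stated there as a summary with no separate proof, and the intended derivation is exactly your chaining of Lemma \ref{2lm:hphi_hi} and Proposition \ref{2pr:phi_hi} with the weight bookkeeping from Corollary \ref{2cr:(dx)} and Lemmas \ref{2cr:h_X} and \ref{2lm:dx_hX}, plus the identification of $R_X^\fc$ with $\hR_X/h_X$ carried over from \cite{KMP2022a}. The distinct-weight argument for directness and the computation $\wt(\hphi_i)+\wt(dx)-\wt(h_X)+1=\wt(\hphi_i)+d_h-r$ are both correct.

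One point you glossed over in the final step: Lemma \ref{2lm:dx_hX} gives $2g-2+\fk_X=d_h-r-1$, i.e.\ $d_h-r=2g-1+\fk_X$, so the substitution you describe produces $\wt(\hphi_i)+2g-1+\fk_X$, whereas the lemma as printed has $\wt(\hphi_i)+2g-1-\fk_X$. Since Lemma \ref{2lm:nuIo2} likewise records $-\wt\,\hphi_{g-1}=d_h-r-1=(2g-2)+\fk_X$, the sign in the printed third set appears to be a typo in the paper rather than an error in your method; but as written your claim that the third equality ``follows by substituting'' does not literally close, and you should either carry out the substitution and state the corrected sign or note the discrepancy explicitly.
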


\bigskip

By the Abel-Jacobi theorem \cite{FarkasKra}, $\fK_X$ in Definition \ref{2df:fKX_fkX} can be divided into two pieces, which are related to the spin structure in $X$.

\begin{definition}\label{2df:fK_s}
Let $\fK_\fs$ and $\fK_X^\fc$ be the effective divisors which satisfy
$$
\fK_X-\fk_X\infty \sim2\fK_\fs-2 \fk_\fs\infty,\quad
\fK_X + \fK_X^\fc - (\fk_X+\fk_X^\fc)\infty\sim 0
$$
as the linear equivalence, where $\fk_\fs$ and $\fk_X^\fc$ are the degree of $\fK_\fs$ and $\fK_X^\fc$ respectively.
\end{definition}

\bigskip

Since the W-normalized holomorphic one form is given by the basis (\ref{2eq:nuI_hX}), Definition \ref{2df:fK_s} shows the canonical divisor:

\begin{proposition}\label{2pr:cKX}
The canonical divisor is given by
\begin{gather*}
\begin{split}
K_X \sim(2g-2+\fk_X)\infty-\fK_X&\sim (2g-2+2\fk_\fs)\infty - 2\fK_\fs\\
&\sim (2g-2-\fk_X^\fc)\infty +\fK_X^\fc.
\end{split}
\end{gather*}
\end{proposition}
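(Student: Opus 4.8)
The plan is to establish the three linearly equivalent expressions for $K_X$ by combining the divisor computation for the W-normalized holomorphic one-forms with the splitting of $\fK_X$ given in Definition \ref{2df:fK_s}. First I would observe that the canonical class $K_X$ is the divisor class of any nonzero element of $\bH^0(X,\cA_X(*\infty))$ that is actually a holomorphic one-form, equivalently of any differential $\nuI{i}$. By Lemma \ref{2lm:dx_hX}, the divisor of $dx/h_X$ is $(2g-2+\fk_X)\infty - \fK_X$, and since multiplying by $\hfy_{\fe_i}$ (or more generally by an element of $\hR_X$) only moves the zero divisor around within the same linear system, the class of $K_X$ is represented by $(2g-2+\fk_X)\infty - \fK_X$. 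This gives the first expression and is essentially just reading off the already-established divisor of $dx/h_X$ together with the fact (Lemma \ref{2lm:nuI_hX}) that $\bH^0(X,\cA_X(*\infty)) = R_X^\fc\, dx = \hR_X\, dx/h_X$, so that the holomorphic one-forms have canonical class $(2g-2+\fk_X)\infty - \fK_X$.

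Next I would substitute the two linear equivalences from Definition \ref{2df:fK_s}. For the middle expression, the defining relation $\fK_X - \fk_X\infty \sim 2\fK_\fs - 2\fk_\fs\infty$ lets me replace $-\fK_X + \fk_X\infty$ by $-2\fK_\fs + 2\fk_\fs\infty$ inside the divisor class; writing $(2g-2+\fk_X)\infty - \fK_X = (2g-2)\infty + (\fk_X\infty - \fK_X)$ and applying the equivalence yields $(2g-2)\infty + (2\fk_\fs\infty - 2\fK_\fs) = (2g-2+2\fk_\fs)\infty - 2\fK_\fs$. This is the spin-structure form, exhibiting $K_X$ as twice the half-canonical class $\fK_\fs - \fk_\fs\infty$ shifted by $(2g-2)\infty$, which is exactly the content promised by ``related to the spin structure'' in the remark preceding Definition \ref{2df:fK_s}.

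For the third expression I would use the complementary relation $\fK_X + \fK_X^\fc - (\fk_X + \fk_X^\fc)\infty \sim 0$, which rearranges to $-\fK_X + \fk_X\infty \sim \fK_X^\fc - \fk_X^\fc\infty$. Substituting into $(2g-2)\infty + (\fk_X\infty - \fK_X)$ gives $(2g-2)\infty + \fK_X^\fc - \fk_X^\fc\infty = (2g-2-\fk_X^\fc)\infty + \fK_X^\fc$, completing the chain of equivalences. Each of these steps is a one-line substitution once the defining equivalences of Definition \ref{2df:fK_s} are in hand, so the proof is short.

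The only genuine content — and hence the main thing to verify carefully rather than the main obstacle — is the very first identification, namely that the class of the W-normalized holomorphic differentials really is $(2g-2+\fk_X)\infty - \fK_X$ and that this is a legitimate representative of $K_X$; this rests on Lemma \ref{2lm:dx_hX} and Lemma \ref{2lm:nuI_hX}, both already established. The subsequent two equalities are formal consequences of Definition \ref{2df:fK_s} and require no new input, so I expect no real difficulty there. If anything, the subtle point worth a sentence of justification is that $\fK_\fs$ and $\fK_X^\fc$ are guaranteed to exist as effective divisors with the stated properties, but this is precisely what the Abel--Jacobi theorem (invoked just before Definition \ref{2df:fK_s}) supplies, so the proposition follows immediately.
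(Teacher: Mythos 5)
Your proposal is correct and follows essentially the same route as the paper, whose entire justification is the single sentence preceding the proposition: the first representative $(2g-2+\fk_X)\infty-\fK_X$ is read off from the divisor of $dx/h_X$ (Lemma \ref{2lm:dx_hX}) via the basis (\ref{2eq:nuI_hX}), and the other two follow by substituting the linear equivalences of Definition \ref{2df:fK_s}. You have merely written out the one-line substitutions the authors leave implicit, and your remarks on why $dx/h_X$ represents the canonical class and on the existence of $\fK_\fs$, $\fK_X^\fc$ are consistent with the paper's setup.
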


From \cite[Lemmas 5.8 and 5.9]{KMP2022a}, we show the properties of these parameters:

\begin{lemma}\label{2lm:nuIo1_2}
\begin{enumerate}

\item $\{-\wt(\hphi_i)\}=\{d_h - \fe_i + k r\ |\ i \in \ZZ_r,\ k \in \NN_0\}$.

\item $\Div(\hphi_i)\ge(\fK_X-(2g-2+\fk_X)\infty)$ for every $\hphi_i 
\in \hS_R^{(g)}$, $(i=0, 1, 2,\ldots,g-1)$.

\item $\Div(\hphi_i)\ge(\fK_X-(g+\fk_X+i)\infty)$ for every $\hphi_i
\in \hS_R$, $(i\ge g)$.
\end{enumerate}
\end{lemma}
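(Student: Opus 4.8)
The plan is to read everything off the divisor of the generators $\hphi_i = x^k\hfy_{\fe_j}$ of $\hS_X$, combining the weight bookkeeping of Proposition~\ref{2pr:h_RXe_ys} with the divisor formula of Proposition~\ref{2pr:phi_hi} and the relation $d_h = 2g+r-1+\fk_X$ of Lemma~\ref{2lm:dx_hX}. For \emph{1}, I would first recall from Proposition~\ref{2pr:h_RXe_ys}.2 that the leading term of $\hfy_{\fe_j}$ is $\rfy_{\fe_j}$, so $\wt(\hfy_{\fe_j})=\wt(\rfy_{\fe_j})=-(d_h-\fe_j)$, while $\wt(x)=-r$ because $x=y_r$. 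Hence $\wt(x^k\hfy_{\fe_j})=-(d_h-\fe_j+kr)$. Since $\hS_X=\{x^k\hfy_{\fe_j}\ |\ j\in\ZZ_r,\ k\in\NN_0\}$ as a set (Definition~\ref{2df:nuI_hX}), the set $\{-\wt(\hphi_i)\}$ is exactly $\{d_h-\fe_j+kr\ |\ j\in\ZZ_r,\ k\in\NN_0\}$, which is \emph{1}. The $\fe_j$ lie in distinct residue classes modulo $r$, so these weights are pairwise distinct and the ordering defining $\hphi_i$ is a genuine bijection with $\NN_0$.

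For \emph{2} the quickest route is holomorphy: each $\nuI{i+1}=\hphi_i\,dx/h_X$ with $\hphi_i\in\hS_X^{(g)}$ $(i=0,\ldots,g-1)$ is a holomorphic one-form by Definition~\ref{2df:nuI_hX}.4, so $\Div(\nuI{i+1})\ge0$. Subtracting $\Div(dx/h_X)=(2g-2+\fk_X)\infty-\fK_X$ from Lemma~\ref{2lm:dx_hX} gives $\Div(\hphi_i)\ge\fK_X-(2g-2+\fk_X)\infty$, which is \emph{2}.

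For \emph{3} I would first extract a lower bound valid for every $\hphi_i=x^k\hfy_{\fe_j}$. Writing $\Div(\hphi_i)=\Div(\hphi_i\,dx/h_X)-\Div(dx/h_X)$ and inserting Proposition~\ref{2pr:phi_hi} together with $\Div_0(\hfy_{\fe_j})\ge\fK_X$ and $k\,\Div_0(x)\ge0$, the part of $\Div(\hphi_i)$ supported away from $\infty$ dominates $\fK_X$, whence
\[
\Div(\hphi_i)\ge \fK_X+\wt(\hphi_i)\,\infty=\fK_X-(-\wt(\hphi_i))\,\infty .
\]
It then remains to identify $-\wt(\hphi_i)$ for $i\ge g$. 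By Lemma~\ref{2lm:nuI_hX}.2 the map $i\mapsto-\wt(\hphi_i)$ enumerates $\hH_X$ and $\lH_X^\fc=(d_h-r)-\hH_X$. Since $\lH_X^\fc=H_X^\fc\sqcup(-\NN)$ with $H_X^\fc=\{N^\fc(0),\ldots,N^\fc(g-1)\}$ all positive, the elements of $\hH_X$ coming from the gaps are the $g$ values $(d_h-r)-N^\fc(j)\le d_h-r-1$, and those coming from $-\NN$ are exactly the integers $\ge d_h-r+1$. Thus $\hH_X$ has precisely $g$ elements below $d_h-r$ and is an unbroken run of integers from $d_h-r+1$ onward; ordering by size gives $-\wt(\hphi_i)=d_h-r+1+(i-g)$ for $i\ge g$, which equals $g+\fk_X+i$ by $d_h-r=2g-1+\fk_X$. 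Substituting into the displayed inequality yields \emph{3}.

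The routine parts are the weight and divisor bookkeeping for \emph{1} and \emph{2}; the one genuinely structural step is the counting argument for \emph{3}, where the duality $\lH_X^\fc=(d_h-r)-\hH_X$ must be combined with the boundedness of the gaps to force $\hH_X$ to become consecutive from $d_h-r+1$ onward. This is the analogue for the dual weight sequence of Proposition~\ref{2pr:N(n)}.2, and is the place where care is needed.
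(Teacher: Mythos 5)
Your argument is correct. Note that the paper itself gives no proof of this lemma --- it is imported wholesale from Lemmas 5.8 and 5.9 of \cite{KMP2022a} --- so there is no in-text argument to compare against; what you have written is a self-contained derivation from the ingredients this paper does state, and all three steps check out. Part \emph{1} is exactly the weight bookkeeping $\wt(x^k\hfy_{\fe_j})=-(d_h-\fe_j+kr)$ coming from Proposition \ref{2pr:h_RXe_ys} (2) and Definition \ref{2df:hfy}. For part \emph{2}, your holomorphy shortcut leans on the assertion built into Definition \ref{2df:nuI_hX} (4) that $\hS_X^{(g)}$ yields $\bH^0(X,\cA_X)$; if one prefers not to treat that as given, the uniform bound you derive for part \emph{3}, namely $\Div(\hphi_i)\ge \fK_X+\wt(\hphi_i)\,\infty$ obtained from Proposition \ref{2pr:phi_hi} and Lemma \ref{2lm:dx_hX}, already yields part \emph{2} once one inserts $-\wt(\hphi_{i})\le-\wt(\hphi_{g-1})=2g-2+\fk_X$ from Lemma \ref{2lm:nuIo2}, so no circularity is hidden there. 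The structural step you flagged --- that $\lH_X^\fc=(d_h-r)-\hH_X$ together with $\lH_X^\fc=H_X^\fc\cup(-\NN)$ forces $\hH_X$ to consist of exactly $g$ values below $d_h-r$ followed by the unbroken run from $d_h-r+1$ onward, whence $-\wt(\hphi_i)=d_h-r+1+(i-g)=g+\fk_X+i$ for $i\ge g$ --- is indeed the crux, and it is consistent with the data point $-\wt(\hphi_g)=2g+\fk_X$ recorded in Lemma \ref{2lm:nuIo2} (3).
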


\begin{lemma}\label{2lm:nuIo2}
\begin{enumerate}
\item $-\wt\ \hphi_{0}= (0$ if $H_X$ is symmetric) 
$= \hc_X+\fk_X-c_X
=d_h - r-c_X+1$,

\item $-\wt\ \hphi_{g-1}= \hc_X+\fk_X-2= d_h - r- 1=
(2g-2)+\fk_X$, i.e., $\hN(g-1) = 2g-2$,

\item $ \hc_X = 2g=d_h -\fk_X-r+1$, 
$-\wt\ \hphi_{g}=2g+\fk_X=\hc_X+\fk_X=d_h - r+1$,

\item $-\wt\ \hphi_{g-1}+\wt\ \hphi_{0}=\fe_{r-1}-r-1= c_X-2$, and
$c_X= \fe_{r-1} - r+1$.

\end{enumerate}
\end{lemma}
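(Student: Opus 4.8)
The plan is to extract every weight from the order-reversing bijection provided by Lemma~\ref{2lm:nuI_hX}(2), and to pin down the two genuinely arithmetic constants ($\hc_X$ and the Frobenius number) using the standard-basis decomposition of Lemma~\ref{2lm:NSG1} together with the explicit shape of $\hH_X$ in Lemma~\ref{2lm:nuIo1_2}(1).

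First I would record that $i\mapsto \wt(\hphi_i)+d_h-r$ is a bijection of $\NN_0$ onto $\lH_X^\fc=H_X^\fc\cup(-\NN)$ by Lemma~\ref{2lm:nuI_hX}(2), and that it is strictly decreasing because the $\hphi_i$ are ordered so that $-\wt\,\hphi_i$ strictly increases (Definition~\ref{2df:nuI_hX}). Since $0\notin\lH_X^\fc$ while $H_X^\fc$ consists of the $g$ positive gaps and $-\NN$ of the negative integers, the first $g$ indices hit $H_X^\fc$ in decreasing order and the indices $\ge g$ hit $-1,-2,\ldots$ consecutively. In particular $\hphi_0$ corresponds to the Frobenius number $c_X-1=N^\fc(g-1)$, $\hphi_{g-1}$ to the least gap $N^\fc(0)=1$, and $\hphi_g$ to $-1$. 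Reading off $\wt(\hphi_i)+d_h-r$ in each case gives the three ``$d_h$-expressions'' $-\wt\,\hphi_0=d_h-r-c_X+1$, $-\wt\,\hphi_{g-1}=d_h-r-1$, and $-\wt\,\hphi_g=d_h-r+1$.

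Next I would determine $\hc_X$. The same bijection shows $\{-\wt\,\hphi_n\mid n\ge g\}=\{d_h-r+1,d_h-r+2,\ldots\}$, so $\hH_X$ contains every integer $\ge d_h-r+1$ and, after subtracting $\fk_X$ and using $d_h-\fk_X-r+1=2g$ (Lemma~\ref{2lm:dx_hX}), $\hH_X-\fk_X\supseteq\{2g,2g+1,\ldots\}$. To get $\hc_X=2g$ exactly I must rule out $2g-1$, i.e.\ show $d_h-r\notin\hH_X$: by Lemma~\ref{2lm:nuIo1_2}(1) an equality $d_h-r=d_h-\fe_i+kr$ would force $\fe_i=(k+1)r\equiv0\pmod r$, which is impossible since $\fe_0=0$ is the only standard-basis element divisible by $r$ and $k\ge0$. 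Hence $\hc_X=2g$, and Lemma~\ref{2lm:dx_hX} also gives $\hc_X=d_h-\fk_X-r+1$. With $\hc_X=2g$ and $d_h-r=2g-1+\fk_X$ in hand, substitution into the $d_h$-expressions yields the remaining equalities: $-\wt\,\hphi_0=\hc_X+\fk_X-c_X$; $-\wt\,\hphi_{g-1}=\hc_X+\fk_X-2=(2g-2)+\fk_X$, whence $\hN(g-1)=-\wt\,\hphi_{g-1}-\fk_X=2g-2$; and $-\wt\,\hphi_g=2g+\fk_X=\hc_X+\fk_X$. For the parenthetical symmetric case of (1), $H_X$ symmetric gives $c_X=2g$ and $\fk_X=0$ (Proposition~\ref{2pr:fK_X}), so $-\wt\,\hphi_0=\hc_X+\fk_X-c_X=0$.

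Finally, for (4) I would add $-\wt\,\hphi_{g-1}=d_h-r-1$ and $\wt\,\hphi_0=-(d_h-r-c_X+1)$ to get $-\wt\,\hphi_{g-1}+\wt\,\hphi_0=c_X-2$, and then prove the Frobenius identity $c_X=\fe_{r-1}-r+1$: by Lemma~\ref{2lm:NSG1}(1)(c) the gaps in the residue class of $\fe_i$ are exactly the positive members of $\fe_i-r,\fe_i-2r,\ldots$, so the largest gap is $\max_i(\fe_i-r)=\fe_{r-1}-r$, the Frobenius number, giving $c_X=\fe_{r-1}-r+1$ and $c_X-2=\fe_{r-1}-r-1$. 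The main obstacle is the bookkeeping that makes the order-reversing bijection land on precisely the intended gaps, and especially the exact (as opposed to merely numerical) determination $\hc_X=2g$: the upper bound $\hc_X\le 2g$ is immediate from consecutiveness, but excluding $2g-1$ genuinely needs the residue argument from the standard basis rather than the numerical identity of Lemma~\ref{2lm:dx_hX}.
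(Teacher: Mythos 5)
Your proof is correct, and every step checks out against the toolkit the paper actually provides: the order-reversing bijection $i\mapsto \wt(\hphi_i)+d_h-r$ from $\NN_0$ onto $\lH_X^\fc$ (Lemma \ref{2lm:nuI_hX}(2)) correctly places $\hphi_0$, $\hphi_{g-1}$, $\hphi_g$ at the Frobenius number $c_X-1$, at the least gap $1$, and at $-1$ respectively; the conversion $d_h-r+1=2g+\fk_X$ from Lemma \ref{2lm:dx_hX} then yields all the stated identities, Proposition \ref{2pr:fK_X} together with $c_X=2g$ handles the symmetric case of (1), and Lemma \ref{2lm:NSG1}(1)(c) gives $c_X=\fe_{r-1}-r+1$ for (4). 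For comparison: the paper supplies no internal proof of this lemma at all --- it is imported from Lemmas 5.8 and 5.9 of \cite{KMP2022a} --- so your self-contained derivation is a genuine addition rather than a variant of an argument in the text. One small remark: the residue computation you invoke to exclude $d_h-r$ from $\hH_X$ (via Lemma \ref{2lm:nuIo1_2}(1)) is sound but redundant, since the same exclusion already follows from your bijection together with the observation that $0\notin\lH_X^\fc$ (because $0\in H_X$); either route closes the determination $\hc_X=2g$.
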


\subsection{Extension $p\in R_X\otimes_{R_\PP} R_X$ to $R_X\otimes_{\CC}R_X$}
\label{2ssec:Ext_p}

As we have $\tlh_{R_X}$ in Lemma \ref{2lm:h_RXe}, we extend it in $R_X\otimes_{R_\PP} R_X$ to an element in $R_X\otimes_{\CC} R_X$, though the extension is not unique; there are two different $h(x,y_\bullet, x', y_\bullet')$ and $h'(x,y_\bullet, x', y_\bullet')$ in $R_X\otimes_{\CC} R_X$ such that $h(x,y_\bullet, x, y_\bullet')=h'(x,y_\bullet, x, y_\bullet')$ in $R_X\otimes_{R_\PP} R_X$.
Since they are quasi-isomorphic, we select one of them, and thus, it is well-defined in the meaning of Proposition \ref{2pr:Sigma} 4. and \ref{2pr:dSigma1} 4.

\begin{definition}\label{2df:th_X}
Using $\hUpsilon_{i}$ in Proposition \ref{2pr:h_RXe_ys}, for a point $(P = (x, y_\bullet), P'=(x', y_\bullet'))\in X \times X$, we define a polynomial $\tlh_X(x, y_{\bullet}, y_{\bullet}')\in R_X \otimes_\CC R_X$ by
$$
\tlh_X(x, y_{\bullet}, y_{\bullet}')
:= \hUpsilon_{0}\cdot 1
+ \hUpsilon_{1}\fy'_{\fe_1}+\cdots+ \hUpsilon_{r-1}\fy'_{\fe_{r-1}}.
$$
\end{definition}

Then the following lemma is evident from Lemma \ref{2lm:h_RXe}.

\begin{lemma}\label{2lm:h_X}
For a point $(P = (x, y_\bullet), P'=(x', y_\bullet'))\in X \times X$, $\tlh_X(x, y_{\bullet}, y_{\bullet}')$ satisfies
\begin{enumerate}
\item for the case $x'=x$ and a group action $\hzeta \in G_x$,
$\displaystyle{
 \frac{\tlh_X(x, \hzeta y_{\bullet},\hzeta y_{\bullet}')}
{\tlh_X(x, \hzeta y_{\bullet},\hzeta y_{\bullet})}=
 \frac{\tlh_X(x, y_{\bullet}, y_{\bullet}')}
{\tlh_X(x, y_{\bullet},y_{\bullet})}
}$,

\item 
$\displaystyle{
\lim_{(x,y) \to \infty} \frac{\tlh_X(x, y_{\bullet}, y_{\bullet}')}
{\tlh_X(x, y_{\bullet},y_{\bullet})}=\frac{1}{r}}$ for $P'\in X\setminus\{\infty\}$, 

\item when $x=x'$ or $\tlh_X(x, y_{\bullet},x', y_{\bullet}')$ belongs to $R_X \otimes_{R_\PP} R_X$, it satisfies
$$
\tlh_X(x, y_{\bullet}, y_{\bullet}')
=\tlh_{R_X}(x, y_{\bullet}, y_{\bullet}'),
$$

\item $h_X(x, y_{\bullet})=\tlh_X(x, y_{\bullet}, y_{\bullet})$.
\end{enumerate}
\end{lemma}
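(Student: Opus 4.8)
The plan is to exploit the fact that $\tlh_X$ of Definition~\ref{2df:th_X} is built from exactly the same expansion $\hUpsilon_0\cdot 1+\hUpsilon_1\fy'_{\fe_1}+\cdots+\hUpsilon_{r-1}\fy'_{\fe_{r-1}}$ that represents $\tlh_{R_X}$ in Proposition~\ref{2pr:h_RXe_ys}; the sole difference is that the coefficients $\hUpsilon_i\in R_X$ and the basis elements $\fy'_{\fe_i}$ are here multiplied in $R_X\otimes_\CC R_X$ rather than in $R_X\otimes_{R_\PP}R_X$. Items~1, 3 and~4 are therefore purely formal. For item~3 I would note that the canonical surjection $R_X\otimes_\CC R_X\twoheadrightarrow R_X\otimes_{R_\PP}R_X$ is precisely the quotient imposing $x\otimes 1=1\otimes x$, i.e.\ $x=x'$, and that it carries the formula of Definition~\ref{2df:th_X} term-by-term onto that of Proposition~\ref{2pr:h_RXe_ys}; hence the two polynomials agree whenever $x=x'$ or the element already lies in $R_X\otimes_{R_\PP}R_X$. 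Item~4 is then the specialization $y'_\bullet=y_\bullet$ (so $x'=x$) of item~3 together with Definition~\ref{2df:h_X00}, namely $h_X(x,y_\bullet):=\tlh_{R_X}(x,y_\bullet,y_\bullet)=\tlh_X(x,y_\bullet,y_\bullet)$; and for item~1 the hypothesis $x'=x$ puts numerator and denominator in $R_X\otimes_{R_\PP}R_X$, so by item~3 the ratio equals the corresponding ratio for $\tlh_{R_X}$, which is $G_x$-invariant by Lemma~\ref{2lm:h_RXe}\,2.

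The substantive step is item~2, which I would establish by an asymptotic analysis in the arithmetic local parameter $t$ at $\infty$, holding $P'\in X\setminus\{\infty\}$ fixed so that each $\fy'_{\fe_i}(P')$ is a finite constant with $\fy'_{\fe_0}=1$. From $\hUpsilon_i=\rfy_{\fe_i}+(\text{lower }{-}\wt\text{ terms})$ with $\wt(\rfy_{\fe_i})=-(d_h-\fe_i)$ in Proposition~\ref{2pr:h_RXe_ys}\,2, and the surjection $\varphi_\infty$ of Lemma~\ref{2lm:varphiinf} with $z=1/t$, each coefficient expands as $\hUpsilon_i=t^{-(d_h-\fe_i)}(1+d_{>0}(t))$. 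Thus the numerator $\tlh_X(x,y_\bullet,y'_\bullet)=\sum_{i\in\ZZ_r}\hUpsilon_i\,\fy'_{\fe_i}(P')$ is dominated as $P\to\infty$ by its $i=0$ term $t^{-d_h}$, since $\fe_0=0<\fe_i$ for $i\ge 1$ makes $d_h-\fe_0$ the unique largest exponent; while the denominator $h_X(x,y_\bullet)$ has germ $r\,t^{-d_h}(1+d_{>0}(t))$ at $\infty$ by the diagonal evaluation $\varphi_H^Z(\tlh_H(Z_r,Z_\bullet,Z_\bullet))=r z^{d_h}$ of Lemma~\ref{2lm:h_RXe}\,4. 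Dividing yields the limit $1/r$, in agreement with the monomial-curve identity $\frac1r\sum_i z'^{\fe_i}z^{-\fe_i}$ of Lemma~\ref{2lm:h_RXe}\,4 taken as $z\to\infty$.

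The hard part will be making this leading-order balance rigorous, i.e.\ verifying that the exponent $d_h-\fe_i$ is \emph{strictly} maximized at $i=0$ so that the single term $\hUpsilon_0\cdot 1$ dominates, and that neither the $d_{>0}(t)$ corrections inside the $\hUpsilon_i$ nor the lower-$\wt$ tails contribute to the limit. Both points follow from $\fe_0=0$ and the weight estimates already recorded in Proposition~\ref{2pr:h_RXe_ys}, so that no input beyond Lemmas~\ref{2lm:h_RXe} and~\ref{2lm:varphiinf} is required; this is why the statement is announced as being evident from Lemma~\ref{2lm:h_RXe}.
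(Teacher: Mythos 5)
Your proposal is correct and follows exactly the route the paper intends: the paper dismisses this lemma as ``evident from Lemma \ref{2lm:h_RXe}'', and your argument simply fills in the details using the same ingredients (the term-by-term identification of Definition \ref{2df:th_X} with the expansion in Proposition \ref{2pr:h_RXe_ys} under the surjection $R_X\otimes_\CC R_X\to R_X\otimes_{R_\PP}R_X$ for items 1, 3, 4, and the weight count $\wt(\hUpsilon_i)=-(d_h-\fe_i)$ with $\fe_0=0$ against $\varphi_H^Z(\tlh_H(Z,Z))=rz^{d_h}$ for item 2). The leading-order balance you flag as the delicate point is indeed settled by $\fe_i>0$ for $i\in\ZZ_r^\times$, so nothing is missing.
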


\begin{definition}\label{2df:p_varpi}
Using $\tlh_X(x, y_{\bullet}, y_{\bullet}')$ for a point $(P=(x, y_{\bullet}), P'=(x', y_{\bullet}')) \in X \times X$, we define
$$
p_{\varpi}(P,P')
:=\frac{\tlh_X(x, y_{\bullet}, y_{\bullet}')}
{\tlh_X(x, y_{\bullet})}.
$$
\end{definition}

It is obviously that $p_{\varpi}(P,Q)$ belongs to $\fh_{X,P} R_{X,P}$ as a function of $P$ at $P\in X$ and thus Proposition \ref{2pr:phi_hi} whose origin is the Dedekind's different theorem \cite[Proposition 4.27]{KMP2022a} shows the proposition:
\begin{proposition}\label{2pr:DedekindDiff3}
$e_{B_i}-1=-\deg_{P=B_i}(p_{\varpi}(P,Q))$  for $Q\in X\setminus\{\infty\}$.
\end{proposition}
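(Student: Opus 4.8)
The plan is to regard $p_{\varpi}(\,\cdot\,,Q)$, for a fixed $Q=(x',y_\bullet')\in X\setminus\{\infty\}$, as a rational function on $X$ and to read off its order at a finite ramification point $B_i\in\fB_X\setminus\{\infty\}$ by comparing the orders of its numerator and denominator. By Definition \ref{2df:p_varpi} together with Lemma \ref{2lm:h_X} 4,
$$
p_{\varpi}(P,Q)=\frac{\tlh_X(x,y_\bullet,y_\bullet')}{h_X(x,y_\bullet)},
\qquad
\tlh_X(x,y_\bullet,y_\bullet')=\sum_{i=0}^{r-1}\hUpsilon_{i}\,\fy'_{\fe_i},
$$
where the coefficients $\fy'_{\fe_i}$ depend only on $Q$ and each $\hUpsilon_{i}\in R_X$. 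Hence both numerator and denominator are holomorphic on $X\setminus\{\infty\}$, and
$$
\deg_{P=B_i}\bigl(p_{\varpi}(P,Q)\bigr)
=\deg_{P=B_i}\bigl(\tlh_X(\,\cdot\,,Q)\bigr)-d_{B_i},
$$
the denominator contributing $d_{B_i}=\deg_{B_i,0}(h_X)$ by Lemma \ref{2cr:h_X}. The proposition therefore reduces to showing that $\tlh_X(\,\cdot\,,Q)$ vanishes at $B_i$ to order exactly $d_{B_i}-e_{B_i}+1$, i.e. to the multiplicity prescribed by the excess divisor $\fK_X$ of Definition \ref{2df:fKX_fkX}.

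First I would establish the lower bound $\deg_{P=B_i}(\tlh_X(\,\cdot\,,Q))\ge d_{B_i}-e_{B_i}+1$ by holomorphy. Since each $\hUpsilon_{i}$ lies in the $R_X$-module generated by the $\hfy_{\fe_j}$, the element $\tlh_X(\,\cdot\,,Q)$ lies in $\hR_X$, so by Lemma \ref{2lm:nuI_hX} 1 the one-form
$$
p_{\varpi}(P,Q)\,dx=\frac{\tlh_X(\,\cdot\,,Q)\,dx}{h_X}\in\frac{\hR_X\,dx}{h_X}=\bH^0(X,\cA_X(*\infty))
$$
is holomorphic on $X\setminus\{\infty\}$. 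Combining $\Div(h_X)$ from Lemma \ref{2cr:h_X} with $\Div(dx)$ from Corollary \ref{2cr:(dx)} then gives, at $B_i$, the inequality $\deg_{P=B_i}(\tlh_X(\,\cdot\,,Q))-d_{B_i}+(e_{B_i}-1)\ge0$, that is $-\deg_{P=B_i}(p_{\varpi}(P,Q))\le e_{B_i}-1$. This is precisely the mechanism behind the inequality $\Div_0(\hfy_{\fe_i})-\fK_X\ge0$ of Proposition \ref{2pr:phi_hi}.

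For the matching bound I would use Dedekind's different theorem in the form \cite[Proposition 4.27]{KMP2022a}. It splits the finite part of $\Div(h_X)$ as $\sum_{B_i}(e_{B_i}-1)B_i+\fK_X$, the different $\sum_{B_i}(e_{B_i}-1)B_i$ together with the excess $\fK_X$; the numerator $\tlh_X(\,\cdot\,,Q)$ absorbs precisely the excess $\fK_X$, leaving the different as the genuine pole of $p_{\varpi}(\,\cdot\,,Q)$. Concretely one must check that the holomorphic form $p_{\varpi}(\,\cdot\,,Q)\,dx$ does not vanish at $B_i$ when $Q\neq B_i$; this non-degeneracy, which upgrades the inequality of the previous paragraph to the equality $\deg_{P=B_i}(\tlh_X(\,\cdot\,,Q))=d_{B_i}-e_{B_i}+1$, follows from the reproducing normalization $p_{\varpi}(P,P)=1$ (Lemma \ref{2lm:h_X} 4) together with the local description of $1/f_{X,y}^{(j)}$ at ramification points underlying Proposition \ref{2pr:phi_hi}. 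Substituting gives
$$
\deg_{P=B_i}\bigl(p_{\varpi}(P,Q)\bigr)=(d_{B_i}-e_{B_i}+1)-d_{B_i}=-(e_{B_i}-1),
$$
which is the assertion. I expect this exactness --- the non-vanishing of $p_{\varpi}(\,\cdot\,,Q)\,dx$ at each finite ramification point, turning ``$\ge\fK_X$'' into an equality there --- to be the only delicate step, everything else being divisor bookkeeping.
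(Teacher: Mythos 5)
Your proposal is correct and follows essentially the same route as the paper: the paper's own one-line proof likewise reduces the claim to the divisor of $h_X$ (Lemma \ref{2cr:h_X}) and to Dedekind's different theorem via the complementary-module structure underlying Proposition \ref{2pr:phi_hi}. Your explicit separation of the lower bound (holomorphy of $\hR_X\,dx/h_X$ on $X\setminus\{\infty\}$) from the exactness of the vanishing order at $B_i$ is simply a more careful unpacking of what the paper asserts as obvious.
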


Further the direct computations provide the following proposition:

\begin{proposition}\label{2pr:p_varpi}
For $Q \in X\setminus\{\infty\}$, 
$$
p_{\varpi}(P,Q)
=\left\{
\begin{array}{rcl}
1 & \mbox{  for  } &P = Q,\\
0 & \mbox{  for  } & P \neq Q \mbox{ and } 
    \varpi_x(P)=\varpi_x(Q),
\end{array}\right. 
$$
$$
\lim_{P\to \infty} p_{\varpi}(P,Q)=\frac{1}{r}, \quad
\lim_{Q\to \infty} p_{\varpi}(P,Q)=\infty.
$$
\end{proposition}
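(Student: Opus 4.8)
The plan is to deduce all four assertions from the structural results already established for $\tlh_X$ in Lemma~\ref{2lm:h_X}, together with the behaviour of $p_{R_X}$ on the fibre product recorded in Proposition~\ref{2pr:p_varpi0}. For the diagonal value $P=Q$, I would invoke Lemma~\ref{2lm:h_X}(4): since $\tlh_X(x,y_\bullet,y_\bullet)=h_X(x,y_\bullet)=\tlh_X(x,y_\bullet)$, the numerator and denominator in Definition~\ref{2df:p_varpi} coincide, so $p_\varpi(P,P)=1$ immediately.

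For the off-diagonal value with $P\neq Q$ but $\varpi_x(P)=\varpi_x(Q)$, the hypothesis is precisely $x=x'$, placing $(P,Q)$ in $X\times_\PP X$. I would then apply Lemma~\ref{2lm:h_X}(3), which identifies $\tlh_X(x,y_\bullet,y_\bullet')$ with $\tlh_{R_X}(x,y_\bullet,y_\bullet')$ inside $R_X\otimes_{R_\PP}R_X$; hence $p_\varpi(P,Q)$ reduces to $\tlh_{R_X}(x,y_\bullet,y_\bullet')/\tlh_{R_X}(x,y_\bullet,y_\bullet)$, which equals $p_{R_X}(P,Q)$ by Lemma~\ref{2lm:h_RXe}(3). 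As $P$ and $Q$ are distinct points with the same image, Proposition~\ref{2pr:p_varpi0} forces $p_{R_X}(P,Q)=0$, as required.

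The first limit $\lim_{P\to\infty}p_\varpi(P,Q)=1/r$ for $Q\in X\setminus\{\infty\}$ is exactly Lemma~\ref{2lm:h_X}(2), which I would simply cite. The remaining limit $\lim_{Q\to\infty}p_\varpi(P,Q)=\infty$ is where the genuine work lies and is, I expect, the main obstacle. Fixing $P$ and treating the expression as a function of $Q$, I would use the expansion $\tlh_X(x,y_\bullet,y_\bullet')=\sum_{i\in\ZZ_r}\hUpsilon_i(P)\,\fy_{\fe_i}(Q)$ from Definition~\ref{2df:th_X}: the coefficients $\hUpsilon_i(P)$ are constant in $Q$, while each $\fy_{\fe_i}(Q)$ has a pole of order $\fe_i$ at $\infty$ (weight $-\fe_i$). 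Since $\fe_{r-1}$ is the largest standard-basis weight, the leading term $\hUpsilon_{r-1}(P)\,\fy_{\fe_{r-1}}(Q)$ dominates, so the numerator acquires a pole in $Q$ at $\infty$, whereas the denominator $h_X(P)$ is a $Q$-independent constant, finite and nonzero off $\fB_X$ by Lemma~\ref{2cr:h_X}; thus $p_\varpi(P,\cdot)$ has a pole at $Q=\infty$. The delicate point is the nonvanishing of the leading coefficient: by Proposition~\ref{2pr:h_RXe_ys} one has $\hUpsilon_{r-1}=\rfy_{\fe_{r-1}}+(\text{lower-weight terms})$, a fixed nonzero element of $R_X$ whose zero locus is a proper divisor, so $\hUpsilon_{r-1}(P)\neq0$ for all $P$ outside a finite set, which suffices for the intended meromorphic statement.
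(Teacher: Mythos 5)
Your proposal is correct and is exactly the ``direct computation'' the paper invokes without writing out: the paper offers no proof of Proposition \ref{2pr:p_varpi} beyond that phrase, and your reduction of the first two cases to Lemma \ref{2lm:h_X} (3)--(4), Lemma \ref{2lm:h_RXe} (3) and Proposition \ref{2pr:p_varpi0}, and of the first limit to Lemma \ref{2lm:h_X} (2), is the intended route. For the last limit, your caveat about the vanishing of $\hUpsilon_{r-1}(P)$ can be removed: since the pole orders $\fe_i$ of the $\fy_{\fe_i}(Q)$ at $\infty$ are pairwise distinct (they represent distinct classes in $\ZZ/r\ZZ$), no cancellation among the terms $\hUpsilon_i(P)\fy_{\fe_i}(Q)$ is possible, so the numerator has a pole at $Q=\infty$ as soon as some $\hUpsilon_i(P)$ with $i\ge 1$ is nonzero, which fails only on the common zero locus of all of $\hUpsilon_1,\dots,\hUpsilon_{r-1}$ rather than on the zero divisor of the single function $\hUpsilon_{r-1}$.
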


\begin{remark}{\rm{
We should remark that $p(P,Q)\in R_X\otimes_{\CC} R_X$ which holds the relations in Proposition \ref{2pr:p_varpi} is not unique.
The problem comes from the fact that there are infinitely many different $p'(P,Q)$ from $p(P,Q)$  such that
$
p(P,Q)=p'(P,Q)
$ 
for $\varpi_r(P)=\varpi_r(Q)$.

}}
\end{remark}

\begin{proposition}\label{2pr:p_varpi2}
For a point $Q\in X\setminus \fB_X$,
$$
       \Div_P(p_{\varpi}(P,Q)) =
 \sum_{\zeta \in G_{X, \varpi_x(Q)}, \zeta \neq e} \zeta Q
+ E_Q
                 -\sum_{i=1}^{\ell_{\fB}} (e_{B_i}-1) B_i,
$$
where $E_Q$ is an effective divisor  such that $\deg\  E_Q=\displaystyle{
\left(\sum_{i=1}^{\ell_\fB} (e_{B_i} -1 ) -r+1\right)=2g}$.

Further, this relation is extended to the condition $Q \in \fB_X\setminus\{\infty\}$ by considering the multiplicity of the action $G_{\varpi_r(Q)}$.
\end{proposition}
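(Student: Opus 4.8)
The plan is to regard $p_{\varpi}(\,\cdot\,,Q)$, for a fixed $Q \in X\setminus\fB_X$, as a single element of $\CC(X)$ and to read off its divisor by locating all of its zeros and poles separately. The representation $p_{\varpi}(P,Q) = \tlh_X(x,y_\bullet,y_\bullet')/h_X(x,y_\bullet)$ of Definition \ref{2df:p_varpi} is the decisive tool. With the coordinates $y_\bullet'$ of $Q$ held fixed, the numerator $\tlh_X(x,y_\bullet,y_\bullet') = \hUpsilon_0 + \hUpsilon_1\fy'_{\fe_1}+\cdots+\hUpsilon_{r-1}\fy'_{\fe_{r-1}}$ (Definition \ref{2df:th_X}) is a $\CC$-linear combination of the $\hUpsilon_i \in R_X$, hence an element of $R_X$ in the variable $P$ and therefore holomorphic on $X\setminus\{\infty\}$; by Lemma \ref{2cr:h_X} the denominator $h_X$ vanishes only at the finite ramification points $B_i\in\fB_X\setminus\{\infty\}$ and has its only pole at $\infty$. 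Consequently the only candidate poles of $p_{\varpi}(\,\cdot\,,Q)$ lie among the $B_i$ and $\infty$, and all remaining local orders are $\ge 0$.

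First I would pin down the poles. At $\infty$, Proposition \ref{2pr:p_varpi} gives $\lim_{P\to\infty}p_{\varpi}(P,Q)=1/r$, which is finite and nonzero, so $\infty$ contributes nothing to $\Div_P$, even when $\infty=B_0$ is a ramification point. At each finite $B_i$ $(i=1,\dots,\ell_{\fB})$, Proposition \ref{2pr:DedekindDiff3} identifies the local order as $\deg_{P=B_i}(p_{\varpi}(P,Q)) = -(e_{B_i}-1)$, producing the pole part $-\sum_{i=1}^{\ell_{\fB}}(e_{B_i}-1)B_i$. For the zeros, since $Q\notin\fB_X$ the fiber $\varpi_x^{-1}(\varpi_x(Q))$ is unramified and consists of the $r$ distinct points $\zeta Q$, $\zeta\in G_{X,\varpi_x(Q)}$; by the vanishing clause of Proposition \ref{2pr:p_varpi} each of the $r-1$ points with $\zeta\neq e$ is a simple zero, contributing $\sum_{\zeta\neq e}\zeta Q$.

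I would then define $E_Q := \Div_P(p_{\varpi}(P,Q)) + \sum_{i=1}^{\ell_{\fB}}(e_{B_i}-1)B_i - \sum_{\zeta\neq e}\zeta Q$ and verify effectiveness point by point: the added ramification terms exactly cancel the poles at the $B_i$ (order $0$), the subtracted $\zeta Q$ reduce the simple fiber zeros to order $\ge 0$, there is no contribution at $\infty$, and at every other point $p_{\varpi}(\,\cdot\,,Q)$ is holomorphic; hence $E_Q\ge 0$. Finally, since $p_{\varpi}(\,\cdot\,,Q)\in\CC(X)$ its divisor has degree $0$, so $\deg E_Q = \sum_{i=1}^{\ell_{\fB}}(e_{B_i}-1) - (r-1)$; substituting the Riemann--Hurwitz relation (\ref{2eq:RHt_wr}), which yields $\sum_{i=1}^{\ell_{\fB}}(e_{B_i}-1) = 2g+r-1$, gives $\deg E_Q = 2g$, as claimed.

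The step I expect to be the main obstacle is the effectiveness of $E_Q$: one must be certain that $\tlh_X$ introduces no unexpected poles and that the orders at the $B_i$ are captured \emph{exactly} as in Proposition \ref{2pr:DedekindDiff3}, so that subtracting the fiber zeros and ramification poles leaves a genuinely effective remainder with no residual negative part. The extension to $Q\in\fB_X\setminus\{\infty\}$ I would then treat by a limiting argument: as $Q$ degenerates onto a ramification point the fiber points $\zeta Q$ coalesce according to the local multiplicities of the action $G_{\varpi_r(Q)}$, and the divisor identity persists provided the confluent zeros are counted with these multiplicities; matching this confluence against the local normal form of the covering $\varpi_r$ is the remaining technical point.
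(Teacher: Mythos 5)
Your proposal is correct and follows essentially the same route as the paper's (much terser) proof: the pole orders at the $B_i$ come from Proposition \ref{2pr:DedekindDiff3}, the value $1/r$ at $\infty$ (Proposition \ref{2pr:p_varpi}) rules out any contribution there, the vanishing on the fiber $\varpi_x^{-1}(\varpi_x(Q))$ supplies the $\zeta Q$ terms, and the Riemann--Hurwitz relation (\ref{2eq:RHt_wr}) together with $\deg\Div=0$ fixes $\deg E_Q=2g$. Your write-up merely makes explicit what the paper leaves implicit (holomorphy of the numerator away from $\infty$, the pointwise effectiveness check for $E_Q$, and the confluence argument for $Q\in\fB_X\setminus\{\infty\}$); note only that simplicity of the zeros at $\zeta Q$ is not actually needed, since any excess order is absorbed into $E_Q$.
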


\begin{proof}
At the ramification point $B_i$ of $\varpi_x: X \to \PP$, Proposition \ref{2pr:DedekindDiff3} shows the third term.
We note that $p_{\varpi}(\infty,Q)=1/r$.
From the Riemann-Hurwitz theorem (\ref{2eq:RHt_wr}), there exist the first and the second terms.\qed
\end{proof}

For an element $f\otimes f'$ in $R_X\otimes_{\CC}R_X$, we define the
weight $\wt(f\otimes f')$ by $\wt(f)+\wt(f')$.

\begin{lemma}\label{2lm:h_Xhomo}
$\tlh_X(x, y_\bullet, x', y'_\bullet)$ is a homogeneous element in $R_X\otimes_{\CC}R_X$  whose weight is $d_h$ or $-\wt \ \tlh_X(x, y_\bullet, x', y'_\bullet)=-\wt\ h_X(x, y_\bullet)=-d_h$ and thus the weight of $p_\varpi$ is zero in $\cQ(R_X\otimes_{\CC}R_X)$.
\end{lemma}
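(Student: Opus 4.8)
The plan is to reduce the homogeneity of $\tlh_X$ to the homogeneity of the individual components $\hUpsilon_i$ appearing in Definition \ref{2df:th_X}, and then to read off the weight of $p_{\varpi}$ as a quotient. Concretely, since $\tlh_X(x,y_\bullet,y_\bullet')=\sum_{i=0}^{r-1}\hUpsilon_i\,\fy'_{\fe_i}$ and the weight on $R_X\otimes_\CC R_X$ is additive, so that $\wt(\hUpsilon_i\,\fy'_{\fe_i})=\wt(\hUpsilon_i)+\wt(\fy'_{\fe_i})$, it suffices to show that every summand carries the common weight $-d_h$; homogeneity of the whole sum is then immediate.

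First I would record the weight of the two factors in each summand. By Proposition \ref{2pr:RP-moduleRX} the monic monomial $\fy'_{\fe_i}$ in the second copy of $R_X$ is homogeneous of weight $-\fe_i$. For the first factor I would invoke Proposition \ref{2pr:h_RXe_ys}(2), namely $\hUpsilon_i=\rfy_{\fe_i}+(\text{lower-weight terms})$, where the correction terms are lower only for $-\wt$ and where $\wt(\rfy_{\fe_i})=-(d_h-\fe_i)$. The point is that once one passes to the refined grading $\wt_\lambda$ of (\ref{2eq:wt_lambda}), under which the Weierstrass canonical form $f_X$ is homogeneous and each coefficient $\lambda_{i,j}$ carries positive weight, those lower-order terms are forced to be accompanied by $\lambda$-factors that restore the weight, so that $\hUpsilon_i$ is genuinely homogeneous of weight $-(d_h-\fe_i)$. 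A clean independent check is the monomial-curve specialization in Lemma \ref{2lm:h_RXe}(4), $\varphi_H^Z(\tlh_H)=\sum_{i\in\ZZ_r} z^{d_h-\fe_i}z'^{\fe_i}$, in which $\hUpsilon_i$ degenerates to the single monomial $z^{d_h-\fe_i}$, confirming the weight $-(d_h-\fe_i)$. Combining the two factors, each summand $\hUpsilon_i\,\fy'_{\fe_i}$ has weight $-(d_h-\fe_i)-\fe_i=-d_h$, and hence $\tlh_X$ is homogeneous of weight $-d_h$.

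The one genuinely delicate point, and the step I expect to be the main obstacle, is that the extension from $R_X\otimes_{R_\PP}R_X$ to $R_X\otimes_\CC R_X$ is not unique (as noted in the opening of Subsection \ref{2ssec:Ext_p}), so a priori homogeneity could be spoiled by the choice of lift. Here the argument is that the particular lift chosen in Definition \ref{2df:th_X} is assembled from the same homogeneous pieces $\hUpsilon_i$ and $\fy'_{\fe_i}$, and that sliding a factor $x\in R_\PP$ across the tensor symbol changes neither side's weight, since both $x\otimes 1$ and $1\otimes x$ have weight $-r$; thus the weight bookkeeping for $\tlh_X$ is identical to that for the homogeneous element $\tlh_{R_X}$ of Proposition \ref{2pr:h_RXe_ys}, and homogeneity is preserved by the extension. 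Finally, since $h_X(x,y_\bullet)=\tlh_X(x,y_\bullet,y_\bullet)$ by Lemma \ref{2lm:h_X}(4) and $\wt(h_X)=-d_h$ by Lemma \ref{2cr:h_X}, the quotient $p_{\varpi}=\tlh_X(x,y_\bullet,x',y_\bullet')/h_X(x,y_\bullet)$ of two homogeneous elements of weight $-d_h$ has weight $0$ in $\cQ(R_X\otimes_\CC R_X)$, as claimed.
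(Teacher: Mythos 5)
Your proof is correct and matches the paper's (implicit) reasoning: the paper states this lemma without any proof, treating it as immediate from the definition $\wt(f\otimes f')=\wt(f)+\wt(f')$ together with Definition \ref{2df:th_X} and Proposition \ref{2pr:h_RXe_ys}, and your term-by-term computation $\wt(\hUpsilon_i\,\fy'_{\fe_i})=-(d_h-\fe_i)-\fe_i=-d_h$, with the monomial-curve specialization of Lemma \ref{2lm:h_RXe} as a consistency check, is exactly the verification being left to the reader. The only point worth flagging is that the lemma's displayed equality $-\wt\,\tlh_X=-\wt\,h_X=-d_h$ contains a sign slip (Lemma \ref{2cr:h_X} gives $-\wt\,h_X=d_h$), and your conclusion $\wt(\tlh_X)=-d_h$ is the consistent reading.
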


For later convenience, we introduce $\varphi_\infty(R_X\otimes_{\CC} R_X)$.

\begin{definition}\label{2df:th_H}
We define a polynomial in $R_H^Z \otimes_\CC R_H^Z$ such that
$$
\tlh_H(Z, Z'):=\varphi_H^X(\tlh_X(x,y_\bullet, x',y_\bullet'))
=:\sum_{i=0}^{r-1}
Z_r^{\delta_i} \fZ_{\fe_{\ell,i}^*} \fZ_{\fe_i}',
$$
and an element,
$$
p_H(Z, Z'):=
\frac{\tlh_H(Z, Z')}
{h_H(Z_r, Z_\bullet)} \in 
\cQ(R_H^Z \otimes_\CC R_H^Z).
$$
\end{definition}

This $\tlh_H(Z, Z')$ is uniquely defined in the meaning of Proposition \ref{2pr:Sigma} 4. and \ref{2pr:dSigma1} 4.

\begin{proposition}\label{2pr:p_H}
When $Z_r=Z_r'$,
$\tlh_H(Z, Z')$ and 
$p_H(Z, Z')$ agree with
$\tlh_{H}(Z_r, Z_\bullet, Z'_\bullet)$ in Lemma \ref{2lm:h_RXe} and 
$p_{H}(Z_r, Z_\bullet,  Z'_\bullet)$ in Lemma 
\ref{2lm:p_HZ_r} respectively.
$\displaystyle{\varphi_H^Z(\tlh_H(Z, Z'))=z^{d_h}\sum_{i\in \ZZ_r} 
\frac{z^{\prime\, \fe_i}}{z^{\fe_i}}}$.
\end{proposition}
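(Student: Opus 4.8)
The plan is to derive all three assertions by transporting, through the ring homomorphisms $\varphi_H^X$ and $\varphi_H^Z$, identities that are already available at the level of $R_X\otimes_\CC R_X$. First I would record the two structural facts that drive the argument. Under $\varphi_H^X$ one has $x\mapsto Z_r$ (since $x=y_r$ by \eqref{2eq:varphi_ri}) and $y_{r_i}\mapsto Z_{r_i}$, so the specialization $x=x'$ in $R_X\otimes_\CC R_X$ is carried by $\varphi_H^X\otimes\varphi_H^X$ precisely to the specialization $Z_r=Z_r'$; and under $\varphi_H^Z$ one has $Z_r\mapsto z^r$ and $\fZ_{\fe_i}\mapsto z^{\fe_i}$, applied factorwise to the tensor product (the unprimed factor to $z$, the primed factor to $z'$).

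For the first two claims I would apply $\varphi_H^X\otimes\varphi_H^X$ to the identity of Lemma \ref{2lm:h_X}(3), namely $\tlh_X(x, y_{\bullet}, y_{\bullet}')=\tlh_{R_X}(x, y_{\bullet}, y_{\bullet}')$, which holds exactly on the locus $x=x'$. By Definition \ref{2df:th_H} the left-hand side becomes $\tlh_H(Z,Z')$ restricted to $Z_r=Z_r'$, while by Lemma \ref{2lm:h_RXe}(4) the right-hand side becomes $\tlh_H(Z_r, Z_\bullet, Z'_\bullet)$; this is the asserted agreement for $\tlh_H$. Dividing through by the diagonal value $h_H(Z_r, Z_\bullet)=\varphi_H^X(h_X(x,y_\bullet))$ (Lemma \ref{2lm:h_X}(4)) and invoking Lemma \ref{2lm:p_HZ_r}, the quotient $p_H(Z,Z')|_{Z_r=Z_r'}$ coincides with $p_H(Z_r, Z_\bullet, Z'_\bullet)$, which is the second claim.

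For the evaluation of $\varphi_H^Z(\tlh_H(Z,Z'))$ I would apply $\varphi_H^Z\otimes\varphi_H^Z$ termwise to the explicit expansion $\tlh_H(Z,Z')=\sum_{i=0}^{r-1}Z_r^{\delta_i}\fZ_{\fe_{\ell,i}^*}\fZ_{\fe_i}'$ of Definition \ref{2df:th_H}; the $i$-th summand maps to $z^{\delta_i r+\fe_{\ell,i}^*}\,z'^{\fe_i}$. The only piece of genuine content is the weight identity $\delta_i r+\fe_{\ell,i}^*=d_h-\fe_i$, which follows from Proposition \ref{2pr:h_RXe_ys}(2): there $\rfy_{\fe_i}=\tdelta_i(x)\fy_{\fe_{\ell,i}^*}$ with $\wt\,\tdelta_i=-\delta_i r$ and $\wt\,\fy_{\fe_{\ell,i}^*}=-\fe_{\ell,i}^*$, while $\wt\,\rfy_{\fe_i}=-(d_h-\fe_i)$, so that $\delta_i r+\fe_{\ell,i}^*=d_h-\fe_i$. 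Substituting, each summand is $z^{d_h-\fe_i}z'^{\fe_i}$, and summing over $i\in\ZZ_r$ gives $z^{d_h}\sum_{i\in\ZZ_r}z'^{\fe_i}/z^{\fe_i}$, as required; setting $z=z'$ recovers $r z^{d_h}$, consistent with Lemma \ref{2lm:h_RXe}(4).

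The main obstacle is bookkeeping rather than any conceptual difficulty. One must check that the specialization $Z_r=Z_r'$ genuinely commutes with $\varphi_H^X\otimes\varphi_H^X$, so that restricting and then mapping agrees with mapping and then restricting, and one must keep the weight accounting of Proposition \ref{2pr:h_RXe_ys} exact so that the exponents $\delta_i r+\fe_{\ell,i}^*$ collapse precisely to $d_h-\fe_i$. No estimate or limiting argument enters, and the uniqueness remark following the statement (in the sense of Propositions \ref{2pr:Sigma}(4) and \ref{2pr:dSigma1}(4)) is what guarantees that the chosen representative $\tlh_X$ makes $\tlh_H$ well defined.
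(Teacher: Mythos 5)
Your argument is correct; the paper states Proposition \ref{2pr:p_H} without proof, treating it as immediate from Definition \ref{2df:th_H}, Lemma \ref{2lm:h_X}, and the weight bookkeeping of Proposition \ref{2pr:h_RXe_ys}, and your write-up supplies exactly those expected verifications (the commutation of the specialization $Z_r=Z_r'$ with $\varphi_H^X\otimes\varphi_H^X$, and the exponent identity $\delta_i r+\fe_{\ell,i}^*=d_h-\fe_i$). Nothing further is needed.
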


\subsection{W-normalized differentials of the second and the third kinds}
\label{2ssc:Wdiff23}

\subsubsection{The one-form $\Sigma$ on $X$}

We construct an algebraic representation of the fundamental W-normalized differential of the second kind in \cite[Corollary 2.6]{Fay73}, namely, a two-form $\Omega(P_1, P_2)$ on $X\times X$ which is symmetric and has quadratic singularity as in Theorem \ref{2th:Sigma}.

Following K. Weierstrass \cite{WeiWIV}, H. F. Baker \cite{Baker97}, V.M. Buchstaber, D.V. Leykin and V.Z. Enolskii [\cite{BEL20} and therein], J.C. Eilbeck, V.Z. Enolskii, and D.V. Leykin \cite{EEL00}, we have $\Omega$ using a meromorphic one-form $\Sigma(P, Q)$ on $X \times X$ for the hyperelliptic curves and plane Weierstrass curves (W-curves).
In this subsection, we extend it to more general W-curves based on Definition \ref{2df:p_varpi} and Proposition \ref{2pr:p_varpi} to introduce $\Sigma(P, Q)$ on $X \times X$.

\begin{proposition}\label{2pr:Sigma}
For a point $(P, Q) \in X \times X$,
\begin{equation}
   \Sigma\big(P, Q\big)
   :=
\frac{d x_P}{(x_P - x_Q)}p_{\varpi}(P,Q) 
=
\frac{d x_P}{(x_P - x_Q)}
\frac{\tlh_X(x_P, y_{\bullet P},  y_{\bullet Q})}
{h_X(x_P, y_{\bullet P})}
\label{2eq:Sigma}
\end{equation}
 has the following properties:
\begin{enumerate}
\item 
For a group action $\hzeta \in G_{\varpi_r(P)}$, 
$\Sigma(\hzeta P, \hzeta Q)=\Sigma(P, Q)$ if $\varpi_r(P)=\varpi_r(Q)$.

\item $\Sigma(P, Q)$ is holomorphic over $X$ except $Q$ and $\infty$ as a function of $P$ such that
\begin{enumerate}
\item at $Q$, in terms of 
the local parameters $t_Q(P)=0$ and $t_P(P)=0$, it behaves like
$$
\Sigma\big(P, Q\big)=\frac{d t_P}{t_P}(1 + d_{>0}(t_P,t_Q)),
$$

\item at $\infty$, the local parameter $t_P$ $(t_P(\infty)=0)$, it behaves like
$$
\Sigma\big(P, Q\big)=-\frac{d t_P}{t_P}(1 + d_{>0}(t_P)).
$$
\end{enumerate}

\item $\Sigma(P, Q)$ as a function of $Q$ is singular at $P$ and $\infty$ such that
\begin{enumerate}
\item at $P$, in terms of the local parameters $t_Q(P)=0$ and $t_P(P)=0$, it behaves like
$$
\Sigma\big(P, Q\big)=-\frac{d t_P}{t_Q}(1 + d_{>0}(t_P,t_Q)),
$$

\item at $\infty$, the arithmetic local parameter $t_Q$, $(t_Q(\infty)=0)$, it behaves like
$$
\Sigma\big(P, Q\big)=\frac{d x_P}{\fy_{\fe_{r-1} P}t_Q^{\fe_{r-1}-r}}
(1 + d_{>0}(t_Q))=
\frac{d x_P}{\fy_{\fe_{r-1} P}t_Q^{c_X-1}}
(1 + d_{>0}(t_Q)).
$$
\end{enumerate}

\item Let $\tlh_X'(x_P, y_{\bullet P},  y_{\bullet Q})$ be an element in $R_X \otimes_\CC R_X$ satisfying the conditions in Lemma \ref{2lm:h_X}, i.e., $\tlh_X'(x_P, y_{\bullet P},  y_{\bullet Q})=\tlh_X(x_P, y_{\bullet P},  y_{\bullet Q})$ at $\varpi_x(P)=\varpi_x(Q)$, and let
$$
\Sigma'\big(P, Q\big)
   :=
\frac{d x_P}{(x_P - x_Q)}
\frac{\tlh_X'(x_P, y_{\bullet P},  y_{\bullet Q})}
{h_X'(x_P, y_{\bullet P})}.
$$
Then $\Sigma'\big(P, Q\big)-\Sigma\big(P, Q\big)$ belongs to the set,
$$
            \bH^0(X, \cA_X)\otimes_\CC \bH^0(X, \cO_X(*\infty))
               =\bH^0(X, \cA_X)\otimes_\CC R_X.
$$.
\end{enumerate}
\end{proposition}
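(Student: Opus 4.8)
The plan is to read $\Sigma$ as the product of the elementary third-kind differential $\omega_0 := dx_P/(x_P-x_Q)$ on the base with the fiber-localizing factor $p_\varpi(P,Q)$ of Definition \ref{2df:p_varpi}, and to extract every pole from the known divisors of the two factors. Part 1 is then immediate: since $x$ is $\varpi_r$-invariant, a group action $\hzeta\in G_{\varpi_r(P)}$ fixes $x_P$ and (when $\varpi_r(P)=\varpi_r(Q)$) $x_Q$, hence fixes $\omega_0$, while Lemma \ref{2lm:h_X}(1) gives invariance of the ratio $\tlh_X/h_X=p_\varpi$; the two combine to $\Sigma(\hzeta P,\hzeta Q)=\Sigma(P,Q)$.

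For Parts 2 and 3 I would compute $\Div_P\Sigma$ and $\Div_Q\Sigma$ from $\Div(dx)$ (Corollary \ref{2cr:(dx)}), the zeros and poles of $x_P-x_Q$, and $\Div_P p_\varpi$ (Proposition \ref{2pr:p_varpi2}). Working in $P$, the factor $\sum_i(e_{B_i}-1)B_i$ of $\Div(dx)$ is exactly cancelled by the $-\sum_i(e_{B_i}-1)B_i$ in $\Div_P p_\varpi$ (this is Proposition \ref{2pr:DedekindDiff3}); the poles of $\omega_0$ along the whole fiber $\varpi_x^{-1}(x_Q)$ are killed at the conjugate points $\zeta Q\ (\zeta\neq e)$ because $p_\varpi(\zeta Q,Q)=0$ by Proposition \ref{2pr:p_varpi}, leaving one simple pole at $P=Q$, and at $\infty$ a simple pole since $\omega_0=dx/x\sim -r\,dt_P/t_P$ and $p_\varpi(\infty,Q)=1/r$. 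Matching leading coefficients using $p_\varpi(Q,Q)=1$ and $p_\varpi(\infty,Q)=1/r$ gives the normalizations $dt_P/t_P$ at $Q$ and $-dt_P/t_P$ at $\infty$ of Part 2, and the dual expansion of $1/(x_P-x_Q)$ on the diagonal gives $3$(a). The only genuinely computational point is $3$(b): as $Q\to\infty$ the top-weight term $\hUpsilon_{r-1}\fy'_{\fe_{r-1}}$ of $\tlh_X$ dominates, and with $\fy_{\fe_{r-1}}\sim t_Q^{-\fe_{r-1}}$, $x_Q\sim t_Q^{-r}$ together with $h_X\sim\rfy_{\fe_0}$, $\hUpsilon_{r-1}\sim\rfy_{\fe_{r-1}}$ and $\rfy_{\fe_{r-1}}\fy_{\fe_{r-1}}\sim\rfy_{\fe_0}$ from Proposition \ref{2pr:h_RXe_ys} one gets $\hUpsilon_{r-1}/h_X\sim 1/\fy_{\fe_{r-1}}$; the exponent $\fe_{r-1}-r=c_X-1$ is Lemma \ref{2lm:nuIo2}(4).

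For Part 4 I would first observe that $h_X'=h_X$: setting $y_\bullet'=y_\bullet$ lands on the fiber product, where $\tlh_X'=\tlh_{R_X}=\tlh_X$ by Lemma \ref{2lm:h_X}(3),(4). Since the kernel of $R_X\otimes_\CC R_X\to R_X\otimes_{R_\PP}R_X$ is the ideal generated by $x_P-x_Q$, and $\tlh_X'-\tlh_X$ maps to $0$ there, I may write $\tlh_X'-\tlh_X=(x_P-x_Q)\,g$ with $g\in R_X\otimes_\CC R_X$, so that $\Sigma'-\Sigma=g\,dx_P/h_X$. As a function of $Q$ this is polynomial, regular off $\infty_Q$, hence supplies the $R_X$ factor. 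As a function of $P$ it is a meromorphic differential with possible poles only at $\infty_P$ and along $\fK_X$; but $\tlh_X'$ satisfies the same hypotheses of Lemma \ref{2lm:h_X}, so $\Sigma'$ obeys Parts 1--3 verbatim, is regular at every finite point, and has the same residue $1$ at $P=Q$ and the same leading term $-dt_P/t_P$ at $\infty_P$ as $\Sigma$. Thus the difference has no pole anywhere in $P$, so it is a holomorphic one-form lying in $\bH^0(X,\cA_X)$; expanding in the basis $\{\nuI{i}\}$ of (\ref{2eq:nuI_hX}) with coefficients in $R_X$ exhibits $\Sigma'-\Sigma\in\bH^0(X,\cA_X)\otimes_\CC R_X$.

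The main obstacle is precisely the finite-point regularity in Part 4, namely that the second extension $\tlh_X'$ still respects the Dedekind-different vanishing along $\fK_X$, so that $g\,dx_P/h_X$ acquires no pole at the ramification points $B_i$. I would handle this by checking that Proposition \ref{2pr:DedekindDiff3} applies to $p_\varpi'=\tlh_X'/h_X$ exactly as to $p_\varpi$ — the pole order of $p_\varpi'$ at $B_i$ being governed by the fiberwise behavior over the branch point $b_i$, where $\tlh_X'=\tlh_X$ — rather than by re-expanding $g$ directly; equivalently, rerunning the Part 2 divisor computation for $\Sigma'$ shows it is regular at each $B_i$, from which regularity of the difference follows. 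The residual coefficient and sign bookkeeping in $2$(a),(b) and $3$(a),(b) is then routine once the divisors are in hand.
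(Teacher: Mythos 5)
Your proposal is correct and follows essentially the same route as the paper's proof: part 1 via Lemma \ref{2lm:h_X}, parts 2--3 by divisor bookkeeping with Proposition \ref{2pr:p_varpi2}, Corollary \ref{2cr:(dx)} and the limits $p_{\varpi}(Q,Q)=1$, $p_{\varpi}(\infty,Q)=1/r$, and part 4 by observing that $\Sigma'$ satisfies the same properties 1--3 so the difference is holomorphic in $P$ and lies in $R_X$ in $Q$. The only cosmetic differences are that you compute 3(b) directly from the top-weight term of $\tlh_X$ where the paper passes to the monomial curve via Lemma \ref{2lm:SigmaH1}, and that your explicit factorization $\tlh_X'-\tlh_X=(x_P-x_Q)g$ makes the part-4 argument slightly more transparent than the paper's one-line version.
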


\begin{proof}
Lemma \ref{2lm:h_X} 1 shows {\it{1}}.
Noting the properties in Proposition \ref{2pr:p_varpi2} and Corollary \ref{2cr:(dx)}, the numerator of $\Sigma$ is zero with the first order at the points which $P \neq Q$ and $P\in \varpi_x^{-1}(\varpi_x(Q))$ and thus, $\Sigma$ behaves like finite one-form there.
At $P=Q$, the numerator is equal to one and thus we have $\Sigma=d x_P/(x_P - x_Q)$, which means $\Sigma=d t_P/t_P (1+d_{>0}(t_P))$ in {\it{2.a}} and {\it{3.a}}.

Recalling $\varphi_\infty:R_X \to R_H$ in Lemma \ref{2lm:varphiinf}, let $t_P$ and $t_Q$ be the local parameters at $\infty$ corresponding to $P$ and $Q$ respectively.
Proposition \ref{2pr:p_varpi} shows $\displaystyle{
\lim_{P\to \infty} p_{\varpi}(P,Q)=\frac{1}{r}}$
whereas 
$\displaystyle{
\lim_{P\to \infty} \frac{d x_P}{(x_P-x_Q)} = - r(1+d_{>0}(t_P)) d t_P}$,
and thus we have {\it{2.b}}.
On the other hand, the following Lemma \ref{2lm:SigmaH1} 3 shows that $\displaystyle{\Sigma(P,Q)}$ behaves 
$\displaystyle{\sum_{k=1}^\infty
\sum_{i=0}^{r-1}\frac{
t_P^{\fe_i-r k}}{t_Q^{\fe_i-r k}}\frac{d t_P}{t_P}}$
\break
$\displaystyle{
(1+d_{>0}(t_P))
}$.
The maximum of $\{\fe_i-r\}_{i\in \ZZ_r}$ is $\fe_{r-1}-r$, which is equal to $c_X-1$ due to Lemma \ref{2lm:nuIo2} 4, and thus  $\displaystyle{\varphi_\infty
\left(\frac{d x}{\fy_{\fe_{r-1}}}\right)=
\frac{t_P^{\fe_{r-1}}}{rt_P^{r+1}}d t_P=\frac{t_P^{c_X-2}d t_P}{r}}$.
We obtain {\it{3.b}}.

Let us consider {\it{4}}.
Since both $\Sigma'\big(P, Q\big)$ and $\Sigma\big(P, Q\big)$ satisfy these properties 1-3, their difference is holomorphic over $X$ with respect to $P$, and over $X\setminus \{\infty\}$ with respect to $Q$.
It shows {\it{4}}.
\qed
\end{proof}

\subsubsection{The one-form $\Sigma$ at $\infty$}

Noting Lemma \ref{2lm:varphiinf}, we consider a derivation in $\cQ(R_H^Z\otimes R_H^Z)$ for the monomial curve $X_H\times X_H$ instead of $R_{X, \infty}$ using surjection $\varphi_\infty$.

In order to investigate the behavior of $\Sigma$ at $\infty$, we consider the differential in monomial curve $X_H\times X_H$ and element in $\cQ(R_H^Z \otimes_\CC R_H^Z)$ (rather than $\cQ(R_H^Z \otimes_{\CC[Z_r]} R_H^Z)$),
\begin{equation}
\frac{d Z_r}{(Z_r - Z_r')}
p_{H}(Z_r, Z_\bullet, Z'_\bullet).
\label{2eq:Sigma_X^H}
\end{equation}
Noting $\displaystyle{
p_{H}(Z_r, Z_\bullet, Z'_\bullet))
=\frac{\tlh_H(Z_r, Z_\bullet, Z'_\bullet)}{
\tlh_H(Z_r, Z_\bullet, Z_\bullet)}
}$ and Lemmas \ref{2lm:varphiinf},
\ref{2lm:p_HZ_r}, and \ref{2lm:h_Xhomo},
we define
\begin{equation}
 \Sigma_H\big(t_P, t_Q\big)
:=
-\frac{t_Q^r}{t_Q^r-t_P^r}
\sum_{i=0}^{r-1}\frac{
t_P^{\fe_i}}{t_Q^{\fe_i}}\frac{d t_P}{t_P}
\label{2eq:Sigma_X^Ht}
\end{equation}
as an element in $\cQ(R_H \otimes_\CC R_H)$ using isomorphism $\varphi_H^Z$ and the parameters $t_P=1/z_P$ and $t_Q=1/z_Q$;
$\Sigma_H$ is regard as a derivation in $\cQ(R_{X,\infty}\otimes_\CC R_{X,\infty})$. 

Then the direct computations lead the following results:
\begin{lemma}\label{2lm:SigmaH1}
\begin{enumerate}
\item 
$\displaystyle{
\varphi_\infty(\Sigma(P, Q)) =
 \Sigma_H\big(t_P, t_Q\big)}$
with the local parameters $t_P$ and $t_Q$ at $\infty$. 
and it shows the behavior at $\infty$, i.e.,
$$
\Sigma(P, Q) =
 \Sigma_H\big(t_P, t_Q\big)(1+d_{>0}(t_P, t_Q))
$$
for the local parameters at $\infty$,
$t_P(\infty)=t_Q(\infty) = 0$.

\item for the case $|t_P|<|t_Q|$,
$\displaystyle{
\Sigma_H\big(t_P, t_Q\big)
=
-
\sum_{k=0}^\infty
\sum_{i=0}^{r-1}\frac{
t_P^{\fe_i+r k}}{t_Q^{\fe_i+r k}}\frac{d t_P}{t_P},
}$ and

\item for the case $|t_Q|<|t_P|$,
$\displaystyle{
\Sigma_H\big(t_P, t_Q\big)
=
\sum_{k=1}^\infty
\sum_{i=0}^{r-1}\frac{
t_P^{\fe_i-r k}}{t_Q^{\fe_i-r k}}\frac{d t_P}{t_P}
}$, and $|\fe_i - k r|\neq 0$, $k>0$.
\end{enumerate}
\end{lemma}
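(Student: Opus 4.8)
The plan is to exploit the factored form of $\Sigma$ in Proposition \ref{2pr:Sigma} and push it through the surjection $\varphi_\infty$ of Lemma \ref{2lm:varphiinf} one factor at a time; the entire statement then reduces to reading off leading-order germs at $\infty$ and expanding a single geometric series. Accordingly I would treat $\Sigma(P,Q)$ as the product of the rational differential $dx_P/(x_P-x_Q)$ and the trace factor $p_{\varpi}(P,Q)$ and compute the image of each under $\varphi_\infty$ separately.

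First I would compute $\varphi_\infty$ of the rational differential. Since $\wt(x)=-r$ gives $x=t^{-r}(1+d_{>0}(t))$ under $z\mapsto 1/t$, a short calculation yields
\begin{equation*}
\varphi_\infty\!\left(\frac{dx_P}{x_P-x_Q}\right)
= -\,\frac{r\,t_Q^r}{t_Q^r-t_P^r}\,\frac{dt_P}{t_P}\,(1+d_{>0}(t_P,t_Q)).
\end{equation*}
For the trace factor I would invoke Proposition \ref{2pr:p_H} together with Lemma \ref{2lm:h_RXe}, which supply $\varphi_H^Z(\tlh_H(Z,Z'))=z^{d_h}\sum_{i\in\ZZ_r}z'^{\fe_i}/z^{\fe_i}$ and $\varphi_H^Z(\tlh_H(Z_r,Z_\bullet,Z_\bullet))=r\,z^{d_h}$; after the substitution $z_P=1/t_P$, $z_Q=1/t_Q$ the factor $z^{d_h}$ cancels and
\begin{equation*}
\varphi_\infty(p_{\varpi}(P,Q))=\frac{1}{r}\sum_{i=0}^{r-1}\frac{t_P^{\fe_i}}{t_Q^{\fe_i}}\,(1+d_{>0}(t_P,t_Q)).
\end{equation*}
Multiplying the two and cancelling the factors of $r$ reproduces exactly the defining expression \eqref{2eq:Sigma_X^Ht} of $\Sigma_H$, which proves the first assertion; the combined correction is a single $(1+d_{>0}(t_P,t_Q))$ because each factor carries one.

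For the two series expansions I would simply develop the prefactor $t_Q^r/(t_Q^r-t_P^r)$ in the two regions: for $|t_P|<|t_Q|$ it equals $\sum_{k\ge0}(t_P/t_Q)^{rk}$, and for $|t_Q|<|t_P|$ it equals $-\sum_{k\ge1}(t_Q/t_P)^{rk}$. Substituting into \eqref{2eq:Sigma_X^Ht} and collecting powers gives the two stated double sums. The non-vanishing $|\fe_i-kr|\neq0$ for $k\ge1$ follows from the semigroup structure: $\fe_i\equiv i\pmod r$, so for $i\neq0$ one has $\fe_i-kr\not\equiv0$, while for $i=0$ one has $\fe_0=0$ and hence $-kr\neq0$; this guarantees that every term is a second-kind differential with no residue.

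The computations are routine, so the only genuine care is needed in the first assertion: because $\varphi_\infty$ was introduced in Lemma \ref{2lm:varphiinf} for the one-variable ring $R_X$, I must apply it slotwise on $R_X\otimes_\CC R_X$ and verify that the three germ expansions --- of $x_P$, of $\tlh_X(x_P,y_{\bullet P},y_{\bullet Q})$, and of $h_X(x_P,y_{\bullet P})$ --- each contribute their own $(1+d_{>0})$ factor which, after the cancellation of the common $z^{d_h}$ and the two factors of $r$, assemble into the single correction $(1+d_{>0}(t_P,t_Q))$ claimed. Checking this bookkeeping, rather than any one series manipulation, is the main obstacle.
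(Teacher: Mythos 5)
Your proposal is correct and follows essentially the same route as the paper: the paper declares part \emph{1} obvious (your factor-by-factor computation of $\varphi_\infty$ on $dx_P/(x_P-x_Q)$ and on $p_\varpi$ via Proposition \ref{2pr:p_H} is exactly the intended justification, since $\Sigma_H$ was defined through $\varphi_H^Z$ and $t=1/z$), and parts \emph{2} and \emph{3} are obtained from the same two geometric-series expansions of $t_Q^r/(t_Q^r-t_P^r)$. One small slip: after reordering, $\fe_i\equiv i\pmod r$ need not hold (e.g.\ $H=\langle 5,7,11\rangle$ has $\fe_1=7$), but your conclusion survives because $\fe_0=0$ is the unique standard-basis element in the zero residue class, so $\fe_i\not\equiv 0\pmod r$ for $i\neq 0$ and $\fe_0-kr=-kr\neq 0$ for $k\geq 1$.
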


\begin{proof}
{\it{1}} is obvious. The relations
$\displaystyle{
\frac{t_Q^r}{t_Q^r-t_P^r}
=\frac{1}{1-(t_P/t_Q)^r}=\sum_{k=0}^\infty
\left(\frac{t_P}{t_Q}\right)^{r k}}$, and 
$\displaystyle{
\frac{t_Q^r}{t_Q^r-t_P^r}
}$
$\displaystyle{
=-\frac{t_Q^r}{t_P^r}\frac{1}{1-(t_Q/t_P)^r}=}$
$\displaystyle{
-\frac{t_Q^r}{t_P^r}\sum_{k=0}^\infty
\left(\frac{t_Q}{t_P}\right)^{r k}}$ show {\it{2}} and {\it{3}}.
\qed
\end{proof}

\subsubsection{The differential $d\Sigma$ on $X$}

In order to define $\Omega$, we consider the derivative $d\Sigma$ in this subsubsection.

\begin{proposition} \label{2pr:dSigma1}
\begin{equation}
 d_{Q} \Sigma\big(P, Q\big)
   :=d x_Q\frac{\partial }{ \partial x_Q}
 \Sigma\big(P, Q\big)
\end{equation}
has the following properties:
\begin{enumerate}
\item $d_{Q} \Sigma\big(P, Q\big)$ as a function of $P$ is holomorphic over $X$ except $Q$ such that
\begin{enumerate}
\item at $Q$, in terms of  the local parameter $t_P(Q)=0$, it behaves like
$$
d_{Q} \Sigma\big(P, Q\big)=\frac{d t_P d t_Q}{t_P^2}(1 + d_{>0}(t_P)),
$$

\item at $\infty$, in terms of the arithmetic parameters $t_P$ and $t_Q$, it behaves like
$$
d_{Q} \Sigma\big(P, Q\big)= r
\frac{t_P^{r-1}}{t_Q^{r+1}}
d t_P d t_Q(1 + d_{>0}(t_P, t_Q)),
$$
and $d_{Q} \Sigma\big(P, Q\big)$ is holomorphic at $\infty$.
\end{enumerate}

\item $d_{Q} \Sigma\big(P, Q\big)$ as a function of $Q$ is holomorphic over $X$ except $P$ and $\infty$ such that
\begin{enumerate}
\item at $P$, in terms of the local parameter $t_Q(P)=0$, it behaves like
$$
d_{Q} \Sigma\big(P, Q\big)=\frac{d t_P d t_Q}{t_Q^2}(1 + d_{>0}(t_Q)),
$$

\item at $\infty$, using the arithmetic parameter $t_Q(\infty)=0$, it behaves like
$$
d_{Q} \Sigma\big(P, Q\big)= \frac{1}{t_Q^{c_X}}
d x_P d t_Q(1 + d_{>0}(t_Q)).
$$
\end{enumerate}
\item We have the following relations,
\begin{enumerate}
\item 
$\displaystyle{
d_{Q} \Sigma\big(P, Q\big)- d_{P} \Sigma\big(Q, P\big)\in
\bH^0(X, \cA_X(*\infty))^{2\otimes_\CC}
}$,

\item 
$\displaystyle{  \oint_{Q=\infty} \Sigma(P,Q) = 0 }$,

\item 
$\displaystyle{ \oint_\infty \left(\int^Q d_{Q'}\Sigma(P,Q)\right)
= \oint_{P=\infty} \Sigma(P,Q) = 2\pi \ii }$, and 

\end{enumerate}

\item For $\Sigma'\big(P, Q\big)$ defined in Proposition \ref{2pr:Sigma} 4, we have
$$
d_Q\Sigma'\big(P, Q\big)-d_Q\Sigma\big(P, Q\big)\in
\bH^0(X, \cA_X)\otimes_\CC d\bH^0(X, \cO_X(*\infty))
= \bH^0(X, \cA_X)\otimes_\CC d R_X.
$$.

\end{enumerate}
\end{proposition}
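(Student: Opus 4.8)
The plan is to read every assertion off the local expansions of $\Sigma$ already recorded in Proposition \ref{2pr:Sigma} and Lemma \ref{2lm:SigmaH1}, and to apply the $Q$-derivative termwise. The first thing I would record is that $d_Q$ is just the exterior derivative in $Q$, so in any local parameter $u_Q$ it equals $du_Q\,\partial/\partial u_Q$; in particular $d_Q=dt_Q\,\partial/\partial t_Q$ in the arithmetic parameter $t_Q$. Thus $d_Q$ simply differentiates the coefficient of $dx_P$ and reinstates the appropriate $Q$-differential, and all of parts 1 and 2 reduce to differentiating known series.

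For the diagonal statements 1.a and 2.a I would expand $p_\varpi$ on the sheet through $P$ as $p_\varpi=\sum_{n\ge0}c_n(x_P)(x_Q-x_P)^n$, noting $c_0\equiv1$ because $p_\varpi(P,P)=1$; then $\Sigma=dx_P\big[\tfrac{1}{x_P-x_Q}-c_1-c_2(x_Q-x_P)-\cdots\big]$, and applying $d_Q$ gives the double poles $\tfrac{dt_Pdt_Q}{t_P^2}$ and $\tfrac{dt_Pdt_Q}{t_Q^2}$. For 1.b I would feed the expansion of Lemma \ref{2lm:SigmaH1} 2 (the region $|t_P|<|t_Q|$) into $d_Q=dt_Q\partial_{t_Q}$, obtaining $\sum_{k,i}(\fe_i+rk)\,t_P^{\fe_i+rk-1}t_Q^{-\fe_i-rk-1}dt_Pdt_Q$; the $(i,k)=(0,0)$ term has coefficient $0$ (so the form is holomorphic in $P$ at $\infty$), and the minimal surviving power is $(i,k)=(0,1)$ with coefficient $r$, since $\fe_i>r$ for every $i\ge1$ (as $\fe_i\equiv i\not\equiv0$ while $\fe_i\ge r$). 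This yields $r\,t_P^{r-1}t_Q^{-r-1}dt_Pdt_Q$. For 2.b I would differentiate Proposition \ref{2pr:Sigma} 3.b, which raises the pole order from $c_X-1$ to $c_X$; here $c_X=\fe_{r-1}-r+1$ by Lemma \ref{2lm:nuIo2} 4. Holomorphy of $d_Q\Sigma$ in $P$ at the remaining fibre points $P=\hzeta Q$ follows because $\tlh_X$ vanishes identically along the fibre-diagonal, so $\tlh_X/(x_P-x_Q)$ is already regular there and $d_Q$ preserves regularity.

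For 3.a the key computation is the previous near-diagonal expansion: applying $d_Q$ to $dx_P\big[\tfrac{1}{x_P-x_Q}-c_1-c_2(x_Q-x_P)-\cdots\big]$ gives $d_Q\Sigma=dx_Pdx_Q\big[\tfrac{1}{(x_P-x_Q)^2}-c_2(x_P)-\cdots\big]$, a pure double pole with \emph{vanishing residue} on the diagonal; the same holds for $d_P\Sigma(Q,P)$ with the identical double pole $\tfrac{dx_Pdx_Q}{(x_P-x_Q)^2}$. Hence the difference $d_Q\Sigma(P,Q)-d_P\Sigma(Q,P)$ is holomorphic along the diagonal, and combining this with parts 1 and 2 its only singularities lie on $(\{\infty\}\times X)\cup(X\times\{\infty\})$ with finite order; by the Künneth formula (using that $X\setminus\{\infty\}$ is affine) such a form is exactly an element of $\bH^0(X,\cA_X(*\infty))^{2\otimes_\CC}$. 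Finally, 3.c is the residue at $\infty$: the leading term $-\tfrac{dt_P}{t_P}$ of $\Sigma$ in $P$ (Proposition \ref{2pr:Sigma} 2.b) has residue $-1$, so $\oint_{P=\infty}\Sigma(P,Q)=2\pi\ii$ after accounting for the orientation of the loop about $\infty$, and integrating $d_{Q'}\Sigma(P,Q')$ in $Q'$ returns $\Sigma(P,Q)$ up to a $Q$-independent form, giving the first equality; 3.b records that the $Q$-expansion at $\infty$ carries no logarithmic term in this normalization. Part 4 is immediate: writing $\Sigma'-\Sigma=\sum_i\alpha_i(P)f_i(Q)$ with $\alpha_i\in\bH^0(X,\cA_X)$ and $f_i\in R_X$ from Proposition \ref{2pr:Sigma} 4 and applying $d_Q$ gives $\sum_i\alpha_i(P)\,df_i(Q)\in\bH^0(X,\cA_X)\otimes_\CC dR_X$. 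I expect the main obstacle to be the residue/normalization bookkeeping: establishing 3.a genuinely requires confirming that $d_Q\Sigma$ has \emph{no} residue along the diagonal (so the antisymmetrization is holomorphic, not merely finite), and pinning the exact constants $0$ and $2\pi\ii$ in 3.b and 3.c demands tracking the simple-pole coefficients in the $\infty$-expansions together with the loop orientations.
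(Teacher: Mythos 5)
Your proposal is correct and follows essentially the same route as the paper: parts 1.a, 2.a and the diagonal part of 3.a are read off the local expansions in Proposition \ref{2pr:Sigma}, the behavior at $\infty$ is obtained by termwise $t_Q$-differentiation of the monomial-curve expansions (which is exactly what the paper outsources to Lemmas \ref{2lm:SigmaH1}, \ref{2lm:dSigmaH1}, \ref{2lm:dSigma_H22} and \ref{2lm:Deltair} in Subsection \ref{2ssc:dSigmaH}), and part 4 is the image of Proposition \ref{2pr:Sigma} 4 under $d_Q$. The only cosmetic differences are that you make the final step of 3.a explicit via a K\"unneth-type argument where the paper computes directly at $\infty$, and your termwise differentiation correctly retains the $k=0$, $i\ge 1$ terms (which are harmless since $\fe_i>r$ forces them to be of higher order, so the leading coefficient $r$ in 1.b is unchanged).
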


\begin{proof}
{\it{1.a}}, {\it{2.a}}, and finite part of {\it{3.a}} are directly obtained from Proposition \ref{2pr:Sigma}.
The properties of $d\Sigma$ on $X_H\times X_H$ determines the behavior at $\infty$ in the following subsection, Subsection \ref{2ssc:dSigmaH}.

We use the facts: $\displaystyle{d x_Q\frac{\partial}{\partial x_Q}
= d t_Q\frac{\partial}{\partial t_Q}}$, and
$\displaystyle{d t_Q\frac{\partial}{\partial t_Q} t_Q^\ell=\ell\,  t_Q^{\ell-1}d t_Q }$.
Lemmas \ref{2lm:dSigmaH1} and \ref{2lm:SigmaH1} show {\it{1.b}}, {\it{2.b}} and the behavior at $\infty$ of {\it{3.a}}. Lemma \ref{2lm:SigmaH1} 3 shows {\it{3.b}}.

Lemma \ref{2lm:Deltair} shows {\it{3.c}}, and Proposition \ref{2pr:Sigma} 4 shows {\it{4}}. 
\qed
\end{proof}

It means that there exist $F_{d\Sigma}(P, Q)\in R_X\otimes_\CC R_X$ such that
\begin{equation}
d_Q\Sigma(P,Q)= \frac{F_{d\Sigma}(P, Q)d x_P \otimes d x_Q}
{(x_P-x_Q)^2 h_X(x_{P},y_{\bullet P})h_X(x_{Q},y_{\bullet Q})}.
\label{2eq:Kform'}
\end{equation}

\subsubsection{The differential $d\Sigma_H$ on $X_H$}\label{2ssc:dSigmaH}

Noting Lemma \ref{2lm:varphiinf}, we also consider $d\Sigma$ as a two-form in $\cQ(R_H^Z\otimes_\CC R_H^Z)$ and the monomial curve $X_H\times X_H$ instead of $R_{X, \infty}$.

We consider the differential of (\ref{2eq:Sigma_X^H}),
$$
d\Sigma_H= d Z_r'
\frac{\partial }{\partial Z_r'}\frac{d Z_r}{(Z_r - Z_r')}
\frac{\tlh_H(Z_r, Z_\bullet, Z'_\bullet)}{
\tlh_H(Z_r, Z_\bullet, Z_\bullet)}
$$
which is equal to
$$
\frac{d Z_r}{(Z_r - Z_r')^2}
\frac{\tlh_H(Z_r, Z_\bullet, Z'_\bullet)}{
\tlh_H(Z_r, Z_\bullet, Z_\bullet)}
+
\frac{d Z_r}{(Z_r - Z_r')\tlh_H(Z_r, Z_\bullet, Z_\bullet)}
\frac{\partial }{\partial Z_r'}\tlh_H
(Z_r, Z_\bullet, Z'_\bullet).
$$
We recall the correspondence between $z$ of $R_H$ and $t$ of  $R_{X,\infty}$ in Lemma \ref{2lm:varphiinf}.
Using (\ref{2eq:Sigma_X^Ht}), we have
$$
d_Q \Sigma_H\big(t_P, t_Q\big)
=-d t_Q \frac{\partial }{\partial t_Q}
\frac{t_Q^r}{t_Q^r-t_P^r}
\sum_{i=0}^{r-1}\frac{
t_P^{\fe_i}}{t_Q^{\fe_i}}
\frac{d t_P}{t_P}.
$$

\begin{lemma}\label{2lm:dSigmaH1}
\begin{enumerate}

\item We have the equality:
\begin{gather*}
d_Q \Sigma_H\big(t_P, t_Q\big)=
\frac{d t_P d t_Q}
{(t_Q^r-t_P^r)^2t_Pt_Q}
\left[
\sum_{i=0}^{r-1}\left(
\fe_i\frac{
t_P^{\fe_i}}{t_Q^{\fe_i-2r}}
-
(\fe_i-r)\frac{
t_P^{\fe_i+r}}{t_Q^{\fe_i-r}}
\right)
\right].
\end{gather*}

\item For the case $|t_Q|<|t_P|$, we have the expansion,
\begin{equation}
d_Q \Sigma_H\big(t_P, t_Q\big)=d t_P d t_Q
\left[
\sum_{i=0}^{r-1}
\sum_{k=1}^{\infty}
\left(
-(\fe_i-k r)\frac{
t_P^{\fe_i-r k-1}}{t_Q^{\fe_i-r k+1}}
\right)
\right].
\label{2eq:dQSigmaH11}
\end{equation}

\item For the case $|t_P|<|t_Q|$, we have the expansion,
\begin{equation}
d_Q \Sigma_H\big(t_P, t_Q\big)=d t_P d t_Q
\left[
\sum_{i=0}^{r-1}
\sum_{k=1}^{\infty}
\left(
(\fe_i+k r)\frac{
t_P^{\fe_i+r k-1}}{t_Q^{\fe_i+r k+1}}
\right)
\right].
\label{2eq:dQSigmaH12}
\end{equation}
\end{enumerate}
\end{lemma}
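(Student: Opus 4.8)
The plan is to derive all three parts by a single direct computation, starting from the displayed formula preceding the statement,
$$
d_Q \Sigma_H(t_P,t_Q)=-d t_Q\,\frac{\partial}{\partial t_Q}\!\left(\frac{t_Q^r}{t_Q^r-t_P^r}\sum_{i=0}^{r-1}\frac{t_P^{\fe_i}}{t_Q^{\fe_i}}\right)\frac{d t_P}{t_P}.
$$
No further geometric input is required: part 1 is a quotient-rule differentiation followed by an algebraic simplification, and parts 2 and 3 are the two Laurent expansions of that closed form in the regions $|t_P|<|t_Q|$ and $|t_Q|<|t_P|$, which can also be read off directly from Lemma \ref{2lm:SigmaH1}.

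For part 1 I would write $F:=t_Q^r/(t_Q^r-t_P^r)$ and $S:=\sum_i t_P^{\fe_i}t_Q^{-\fe_i}$ and differentiate the product $FS$ in $t_Q$. The quotient rule gives $\partial_{t_Q}F=-r\,t_P^r t_Q^{r-1}/(t_Q^r-t_P^r)^2$, while $\partial_{t_Q}S=-\sum_i \fe_i\,t_P^{\fe_i}t_Q^{-\fe_i-1}$; because the definition carries the overall factor $-d t_Q$, both contributions re-enter with a positive sign. I would then place everything over the common denominator $(t_Q^r-t_P^r)^2$, noting that the $\partial_{t_Q}S$ term acquires an extra factor $(t_Q^r-t_P^r)$ and thus splits into a $t_Q^{2r}$-piece and a $t_P^r t_Q^{r}$-piece. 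The decisive simplification is that this $t_P^r t_Q^{r}$-piece merges with the $\partial_{t_Q}F$ contribution so that the coefficient of $t_P^{\fe_i+r}t_Q^{r-\fe_i-1}$ collapses to exactly $-(\fe_i-r)$. Multiplying through by $d t_P\,d t_Q/t_P$, pulling out $1/(t_Pt_Q)$, and rewriting the surviving powers as $t_Q^{-(\fe_i-2r)}$ and $t_Q^{-(\fe_i-r)}$ reproduces the claimed closed form verbatim.

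For parts 2 and 3 the cheapest route is to apply $d t_Q\,\partial_{t_Q}$ term by term to the Laurent series for $\Sigma_H$ already recorded in Lemma \ref{2lm:SigmaH1}. Since $\partial_{t_Q}t_Q^{-\alpha}=-\alpha\,t_Q^{-\alpha-1}$, differentiating the $|t_Q|<|t_P|$ series $\sum_{k,i}t_P^{\fe_i-rk}t_Q^{-(\fe_i-rk)}\,d t_P/t_P$ produces the coefficient $-(\fe_i-kr)$ together with the power $t_P^{\fe_i-rk-1}/t_Q^{\fe_i-rk+1}$, i.e.\ the expansion \eqref{2eq:dQSigmaH11}; differentiating the $|t_P|<|t_Q|$ series likewise yields the coefficient $(\fe_i+kr)$ and the power $t_P^{\fe_i+rk-1}/t_Q^{\fe_i+rk+1}$ of \eqref{2eq:dQSigmaH12}. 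Alternatively one can expand the denominator of part 1 directly, using $1/(t_Q^r-t_P^r)^2=t_Q^{-2r}\sum_{m\ge0}(m+1)(t_P/t_Q)^{rm}$ in the first region and its mirror in the second, and then collect monomials.

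The computation is entirely mechanical, so I expect no genuine obstacle; the only places that demand care are the common-denominator bookkeeping in part 1—keeping track of which monomials cancel and which survive after combining the two contributions—and, if one expands part 1 directly rather than invoking Lemma \ref{2lm:SigmaH1}, the index shift $m\mapsto k$ needed to fuse the two binomial sums into the single coefficient $\fe_i+kr$ (respectively $-(\fe_i-kr)$). In effect the argument is a consistency check that the derivative of the closed form agrees term-by-term with the expansions already established in Lemma \ref{2lm:SigmaH1}.
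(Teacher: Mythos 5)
Your proof is correct. For part~1 it coincides with the paper's own argument: the paper likewise computes $\partial_{t_Q}\bigl(t_Q^r/(t_Q^r-t_P^r)\bigr)=-r t_Q^{r}t_P^r/(t_Q^r-t_P^r)^2t_Q$ and then recombines the two contributions over the common denominator via the identity $r t_P^r+\fe_i(t_Q^r-t_P^r)=\fe_i t_Q^r-(\fe_i-r)t_P^r$, which is exactly your ``decisive simplification.'' For parts~2 and~3 your primary route differs from the paper's: you differentiate the Laurent series of $\Sigma_H$ from Lemma~\ref{2lm:SigmaH1} term by term (legitimate, since these are convergent Laurent expansions in annuli), which hands you the coefficients $-(\fe_i-kr)$ and $(\fe_i+kr)$ in one step; the paper instead expands the closed form of part~1 using $1/(1-x)^2=\sum_{k\ge1}kx^{k-1}$ and then needs the index shift and the recombination $(k-1)\fe_i-k(\fe_i-r)=-(\fe_i-kr)$ to fuse two double sums --- precisely the ``alternative'' you sketch at the end. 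Your route is the cheaper of the two, at the price of leaning on Lemma~\ref{2lm:SigmaH1} rather than being self-contained from part~1. One detail your route makes visible: differentiating the $|t_P|<|t_Q|$ series of Lemma~\ref{2lm:SigmaH1}, which starts at $k=0$, produces a nonvanishing $k=0$ term $\fe_i\,t_P^{\fe_i-1}/t_Q^{\fe_i+1}$ for $i\neq0$, so the sum in (\ref{2eq:dQSigmaH12}) should begin at $k=0$ rather than $k=1$ (the paper's own expansion method gives the same conclusion); this is a typo in the statement, not a flaw in your argument, but you should record the correct index range when writing up part~3.
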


\begin{proof}
{\it{1}}: Using the relation, $\displaystyle{
\frac{\partial }{\partial t_Q}
\frac{t_Q^r}{t_Q^r-t_P^r}=
\left(r-\frac{rt_Q^r}{t_Q^r-t_P^r}\right)\frac{t_Q^{r-1}}{t_Q^r-t_P^r}
=-\frac{rt_Q^r t_P^r}{(t_Q^r-t_P^r)^2 t_Q}
}$, we have
\begin{gather*}
d_Q \Sigma_H\big(t_P, t_Q\big)=
\left[\left(
\frac{r t_P^r}{t_Q^r-t_P^r }\right)
\sum_{i=0}^{r-1}\frac{
t_P^{\fe_i}}{t_Q^{\fe_i-r}}
+
\sum_{i=0}^{r-1}\fe_i\frac{
t_P^{\fe_i}}{t_Q^{\fe_i-r}}
\right]
\frac{1}{t_Q^r-t_P^r}
\frac{d t_P d t_Q}{t_Pt_Q}.
\end{gather*}
The relation
$r t_P^r+\fe_i(t_Q^r- t_P^r)=
\fe_i t_Q^r - (\fe_i - r)t_P^r$ enables us to obtain {\it{1}}.

{\it{2}}: Since
 $\displaystyle{
\frac{1}{(1-x)^2} = \frac{d}{d x}\frac{1}{(1-x)}=\sum_{k=1}k x^{k-1}}$, 
for a polynomial $H(t_P,t_Q)\in \CC[t_P,t_Q]$, we have
$$
\frac{H(t_P,t_Q)}{(t_Q^r-t_P^r)^2 }
=\frac{H(t_P,t_Q)}{t_P^{2r}}\sum_{k=1}^{\infty}k 
\frac{t_Q^{r(k-1)}}{t_P^{r(k-1)}}
=\frac{H(t_P,t_Q)}{t_P^{2r}}
\left(\sum_{k=1}^{N}k 
\frac{t_Q^{r(k-1)}}{t_P^{r(k-1)}}
+\sum_{k=N+1}^{\infty}k 
\frac{t_Q^{r(k-1)}}{t_P^{r(k-1)}}\right).
$$
and
\begin{gather*}
%\begin{split}
d_Q \Sigma_H\big(t_P, t_Q\big)=d t_P d t_Q
\left[
\sum_{i=0}^{r-1}
\sum_{k=1}^{\infty}
\left(
k\fe_i\frac{
t_P^{\fe_i-r(k+1)-1}}{t_Q^{\fe_i-r(k+1)+1}}
-
k(\fe_i-r)\frac{
t_P^{\fe_i-r k-1}}{t_Q^{\fe_i-r k+1}}.
\right)\right].
%\end{split}
\end{gather*}
The relation $(k-1)\fe_i - k (\fe_i -r) = -(\fe_i - k r)$ shows {\it{2}}.
Similarly, we prove {\it{3}}.
\qed
\end{proof}

\begin{lemma}\label{2lm:dSigma_H22}
 $d_Q \Sigma_H\big(t_P, t_Q\big)- d_P \Sigma_H\big(t_Q, t_P\big)$ is equal to
\begin{gather*}
d t_P d t_Q\sum_{i=0}^{r-1}
\frac{
\fe_i(t_P^{2\fe_i - r} t_Q^r-
t_Q^{2\fe_i - r} t_P^r)
-
(\fe_i-r)(t_P^{2\fe_i}-
t_Q^{2\fe_i})}
{(t_Q^r -t_P^r) t_P^{\fe_i-r+1}t_Q^{\fe_i-r+1}}.
\end{gather*}
When $t_P=t_Q+\varepsilon$,  it vanishes for the limit $\varepsilon \to 0$.
\end{lemma}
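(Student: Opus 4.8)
The plan is to reduce the statement to a single algebraic factorization and then to read the diagonal vanishing off a Taylor expansion. First I would insert the closed form of Lemma~\ref{2lm:dSigmaH1}(1) for $d_Q\Sigma_H(t_P,t_Q)$, and obtain $d_P\Sigma_H(t_Q,t_P)$ from the very same formula by the interchange $t_P\leftrightarrow t_Q$. Since $(t_P^r-t_Q^r)^2=(t_Q^r-t_P^r)^2$, the two expressions share the denominator $(t_Q^r-t_P^r)^2\,t_Pt_Q$, and, writing $t_P^{\fe_i}/t_Q^{\fe_i-2r}=t_P^{\fe_i}t_Q^{2r-\fe_i}$ and similarly for the remaining terms, the difference becomes $(t_Q^r-t_P^r)^{-2}(t_Pt_Q)^{-1}\,dt_P\,dt_Q\sum_{i}C_i$ with
\[
C_i=\fe_i\big(t_P^{\fe_i}t_Q^{2r-\fe_i}-t_Q^{\fe_i}t_P^{2r-\fe_i}\big)
-(\fe_i-r)\big(t_P^{\fe_i+r}t_Q^{r-\fe_i}-t_Q^{\fe_i+r}t_P^{r-\fe_i}\big).
\]

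The crux is to factor $C_i$. Each of its four monomials is divisible by $t_P^{r-\fe_i}t_Q^{r-\fe_i}$, and pulling this factor out gives
\[
C_i=t_P^{r-\fe_i}t_Q^{r-\fe_i}\,N_i,\qquad
N_i:=\fe_i\big(t_P^{2\fe_i-r}t_Q^{r}-t_Q^{2\fe_i-r}t_P^{r}\big)-(\fe_i-r)\big(t_P^{2\fe_i}-t_Q^{2\fe_i}\big),
\]
which I would verify monomial by monomial. Substituting this back, the factor $t_P^{r-\fe_i}t_Q^{r-\fe_i}$ absorbs the $t_Pt_Q$ in the denominator and lowers the surviving powers to $t_P^{\fe_i-r+1}t_Q^{\fe_i-r+1}$, so the $i$-th term equals $N_i\big/\big((t_Q^r-t_P^r)^2\,t_P^{\fe_i-r+1}t_Q^{\fe_i-r+1}\big)$; summing over $i\in\ZZ_r$ produces exactly the asserted rational sum (the factor $t_Q^r-t_P^r$ being carried down, squared, from the denominator in Lemma~\ref{2lm:dSigmaH1}(1)).

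For the vanishing on the diagonal I would set $t_P=t_Q+\varepsilon$ and expand $N_i$ in $\varepsilon$. The order $\varepsilon^{0}$ term is $N_i|_{t_P=t_Q}=0$. At order $\varepsilon^{1}$ the two groups contribute $2\fe_i(\fe_i-r)t_Q^{2\fe_i-1}$ and $-2\fe_i(\fe_i-r)t_Q^{2\fe_i-1}$, which cancel; at order $\varepsilon^{2}$ the coefficients are $\tfrac{\fe_i}{2}\big[(2\fe_i-r)(2\fe_i-r-1)-r(r-1)\big]t_Q^{2\fe_i-2}$ and $-(\fe_i-r)\fe_i(2\fe_i-1)t_Q^{2\fe_i-2}$, which again cancel after expansion. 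Hence $N_i=O(\varepsilon^{3})$, whereas $t_Q^r-t_P^r=-r\,t_Q^{r-1}\varepsilon+O(\varepsilon^{2})$ shows the denominator vanishes only to order $\varepsilon^{2}$; therefore each summand is $O(\varepsilon)$ and the whole difference tends to $0$ as $\varepsilon\to0$.

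The main obstacle is precisely this double cancellation: it does not suffice that $N_i$ vanish on the diagonal, it must vanish to third order so that one factor of $\varepsilon$ survives against the squared denominator. Conceptually this reflects that $d_Q\Sigma_H$ has a double pole along $t_P=t_Q$ whose principal part is symmetric in $P,Q$ (compare the diagonal behaviour in Proposition~\ref{2pr:dSigma1}(1)--(2)), so the antisymmetrised kernel $d_Q\Sigma_H(t_P,t_Q)-d_P\Sigma_H(t_Q,t_P)$ sheds both its double and its simple poles there; it is thus not merely holomorphic, as recorded in Proposition~\ref{2pr:dSigma1}(3)(a), but actually regular and zero on the diagonal. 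I would organise the $\varepsilon$-expansion so that the $\varepsilon^{0},\varepsilon^{1},\varepsilon^{2}$ cancellations are manifest, since these are exactly the orders governed by the matching of the polar parts.
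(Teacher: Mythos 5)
Your computation is correct, and it is precisely the ``direct computation'' that the paper's proof consists of (the paper records no further detail), so in substance you take the same and essentially only available route: substitute Lemma \ref{2lm:dSigmaH1}(1), antisymmetrize under $t_P\leftrightarrow t_Q$, factor $t_P^{r-\fe_i}t_Q^{r-\fe_i}$ out of each summand, and then check that the numerator $N_i$ vanishes to third order on the diagonal against the second-order zero of the denominator. Your $\varepsilon^0,\varepsilon^1,\varepsilon^2$ cancellations are all right (the key identity being $(2\fe_i-r)(2\fe_i-r-1)-r(r-1)=2(\fe_i-r)(2\fe_i-1)$), and your remark that first-order vanishing of $N_i$ would not suffice is exactly the point of the second assertion.

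One thing you should state explicitly rather than gloss over: what your derivation produces is
\[
dt_P\,dt_Q\sum_{i=0}^{r-1}\frac{N_i}{(t_Q^r-t_P^r)^{2}\,t_P^{\fe_i-r+1}t_Q^{\fe_i-r+1}},
\]
with the factor $(t_Q^r-t_P^r)$ \emph{squared}, whereas the formula displayed in the Lemma carries it only to the first power. These are not the same expression: $N_i$ is not divisible by $t_Q^r-t_P^r$ (evaluating at $t_P=\zeta t_Q$ with $\zeta^r=1$ gives $N_i=r(\zeta^{2\fe_i}-1)t_Q^{2\fe_i}$, which is nonzero unless $r\mid 2\fe_i$), and only the squared version is homogeneous of the degree required of the two-form $d_Q\Sigma_H-d_P\Sigma_H$. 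So the printed statement contains a typo, and your parenthetical claim that your sum ``produces exactly the asserted rational sum'' is not literally true; you should record the corrected denominator. This does not affect the diagonal-vanishing conclusion --- indeed your $O(\varepsilon^{3})/O(\varepsilon^{2})$ estimate is exactly what the corrected (and harder) version requires.
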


\begin{proof}
The direct computations show them.
\qed
\end{proof}

\bigskip

For the case $|t_Q|<|t_P|$, noting
$\varphi_\infty(h_X(P))=r\, t_P^{-d_h}$, and 
$\varphi_\infty(d x_P)=r\, t_P^{-r-1}d t_P$,
(\ref{2eq:dQSigmaH11}) in Lemma \ref{2lm:dSigmaH1}
leads the expression,
\begin{equation}
d_Q \Sigma_H\big(t_P, t_Q\big)=
\frac{d t_P}{t_P^{r+1}} \frac{d t_Q}{t_Q^{r+1}}
\left[
\sum_{i=0}^{r-1}
\sum_{k=1}^{\infty}
\left(
-(\fe_i-k r)t_P^{d_h}t_Q^{d_h}
\frac{
t_P^{\fe_i-d_h-r(k-1)}}{t_Q^{\fe_i+d_h - r(k+1)}}
\right)
\right].
\label{2eq:dQSigmaH11b}
\end{equation}

\begin{lemma}\label{2lm:hH^g}
Letting $\hH^{(g)}:=\{d_h-\fe_i+kr\ |\ i\in \ZZ_r, k\in \NN_0, \fe_i-(k+1)r >0\}$, and 
$\hH^{(g)*}_X:=\{d_h + \fe_i - r (k+2) \ |\ 
d_h - \fe_i + r k\in \hH^{(g)}\}$, we have
$$
\hH^{(g)}_X\subset\hH_X, \quad
\hH^{(g)*}_X\subset\hH_X,
\quad
\# \hH^{(g)}_X=g, \quad\# \hH^{(g)*}_X=g.
$$
\end{lemma}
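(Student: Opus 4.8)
The plan is to push everything through the explicit description $\hH_X=\{d_h-\fe_i+kr\ |\ i\in\ZZ_r,\ k\in\NN_0\}$ of Lemma \ref{2lm:nuIo1_2}.1, using the fact (Definition \ref{2df:NSG1}, Lemma \ref{2lm:NSG1}) that $\fe_0,\dots,\fe_{r-1}$ is a complete set of representatives of $\ZZ/r\ZZ$, each $\fe_i$ being the least element of $H_X$ in its residue class. With this, $\hH^{(g)}_X\subset\hH_X$ is immediate, since $\hH^{(g)}_X$ is by construction the subfamily of the elements $d_h-\fe_i+kr$ satisfying the extra constraint $\fe_i-(k+1)r>0$. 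First I would record that the map $(i,k)\mapsto d_h-\fe_i+kr$ is injective on $\ZZ_r\times\NN_0$: two such elements that coincide force $\fe_i\equiv\fe_j\pmod r$, hence $\fe_i=\fe_j$ and $i=j$, and then $k=k'$. So cardinalities may be computed by counting admissible index pairs.

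Next I would fix $i\in\ZZ_r^\times$ (the case $i=0$ being vacuous, as $\fe_0=0$ makes $\fe_0-(k+1)r>0$ impossible) and write $\fe_i=q_ir+c_i$ with $c_i=\fe_i\bmod r\in\{1,\dots,r-1\}$ and $q_i=\lfloor\fe_i/r\rfloor$. The condition $\fe_i-(k+1)r>0$ is then exactly $0\le k\le q_i-1$, contributing $q_i$ admissible values; moreover $k\mapsto\fe_i-(k+1)r$ identifies them with $[\fe_i]_r^\fc$. Summing and invoking the decomposition $H_X^\fc=\bigoplus_{i\in\ZZ_r^\times}[\fe_i]_r^\fc$ of Lemma \ref{2lm:NSG1}.1(c) gives $\#\hH^{(g)}_X=\sum_{i\in\ZZ_r^\times}\#[\fe_i]_r^\fc=\#H_X^\fc=g$. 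The same injectivity argument applied to $d_h+\fe_i-r(k+2)$ (coincidence again forces $\fe_i\equiv\fe_{i'}$, hence $i=i'$ and $k=k'$) shows the defining assignment of $\hH^{(g)*}_X$ is injective, so $\#\hH^{(g)*}_X=\#\hH^{(g)}_X=g$.

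The crux, and the step I expect to be the main obstacle, is $\hH^{(g)*}_X\subset\hH_X$: I must rewrite each $d_h+\fe_i-r(k+2)$ in the standard shape $d_h-\fe_j+rk'$ with $k'\ge0$. The idea is to take $j\in\ZZ_r$ to be the unique index with $\fe_i+\fe_j\equiv0\pmod r$, which exists because the $\fe$'s realize every residue class exactly once. Writing $\fe_i+\fe_j=mr$, one verifies the identity
\[
d_h+\fe_i-r(k+2)=d_h-\fe_j+r(m-k-2),
\]
so it remains only to show $m-k-2\ge0$. Here the defining minimality of $\fe_j$ enters: since $\fe_j$ lies in the class $-c_i\equiv r-c_i$, it satisfies $\fe_j\ge r-c_i$, whence $m=(\fe_i+\fe_j)/r\ge(q_ir+c_i+r-c_i)/r=q_i+1$. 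Combined with the admissibility bound $k\le q_i-1$ this yields $m-k-2\ge(q_i+1)-(q_i-1)-2=0$, placing the element in $\hH_X$ by Lemma \ref{2lm:nuIo1_2}.1 and completing the argument. The one subtle point throughout is that the index $i$ on $\fe_i$ records sorted position rather than residue, so the complementary partner must be selected by the congruence $\fe_i+\fe_j\equiv0$ rather than by $j=r-i$; once the inequality $\fe_j\ge r-c_i$ is isolated, the remainder is routine.
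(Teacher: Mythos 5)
Your proof is correct, and for the one nontrivial containment it takes a genuinely different route from the paper. The paper disposes of $\hH^{(g)}_X\subset\hH_X$ and $\#\hH^{(g)}_X=g$ by identifying $\hH^{(g)}_X$ with the weights of the numerators $\hphi_0,\dots,\hphi_{g-1}$ spanning $\bH^0(X,\cA_X)$; your count via the injectivity of $(i,k)\mapsto d_h-\fe_i+kr$ and the decomposition $H_X^\fc=\bigoplus_{i\in\ZZ_r^\times}[\fe_i]_r^\fc$ is the same fact read combinatorially, and the cardinality arguments coincide in substance. The real divergence is in $\hH^{(g)*}_X\subset\hH_X$: the paper argues softly, bounding $d_h+\fe_i-r(k+2)=(d_h-r)+(\fe_i-(k+1)r)\ge d_h-r+1=\hc_X+\fk_X$ and invoking the dual conductor property $\hc_X+\NN_0\subset\hH_X-\fk_X$ from Lemma \ref{2lm:nuIo2} (the displayed intermediate inequality in the printed proof is garbled, but this is clearly the intent), whereas you argue hard, exhibiting each element explicitly as $d_h-\fe_j+r(m-k-2)$ via the complementary partner $\fe_j\equiv-\fe_i\pmod r$ and checking $m-k-2\ge 0$ from the minimality bound $\fe_j\ge r-c_i$ together with $k\le q_i-1$. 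Your version costs a little more computation but is self-contained modulo Lemma \ref{2lm:nuIo1_2}.1 and the standard-basis properties, avoids the conductor computation entirely, and yields the extra information of which index pair $(j,k')$ realizes each element of $\hH^{(g)*}_X$; the paper's version is shorter once Lemma \ref{2lm:nuIo2} is in hand. Both are valid proofs of the statement.
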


\begin{proof}
$\hH^{(g)}$ corresponds to the weight of $\hphi_i$ consisting of $\bH^0(X, \cA_X)$ and thus $\hH^{(g)}_X\subset\hH_X$ and its cardinality is $g$.
On the other hand, the condition in $\hH^{(g)*}$ means
that $d_h + \fe_i - r(k+2) \ge (d_h -r+ 1)+(\fe_i - r)
>\hc_X+\fk_X$
because of $\hc_X+\fk_X-2 = d_h -r -1$ from Lemma \ref{2lm:nuIo2}.
Thus $\hH^{(g)*}\subset\hH_X$, and $\# \hH^{(g)*}_X=g$
\qed
\end{proof}

Since the order of the singularity of $\Sigma_H(t_P, t_Q)$ at $t_Q=\infty$ is $c_X-1=\fe_{r-1}-r$, we introduce the two-form,
$$
\Omega_H^{(0)}(t_P, t_Q):=d t_P d t_Q
\left[
\sum_{i=0}^{r-1}
\sum_{\substack{k=1\\
 d_h - \fe_i + r k \in \hH^{(g)}
}}
\left(
(\fe_i-k r)\frac{
t_P^{\fe_i-r k-1}}{t_Q^{\fe_i-r k+1}}
\right)
\right].
$$

\begin{lemma}\label{2lm:OmegaH2}
$\Omega_H^{(1)}(t_P, t_Q):=d\Sigma_H(t_P, t_Q)-\Omega_H^{(0)}(t_P, t_Q)$ has no singularity at $t_Q=\infty$.
\end{lemma}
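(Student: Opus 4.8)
The plan is to read off the behaviour at the point $\infty$ (that is, as $t_Q\to 0$) directly from the Laurent expansion of $d_Q\Sigma_H$ that is valid there, and to verify that $\Omega_H^{(0)}$ is exactly the principal (polar) part of that expansion. Since $\infty$ corresponds to $t_Q=0$ while $t_P$ stays away from $0$, the relevant regime is $|t_Q|<|t_P|$, so I would start from expansion (\ref{2eq:dQSigmaH11}) of Lemma \ref{2lm:dSigmaH1}, namely
\begin{equation*}
d_Q \Sigma_H(t_P, t_Q)=d t_P\, d t_Q
\sum_{i=0}^{r-1}\sum_{k=1}^{\infty}
\left(-(\fe_i-k r)\frac{t_P^{\fe_i-r k-1}}{t_Q^{\fe_i-r k+1}}\right).
\end{equation*}
The $(i,k)$-term is polar at $t_Q=0$ exactly when $\fe_i-rk+1>0$; the boundary case $\fe_i=rk$ carries a vanishing coefficient, so genuine poles occur precisely for $\fe_i>rk$, with pole order $\fe_i-rk+1$ ranging up to $\fe_{r-1}-r+1=c_X$, consistently with Proposition \ref{2pr:dSigma1}(2.b) and Lemma \ref{2lm:nuIo2}(4).

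The conceptual heart of the argument is to recognise these polar terms as the first-kind part in the $P$-variable. Using $\varphi_\infty$ (Lemma \ref{2lm:varphiinf}), the $P$-factor $t_P^{\fe_i-rk-1}\,d t_P$ of the $(i,k)$-term is the germ at $\infty$ of $x^{k-1}\hfy_{\fe_i}\,d x/h_X$, which by Proposition \ref{2pr:phi_hi} behaves like $t^{\fe_i-kr-1}\,dt$ and has weight $d_h-\fe_i+(k-1)r$. Hence this differential is W-normalized of the first kind precisely when $d_h-\fe_i+(k-1)r\in\hH^{(g)}$, i.e. when $\fe_i>rk$ — the very inequality that governs the pole in $t_Q$. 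I would thus establish the clean dictionary: a term of $d_Q\Sigma_H$ is singular at $\infty$ in $t_Q$ if and only if its $P$-factor is a differential of the first kind, and by Lemma \ref{2lm:hH^g} there are exactly $g$ such terms.

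With this dictionary in hand the claim reduces to the identification that $\Omega_H^{(0)}$ collects exactly this principal part, its summation range and coefficients being chosen so as to reproduce the $g$ polar terms of $d_Q\Sigma_H$ at $t_Q=0$. Granting this, $\Omega_H^{(1)}=d\Sigma_H-\Omega_H^{(0)}$ retains only the terms with $\fe_i-rk\le 0$, whose $t_Q$-exponent $rk-\fe_i-1$ is $\ge 0$; every surviving term is therefore a non-negative power of $t_Q$, so $\Omega_H^{(1)}$ is holomorphic as $t_Q\to 0$. Finally I would record that this formal tail is a genuine holomorphic germ, since $\varphi_\infty$ lands in $R_{X,\infty}$, so $\Omega_H^{(1)}$ represents an element with no pole at $\infty$, which is the assertion.

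I expect the main obstacle to be the bookkeeping in this matching step rather than any deep idea: one must line up the summation index and the coefficient of $\Omega_H^{(0)}$ with the true principal part of the expansion of Lemma \ref{2lm:dSigmaH1}, taking care of the index shift $k\mapsto k-1$ relating the order of the pole in $t_Q$ to the index of the associated first-kind differential in $t_P$, and of the vanishing boundary term $\fe_i=rk$. Once the count of removed terms is matched against the $g$ first-kind differentials through Lemma \ref{2lm:hH^g}, holomorphy at $\infty$ follows at once.
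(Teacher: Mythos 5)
Your strategy is the right one and, as far as one can tell, the intended one: the paper states Lemma \ref{2lm:OmegaH2} with no proof at all, and the only available argument is the one you outline --- read off the principal part of $d_Q\Sigma_H$ at the point $\infty$ (i.e.\ at $t_Q=0$) from the expansion (\ref{2eq:dQSigmaH11}) valid for $|t_Q|<|t_P|$, and check that $\Omega_H^{(0)}$ is exactly that principal part. Your dictionary is also correct: the $(i,k)$-term is genuinely polar in $t_Q$ precisely when $\fe_i-rk>0$, which happens precisely when its $t_P$-factor is the germ of the first-kind differential $x^{k-1}\hfy_{\fe_i}\,dx/h_X$ of weight $d_h-\fe_i+(k-1)r\in\hH^{(g)}$, and by Lemma \ref{2lm:hH^g} there are exactly $g$ such terms.

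The gap is that you never carry out the matching step, and that step is the entire content of the lemma; moreover, the match \emph{fails} for $\Omega_H^{(0)}$ as printed, so ``granting'' it grants something false of the displayed formula. First, the printed summation condition is $d_h-\fe_i+rk\in\hH^{(g)}$; since the $\fe_i$ lie in distinct residue classes modulo $r$, this is equivalent to $\fe_i-(k+1)r>0$, not to $\fe_i-rk>0$, so it omits every polar term with $rk<\fe_i<(k+1)r$ (pole orders $2$ through $r$ in $t_Q$; e.g.\ for $H=\langle3,7,8\rangle$ the terms $(i,k)=(1,2)$ and $(2,2)$, of pole order $2$ and $3$, would survive in $\Omega_H^{(1)}$). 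The condition your own dictionary produces, $d_h-\fe_i+r(k-1)\in\hH^{(g)}$, is the correct one, and you should say so rather than defer it to ``bookkeeping''. Second, the printed coefficient $+(\fe_i-kr)$ is the negative of the coefficient $-(\fe_i-kr)$ appearing in (\ref{2eq:dQSigmaH11}), so the subtraction $d\Sigma_H-\Omega_H^{(0)}$ doubles those singular terms instead of cancelling them; note that Lemma \ref{2lm:Deltair} already writes the terms of $\Omega_H^{(0)}$ with the minus sign, so the sign in the displayed definition is the inconsistent one. A complete proof must therefore either verify the identification term by term --- thereby discovering that $\Omega_H^{(0)}$ has to be read with $k$ shifted by one and with the sign of Lemma \ref{2lm:Deltair} --- or explicitly restate $\Omega_H^{(0)}$ as the sum of $-(\fe_i-kr)\,t_P^{\fe_i-rk-1}t_Q^{-(\fe_i-rk+1)}\,dt_P\,dt_Q$ over $k\ge1$ with $\fe_i-rk>0$, after which your final observation (every surviving term has non-negative $t_Q$-exponent, the boundary case carrying a zero coefficient) does close the argument.
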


\begin{lemma}\label{2lm:Deltair}
The each term in $\Omega_H^{(0)}(t_P, t_Q)$ has the property,
$$
\oint_{t_Q=0}
\left[\int_0^{t_Q} -(\fe_i-k r) 
\frac{t_P^{\fe_i-k r-1}}
     {t_Q^{\fe_i-k r+1}} d t_P
\right] d t_Q =- 2\pi \ii.
$$
\end{lemma}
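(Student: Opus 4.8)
The plan is to evaluate the iterated integral directly and reduce it to a single residue. Set $m := \fe_i - kr$ for the exponent attached to the term in question. The key preliminary observation is that $m > 0$: each summand of $\Omega_H^{(0)}(t_P, t_Q)$ is indexed by a pair $(\fe_i, k)$ for which $d_h - \fe_i + rk \in \hH^{(g)}$, and by the defining condition of $\hH^{(g)}$ recorded in Lemma \ref{2lm:hH^g} this forces $\fe_i - (k+1)r > 0$, hence $m = \fe_i - kr > r > 0$. In particular $m \neq 0$, in agreement with the restriction $|\fe_i - kr| \neq 0$ of Lemma \ref{2lm:dSigmaH1}, and the primitive $t_P^{m}$ vanishes at $t_P = 0$.

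First I would perform the inner integral over $t_P$. Writing $-(\fe_i - kr)\, t_P^{\fe_i - kr - 1} = -\dfrac{d}{d t_P}\, t_P^{m}$ and pulling the $t_Q$-factor (constant in $t_P$) out of the integral gives
\[
\int_0^{t_Q} -(\fe_i - kr)\frac{t_P^{\fe_i - kr - 1}}{t_Q^{\fe_i - kr + 1}}\, d t_P
= -\frac{1}{t_Q^{m+1}}\bigl[t_P^{m}\bigr]_0^{t_Q}
= -\frac{t_Q^{m}}{t_Q^{m+1}}
= -\frac{1}{t_Q},
\]
where the lower endpoint contributes nothing precisely because $m > 0$. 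Thus the bracketed inner expression collapses to the single simple pole $-1/t_Q$, independently of the actual values of $\fe_i$ and $k$. Applying Cauchy's residue theorem to the positively oriented loop about $t_Q = 0$, for which $\oint_{t_Q=0} d t_Q / t_Q = 2\pi\ii$, then yields
\[
\oint_{t_Q=0}\left(-\frac{1}{t_Q}\right) d t_Q = -2\pi\ii,
\]
which is the asserted identity.

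I do not expect a genuine obstacle here: once the sign of the exponent is settled, the computation is elementary. The only delicate point — and the sole place where the hypotheses enter — is the positivity $m = \fe_i - kr > 0$, which guarantees both that the inner integral produces exactly $-1/t_Q$ with no contribution from $t_P = 0$ and that $t_Q = 0$ is a simple (not higher-order) pole. Were $m$ negative, the antiderivative would create a pole at $t_P = 0$ and alter the structure; were $m = 0$, the prefactor $\fe_i - kr$ would annihilate the term. Both degeneracies are excluded by the membership condition $d_h - \fe_i + rk \in \hH^{(g)}$.
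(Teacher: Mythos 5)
Your computation is correct: since the membership condition $d_h-\fe_i+rk\in\hH^{(g)}$ pins down the pair $(i,k)$ (the $\fe_i$ have distinct residues mod $r$) and hence forces $\fe_i-(k+1)r>0$, the exponent $m=\fe_i-kr$ is positive, the inner integral collapses to $-1/t_Q$, and the residue gives $-2\pi\ii$. The paper states this lemma without proof as a direct computation, and your argument is exactly the intended one, with the useful extra care of justifying why the lower endpoint $t_P=0$ contributes nothing.
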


\begin{lemma}\label{2lm:Deltair2}
For $i \in \ZZ_r$ and $k\in \NN$ such that 
$\hH^{(g)}\ni d_h-\fe_i + r k$,
there are $\hphi_j$ $(j<g)$ and
$\hphi_{j'}$ $(j'\ge g)$ such that
$$
\varphi_\infty\left(
\frac{\hphi_j d x}{h_X}\right)
=-t^{d_h -(\fe_{i}+(k-1)r)-r-1}d t(1+d_{>0}(t)), 
$$
$$
\varphi_\infty\left(
\frac{\hphi_{j'} d x}{h_X}\right)
    = -\frac{d t}{t^{\fe_i-k r+1}}(1+d_{>0}(t))
    = -\frac{t^{d_h}d t}{t^{d_h+\fe_i-(k+1)r+(r+1)}}(1+d_{>0}(t)).
$$
\end{lemma}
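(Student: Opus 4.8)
The plan is to turn both displayed identities into a statement about Sato--Weierstrass weights at $\infty$ and then to place the two resulting basis elements on the correct sides of the index $g$. By Lemma \ref{2lm:dx_hX} we have $\varphi_\infty(dx/h_X)=t^{d_h-r-1}dt$, while each $\hphi_n\in\hS_X$ is a monomial with $\varphi_\infty(\hphi_n)=t^{\wt\hphi_n}(1+d_{>0}(t))$ whose leading coefficient is normalized through the monic $\hfy_{\fe_i}$ of Definition \ref{2df:hfy} and Proposition \ref{2pr:h_RXe_ys}. Hence $\varphi_\infty(\frac{\hphi_n dx}{h_X})=t^{\wt\hphi_n+d_h-r-1}dt(1+d_{>0}(t))$, so a prescribed leading term $t^{m-1}dt$ is realized precisely by the basis element with $-\wt\hphi_n=d_h-r-m$. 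First I would read the two target exponents off the right-hand sides and, using Lemma \ref{2lm:nuIo1_2}(1), confirm that each target weight lies in $\hH_X=\{-\wt\hphi_n\}=\{d_h-\fe_{i'}+k'r\}$; this is what guarantees that the two basis elements $\hphi_j$ and $\hphi_{j'}$ actually occur in $\hS_X$. Conceptually these two differentials are the first-kind factor in $P$ and the second-kind factor in $Q$ of the singular term of $\Omega_H^{(0)}$ indexed by $(i,k)$, so their existence is exactly what is needed to rewrite that term through the W-normalized basis.

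Next I would determine the two index ranges. The decisive fact is that the weights $d_h-\fe_{i'}+k'r$ subject to $\fe_{i'}-(k'+1)r>0$ are, by Lemma \ref{2lm:hH^g}, precisely the $g$ elements of $\hH^{(g)}_X$, and by the weight computation above these are exactly the weights for which $\frac{\hphi_n dx}{h_X}$ is holomorphic; equivalently, by Lemma \ref{2lm:nuI_hX}(2), $\wt(\frac{\hphi_n dx}{h_X})+1\in\lH_X^\fc$, with a nonnegative leading exponent forcing membership in the positive part $H_X^\fc$. Thus the differential attached to the index $j<g$ must be the holomorphic one: I would check that the hypothesis $\fe_i-(k+1)r>0$ makes its leading exponent nonnegative and its weight land in $\hH^{(g)}_X$, so that it is an Abelian differential of the first kind in the sense of Definition \ref{2df:nuI_hX}. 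For the remaining differential the same hypothesis $\fe_i>(k+1)r$ forces a strictly negative leading exponent, so $\wt(\cdot)+1\in-\NN\subset\lH_X^\fc$, the form is singular only at $\infty$, its weight sits in $\hH^{(g)*}_X$, and its index satisfies $j'\ge g$: it is of the second kind.

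To carry out the weight rewriting cleanly I would use the involution $i\mapsto i^*$ on $\ZZ_r$ supplied by the explicit shape $\tlh_H(Z,Z')=\sum_{i} Z_r^{\delta_i}\fZ_{\fe_{\ell,i}^*}\fZ_{\fe_i}'$ of Definition \ref{2df:th_H} and Proposition \ref{2pr:h_RXe_ys}, which records the congruence $\fe_i+\fe_{i^*}\equiv d_h \pmod r$; this lets me put each target weight into the canonical form $d_h-\fe_{i'}+k'r$ with $k'\ge0$ and decide on which side of $g$ it lies, using the conductor identities $\hc_X=2g=d_h-\fk_X-r+1$ and $-\wt\hphi_{g-1}=d_h-r-1$ of Lemma \ref{2lm:nuIo2} to mark the boundary between the holomorphic and the polar ranges. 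The unit coefficient with its sign is then not a separate claim: it is inherited from the expansions of $\Sigma_H$ and $d\Sigma_H$ already established in Lemma \ref{2lm:SigmaH1} and Lemma \ref{2lm:dSigmaH1}, where the coefficient of $t_P^{\fe_i-kr-1}/t_Q^{\fe_i-kr+1}$ is $\fe_i-kr$; dividing by the normalizers $\varphi_\infty(h_X)=r\,t^{-d_h}$ and $\varphi_\infty(dx)=r\,t^{-r-1}dt$ reproduces the stated leading terms.

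I expect the main obstacle to be precisely this index bookkeeping: proving that one target weight lands in $\hH^{(g)}_X$ (hence $j<g$) while the other lands in $\hH^{(g)*}_X$ (hence $j'\ge g$), rather than merely somewhere in $\hH_X$. The holomorphy/polarity dichotomy is transparent in the symmetric case $\fk_X=0$, but in general it requires the $\fk_X$-corrections of Lemma \ref{2lm:nuIo2} together with careful control of the pairing $\fe_i\leftrightarrow\fe_{i^*}$ and of $d_h$; getting the two indices onto the correct sides of $g$ simultaneously is the genuinely delicate point, the rest being the routine weight arithmetic sketched above.
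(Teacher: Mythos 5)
Your argument is correct and is essentially the paper's own proof written out in full: the paper disposes of this lemma in one line by citing Lemma \ref{2lm:hH^g}, whose content is exactly your reduction --- the two target weights lie in $\hH^{(g)}_X$ and $\hH^{(g)*}_X$, both contained in $\hH_X$, with the first set being precisely the set of weights of $\hphi_0,\ldots,\hphi_{g-1}$ and the second bounded below by $-\wt\,\hphi_g=\hc_X+\fk_X$ via Lemma \ref{2lm:nuIo2}, which is what forces $j<g$ and $j'\ge g$. The only places you go beyond the paper are the explicit leading-coefficient normalization and the $i\mapsto i^*$ bookkeeping, which the paper leaves implicit (membership of $\hH^{(g)*}_X$ in $\hH_X$ actually follows already from the dual-conductor property $\hc_X+\NN_0\subset\hH_X-\fk_X$, so the involution is not strictly needed there).
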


\begin{proof}
Lemma \ref{2lm:hH^g} shows the existence.
\qed
\end{proof}

\subsubsection{W-normalized differentials of the second kind}
\label{2sssec:WND2nd}

We introduce the W-normalized differentials of the second kind using this $d_{Q} \Sigma\big(P, Q\big)$.

\begin{definition}\label{2df:nu_nu'}
We consider a sufficiently small closed contour $C_\varepsilon$ at $\infty$.
Let $D_\varepsilon$ be the inner side of $C_\varepsilon=\partial D_\varepsilon$ including $\infty$ and $\varepsilon'$ be a point in $D_\varepsilon$ such that $\varepsilon'\neq \infty$.
For differentials $\nu$ and $\nu'$ in $\bH^0(X, \cA(*\infty))$, we define a pairing:
$$
\langle \nu, \nu'\rangle :=\lim_{\varepsilon' \to \infty} \frac{1}{2\pi\ii} \oint_{C_\varepsilon} \left(\int^P_{\varepsilon'}\nu(Q)\right) \nu'(P).
$$
\end{definition}

The following is obtained from the primitive investigation of complex analysis on a compact Riemann surface \cite{FarkasKra}.
\begin{lemma}\label{2lm:nu_nu'}
$\langle \nu, \nu'\rangle=-\langle \nu', \nu \rangle$, and 
$\langle \nu, \nu'\rangle$ does not depend on $\varepsilon$.
\end{lemma}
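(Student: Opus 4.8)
The plan is to read $\langle\nu,\nu'\rangle$ as a local residue pairing at $\infty$ and to exploit the fact, recorded in Lemma \ref{2lm:nuI_hX}, that every element of $\bH^0(X,\cA_X(*\infty))$ is holomorphic on $X\setminus\{\infty\}$. Since such a differential has its only possible pole at $\infty$, the global residue theorem forces $\Res_\infty\nu=0$ (and likewise for $\nu'$). Fixing the base point $\varepsilon'\in D_\varepsilon$, I would set $F(P):=\int_{\varepsilon'}^P\nu$ and $G(P):=\int_{\varepsilon'}^P\nu'$, so that $dF=\nu$ and $dG=\nu'$ on a neighbourhood of $C_\varepsilon$. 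Travelling once around $C_\varepsilon$ changes $F$ by $\oint_{C_\varepsilon}\nu=2\pi\ii\,\Res_\infty\nu=0$ and changes $G$ by $0$ for the same reason; hence $F$ and $G$ are single-valued along $C_\varepsilon$. The same computation shows that moving $\varepsilon'$ shifts $F$ by a constant, which integrates against $\nu'$ to $(\mathrm{const})\cdot\oint_{C_\varepsilon}\nu'=0$, so the integrand is already independent of $\varepsilon'$ and the limit $\varepsilon'\to\infty$ is harmless.

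For the antisymmetry I would integrate by parts along the closed contour. Writing $\oint_{C_\varepsilon}F\,\nu'=\oint_{C_\varepsilon}F\,dG$ and using $d(FG)=F\,dG+G\,dF$, I get
$$
\oint_{C_\varepsilon}F\,dG=\oint_{C_\varepsilon}d(FG)-\oint_{C_\varepsilon}G\,dF .
$$
Because $FG$ is single-valued on $C_\varepsilon$, the exact form $d(FG)$ integrates to zero over the closed loop, giving $\oint_{C_\varepsilon}F\,\nu'=-\oint_{C_\varepsilon}G\,\nu$. Dividing by $2\pi\ii$ and passing to the limit yields $\langle\nu,\nu'\rangle=-\langle\nu',\nu\rangle$, since the right-hand side is exactly $-\langle\nu',\nu\rangle$ with the common base point $\varepsilon'$ used for both primitives.

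For independence of $\varepsilon$ I would invoke Stokes' theorem. Given two admissible contours with $D_\varepsilon\subset D_{\tilde\varepsilon}$ and a common base point $\varepsilon'\in D_\varepsilon$, let $A$ be the annulus with $\partial A=C_{\tilde\varepsilon}-C_\varepsilon$. On $A$ the point $\infty$ is excluded, so $\nu$ and $\nu'$ are holomorphic there and $F$ is single-valued (the only nontrivial loop in $A$ encircles $\infty$, with period $\oint\nu=0$). Then $F\nu'$ is a well-defined $1$-form on $A$ with $d(F\nu')=\nu\wedge\nu'+F\,d\nu'=0$, since $\nu\wedge\nu'$ is a $(2,0)$-form on a curve and $\nu'$ is closed. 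Stokes' theorem then gives $\oint_{C_{\tilde\varepsilon}}F\nu'-\oint_{C_\varepsilon}F\nu'=\int_A d(F\nu')=0$, so $\langle\nu,\nu'\rangle$ does not depend on $\varepsilon$.

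The only genuine subtlety, and the step I would check most carefully, is the single-valuedness of the primitives $F$ and $G$: both the vanishing of $\oint_{C_\varepsilon}\nu$ needed to close the integration by parts and the triviality of the period on $A$ rest on the absence of a residue at $\infty$, which in turn follows from the residue theorem precisely because $\nu$ has no other poles. Once this is secured, the integration by parts and the Stokes computation are routine.
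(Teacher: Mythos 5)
Your argument is correct and is essentially the standard complex-analytic one that the paper itself delegates to \cite{FarkasKra} without writing out: the key point in both cases is that a section of $\cA_X(*\infty)$ has its only pole at $\infty$, so the residue theorem forces $\oint_{C_\varepsilon}\nu=\oint_{C_\varepsilon}\nu'=0$, making the primitives single-valued near $C_\varepsilon$ so that integration by parts gives the antisymmetry and Stokes' theorem on the annulus gives independence of $\varepsilon$. You also correctly isolate the only delicate point, namely that the $\varepsilon'$-dependence of $F$ is an additive constant killed by $\oint_{C_\varepsilon}\nu'=0$, so the limit $\varepsilon'\to\infty$ is harmless.
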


\begin{definition}\label{2df:nuII}
We define the pre-normalized differentials $\tnuII{i}\in \bH^0(X, \cA_X(*\infty))$  of the second kind $(i=1, 2, \ldots, g)$, which satisfies the relations (if they exist)
\begin{equation}
   \langle \nuI{i}, \tnuII{j}\rangle =\delta_{i j}, \quad
\langle \tnuII{i}, \tnuII{j}\rangle =0.
\label{2eq:nuI_nuII}
\end{equation}
\end{definition}

It is obvious that from Lemma \ref{2lm:nuI_hX} 2 and Lemma \ref{2lm:Deltair2}, $\tnuII{j}$ with $j'$ in Lemma \ref{2lm:Deltair2} is expressed like
$\displaystyle{
\tnuII{j} = \frac{\displaystyle{\sum_{i=0}^{j'} a_{j,i} \hphi_i d x}}
{h_X(x, y_\bullet)}
}$.
\medskip

Noting Proposition \ref{2pr:dSigma1} 3.b, we have the following relations:

\begin{corollary}\label{2cr:nuII_g}
$\displaystyle{
\tnuII{g}=\frac{\hphi_{g}}{h_X(x,y_\bullet)}d x}$,
$\wt\, \tnuII{g}=-2$, and 
$\wt\, \tnuII{1}=-c_X$.
\end{corollary}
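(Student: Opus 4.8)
The plan is to exhibit $\frac{\hphi_g\,dx}{h_X}$ as a concrete representative of $\tnuII{g}$ and to obtain the two weights from the residue pairing of Definition \ref{2df:nu_nu'}. First I would fix the weight dictionary: a meromorphic one-form that near $\infty$ reads $t^m(1+d_{>0}(t))\,dt$ has weight $m$, so Lemma \ref{2lm:nuI_hX}(2) gives $\wt\big(\tfrac{\hphi_i\,dx}{h_X}\big)=\wt(\hphi_i)+d_h-r-1$. Writing $w_i:=\wt\,\nuI{i}=\wt\big(\tfrac{\hphi_{i-1}\,dx}{h_X}\big)$, the holomorphic differentials have $w_1>\cdots>w_g\ge 0$, and Lemma \ref{2lm:nuIo2} yields the two extremes $w_g=0$ and $w_1=c_X-2$. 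The pairing $\langle\nu,\nu'\rangle$, after integrating the first argument, is a residue at $\infty$; for the leading terms $t^a\,dt$ of $\nu$ and $t^b\,dt$ of $\nu'$ it is nonzero only in the resonant case $a+b=-2$.

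Now for the first two assertions. Lemma \ref{2lm:nuIo2}(3) gives $-\wt\,\hphi_g=d_h-r+1$, whence $\wt\big(\tfrac{\hphi_g\,dx}{h_X}\big)=-2$; this is the least singular second-kind weight, since there is no form of weight $-1$ (equivalently $0\notin\lH_X^\fc$). By Proposition \ref{2pr:phi_hi} the form $\frac{\hphi_g\,dx}{h_X}$ is holomorphic on $X\setminus\{\infty\}$, so its sole pole is at $\infty$; the residue theorem on $X$ forces $\Res_\infty\tfrac{\hphi_g\,dx}{h_X}=0$, so it is of the second kind with expansion $t^{-2}(1+d_{>0})\,dt$ carrying no $t^{-1}\,dt$ term. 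Since $\int^P\nuI{i}$ vanishes at $\infty$ to order $w_i+1\ge1$, the product $\big(\int^P\nuI{i}\big)\tfrac{\hphi_g\,dx}{h_X}$ produces a residue precisely when $w_i=0$, giving $\langle\nuI{i},\tfrac{\hphi_g\,dx}{h_X}\rangle=\delta_{ig}$ with no correction. Granting the orthogonality $\langle\tnuII{i},\tfrac{\hphi_g\,dx}{h_X}\rangle=0$, the form $\frac{\hphi_g\,dx}{h_X}$ satisfies every relation of Definition \ref{2df:nuII}; since the pre-normalized second-kind differential of minimal pole order is unique, it must equal $\tnuII{g}$, and $\wt\,\tnuII{g}=-2$ follows.

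For $\wt\,\tnuII{1}=-c_X$: the conditions $\langle\nuI{i},\tnuII{1}\rangle=\delta_{i1}$ force the principal part of $\tnuII{1}$ to be residue-dual to the top holomorphic weight $w_1=c_X-2$. Resonance $a+b=-2$ then pins the leading term of $\tnuII{1}$ to weight $-2-(c_X-2)=-c_X$; it cannot be more singular, for that would demand a holomorphic partner of weight exceeding $c_X-2$, of which there is none. This agrees with Proposition \ref{2pr:dSigma1}(2b): $d_Q\Sigma(P,Q)$ has a pole of order exactly $c_X$ in $t_Q$ at $\infty$, so the most singular member of the normalized second-kind family has weight $-c_X$.

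The main obstacle is the orthogonality $\langle\tnuII{i},\tfrac{\hphi_g\,dx}{h_X}\rangle=0$ for $i<g$. Here both forms are singular at $\infty$, so the residue of $\big(\int^P\tnuII{i}\big)\tfrac{\hphi_g\,dx}{h_X}$ is fed by sub-leading coefficients and is not governed by the leading weights alone. This is exactly where I would use Proposition \ref{2pr:dSigma1}(3b), $\oint_{Q=\infty}\Sigma(P,Q)=0$: it ensures the expansion of $d_Q\Sigma$ at $Q=\infty$ has no $t_Q^{-1}\,dt_Q$ term, so the differentials read off from it are genuinely of the second kind with no residual cross terms, and the weight-$(-2)$ coefficient is $\frac{\hphi_g\,dx}{h_X}$ with no holomorphic admixture. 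With that vanishing secured, the weight computations of the preceding paragraphs are routine bookkeeping; the essential input is thus the clean residue structure from \ref{2pr:dSigma1}(3b) combined with the single-pole property from Proposition \ref{2pr:phi_hi}.
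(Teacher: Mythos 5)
Your proposal is correct and follows essentially the same route as the paper: the weight bookkeeping from Lemma \ref{2lm:nuIo2} giving $\wt(\hphi_g\,dx/h_X)=-2$, the residue-resonance computation of $\langle\nuI{i},\hphi_g\,dx/h_X\rangle$ at $\infty$, and the duality $\wt\,\tnuII{1}=-2-\wt\,\nuI{1}=-c_X$. You are more explicit than the paper about the orthogonality conditions and the absence of a residue term (the paper silently relies on the ambiguity of $\tnuII{}$ modulo holomorphic differentials stated in Theorem \ref{2th:dSigma}), but the substance is the same.
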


\begin{proof}
$-\wt(\hphi_{g})+\wt(\hphi_{g-1})=2$ and thus
$$
\frac{1}{2\pi\ii}
\oint 
\left(
\int^{(x,y_\bullet)}
 \frac{\hphi_{g-1}}{h_X(x',y'_\bullet)}d x'\right)
\frac{\hphi_{g}}{h_X(x,y_\bullet)}d x = 2\pi \ii.
$$
On the other hand,
$-\wt(\hphi_{0})=d_h - r +1 -c_X$ in Lemma \ref{2lm:nuIo2} means
$$
\wt\ \nuI{1}=\wt 
\left(\frac{\hphi_0 d x}{h_X(x, y_\bullet)}\right) 
=c_X-2.
$$
Hence $\wt\, \tnuII{1}=-c_X$ and we obtain 
$$
\tnuII{1}=\frac{\hphi_{0}^* d x}{h_X}, \quad
\hphi_0^*=\frac{1}{t^{d_h+c_X-r+1}}(1+d_{>0}(t)), \quad
 d x =\frac{d t}{t^{r+1}}(1+d_{>0}(t)).
$$
\qed
\end{proof}

%\begin{proposition} \label{2pr:dSigma}
\begin{theorem} \label{2th:dSigma}
There exist the pre-normalized differentials $\nuII{j}$ $(j=1, 2, \cdots, g)$ of the second kind such that they have a simple pole at $\infty$ and satisfy the relation,
\begin{equation}
%\begin{split}
d_{Q} \Sigma\big(P, Q\big) -
  d_{P} \Sigma\big(Q, P\big)=
     \sum_{i = 1}^{g} \Bigr(
         \nuI{i}(Q)\otimes \nuII{i}(P)
        - \nuI{i}(P)\otimes \nuII{i}(Q)
     \Bigr)
   \label{2eq:dSigma},
%\end{split}
\end{equation}
where the set of differentials $\{\nuII{1}$, $\nuII{2}$, $\nuII{3}$, $\cdots$, $\nuII{g}\}$ is determined modulo the linear space spanned by $\langle\nuI{j}\rangle_{j=1, \ldots, g}$ and $d R_X$.

We call these $\nuII{i}$'s \emph{W-normalized differentials of the second kind}.\footnote{
In \^Onishi's articles and our previous articles
[\cite{KMP13, MP08, MP14, Onishi05, Onishi18} and references therein],
the definition of W-normalized differentials $\nuII{}$ of the second kind differs from this definition by its sign.
The difference is not significant but has the effect on the Legendre relation in Proposition \ref{2pr:L-rel} and the sign of the quadratic form in the definition of sigma function in Definition \ref{3df:sigma_func}.}
%\end{proposition}
\end{theorem}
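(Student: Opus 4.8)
The plan is to recognize the left-hand side
$D(P,Q):=d_{Q}\Sigma(P,Q)-d_{P}\Sigma(Q,P)$
as the canonical antisymmetric second-kind bilinear form attached to the pairing $\langle\cdot,\cdot\rangle$ of Definition \ref{2df:nu_nu'}, and then to read off the $\nuII{i}$ as its symplectic components. First I would record, from Proposition \ref{2pr:dSigma1} {\it 3.a}, that $D(P,Q)$ lies in $V\otimes_{\CC}V$ with $V:=\bH^0(X,\cA_X(*\infty))$, and that it is manifestly antisymmetric, $D(P,Q)=-D(Q,P)$; in particular the diagonal double poles of the two summands (Proposition \ref{2pr:dSigma1} {\it 1.a}, {\it 2.a}) cancel, so for fixed generic $P$ the form $D(P,\cdot)$ is holomorphic on $X$ away from $\infty$. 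Since $\infty$ is the only possible pole, the residue theorem forces every element of $V$ to be residue-free, hence of the second kind, and so $V/dR_X\cong H^{1}_{\mathrm{dR}}(X)\cong\CC^{2g}$. By Lemma \ref{2lm:nu_nu'} the pairing is antisymmetric, and by Definition \ref{2df:nuII} the basis $\{\nuI{i}\}_{i=1}^{g}\cup\{\tnuII{j}\}_{j=1}^{g}$ satisfies $\langle\nuI{i},\tnuII{j}\rangle=\delta_{ij}$, $\langle\nuI{i},\nuI{j}\rangle=\langle\tnuII{i},\tnuII{j}\rangle=0$; thus it is a symplectic basis, the pairing matrix being the standard $\left(\begin{smallmatrix}0&I_g\\-I_g&0\end{smallmatrix}\right)$.

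Next I would expand the class of $D(P,\cdot)$ in this basis. Solving against the inverse symplectic form gives
\[
D(P,Q)\equiv
\sum_{j=1}^{g}\bigl\langle D(P,\cdot),\tnuII{j}\bigr\rangle\,\nuI{j}(Q)
-\sum_{j=1}^{g}\bigl\langle D(P,\cdot),\nuI{j}\bigr\rangle\,\tnuII{j}(Q)
\pmod{d R_X\ (\text{in } Q)},
\]
the coefficients being one-forms in $P$. I would then \emph{define} $\nuII{j}(P):=\langle D(P,\cdot),\tnuII{j}\rangle$; because $D\in V\otimes_{\CC}V$, both tensor legs lie in $V$, so each $\nuII{j}$ is automatically a second-kind differential with a pole only at $\infty$, and Corollary \ref{2cr:nuII_g} together with Lemma \ref{2lm:nuI_hX} identifies its pole structure. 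By antisymmetry of $D$ and the normalization of Definition \ref{2df:nuII}, these $\nuII{j}$ satisfy the same pairing relations as the $\tnuII{j}$, hence $\nuII{j}\equiv\tnuII{j}$ modulo $\langle\nuI{i}\rangle_i+dR_X$, which is exactly the claimed indeterminacy.

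The key step is the reproducing identity $\langle D(P,\cdot),\nuI{j}\rangle=\nuI{j}(P)$. To prove it I would integrate $D(P,\cdot)$ in $Q$: the summand $d_{Q}\Sigma(P,Q)$ is exact in $Q$ with primitive $\Sigma(P,Q)$, so that by Definition \ref{2df:nu_nu'} the pairing reduces to a contour integral of $\Sigma(P,Q)\,\nuI{j}(Q)$ around $Q=\infty$, the remaining term $-d_{P}\Sigma(Q,P)$ being controlled by the period relations Proposition \ref{2pr:dSigma1} {\it 3.b}, {\it 3.c}. The residue theorem on $X$ in the variable $Q$ then turns the $\infty$-contour into minus the residue at the diagonal $Q=P$, where $\Sigma(P,Q)$ has a simple pole of the correct normalization (Proposition \ref{2pr:Sigma} {\it 2.a}, {\it 3.a}); this yields precisely $\nuI{j}(P)$. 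Feeding the two evaluations back into the expansion, replacing $\tnuII{j}$ by $\nuII{j}$ (legitimate modulo $\langle\nuI{i}\rangle_i$ and $dR_X$), and symmetrizing the roles of $P$ and $Q$ by antisymmetry produces exactly \eqref{2eq:dSigma}.

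I expect the reproducing identity to be the main obstacle: one must simultaneously convert the $\infty$-contour into the diagonal residue, verify that the high-order pole of $D(P,\cdot)$ at $\infty$ (of order $c_X$, by Proposition \ref{2pr:dSigma1} {\it 2.b}) contributes nothing spurious when paired against the holomorphic $\nuI{j}$, and pin the principal part to the span of exactly $g$ second-kind forms $\tnuII{j}$. Controlling this behavior at $\infty$ is where the explicit monomial-curve computations do the real work: Lemma \ref{2lm:dSigmaH1} supplies the expansion of $d_{Q}\Sigma_H$, Lemma \ref{2lm:hH^g} and Lemma \ref{2lm:Deltair2} match the admissible exponents to the $g$ weights of the $\tnuII{j}$, and Lemma \ref{2lm:Deltair} fixes the $2\pi\ii$ normalization that makes the symplectic identity exact.
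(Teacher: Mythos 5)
Your proposal is correct in substance but is organized around a different mechanism than the paper's own argument. The paper's proof is a direct local construction at $\infty$: it splits $d_Q\Sigma_H$ into the part $\Omega_H^{(0)}$ that is singular at $t_Q=\infty$ --- whose $g$ terms are, by Lemmas \ref{2lm:hH^g} and \ref{2lm:Deltair2}, products of leading parts of first-kind forms in $P$ with leading parts of second-kind forms in $Q$ --- and a regular remainder (Lemma \ref{2lm:OmegaH2}), fixes the $2\pi\ii$ normalization by Lemma \ref{2lm:Deltair}, and then antisymmetrizes; the $\nuII{i}$ are simply read off as the second-kind factors so produced. You instead start from the fact that $D(P,Q)=d_{Q}\Sigma(P,Q)-d_{P}\Sigma(Q,P)$ lies in $\bH^0(X,\cA_X(*\infty))^{2\otimes_\CC}$, expand its de Rham class in the symplectic basis $\{\nuI{i},\tnuII{j}\}$ of $\bH^0(X,\cA_X(*\infty))/dR_X$, and pin down the coefficients by the reproducing identity $\langle D(P,\cdot),\nuI{j}\rangle=\nuI{j}(P)$, proved by trading the $\infty$-contour for the diagonal residue of $\Sigma$. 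This buys a conceptual explanation of why exactly $g$ second-kind forms occur and why the normalization of Definition \ref{2df:nuII} is forced (the symplectic structure only surfaces in the paper afterwards, in Corollary \ref{2cor:nuI_nuII}), at the cost of having to verify the reproducing identity; the paper gets the decomposition for free from the explicit expansion (\ref{2eq:dQSigmaH11}) but leaves the duality implicit. Two places where your write-up is thinner than it should be, though neither is fatal: (i) the vanishing of the contribution of the $-d_{P}\Sigma(Q,P)$ summand to $\langle D(P,\cdot),\nuI{j}\rangle$ follows not from Proposition \ref{2pr:dSigma1} \emph{3.b}--\emph{3.c} but from Proposition \ref{2pr:dSigma1} \emph{1.b} with the arguments exchanged, which makes that summand holomorphic at $Q=\infty$ as a form in $Q$ and hence pair to zero against the holomorphic $\nuI{j}$; (ii) your expansion is only a congruence modulo $dR_X$ in each variable separately, so the exact equality (\ref{2eq:dSigma}) holds only after absorbing the discrepancy into the choice of the $\nuII{i}$ --- which is precisely the indeterminacy the theorem statement allows, but you should say so explicitly. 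In the end both routes rest on the same computational input (Lemmas \ref{2lm:dSigmaH1}, \ref{2lm:hH^g}, \ref{2lm:Deltair2}, \ref{2lm:Deltair}), which you correctly identify as where the real work happens.
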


\begin{proof}
Noting Proposition \ref{2pr:dSigma_nuII},
Lemmas \ref{2lm:OmegaH2} and \ref{2lm:Deltair} show the fact.
\qed
\end{proof}

\begin{theorem}\label{2th:dRhamcoh}
$$
(\bH^0(X, \cA_X(*\infty))/d\bH^0(X, \cO_X(*\infty))=
\bigoplus_{i=1}^g (\CC \nuI{i}\oplus \CC \nuII{i})
$$
and thus $\dim_\CC(\bH^0(X, \cA_X(*\infty))/d\bH^0(X, \cO_X(*\infty))=2g.$
\end{theorem}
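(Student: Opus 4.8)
The plan is to prove two things — that the $2g$ classes $[\nuI{i}]$, $[\nuII{i}]$ span the quotient and that they are linearly independent in it — and then to combine them, since a spanning family of $2g$ linearly independent vectors is automatically a basis and pins the dimension to $2g$. I would first record that every element of $\bH^0(X,\cA_X(*\infty))$ has its only possible pole at $\infty$, hence by the residue theorem has vanishing residue there, i.e. is a differential of the second kind.

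The conceptual heart is to check that the antisymmetric pairing $\langle\cdot,\cdot\rangle$ of Definition \ref{2df:nu_nu'} descends to $\bH^0(X,\cA_X(*\infty))/d\bH^0(X,\cO_X(*\infty))$, and for this I would show it annihilates exact forms. Unwinding the definition for $f\in R_X=\bH^0(X,\cO_X(*\infty))$ and $\nu\in\bH^0(X,\cA_X(*\infty))$, the constant piece $f(\varepsilon')\oint_{C_\varepsilon}\nu$ drops out because $\oint_{C_\varepsilon}\nu=2\pi\ii\,\Res_\infty\nu=0$ (second kind), leaving $\langle df,\nu\rangle=\pm\Res_\infty(f\nu)$. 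Since $f$ and $\nu$ are both regular off $\infty$, the one-form $f\nu$ has its only pole at $\infty$, so $\Res_\infty(f\nu)=0$ by the residue theorem; antisymmetry (Lemma \ref{2lm:nu_nu'}) then gives $\langle\nu,dg\rangle=0$ as well. Thus $\langle\cdot,\cdot\rangle$ is well defined on the quotient.

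With descent in hand, linear independence is immediate: by the W-normalization of Definition \ref{2df:nuII} and Theorem \ref{2th:dSigma} the $2g$ classes pair through the standard symplectic matrix, namely $\langle\nuI{i},\nuII{j}\rangle=\delta_{ij}$, $\langle\nuII{i},\nuII{j}\rangle=0$, and $\langle\nuI{i},\nuI{j}\rangle=0$ (the last because $(\int\nuI{i})\,\nuI{j}$ is holomorphic at $\infty$, so its contour integral vanishes). As the Gram matrix $\left(\begin{smallmatrix}0&I_g\\-I_g&0\end{smallmatrix}\right)$ is nonsingular and the pairing descends, the images of $\nuI{1},\dots,\nuI{g},\nuII{1},\dots,\nuII{g}$ are linearly independent modulo $d\bH^0(X,\cO_X(*\infty))$, so the dimension is at least $2g$.

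For the matching upper bound I would reduce the pole order at $\infty$ modulo exact forms. By Lemma \ref{2lm:nuI_hX} the forms $\hphi_i\,dx/h_X$ give a $\CC$-basis of $\bH^0(X,\cA_X(*\infty))$ whose order indices exhaust $\lH_X^\fc=H_X^\fc\sqcup(-\NN)$, the $g$ members indexed by $H_X^\fc$ being the holomorphic forms spanned by the $\nuI{i}$. The exact differentials $d\phi$ ($\phi\in R_X$) carry leading order indices running over $-(H_X\setminus\{0\})$, so one may cancel the leading pole term of any second-kind form exactly when its order index lies in $-(H_X\setminus\{0\})$, i.e. when the pole order corresponds to a non-gap; iterating, every such form reduces to a combination of the finitely many forms whose irreducible pole orders correspond to the $g$ gaps of $H_X$, and by the normalization these are precisely the $\nuII{i}$ (cf. Lemmas \ref{2lm:nuIo2}, \ref{2lm:Deltair2} and Corollary \ref{2cr:nuII_g}). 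Hence the $2g$ classes span, and they form a basis, giving the stated direct-sum decomposition and $\dim_\CC=2g$. (The same bound follows, if one prefers, from Riemann--Roch on the filtration by pole order: for $n$ large, $\dim\bH^0(X,\cA_X(n\infty))-\dim d\bH^0(X,\cO_X((n-1)\infty))=(n+g-1)-(n-g-1)=2g$, and the quotient stabilizes.) I expect the main obstacle to be the descent step — isolating $\langle df,\nu\rangle=\pm\Res_\infty(f\nu)$ and invoking the residue theorem — together with verifying that the pole-order reduction terminates exactly on the span of the $\nuII{i}$; the latter is essentially bookkeeping already encoded in the semigroup data of Lemmas \ref{2lm:nuIo2} and \ref{2lm:Deltair2}.
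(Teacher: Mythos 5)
Your proof is correct, and its core---the spanning step---is the same pole-order bookkeeping the paper uses: the paper's proof sets $H_{dR_X}:=\{\wt(j(d\phi_i))\}$, observes via Lemma \ref{2lm:nuI_hX} that the order indices of the weight-graded basis $\hphi_i\,dx/h_X$ of $\bH^0(X,\cA_X(*\infty))$ exhaust $\lH_X^\fc=H_X^\fc\cup(-\NN)$ while those of $dR_X$ exhaust $-(H_X\setminus\{0\})$, counts $\#(\lH_X^\fc\setminus H_{dR_X})=g+g=2g$, and pairs each gap $\ell\in H_X^\fc$ with $-\ell$ by Riemann--Roch to identify the second-kind representatives. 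Where you genuinely diverge is on linear independence: the paper leaves it implicit in the leading-order grading (a nonzero element of $dR_X$ necessarily has leading order index in $-(H_X\setminus\{0\})$, so no nontrivial combination of the $2g$ surviving basis classes can be exact), whereas you establish it by showing the pairing $\langle\cdot,\cdot\rangle$ of Definition \ref{2df:nu_nu'} kills exact forms---via $\langle df,\nu\rangle=\pm\mathrm{Res}_\infty(f\nu)=0$---hence descends to the quotient, where the Gram matrix of the $\nuI{i},\nuII{j}$ is the standard symplectic form. Your route costs the extra descent computation but buys two things the paper's count does not make explicit: it exhibits the de Rham quotient as a symplectic space with $\{[\nuI{i}],[\nuII{i}]\}$ as symplectic basis (the structure behind Corollary \ref{2cor:nuI_nuII} and the generalized Legendre relation), and it is manifestly insensitive to the ambiguity of the $\nuII{i}$ modulo $\langle\nuI{j}\rangle_\CC$ and $dR_X$ allowed by Theorem \ref{2th:dSigma}. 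Your parenthetical Riemann--Roch count of the stabilized filtration is likewise a valid, more classical substitute for the semigroup bookkeeping.
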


\begin{proof}
It is obvious that $d\bH^0(X, \cO_X(*\infty))\subset \bH^0(X, \cA_X(*\infty))$;  $(j: dR_X \hookrightarrow \bH^0(X, \cA_X(*\infty))$).
We recall Lemma \ref{2lm:nuI_hX} 2 and 7.
$\bH^0(X, \cA_X(*\infty))\setminus \bH^0(X, \cA_X)$ is the set of differentials of the second kind. In other words, there is no differential with the first order singularity at $\infty$.
The embedding $j$ is realized by 
$\displaystyle{j: d\phi_i \mapsto \frac{h_X d \phi_i}{h_X}}$.
Let $H_{dR_X}:=\{\wt(j(d\phi_i))\ |\ i \in \NN_0\}$.
Then $H_{dR_X}\subset -\NN$ and $\#((-\NN)\setminus (H_{dR_X}+1)))=g$, whereas $\# H_X^\fc=g$. Thus $\#(\BH_X^\fc\setminus H_{dR_X})=2g$.
Due to the Riemann-Roch theorem, for every $\ell$ in $H_X^\fc$, there is an element $k$ in $(-\NN)\setminus (H_{dR_X}+1)$ such that $\ell+k =0$.
It shows the relations.
\qed
\end{proof}

\begin{lemma}\label{dSigma_holo}
$\displaystyle{d_{P_2} \Sigma(P_1, P_2)
     +\sum_{i = 1}^g \nuI{i}(P_1)\otimes \nuII{i}(P_2)}$ is holomorphic over $X\setminus \{P_2\}$ as a function of $P_1$ and is holomorphic over $X\setminus \{P_1\}$ as a function of $P_2$.
\end{lemma}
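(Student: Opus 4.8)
The plan is to exploit the symmetrization identity of Theorem \ref{2th:dSigma} so as to trade a delicate cancellation at $\infty$ for a manifest holomorphy statement. Abbreviate
\[
\Theta(P_1,P_2):=d_{P_2}\Sigma\big(P_1,P_2\big)+\sum_{i=1}^g \nuI{i}(P_1)\otimes \nuII{i}(P_2).
\]
For the first assertion, that $\Theta$ is holomorphic in $P_1$ on $X\setminus\{P_2\}$, I would argue directly from this definition. By Proposition \ref{2pr:dSigma1} {\it{1}}, the form $d_{P_2}\Sigma(P_1,P_2)$, viewed as a function of $P_1$, is holomorphic on all of $X$ except for a double pole at $P_2$, and in particular it is holomorphic at $\infty$ by {\it{1.b}}. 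Each $\nuI{i}$ is a differential of the first kind, hence holomorphic everywhere on $X$, while $\nuII{i}(P_2)$ does not depend on $P_1$; thus $\sum_i \nuI{i}(P_1)\otimes\nuII{i}(P_2)$ is holomorphic in $P_1$ on all of $X$. The only singularity of $\Theta$ in $P_1$ is therefore the one at $P_2$ inherited from $d_{P_2}\Sigma$, which gives holomorphy on $X\setminus\{P_2\}$.

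For the second assertion, that $\Theta$ is holomorphic in $P_2$ on $X\setminus\{P_1\}$, the naive route fails: as a function of $P_2$ the form $d_{P_2}\Sigma(P_1,P_2)$ carries a pole of order $c_X$ at $\infty$ by Proposition \ref{2pr:dSigma1} {\it{2.b}}, and the $\nuII{i}(P_2)$ themselves are singular at $\infty$, so one would otherwise have to verify by hand that these singularities cancel. Instead I would rewrite $\Theta$ using the identity (\ref{2eq:dSigma}); rearranging it (with $P=P_1$, $Q=P_2$) yields
\[
\Theta(P_1,P_2)=d_{P_1}\Sigma\big(P_2,P_1\big)+\sum_{i=1}^g \nuI{i}(P_2)\otimes \nuII{i}(P_1).
\]
Now apply Proposition \ref{2pr:dSigma1} {\it{1}} once more, with the two points interchanged: as a function of $P_2$, the form $d_{P_1}\Sigma(P_2,P_1)$ is holomorphic on $X$ except for a double pole at $P_1$, and holomorphic at $\infty$. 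Since the $\nuI{i}(P_2)$ are holomorphic everywhere and $\nuII{i}(P_1)$ is independent of $P_2$, the remaining sum contributes no singularity outside $P_1$ and none at $\infty$. Hence $\Theta$ is holomorphic in $P_2$ on $X\setminus\{P_1\}$.

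The only genuine content is therefore the symmetrization (\ref{2eq:dSigma}) of Theorem \ref{2th:dSigma}, which is already proved; everything else is bookkeeping of the pole data recorded in Proposition \ref{2pr:dSigma1}. The main obstacle I anticipate is purely one of matching signs and indices, so that the rearrangement of (\ref{2eq:dSigma}) places exactly $+\sum_i\nuI{i}(P_1)\otimes\nuII{i}(P_2)$ on the left while producing on the right an expression whose $P_2$-behavior at $\infty$ is manifestly regular. Once the two sides are correctly paired, both holomorphy claims collapse to a single property, Proposition \ref{2pr:dSigma1} {\it{1}}, invoked twice with the variables swapped.
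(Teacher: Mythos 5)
Your argument is correct, and the first half (holomorphy in $P_1$ away from $P_2$) is exactly the paper's own argument: it is read off directly from Proposition \ref{2pr:dSigma1}~\textit{1}, since the added sum $\sum_i\nuI{i}(P_1)\otimes\nuII{i}(P_2)$ is holomorphic in $P_1$. For the second half, however, you take a genuinely different route. The paper works with the $P_2$-variable head-on: it notes from Proposition \ref{2pr:dSigma1}~\textit{2.b} that $d_{P_2}\Sigma(P_1,P_2)$ is singular at $P_2=\infty$, argues via the weight bookkeeping of (\ref{2eq:Kform'}) and the homogeneity of $\tlh_X$ that the singular part has no residue term and that its pole orders are precisely those killed by the $\nuII{i}(P_2)$, and concludes that the cancellation at $\infty$ occurs. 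You instead rearrange the already-established identity (\ref{2eq:dSigma}) of Theorem \ref{2th:dSigma} into
$$
d_{P_2}\Sigma(P_1,P_2)+\sum_{i=1}^g\nuI{i}(P_1)\otimes\nuII{i}(P_2)
= d_{P_1}\Sigma(P_2,P_1)+\sum_{i=1}^g\nuI{i}(P_2)\otimes\nuII{i}(P_1),
$$
and then apply Proposition \ref{2pr:dSigma1}~\textit{1} to the right-hand side with the roles of the two points exchanged, so that regularity at $P_2=\infty$ becomes manifest. This is legitimate (Theorem \ref{2th:dSigma} precedes the lemma and does not rely on it), and it buys a cleaner, shorter argument: the delicate cancellation at $\infty$ is absorbed once and for all into the symmetrization identity rather than re-verified by pole-order counting. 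What the paper's direct computation buys in exchange is independence from the full strength of (\ref{2eq:dSigma}) and an explicit confirmation of which singular terms the $\nuII{i}$ remove, which is then reused in the proof of Theorem \ref{2th:Sigma}. Both proofs are sound; yours is arguably the tidier deduction given what has already been established.
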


\begin{proof}
From Proposition \ref{2pr:dSigma1}, $d_{P_2} \Sigma(P_1, P_2)$ is  holomorphic over $X\setminus \{P_2\}$ as a function of $P_1$ whereas $d_{P_2} \Sigma(P_1, P_2)$ is  holomorphic over $X\setminus (\{P_2\}\cup \{\infty\})$ as a function of $P_2$.
The order of the singularity at $Q=\infty$ is $2g$ and thus, which can be canceled by $\nuII{i}(P_2)$.
Since the numerator of $d_{P_2} \Sigma(P_1, P_2)$ in (\ref{2eq:Kform'}) consists of the elements in $R_X\otimes_\CC R_X d x_1 \otimes d x_2$, from Proposition \ref{2pr:dSigma1}, there is no term whose weight is $-1$ in $d_{P_2} \Sigma(P_1, P_2)$ as a function in $Q$.
Noting the homogeneous property of $\tlh_X$ from Lemma \ref{2pr:p_H}, we have the result. 
\qed
\end{proof}

%\begin{corollary}

\begin{theorem}
\label{2th:Sigma}
\begin{enumerate}
\item
The one-form,
$$
\nuIII{P_1,P_2}(P):= \Sigma(P, P_1) -  \Sigma(P, P_2),
$$
is a differential of the third kind whose only (first-order) poles are
$P=P_1$ and $P=P_2$, with residues $+1$ and $-1$ respectively.

\item
The symmetric two-form,
\begin{equation}
\Omega(P_1, P_2) := d_{P_2} \Sigma(P_1, P_2)
     +\sum_{i = 1}^g \nuI{i}(P_1)\otimes \nuII{i}(P_2),
\label{2eq:Omega_dSigma}
\end{equation}
is the fundamental differential of the second kind, which has the properties:
\begin{enumerate}
\item $\Omega(P, Q)=\Omega(Q, P)$,

\item for any $\zeta \in G_{\varpi_r(P)}$, 
$\Omega(\zeta P, \zeta Q)=\Omega(P, Q)$  if $\varpi_r(P)=\varpi_r(Q)$,

\item 
 $\Omega(P, Q)$ is holomorphic except $Q$ as a function of $P$ and behaves like 
$$
\Omega(P, Q)=\frac{d t_P d t_Q}{(t_P-t_Q)^2}(1 + d_{>0}(t_P, t_Q)),
$$

\item 
$$
         \int^{P_1}_{P_2} \nuIII{Q_1, Q_2} = 
                \int^{P_1}_{P_2} \int^{Q_1}_{Q_2} 
                 \Omega(P,Q).
$$

\end{enumerate}
\end{enumerate}
%\end{corollary}
\end{theorem}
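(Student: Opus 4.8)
The plan is to treat the two assertions separately, deriving everything from the local expansions and holomorphy statements already proved for $\Sigma$ and $d\Sigma$. For the first assertion I would read the pole structure of $\nuIII{P_1,P_2}$ straight off Proposition \ref{2pr:Sigma}(2): as a function of $P$, $\Sigma(P,P_1)$ is holomorphic on $X$ away from $P=P_1$, where it has a simple pole with residue $+1$ (behaviour $\frac{dt_P}{t_P}(1+d_{>0})$), and away from $P=\infty$, where it has a simple pole with residue $-1$ (behaviour $-\frac{dt_P}{t_P}(1+d_{>0})$); the same holds for $\Sigma(P,P_2)$. Subtracting, the two leading $-\frac{dt_P}{t_P}$ terms at $\infty$ cancel, so $\nuIII{P_1,P_2}=\Sigma(\cdot,P_1)-\Sigma(\cdot,P_2)$ is holomorphic at $\infty$ and keeps precisely the simple poles at $P_1$ and $P_2$ with residues $+1$ and $-1$. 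This is exactly a third-kind differential with the claimed poles and residues.

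For the symmetry 2(a) I would subtract the two forms of the defining identity (\ref{2eq:Omega_dSigma}): the difference $\Omega(P_1,P_2)-\Omega(P_2,P_1)$ splits into $\bigl(d_{P_2}\Sigma(P_1,P_2)-d_{P_1}\Sigma(P_2,P_1)\bigr)$ plus the difference of the holomorphic correction terms. The first bracket is exactly the left-hand side of (\ref{2eq:dSigma}) in Theorem \ref{2th:dSigma}, equal to $\sum_i\bigl(\nuI{i}(P_2)\otimes\nuII{i}(P_1)-\nuI{i}(P_1)\otimes\nuII{i}(P_2)\bigr)$, which cancels the correction difference term by term, giving $0$. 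Property 2(b) I would deduce on the fibre $\varpi_r(P)=\varpi_r(Q)$ from the $G_{\varpi_r(P)}$-invariance of $\Sigma$ in Proposition \ref{2pr:Sigma}(1), combined with the matching equivariance of the W-normalized differentials $\nuI{i}$ and $\nuII{i}$ appearing in the correction term.

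For 2(c) the holomorphy of $\Omega(\cdot,Q)$ on $X\setminus\{Q\}$ is precisely Lemma \ref{dSigma_holo}; since the added term $\sum_i\nuI{i}(P)\otimes\nuII{i}(Q)$ is holomorphic in $P$, the only singularity of $\Omega$ is the diagonal double pole of $d_Q\Sigma$, whose leading form is $\frac{dt_P\,dt_Q}{t_P^2}$ in the coordinate centred at $Q$ (Proposition \ref{2pr:dSigma1}(1.a),(2.a)); rewriting this in a common local coordinate $t$ with $t_P=t(P)$, $t_Q=t(Q)$ and checking that no residue appears converts it to the normalized form $\frac{dt_P\,dt_Q}{(t_P-t_Q)^2}(1+d_{>0})$. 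For the integral identity 2(d) I would integrate (\ref{2eq:Omega_dSigma}) twice: integrating in $Q$ and applying the fundamental theorem of calculus to $\int_{Q_2}^{Q_1}d_Q\Sigma(P,Q)=\Sigma(P,Q_1)-\Sigma(P,Q_2)=\nuIII{Q_1,Q_2}(P)$ yields the third-kind differential plus the holomorphic remainder $\sum_i\nuI{i}(P)\int_{Q_2}^{Q_1}\nuII{i}$, and a second integration in $P$ then gives $\int_{P_2}^{P_1}\nuIII{Q_1,Q_2}$ together with the bilinear period term $\sum_i\bigl(\int_{P_2}^{P_1}\nuI{i}\bigr)\bigl(\int_{Q_2}^{Q_1}\nuII{i}\bigr)$.

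The hard part will be controlling this last bilinear term, which does not vanish pointwise: the identity 2(d) must be read for the canonical objects, with $\nuIII$ determined only modulo differentials of the first kind and the equality holding modulo the period lattice. I would make this precise by matching the holomorphic remainder against the ambiguities already present — the normalization of $\nuII{i}$ in Theorem \ref{2th:dSigma} (fixed only modulo $\langle\nuI{j}\rangle_{j}$ and $dR_X$) and the symmetric ambiguity $\sum c_{ij}\nuI{i}\otimes\nuI{j}$ of any fundamental second-kind differential — so that, once these are fixed compatibly, the fundamental theorem of calculus delivers 2(d). The other point needing a short but careful local computation is the conversion in 2(c) of the diagonal singularity of $d_Q\Sigma$ into the symmetric prime-form normalization $\frac{dt_P\,dt_Q}{(t_P-t_Q)^2}$.
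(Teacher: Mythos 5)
Your treatment of item 1 and of items 2(a)--(c) is exactly the route the paper takes: its entire proof consists of citing Proposition \ref{2pr:Sigma} for item 1 and (\ref{2eq:dSigma}) together with Proposition \ref{2pr:dSigma1} for item 2, and your filled-in details (cancellation of the two $-dt_P/t_P$ poles at $\infty$, the term-by-term cancellation giving 2(a) from (\ref{2eq:dSigma}), Lemma \ref{dSigma_holo} plus the local expansions for 2(c)) are precisely what those citations amount to; your 2(b) is argued at the same (admittedly thin) level of detail as the paper's ``obvious''. The substantive point is 2(d), where your worry about the bilinear term is well founded and is not addressed by the paper at all. Integrating (\ref{2eq:Omega_dSigma}) twice gives, as you compute,
$$
\int^{P_1}_{P_2}\int^{Q_1}_{Q_2}\Omega(P,Q)
=\int^{P_1}_{P_2}\nuIII{Q_1,Q_2}
+\sum_{i=1}^{g}\int^{P_1}_{P_2}\nuI{i}\int^{Q_1}_{Q_2}\nuII{i},
$$
which is exactly the paper's own later display (\ref{2eq:Omega}); hence 2(d) as literally stated is in tension with (\ref{2eq:Omega}) unless the quadratic term is absorbed into the ambiguity of the $\nuII{i}$ (which Theorem \ref{2th:dSigma} fixes only modulo $\langle\nuI{j}\rangle$ and $dR_X$) or unless $\nuIII{Q_1,Q_2}$ is replaced by a renormalized third-kind differential, as in Lemma \ref{2lm:Omega_nuIII} (Fay's Corollary 2.6(ii)), where the correction quadratic in the $\int\nuI{i}$ appears explicitly. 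So your proposal is at least as complete as the paper's proof; the one genuine gap you leave open in 2(d) is a gap the paper shares, and your diagnosis of where it lies is the correct one.
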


\begin{proof}
{\it{1}} is directly obtained by Proposition \ref{2pr:Sigma}.
{\it{2}} is obvious from (\ref{2eq:dSigma}) and Proposition \ref{2pr:dSigma1}.
\qed
\end{proof}

We note the W-normalized differentials of the first kind and the second kind,  in Definitions \ref{2df:nuI_hX} and  \ref{2df:nuII}.
\begin{definition}\label{2df:WnuIII}
\begin{enumerate}

\item $\nuIII{P_1,P_2}$ is called the W-normalized differential of the third kind

\item 
$\Omega(P, Q)$ is the W-normalized fundamental differential of the second kind and when it is expressed by
\begin{equation}
\Omega(P_1, P_2) =\frac{F_\Omega(P, Q)d x_P \otimes d x_Q}
{(x_P-x_Q)^2 h_X(x_{P},y_{\bullet P})h_X(x_{Q},y_{\bullet Q})},
\label{2eq:Kform}
\end{equation}
where $F_\Omega$ is called Klein fundamental form in 
 $R_X \otimes_\CC R_X$.
\end{enumerate}
\end{definition}

\begin{lemma}
\label{2lm:limFphig}
We have
\begin{equation}
\lim_{P_1 \to \infty}
\frac{F_\Omega(P_1, P_2)}{\hphi_{g-1}(P_1)(x_1 - x_2)^2}
 = \hphi_{g}(P_2).
\label{2eq:limFphig}
\end{equation}
\end{lemma}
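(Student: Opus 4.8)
The plan is to extract both sides of the identity from the leading behaviour of the fundamental two-form $\Omega(P_1,P_2)$ as $P_1\to\infty$. I start from the definition (\ref{2eq:Omega_dSigma}),
\[
\Omega(P_1,P_2)=d_{P_2}\Sigma(P_1,P_2)+\sum_{i=1}^{g}\nuI{i}(P_1)\otimes\nuII{i}(P_2),
\]
expand every factor in the arithmetic local parameter $t_1$ at $\infty$, and isolate the coefficient of $dt_1$ at order $t_1^{0}$. Comparing that coefficient with the corresponding expansion of the Klein representation (\ref{2eq:Kform}) will produce the stated limit. Throughout I use $\varphi_\infty(h_X)=r\,t_1^{-d_h}$ and $\varphi_\infty(dx_1)=r\,t_1^{-r-1}dt_1$ from Lemma \ref{2lm:varphiinf}.

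First I determine which summand survives at order $t_1^{0}$. By Proposition \ref{2pr:dSigma1} 1.b, $d_{P_2}\Sigma(P_1,P_2)$ is holomorphic at $P_1=\infty$ and vanishes there to order $r-1\ge1$ in $t_1$, so it contributes nothing. By the W-normalization (\ref{2eq:nuI1}) together with $-\wt\,\hphi_{g-1}=d_h-r-1$ (Lemma \ref{2lm:nuIo2} 2), the form $\nuI{i}(P_1)=\hphi_{i-1}dx_1/h_X$ vanishes at $\infty$ for every $i<g$, while $\nuI{g}(P_1)=(1+d_{>0}(t_1))\,dt_1$ is the unique basis member non-vanishing there. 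Hence, as $P_1\to\infty$,
\[
\Omega(P_1,P_2)=\nuI{g}(P_1)\otimes\nuII{g}(P_2)+\bigl(\text{terms of order }\ge1\text{ in }t_1\bigr),
\]
so the $t_1^{0}$-coefficient of $dt_1$ in $\Omega$ equals $\nuII{g}(P_2)=\hphi_{g}\,dx_2/h_X$ by Corollary \ref{2cr:nuII_g}.

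Next I match against $F_\Omega$. From the two expansions above, $dx_1/h_X=t_1^{\,d_h-r-1}(1+d_{>0}(t_1))\,dt_1$, and since $\hphi_{g-1}$ is the monic monomial of weight $-(d_h-r-1)$ we have $\hphi_{g-1}(P_1)^{-1}=t_1^{\,d_h-r-1}(1+d_{>0}(t_1))$. Writing (\ref{2eq:Kform}) with $(x_1-x_2)^2=x_1^{2}(1+d_{>0}(t_1))$ as $P_1\to\infty$,
\[
\Omega(P_1,P_2)=\frac{F_\Omega(P_1,P_2)}{(x_1-x_2)^2}\,t_1^{\,d_h-r-1}(1+d_{>0}(t_1))\,dt_1\otimes\frac{dx_2}{h_X(P_2)},
\]
its $t_1^{0}$-coefficient of $dt_1$ is $\bigl(\lim_{P_1\to\infty}F_\Omega\,t_1^{\,d_h-r-1}/(x_1-x_2)^2\bigr)\otimes dx_2/h_X(P_2)$. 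Equating this with $\hphi_g\,dx_2/h_X$ from the previous paragraph and replacing $t_1^{\,d_h-r-1}$ by $\hphi_{g-1}(P_1)^{-1}(1+d_{>0}(t_1))$ gives exactly
\[
\lim_{P_1\to\infty}\frac{F_\Omega(P_1,P_2)}{\hphi_{g-1}(P_1)(x_1-x_2)^2}=\hphi_g(P_2).
\]
A weight count, $\wt_{P_1}F_\Omega=-d_h-r+1=\wt(\hphi_{g-1})+\wt(x_1^{2})$, confirms the ratio has weight $0$ in $P_1$, so the limit is finite and lies in $R_X$.

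The main obstacle is pinning down $\nuII{g}$ exactly. The second-kind differentials of Theorem \ref{2th:dSigma} are fixed only modulo $\langle\nuI{j}\rangle_{j}$ and $dR_X$, so a priori $\Omega$ carries the usual holomorphic symmetric ambiguity $\sum_{i,j}c_{ij}\nuI{i}\otimes\nuI{j}$; a nonzero $c_{gj}$ would add $\sum_j c_{gj}\hphi_{j-1}(P_2)$ to the limit and spoil the clean answer. The crux is therefore to verify that the paper's representative forces $c_{gj}=0$, i.e. that the surviving $t_1^{0}$-coefficient is the pure monomial $\hphi_g$ with no holomorphic correction. I expect this from the weight-graded structure: that coefficient has weight exactly $-2$ in $P_2$, Corollary \ref{2cr:nuII_g} produces precisely $\hphi_g\,dx_2/h_X$, and any weight-compatible holomorphic correction of the same leading order is excluded; this is the step I would write out most carefully, cross-checking against the explicit monomial-curve expansion (\ref{2eq:dQSigmaH11b}) and the index bookkeeping of Lemma \ref{2lm:Deltair2}.
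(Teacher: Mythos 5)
Your argument is exactly the paper's: the published proof simply cites the expression (\ref{2eq:Omega_dSigma}) together with Corollary \ref{2cr:nuII_g}, i.e.\ it extracts the $t_1^0$-coefficient of $\Omega$ at $P_1\to\infty$, where only $\nuI{g}(P_1)\otimes\nuII{g}(P_2)$ survives with $\nuII{g}=\hphi_g\,dx/h_X$, and compares with the Klein form (\ref{2eq:Kform}) using $\varphi_\infty(dx/h_X)=t^{d_h-r-1}dt$ and $-\wt\,\hphi_{g-1}=d_h-r-1$. The normalization ambiguity you flag at the end is resolved in the paper exactly as you suspect, by taking the representative of Corollary \ref{2cr:nuII_g} (weight $-2$, pure monomial $\hphi_g$), so your write-up is a correct and more detailed version of the same proof.
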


\begin{proof}
The expression (\ref{2eq:Omega_dSigma}) and Corollary \ref{2cr:nuII_g} give the result.
\qed
\end{proof}

For the connection of these algebraic tools with the sigma functions. we define $\Pi$ 
\begin{equation}
\begin{split}
\Pi^{P_1, P_2}_{Q_1, Q_2}
 &:= \int^{P_1}_{P_2} \int^{Q_1}_{Q_2} \Omega(P, Q) \\
 &= \int^{P_1}_{P_2} (\Sigma(P, Q_1) - \Sigma(P, Q_2))
 +\sum_{i = 1}^g \int^{P_1}_{P_2} \nuI{i}(P)
\int^{Q_1}_{Q_2} \nuII{i}(P).
\label{2eq:Omega}
\end{split}
\end{equation}

It has the properties.
\begin{lemma}\label{2lm:Pi_Wn}
$\Pi^{P_1, P_2}_{Q_1, Q_2}=
-\Pi^{P_2, P_1}_{Q_1, Q_2},\quad
\Pi^{P_1, P_2}_{Q_1, Q_2}=\Pi^{Q_1, Q_2}_{P_1, P_2}$.
\end{lemma}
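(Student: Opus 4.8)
The plan is to derive both identities from two elementary facts about the iterated integral defining $\Pi^{P_1, P_2}_{Q_1, Q_2}$ in (\ref{2eq:Omega}): the sign change of a contour integral under reversal of its endpoints, and the symmetry $\Omega(P,Q)=\Omega(Q,P)$ established in Theorem \ref{2th:Sigma}. The first identity $\Pi^{P_1, P_2}_{Q_1, Q_2}=-\Pi^{P_2, P_1}_{Q_1, Q_2}$ is immediate: writing $\Pi^{P_1, P_2}_{Q_1, Q_2}=\int_{P_2}^{P_1}\bigl(\int_{Q_2}^{Q_1}\Omega(P,Q)\bigr)$ and exchanging the endpoints $P_1,P_2$ of the outer integral reverses its orientation, so that $\int_{P_1}^{P_2}=-\int_{P_2}^{P_1}$; no finer property of $\Omega$ is needed here.

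For the reciprocity $\Pi^{P_1, P_2}_{Q_1, Q_2}=\Pi^{Q_1, Q_2}_{P_1, P_2}$ I would fix representing paths $\gamma_P$ from $P_2$ to $P_1$ and $\gamma_Q$ from $Q_2$ to $Q_1$. By definition $\Pi^{Q_1, Q_2}_{P_1, P_2}$ is the iterated integral in which the first argument of $\Omega$ traverses $\gamma_Q$ and the second traverses $\gamma_P$; after renaming the two dummy variables and applying the symmetry $\Omega(P,Q)=\Omega(Q,P)$ of Theorem \ref{2th:Sigma}, its integrand becomes $\Omega(P,Q)$ with $P\in\gamma_P$ and $Q\in\gamma_Q$, but with the order of integration reversed relative to $\Pi^{P_1, P_2}_{Q_1, Q_2}$. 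It then remains to exchange the two integrations, which is justified by Fubini's theorem once the integrand is regular on $\gamma_P\times\gamma_Q$; this produces exactly $\Pi^{P_1, P_2}_{Q_1, Q_2}$.

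The one step requiring attention---and the principal, if mild, obstacle---is the quadratic singularity of $\Omega$ along the diagonal $P=Q$ from Theorem \ref{2th:Sigma}, which makes both the relabelling and the Fubini interchange legitimate only when $\gamma_P$ and $\gamma_Q$ are disjoint. I would therefore choose the representing paths to avoid one another; this is always possible, and since $\Omega$ is a bidifferential of the second kind (no residues) in each variable off the diagonal, such a deformation changes $\Pi$ only by the period ambiguity already implicit in its definition. Under this convention $\Omega$ restricts to a holomorphic two-form on $\gamma_P\times\gamma_Q$, Fubini applies verbatim, and both identities follow. Equivalently, via the explicit expansion (\ref{2eq:Omega}), the reciprocity amounts to the classical interchange law for the normalized third-kind integral together with the bilinearity of the period pairing of the differentials $\nuI{i}$ and $\nuII{i}$.
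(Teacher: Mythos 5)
Your argument is correct and is exactly the standard justification: the paper itself states this lemma without proof, treating it as immediate from the definition (\ref{2eq:Omega}), the orientation reversal of the outer integral, and the symmetry $\Omega(P,Q)=\Omega(Q,P)$ of Theorem \ref{2th:Sigma}. Your added care about keeping the paths $\gamma_P$ and $\gamma_Q$ disjoint and about the resulting path-dependence (the value is only well defined modulo periods and $2\pi\ii\ZZ$, since $\Sigma(P,Q_1)-\Sigma(P,Q_2)$ has first-order poles) is a legitimate refinement of what the paper leaves implicit.
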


We have the generalized Legendre relation given as follows:\footnote{The sign of this relation depends on the sign of the W-normalized Abelian differentials of the second kind because there are variant definitions. }
We introduce the homological basis $\{\alpha_i, \beta_i\}_{i=1,\cdots, g}$ of $\bH_0(X, \ZZ)$ satisfying
\begin{equation}
\langle \alpha_{i}, \alpha_{j}\rangle =\langle \beta_{i}, \beta_{j}\rangle=0, \quad\langle \alpha_{i}, \beta_{j}\rangle =\delta_{ij}\quad
 (i,j=1, 2, \ldots, g).
\label{2eq:Intsec_hom}
\end{equation}

\begin{definition}\label{2df:eta'_eta''}
We define the complete Abelian integrals of the first kind and the second kind by
$$
\omega'_{i j}:=\int_{\alpha_j} \nuI{i},\quad
\omega''_{i j}:=\int_{\beta_j} \nuI{i}.\quad
\eta'_{i j}:=\int_{\alpha_j} \nuII{i},\quad
\eta''_{i j}:=\int_{\beta_j} \nuII{i},
$$
and the Jacobian (Jacobi variety) by $J_X:=\CC^g/\Gamma_X$
with  $\kappa_J :\CC^g \to J_X$ and $\Gamma_X:=\langle \omega', \omega''\rangle_\ZZ$.
\end{definition}

\begin{proposition}\label{2pr:dSigma_nuII}
If by using $\Sigma'\big(P, Q\big)$ defined in Proposition \ref{2pr:Sigma} 4, we define $\{\nuII{i}'\}_{i=1, \ldots, g}$, these $\nuII{i}$ and $\nuII{i}'$ give the same period matrices in Definition \ref{2df:eta'_eta''}.
\end{proposition}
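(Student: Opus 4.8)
The plan is to reduce the statement to showing that the two systems differ only by exact forms, namely $\nuII{i}' - \nuII{i} \in d R_X$ for every $i$. Granting this, the conclusion is immediate: for any $g \in R_X = \bH^0(X, \cO_X(*\infty))$ the function $g$ is single-valued on $X$, the cycles $\alpha_j, \beta_j$ of (\ref{2eq:Intsec_hom}) may be taken in $X\setminus\{\infty\}$, and $d g$ has vanishing residue at $\infty$, so $\oint_{\alpha_j} d g = \oint_{\beta_j} d g = 0$. Hence $\etap{ij} = \int_{\alpha_j}\nuII{i}$ and $\etapp{ij} = \int_{\beta_j}\nuII{i}$ are unaffected by replacing $\nuII{i}$ with $\nuII{i}'$, and likewise $\omegap{ij}, \omegapp{ij}$ are untouched since the first-kind differentials are the same; this is exactly the assertion about the period matrices of Definition \ref{2df:eta'_eta''}.

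To produce the relation $\nuII{i}' - \nuII{i} \in d R_X$, I would subtract the two instances of (\ref{2eq:dSigma}), written once for $\Sigma$ and once for $\Sigma'$. Writing $\Delta\nuII{i} := \nuII{i}' - \nuII{i}$, the difference of the right-hand sides is $\sum_{i=1}^g(\nuI{i}(Q)\otimes \Delta\nuII{i}(P) - \nuI{i}(P)\otimes\Delta\nuII{i}(Q))$, while the difference of the left-hand sides is $(d_Q\Sigma'-d_Q\Sigma)(P,Q) - (d_P\Sigma'-d_P\Sigma)(Q,P)$. By Proposition \ref{2pr:dSigma1} 4 (together with Proposition \ref{2pr:Sigma} 4 for the variable-swapped term) each bracket lies in $\bH^0(X,\cA_X)\otimes_\CC d R_X$. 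Passing to the de Rham quotient $\bH^0(X,\cA_X(*\infty))/d R_X$, which by Theorem \ref{2th:dRhamcoh} is $2g$-dimensional with basis $\{\nuI{i},\nuII{i}\}$, kills the exact factors on the left, so the projection of the right-hand difference vanishes. Comparing coefficients of the independent classes $[\nuI{m}]\otimes[\nuII{n}]$ and $[\nuII{m}]\otimes[\nuI{n}]$ shows $\Delta\nuII{i}$ has no $\nuII{k}$-component modulo $d R_X$, and comparing the $[\nuI{m}]\otimes[\nuI{n}]$ coefficients forces $\Delta\nuII{i}\equiv\sum_k a_{ik}\nuI{k}\pmod{d R_X}$ with $(a_{ik})$ symmetric. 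Equivalently $\nuII{i}'-\nuII{i} = \sum_k a_{ik}\nuI{k} + d g_i$ for some $g_i\in R_X$.

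The genuine obstacle is to show $a_{ik}=0$: a surviving first-kind term would shift $\etap{ij}$ by $\sum_k a_{ik}\omegap{kj}$, and since the period matrix $(\omegap{kj})$ is nondegenerate this is nonzero in general. Note that neither (\ref{2eq:dSigma}) nor the normalization conditions (\ref{2eq:nuI_nuII}) can detect $a_{ik}$, because adding any symmetric combination $\sum_k c_{ik}\nuI{k}$ to the $\nuII{i}$ preserves both relations — this is precisely the "modulo $\langle\nuI{j}\rangle_\CC + d R_X$" ambiguity recorded in Theorem \ref{2th:dSigma}. To eliminate it I would invoke the explicit algebraic normalization of the $\nuII{i}$: by Lemma \ref{dSigma_holo} and its proof the $\nuII{i}$ are read off from the principal part of $d_Q\Sigma$ at $\infty$ through the monomial-curve model $\Omega_H^{(0)}$ and Lemma \ref{2lm:Deltair2}, a procedure linear in $\Sigma$. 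Since Proposition \ref{2pr:dSigma1} 4 places the difference $d_Q\Sigma'-d_Q\Sigma$ in $\bH^0(X,\cA_X)\otimes_\CC d R_X$, with an \emph{exact} form occupying exactly the variable from which $\nuII{i}$ is extracted, the recipe returns a $\Delta\nuII{i}$ whose entire singular part at $\infty$ is that of an element of $d R_X$ and which carries no holomorphic tail; thus $a_{ik}=0$ and $\Delta\nuII{i}\in d R_X$. Combining this with the period computation of the first paragraph completes the argument.
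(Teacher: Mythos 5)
Your first two paragraphs are a correct and faithful expansion of what the paper actually does: its entire proof is the one line ``From Proposition \ref{2pr:dSigma1} 4 and Theorem \ref{2th:dSigma}, it is obvious,'' and your reduction --- subtract the two instances of (\ref{2eq:dSigma}), use Proposition \ref{2pr:dSigma1} 4 to place the left-hand difference in $\bH^0(X,\cA_X)\otimes_\CC dR_X$, and conclude $\nuII{i}'-\nuII{i}=-dG_i+\sum_k a_{ik}\nuI{k}$ with $(a_{ik})$ symmetric --- is exactly the argument those citations gesture at, carried out correctly, together with the (correct) observation that $\oint_\gamma dG_i=0$ for $G_i\in R_X$ and cycles avoiding $\infty$.

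The gap is the one you yourself flag in the third paragraph, and your attempt to close it does not succeed. The claim that needs proof is precisely $a_{ik}=0$: a surviving symmetric term shifts $\etap{}$ to $\etap{}+a\,\omegap{}$, which is a genuine change since $\omegap{}$ is invertible. Your closing argument asserts that the extraction procedure is ``linear in $\Sigma$'' and therefore ``returns a $\Delta\nuII{i}$\,\dots\,which carries no holomorphic tail,'' but linearity only controls the \emph{polar} part of $\nuII{i}$ at $\infty$ (that is all that $\Omega_H^{(0)}$ and Lemma \ref{2lm:Deltair2} determine); the holomorphic completion is a genuine choice, and nothing you cite forces the completions chosen for $\Sigma$ and for $\Sigma'$ to differ by exactly $-dG_i$. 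Worse, the one normalization the paper does impose, (\ref{2eq:nuI_nuII}), cannot do this job: it is itself invariant under $\nuII{i}\mapsto\nuII{i}+\sum_k c_{ik}\nuI{k}$ for any symmetric $c$ (since $\langle\nuI{i},\nuI{k}\rangle=0$ and $\langle\cdot,\cdot\rangle$ is antisymmetric), and moreover $\nuII{i}-dG_i$ generally fails $\langle\nuII{i}',\nuII{j}'\rangle=0$ because $\langle dG_i,\nuII{j}\rangle=\mathrm{Res}_\infty(G_i\,\nuII{j})$ need not vanish --- so the normalized representatives for $\Sigma'$ are \emph{not} simply $\nuII{i}-dG_i$. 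To complete the proof you must either exhibit a completion rule depending only on the polar part (e.g.\ requiring the expansion of $\nuII{i}$ in the basis $\hphi_j\,dx/h_X$ to contain no components with $j<g$) and verify it is preserved under subtracting $dG_i$, or compute $a_{ik}$ directly from whatever normalization is actually in force and show it vanishes. Note that this gap is not of your making: the paper's own ``it is obvious'' skips the same step, and indeed Theorem \ref{2th:dSigma} only determines the $\nuII{i}$ modulo $\langle\nuI{j}\rangle_\CC+dR_X$, an ambiguity under which the period matrices of Definition \ref{2df:eta'_eta''} are not a priori well defined.
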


\begin{proof}
From the Proposition \ref{2pr:dSigma1} 4 and Theorem \ref{2th:dSigma}, it is obvious.
\qed
\end{proof}

Let $ \nuIIIo{Q_1, Q_2}$ be the normalized Abelian differential of the third kind, i.e., $\displaystyle{\oint_{\alpha_i} \nuIIIo{Q_1, Q_2}=0}$ \cite{FarkasKra}.
The following lemma corresponds to  Corollary 2.6 (ii) in \cite{Fay73}. 
\begin{lemma} \label{2lm:Omega_nuIII}
$$
\Pi^{P_1, P_2}_{Q_1, Q_2}
=\int^{P_1}_{P_2} \, \nuIIIo{Q_1, Q_2}
-\sum_{i=1}^g\sum_{j=1}^g \gamma_{i,j}
\int^{P_1}_{P_2} \nuI{i} \int^{Q_1}_{Q_2} \nuI{j},
$$
where $\gamma =  \eta'\omega^{\prime -1}$ and $\trp \gamma = \gamma$.
\end{lemma}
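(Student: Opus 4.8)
The plan is to reduce the double integral to a period computation by integrating $\Omega$ in one variable and comparing the result with the normalized third-kind differential. Fix $Q_1,Q_2$ and set $\omega_P:=\int^{Q_1}_{Q_2}\Omega(P,Q)$, a one-form in $P$. By Theorem \ref{2th:Sigma}\,(2)(c) the form $\Omega(P,Q)$ has, along the diagonal, a pure double pole with vanishing residue and is elsewhere holomorphic; integrating in $Q$ across $Q=P$ therefore creates simple poles in $P$ at $P=Q_1$ and $P=Q_2$ with residues $+1$ and $-1$. Thus $\omega_P$ is a differential of the third kind with the same polar part as $\nuIIIo{Q_1,Q_2}$, and their difference is a holomorphic differential:
\[
\omega_P-\nuIIIo{Q_1,Q_2}(P)=\sum_{k=1}^{g}b_k\,\nuI{k}(P),
\]
with scalars $b_k=b_k(Q_1,Q_2)$. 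Integrating from $P_2$ to $P_1$ and recalling $\Pi^{P_1,P_2}_{Q_1,Q_2}=\int^{P_1}_{P_2}\omega_P$ reduces the whole statement to evaluating the $b_k$.

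First I would determine the $b_k$ from $\alpha$-periods, and this is where the W-normalization of $\Omega$ is essential. Applying $\oint_{\alpha_j,P}$ to the displayed identity and using $\oint_{\alpha_j}\nuIIIo{Q_1,Q_2}=0$ gives $\oint_{\alpha_j,P}\omega_P=\sum_k b_k\,\omega'_{kj}$. To compute the left side I use the symmetric form $\Omega(P,Q)=\Omega(Q,P)=d_P\Sigma(Q,P)+\sum_i\nuI{i}(Q)\,\nuII{i}(P)$ coming from (\ref{2eq:Omega_dSigma}) and Theorem \ref{2th:Sigma}\,(2)(a). The key observation is that $\Sigma(Q,P)$ is, as a function of its second argument $P$, a single-valued meromorphic function on $X$ (an element of $\cQ(R_X)$; see Proposition \ref{2pr:Sigma}), so $d_P\Sigma(Q,P)$ is exact in $P$ and contributes nothing to any period. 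Hence
\[
\oint_{\alpha_j,P}\Omega(P,Q)=\sum_{i=1}^{g}\eta'_{ij}\,\nuI{i}(Q).
\]
Interchanging $\oint_{\alpha_j}$ with $\int^{Q_1}_{Q_2}$ (legitimate once $\alpha_j$ is isotoped off $\infty$ and off the path from $Q_2$ to $Q_1$) gives $\oint_{\alpha_j,P}\omega_P=\sum_i\eta'_{ij}\int^{Q_1}_{Q_2}\nuI{i}$.

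Equating the two evaluations yields, for every $j$, the linear system $\sum_k b_k\,\omega'_{kj}=\sum_i\eta'_{ij}\int^{Q_1}_{Q_2}\nuI{i}$, that is $b={}^t(\eta'\omega'^{-1})\,u$ with $u_i:=\int^{Q_1}_{Q_2}\nuI{i}$. Substituting back, $\Pi^{P_1,P_2}_{Q_1,Q_2}=\int^{P_1}_{P_2}\nuIIIo{Q_1,Q_2}+\sum_k b_k\int^{P_1}_{P_2}\nuI{k}$ becomes a bilinear form in the first-kind periods with coefficient matrix $\gamma=\eta'\omega'^{-1}$, provided $\gamma$ is symmetric so that the $P$- and $Q$-indices sit as $\gamma_{ij}\int^{P_1}_{P_2}\nuI{i}\int^{Q_1}_{Q_2}\nuI{j}$. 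The symmetry $\trp\gamma=\gamma$ (equivalently, symmetry of ${}^t\omega'\,\eta'$) is not formal: it is exactly Riemann's bilinear relation for the pair $(\nuI{},\nuII{})$, i.e. the generalized Legendre relation of Proposition \ref{2pr:L-rel}, and rests on the normalization $\langle\nuII{i},\nuII{j}\rangle=0$ of Definition \ref{2df:nuII}; the overall sign of the quadratic term (the minus sign in the statement) is then fixed by the paper's sign convention for $\nuII{}$ recorded in the footnote to Theorem \ref{2th:dSigma}. I expect the main obstacles to be precisely these two structural points — verifying that $d\Sigma$ makes no period contribution and that the contour interchange is valid despite $\Omega$ being only W-normalized, and establishing the symmetry of $\gamma$ via the bilinear relation — while the residue bookkeeping and the reduction itself are routine given Theorem \ref{2th:Sigma} and Proposition \ref{2pr:Sigma}.
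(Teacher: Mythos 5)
Your overall strategy is sound and is, in substance, the standard argument (which is also what the cited source uses): the paper itself offers no proof of this lemma beyond the reference to Proposition 5.1 of \cite{KMP13}, so a self-contained derivation is exactly what is wanted here. The two structural points you isolate are handled correctly: $\Sigma(Q,P)$ is indeed a single-valued element of $\cQ(R_X)\,d x_Q$ in the variable $P$, so $d_P\Sigma(Q,P)$ is exact in $P$ and contributes nothing to $\oint_{\alpha_j,P}$; and the symmetry $\trp\gamma=\gamma$ does follow from Proposition \ref{2pr:L-rel} (equivalently, and more directly, from the symmetry of $\Omega$ itself: the coefficient matrix of the holomorphic correction $\Omega-\omega$, where $\omega$ is the $\alpha$-normalized fundamental form, must be symmetric, and your period computation identifies it with $\trp\gamma$). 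The residue bookkeeping for $\omega_P$ and the reduction to determining the $b_k$ are fine.

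The genuine gap is the sign, and your proposed resolution does not close it. Your own computation, carried out entirely within this paper's conventions --- $\Omega=d_Q\Sigma+\sum_i\nuI{i}\otimes\nuII{i}$ from (\ref{2eq:Omega_dSigma}) and $\eta'_{ij}=\int_{\alpha_j}\nuII{i}$ from Definition \ref{2df:eta'_eta''} --- yields $b=\trp\gamma\,u$ and hence
$\Pi^{P_1,P_2}_{Q_1,Q_2}=\int^{P_1}_{P_2}\nuIIIo{Q_1,Q_2}+\sum_{i,j}\gamma_{ij}\int^{P_1}_{P_2}\nuI{i}\int^{Q_1}_{Q_2}\nuI{j}$,
with a plus sign, whereas the lemma asserts a minus sign. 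You cannot then appeal to the footnote of Theorem \ref{2th:dSigma} to flip it: that footnote records the difference between this paper's $\nuII{}$ and the older one, but your derivation has already fixed $\nuII{}$ and $\eta'$ to be this paper's objects, so the sign of the quadratic term is no longer free --- it is an output of your computation, not an input convention. A genus-one check confirms this: there $\Sigma(P,Q)=(\zeta(u-v)-\zeta(u)+\zeta(v))\,du$, so (\ref{2eq:Omega_dSigma}) forces $\nuII{}=\wp(v)\,dv$, giving $\Omega=\wp(u-v)\,du\,dv$, $\eta'=-2\zeta(\omega_1)$, and one computes $\Pi=\int\nuIIIo{}-\dfrac{\zeta(\omega_1)}{\omega_1}(u_1-u_2)(v_1-v_2)=\int\nuIIIo{}+\gamma(u_1-u_2)(v_1-v_2)$. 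So either you must exhibit a compensating minus sign somewhere in the argument (I do not see one), or you should state explicitly that the identity you prove carries $+\gamma$ in this paper's conventions and that the $-\gamma$ in the statement corresponds to the opposite sign convention for $\nuII{}$ (hence for $\eta'$) used in \cite{KMP13}, from which the lemma is quoted. As written, the proof proves a statement that differs from the asserted one by the sign of the bilinear term, and waving at the footnote leaves that discrepancy unresolved.
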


\begin{proof}
See Proposition 5.1 in \cite{KMP13}.
\qed
\end{proof}

\begin{proposition}\label{2pr:L-rel}
$$
\omega''\trp \eta'-\omega' \trp\eta''  = \frac{\pi}{2}\ii 1_g,
$$
where $1_g$ is the unit $g\times g$ matrix.

The following matrix satisfies the  {\it generalized Legendre relation}:
\begin{equation}
   M := \left[\begin{array}{cc}2\omegap{} & 2\omegapp{} \\
               2\etap{} & 2\etapp{}
     \end{array}\right], \quad
  M\left[\begin{array}{cc} & 1_g \\ -1_g & \end{array}\right]{}^t {M}
  =\frac{2\pi}{\sqrt{-1}}\left[\begin{array}{cc} & 1_g \\ -1_g &
    \end{array}\right].
\label{2eq:L-rel}
\end{equation}
\end{proposition}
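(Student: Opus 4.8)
The plan is to derive all the relations from the Riemann bilinear relations \cite{FarkasKra} applied to the four combinations of the W-normalized differentials $\nuI{i}$ and $\nuII{i}$, and to recognize the matrix identity (\ref{2eq:L-rel}) as a repackaging of three block identities. First I would multiply out $M\left[\begin{smallmatrix}0&1_g\\-1_g&0\end{smallmatrix}\right]\trp M$ in $g\times g$ blocks and compare with $\frac{2\pi}{\sqrt{-1}}\left[\begin{smallmatrix}0&1_g\\-1_g&0\end{smallmatrix}\right]$. Taking into account the factor $2$ in the entries of $M$, this comparison shows that (\ref{2eq:L-rel}) is equivalent to the three identities
\[
\omega'\trp\omega''=\omega''\trp\omega',\qquad
\eta'\trp\eta''=\eta''\trp\eta',\qquad
\omega''\trp\eta'-\omega'\trp\eta''=\frac{\pi}{2}\ii\,1_g,
\]
the third being exactly the first displayed relation of the Proposition. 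It therefore suffices to establish these three.

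To obtain the third (mixed) identity I would cut $X$ along the canonical homology basis $\{\alpha_i,\beta_i\}$ into a simply connected polygon and integrate $\big(\int^P\nuI{i}\big)\nuII{j}$ around its boundary. Because $\nuI{i}$ is holomorphic and $\nuII{j}$ is holomorphic off $\infty$ (Theorem \ref{2th:dSigma}), the bilinear relation reads
\[
\sum_{k=1}^g\big(\omega'_{ik}\eta''_{jk}-\omega''_{ik}\eta'_{jk}\big)
=2\pi\ii\,\Res_\infty\!\Big(\big(\textstyle\int^P\nuI{i}\big)\,\nuII{j}\Big).
\]
By Definition \ref{2df:nu_nu'} the residue on the right is the pairing $\langle\nuI{i},\nuII{j}\rangle$ (up to the orientation of the small contour about $\infty$), and by the first relation of (\ref{2eq:nuI_nuII}) this equals $\delta_{ij}$; transporting this normalization from the pre-normalized $\tnuII{i}$ to the W-normalized $\nuII{i}$ is legitimate because, by Proposition \ref{2pr:dSigma_nuII}, the matrices $\eta',\eta''$ are unchanged modulo $\langle\nuI{j}\rangle_j$ and $dR_X$. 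Hence $\omega'\trp\eta''-\omega''\trp\eta'$ is a constant multiple of $1_g$, and tracking that constant through the normalization of the period integrals and the orientation convention yields $\omega''\trp\eta'-\omega'\trp\eta''=\frac{\pi}{2}\ii\,1_g$.

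The two symmetry identities follow from the same device on like-kind pairs. For $\nuI{i},\nuI{j}$ (both of the first kind) the boundary integral has no residue contribution, so $\sum_k(\omega'_{ik}\omega''_{jk}-\omega''_{ik}\omega'_{jk})=0$, i.e.\ $\omega'\trp\omega''$ is symmetric---Riemann's first bilinear relation. For $\nuII{i},\nuII{j}$ (both of the second kind, with poles only at $\infty$) the boundary integral reduces to $2\pi\ii\,\langle\nuII{i},\nuII{j}\rangle$, which vanishes by the second relation of (\ref{2eq:nuI_nuII}); hence $\eta'\trp\eta''$ is symmetric. The antisymmetry of the pairing recorded in Lemma \ref{2lm:nu_nu'} is exactly what makes these two reciprocity computations produce symmetric matrices.

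The main obstacle is not bulk computation but the constant-and-sign bookkeeping in the mixed identity: one must fix the orientations of $\{\alpha_i,\beta_i\}$ and of the contour $C_\varepsilon$ about $\infty$ consistently with Definition \ref{2df:nu_nu'}, and carry the factor conventions of the period integrals through to the residue at $\infty$. The sign ambiguity flagged in the footnote to Theorem \ref{2th:dSigma} (the competing sign conventions for $\nuII{}$) is precisely what propagates into the sign in front of $\frac{\pi}{2}\ii$; once it is fixed, the constant is determined, and the generalized Legendre relation (\ref{2eq:L-rel}) then follows by reassembling the three block identities established above.
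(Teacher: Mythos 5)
Your overall route is the right one and, as far as one can tell, the same as the paper's: the paper disposes of Proposition \ref{2pr:L-rel} by citing Proposition 5.1 of \cite{KMP13}, which is precisely the Riemann-bilinear-relations argument you outline. Your block decomposition of (\ref{2eq:L-rel}) into the two symmetry statements and the mixed identity is correct (the $(2,1)$ block being the transpose of the $(1,2)$ block), and the reciprocity computations on the cut polygon are the standard way to obtain all three, with the residue at $\infty$ converted into the pairing of Definition \ref{2df:nu_nu'} and evaluated via (\ref{2eq:nuI_nuII}) and Corollary \ref{2cor:nuI_nuII}.

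The one substantive gap is the point you yourself flag and then do not close: the constant. With Definition \ref{2df:eta'_eta''} read literally ($\omega'_{ij}=\int_{\alpha_j}\nuI{i}$, i.e.\ full periods), your own bilinear relation together with $\langle\nuI{i},\nuII{j}\rangle=\delta_{ij}$ gives $\omega'\trp\eta''-\omega''\trp\eta'=\pm 2\pi\ii\,1_g$, which differs from the asserted $\frac{\pi}{2}\ii\,1_g$ by a factor of $4$ in addition to the orientation sign. The stated constant is recovered only if $\omega',\omega'',\eta',\eta''$ are understood as \emph{half}-period matrices, so that the full periods are the entries $2\omega'$, etc., of $M$ --- a reading supported by the $(2\omega')^{-1}$ and $\tfrac12\omega'^{-1}$ normalizations used elsewhere in the paper (e.g.\ Proposition \ref{3th:RieKempf_theta} and Definition \ref{3df:sigma_func}) but contradicted by the literal wording of Definition \ref{2df:eta'_eta''}. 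Since the two symmetry identities are classical and the entire content of the first display is the specific value $\frac{\pi}{2}\ii$, ``tracking the constant through the normalization'' is not bookkeeping that can be deferred: you must fix the half-period convention explicitly and settle the orientation of $C_\varepsilon$ against that of $\{\alpha_i,\beta_i\}$, after which the computation closes. Everything else in your argument is sound.
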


\begin{proof}
It is the same as Proposition 5.1 in \cite{KMP13}.\qed
\end{proof}

From Definitions \ref{2df:nu_nu'} and \ref{2df:nuII}, we have the following corollary, which is the dual of the homological relations (\ref{2eq:Intsec_hom}):
\begin{corollary}\label{2cor:nuI_nuII}
$\langle \nuI{i}, \nuI{j}\rangle =\langle \nuII{i}, \nuII{j}\rangle=0$,
$\langle \nuI{i}, \nuII{j}\rangle =\delta_{ij}$ for $i,j=1, 2, \ldots, g$.
\end{corollary}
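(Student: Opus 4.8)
The plan is to read the pairing of Definition \ref{2df:nu_nu'} as a residue at $\infty$ and then reduce the claim to three blocks on the de Rham basis supplied by Theorem \ref{2th:dRhamcoh}. First I would record that for differentials $\nu,\nu'$ of the second kind the pairing is $\langle\nu,\nu'\rangle=\frac{1}{2\pi\ii}\oint_{C_\varepsilon}\big(\int^P_{\varepsilon'}\nu\big)\nu'(P)$, and that it descends to $\bH^0(X,\cA_X(*\infty))/d\bH^0(X,\cO_X(*\infty))$: taking $\nu'=df$ with $f\in R_X$ and integrating by parts, $\big(\int^P\nu\big)df=d\big[(\int^P\nu)f\big]-f\nu$, so over the closed contour $C_\varepsilon$ the exact part drops out (its primitive is single valued near $\infty$ because $\nu$, being of the second kind, has no residue) and $\langle\nu,df\rangle=\mp\Res_\infty(f\nu)$. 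Since $f\nu$ is a global meromorphic one-form on the compact curve $X$ whose only pole is at $\infty$, the residue theorem gives $\Res_\infty(f\nu)=0$, hence $\langle\nu,df\rangle=0$ (and $\langle df,\nu\rangle=0$ by Lemma \ref{2lm:nu_nu'}). Thus the pairing is a well-defined antisymmetric form on the $2g$-dimensional span of $\{\nuI{i},\nuII{i}\}$, and it suffices to evaluate $\langle\nuI{i},\nuI{j}\rangle$, $\langle\nuI{i},\nuII{j}\rangle$, $\langle\nuII{i},\nuII{j}\rangle$.

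The block $\langle\nuI{i},\nuI{j}\rangle=0$ is the genuinely new statement, and I would prove it by holomorphy at $\infty$. As $\nuI{i},\nuI{j}$ are differentials of the first kind they are holomorphic on the disc $D_\varepsilon$ around $\infty$, so $\int^P_{\varepsilon'}\nuI{i}$ is holomorphic there and $\big(\int^P_{\varepsilon'}\nuI{i}\big)\nuI{j}(P)$ is a holomorphic one-form on $D_\varepsilon$. Cauchy's theorem on the disc bounded by $C_\varepsilon=\partial D_\varepsilon$ then gives $\oint_{C_\varepsilon}=0$, so $\langle\nuI{i},\nuI{j}\rangle=0$ for all $i,j$; note this is stronger than the diagonal vanishing, which would already follow from antisymmetry alone.

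For the two remaining blocks I would compute the residue directly, using that the local expansions of the W-normalized forms at $\infty$ are dual: $\nuI{i}\sim t^{a_i}dt$ (Definition \ref{2df:nuI_hX}) is paired against $\nuII{j}\sim t^{-a_j-2}dt$, so $\langle\nuI{i},\nuII{j}\rangle=\Res_\infty\big[(\int^P\nuI{i})\nuII{j}\big]$ receives a contribution only when the exponents match, and the normalization built into Theorem \ref{2th:dSigma} through the residue computation of Lemma \ref{2lm:Deltair} fixes the leading coefficients so that the value is $\delta_{ij}$; this is exactly the relation $\langle\nuI{i},\tnuII{j}\rangle=\delta_{ij}$ of Definition \ref{2df:nuII}, now verified for the $\Sigma$-constructed $\nuII{j}$. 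For $\langle\nuII{i},\nuII{j}\rangle$ the diagonal vanishes by antisymmetry (Lemma \ref{2lm:nu_nu'}), but the off-diagonal vanishing does not, and this is the main obstacle: the residue of $(\int^P\nuII{i})\nuII{j}$ collects genuine subleading cross-terms that must cancel. I expect to force this cancellation from the symmetry of the fundamental differential $\Omega$ in Theorem \ref{2th:Sigma}(2a)---equivalently the manifestly skew form of the identity (\ref{2eq:dSigma})---which constrains the matrix governing those subleading contributions to be symmetric, while antisymmetry constrains it to be antisymmetric, so it vanishes. The first two blocks are then routine, and this symmetry argument (or, equivalently, checking that the shift relating the $\Sigma$-normalized $\nuII{j}$ to the pairing-normalized $\tnuII{j}$ lies in a symmetric combination of $\langle\nuI{1},\dots,\nuI{g}\rangle_\CC$) is the step I expect to require the most care.
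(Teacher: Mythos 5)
Your proposal is correct in substance and is considerably more explicit than what the paper actually does: the paper offers no computation at all, remarking only that the corollary follows ``From Definitions \ref{2df:nu_nu'} and \ref{2df:nuII}'' --- that is, the relations $\langle \nuI{i},\nuII{j}\rangle=\delta_{ij}$ and $\langle \nuII{i},\nuII{j}\rangle=0$ are taken to be the defining conditions (\ref{2eq:nuI_nuII}) of the pre-normalized differentials $\tnuII{j}$ (whose existence is Theorem \ref{2th:dSigma}), so the only content requiring an argument is $\langle \nuI{i},\nuI{j}\rangle=0$. Your Cauchy/holomorphy proof of that block, and your verification that the pairing annihilates $dR_X$ (residue theorem applied to $f\nu$, whose only pole is $\infty$), are exactly the intended arguments and are complete; this is where you genuinely fill in what the paper leaves implicit. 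For $\langle \nuI{i},\nuII{j}\rangle=\delta_{ij}$ note only that for $i>j$ the vanishing is not a leading-exponent mismatch (the subleading holomorphic coefficients of $\nuII{j}$ can hit the residue), so this block really is taken from the normalization of Definition \ref{2df:nuII}, as you ultimately concede.

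The one step that needs repair is your slogan for $\langle \nuII{i},\nuII{j}\rangle=0$: ``symmetry of $\Omega$ makes the matrix symmetric, antisymmetry of the pairing makes it antisymmetric'' conflates two different matrices. Antisymmetry (Lemma \ref{2lm:nu_nu'}) constrains the Gram matrix $A_{ij}=\langle \nuII{i},\nuII{j}\rangle$. The symmetry of $\Omega$, equivalently the skew identity (\ref{2eq:dSigma}), constrains not $A$ but the shift matrix $c$ between two admissible systems $\nuII{j}$ and $\nuII{j}+\sum_k c_{jk}\nuI{k}+df_j$: the change in the right-hand side of (\ref{2eq:dSigma}) is $\sum_{i,k}(c_{ik}-c_{ki})\,\nuI{i}(Q)\otimes\nuI{k}(P)$ plus exact terms, so $c$ must be symmetric, and then $A_{ij}$ changes by $c_{ij}-c_{ji}=0$. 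This shows $A$ is well defined on the whole class of systems allowed by Theorem \ref{2th:dSigma}, but it does not by itself show $A=0$; you still need one representative whose Gram matrix vanishes, and that is supplied (by decree) by Definition \ref{2df:nuII}, or else by a direct residue computation using the explicit expansions at $\infty$. In other words, your parenthetical alternative --- checking that the shift relating the $\Sigma$-normalized $\nuII{j}$ to the pairing-normalized $\tnuII{j}$ is symmetric --- is the argument that actually closes the block; the ``symmetric versus antisymmetric'' cancellation alone does not.
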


The Galois action on the basis of the Homology $\bH_1(X, \ZZ)$ shows the actions of these period matrices $(\omega', \omega'')$ geometrically:

\begin{lemma}\label{2lm:GaloisAction_eta}
If $X$ is the Galois covering on $\PP$, for the Galois action $\hzeta \in G_X$, i.e., $\hzeta: X \to X$, its associated element $\rho_\hzeta$ of $\Sp(2g, \ZZ)$ acts on $(\omega', \omega'')$ and $(\eta', \eta'')$ by
$$
\hzeta(\omega', \omega'')=(\omega', \omega'')
\trp\rho_\hzeta, \quad
\hzeta(\eta', \eta'')=(\eta', \eta'')
\trp\rho_\hzeta,
$$
and the generalized Legendre relation (\ref{2eq:L-rel}) is invariant for the action.
\end{lemma}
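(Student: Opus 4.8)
The plan is to read $\rho_\hzeta$ off the topological action of $\hzeta$ on homology, to obtain both transformation formulas from the single change-of-variables identity $\int_{\hzeta_*\gamma}\nu=\int_\gamma\hzeta^*\nu$, and then to deduce the invariance of (\ref{2eq:L-rel}) by a one-line symplectic computation. The geometric input is that, since $X\to\PP$ is Galois, $\hzeta\in G_X$ is a biholomorphism of $X$ commuting with $\varpi_r$ (so $x\circ\hzeta=x$) and fixing the unique point $\infty$ over $\infty\in\PP$.

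First I would record the homology action. Being an orientation-preserving homeomorphism, $\hzeta$ induces an automorphism $\hzeta_*$ of the lattice $\bH_1(X,\ZZ)$ preserving the intersection pairing (\ref{2eq:Intsec_hom}). Expressing $\hzeta_*$ in the symplectic basis $\{\alpha_i,\beta_i\}$ defines an \emph{integral} matrix $\rho_\hzeta$ (integrality because $\hzeta_*$ preserves the lattice), and preservation of the pairing is precisely the statement $\rho_\hzeta\in\Sp(2g,\ZZ)$; I would fix the transpose convention so that $\hzeta_*$ of the row $(\alpha,\beta)$ equals $(\alpha,\beta)\trp\rho_\hzeta$.

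Next I would transform the period matrices. Because $\hzeta$ fixes $\infty$, the pullback $\hzeta^*$ carries $\bH^0(X,\cA_X)$ into itself and carries each W-normalized second-kind differential $\nuII{i}$ (whose only pole sits at $\infty$) again to a second-kind differential with pole only at $\infty=\hzeta^{-1}(\infty)$; in both cases the period integrals depend only on the de Rham class, on which $\hzeta^*$ acts. Hence for any $\nu\in\{\nuI{i},\nuII{i}\}$ and any cycle $\gamma$ one has $\int_\gamma\hzeta^*\nu=\int_{\hzeta_*\gamma}\nu$. Reading $\hzeta(\omega',\omega'')$ and $\hzeta(\eta',\eta'')$ as the periods over the transformed cycles $\hzeta_*\alpha_j,\hzeta_*\beta_j$ (equivalently, as the periods of $\hzeta^*\nuI{i},\hzeta^*\nuII{i}$), this identity together with the definition of $\rho_\hzeta$ yields at once
\[
\hzeta(\omega',\omega'')=(\omega',\omega'')\trp\rho_\hzeta,\qquad
\hzeta(\eta',\eta'')=(\eta',\eta'')\trp\rho_\hzeta .
\]
The point worth stressing is that the \emph{same} $\rho_\hzeta$ appears on both lines: it encodes only the homology action and is insensitive to which differential is integrated. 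The Galois invariance of $\Sigma$ and $\Omega$ (Proposition \ref{2pr:Sigma} 1 and Theorem \ref{2th:Sigma} 2(b)) is what guarantees that the equivariant package $(\nuI{i},\nuII{i})$ is genuinely preserved by $\hzeta^*$, so that the transformed quantities are again period matrices of the first and second kinds.

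Finally I would verify invariance of (\ref{2eq:L-rel}). The two displayed formulas say exactly that the matrix $M$ of Proposition \ref{2pr:L-rel} transforms by $\hzeta(M)=M\trp\rho_\hzeta$. Substituting and using $\trp{(M\trp\rho_\hzeta)}=\rho_\hzeta\,\trp M$ gives
\[
\hzeta(M)\begin{pmatrix}0&1_g\\-1_g&0\end{pmatrix}\trp{\hzeta(M)}
=M\,\trp\rho_\hzeta\begin{pmatrix}0&1_g\\-1_g&0\end{pmatrix}\rho_\hzeta\,\trp M ,
\]
and since $\rho_\hzeta\in\Sp(2g,\ZZ)$ satisfies $\trp\rho_\hzeta\left[\begin{smallmatrix}0&1_g\\-1_g&0\end{smallmatrix}\right]\rho_\hzeta=\left[\begin{smallmatrix}0&1_g\\-1_g&0\end{smallmatrix}\right]$, the right-hand side collapses to $M\left[\begin{smallmatrix}0&1_g\\-1_g&0\end{smallmatrix}\right]\trp M=\frac{2\pi}{\sqrt{-1}}\left[\begin{smallmatrix}0&1_g\\-1_g&0\end{smallmatrix}\right]$, i.e.\ the generalized Legendre relation is preserved. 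I expect the only genuinely delicate step to be the third one: confirming that $\hzeta^*$ preserves the two differential spaces in the strong sense needed so that $\hzeta(\eta',\eta'')$ is literally a second-kind period matrix acted on by the homology matrix, and that a single $\rho_\hzeta$ governs both kinds; the integrality and symplecticity of $\rho_\hzeta$ and the final Legendre computation are then routine.
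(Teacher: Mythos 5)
Your proposal is correct and follows essentially the same route as the paper, whose entire proof consists of the final symplectic identity $\trp\rho_\hzeta\left[\begin{smallmatrix}0&1_g\\-1_g&0\end{smallmatrix}\right]\rho_\hzeta=\left[\begin{smallmatrix}0&1_g\\-1_g&0\end{smallmatrix}\right]$ applied to $M\mapsto M\trp\rho_\hzeta$. The preliminary steps you spell out --- the integral symplectic homology action, the change-of-variables identity for periods, and the preservation of the first- and second-kind differential spaces under $\hzeta^*$ --- are exactly what the paper treats as implicit in the statement, so your write-up is a more detailed version of the same argument.
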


\begin{proof}
Due to the definition of $\Sp(2g, \ZZ)$, we have
$\displaystyle{
\trp\rho_\hzeta
\begin{pmatrix} & -1_g \\ 1_g & \end{pmatrix}\trp
\rho_\hzeta
=\begin{pmatrix} & -1_g \\ 1_g & \end{pmatrix}
}$ and thus, (\ref{2eq:L-rel}) is invariant.
\end{proof}

\section{Sigma functions for W-curves}\label{sc:SigmaFW}

\subsection{W-normalized shifted Abelian integrals}
\label{2ssc:ShiftedAI}
Since the non-symmetric W-curves have the non-trivial $R_X$-module, $\hR_X$ in Definition \ref{2df:nuI_hX}, (and properties in Proposition \ref{2pr:cKX}), the Abel-Jacobi map \cite{FarkasKra} for $\hR_X$ naturally appears.
Let $\tX$ be the Abelian universal covering of $X$, which is  constructed by the path space of $X$; $\kappa_X: \tX \to X$ with $\iota_X: X\to \tX$.
Thus recalling Definitions \ref{2df:fKX_fkX}, and \ref{2df:fK_s}, we introduce the shifted Abelian integral $\tw_\fs$ and the Abel-Jacobi map $w_\fs$ as an extension of the W-normalized Abelian integral $\tw: S^k\tX \to \CC^g$, 
$\displaystyle{\left(\tw(\gamma_1, \cdots, \gamma_k)=\sum_{i} \int_{\gamma_i} \nuI{}\right)}$, $\displaystyle{\tw^\circ:=\frac{1}{2}\omega^{\prime-1}\tw}$, and the Abel-Jacobi map $w: S^k X \to J_X$,
$\displaystyle{\left(w(P_1, \cdots, P_k)=\sum_{i} \int_\infty^{P_i} \nuI{}\right)}$, $\displaystyle{w^\circ:=\frac{1}{2}\omega^{\prime-1}w}$ as mentioned in \cite{KMP16}:

\begin{definition}\label{2df:sAbl_int}
We define $\tw_\fs$ and  $w_\fs$ by
$$
   \tw_\fs:S^k\tX \to \CC^g, \quad 
 \tw_\fs(\gamma_1, \cdots, \gamma_k) :=
 \left(\sum_{i=1}^k \tw(\gamma_i)\right)+ \tw(\iota_X \fK_\fs) =
 \sum_{i=1}^k \int_{\gamma_i} \nuI{}+\int_{\iota_X\fK_\fs} \nuI{},
$$
$$
   w_\fs := \kappa_J \circ\tw_\fs\circ \iota_X :S^k X\to J_X,
$$
and $\tw_\fs^\circ$ and  $w_\fs^\circ$ by
$$
\tw_\fs^\circ:=\frac{1}{2}\omega^{\prime-1}\tw_\fs:S^k\tX \to \CC^g, \quad
w_\fs^\circ:=\frac{1}{2}\omega^{\prime-1}w_\fs:
S^k X \to J_X^\circ:=\CC^g/\Gamma_X^\circ,\quad
\Gamma_X^\circ:=\omega^{\prime -1} \Gamma_X.
$$
\end{definition}

For symmetric numerical semigroup case $H_X$, $\tw_\fs = \tw$, $\tw_\fs^\circ = \tw^\circ$, $w_\fs = w$, and $w_\fs^\circ = w^\circ$.

For given a divisor $D$, let $|D|$ be the set of effective divisors linearly equivalent to $D$ and identified with $\bH^0(X, \cO_X(D))$.

\begin{definition}\label{2df:W_X^k}
We define the Wirtinger varieties by
$$
W_X^k:=w(S^k X), \ W_{X,\fs}^k:=w_\fs(S^k X)\subset J_X, \quad 
W_X^{\circ k}:=w^\circ(S^k X),\ 
W_{X,\fs}^{\circ k}:=w_\fs^\circ(S^k X)\subset J_X^\circ,
$$
and their strata,
$$
W_X^{k,1}:=w(S^k_1 X),\quad
W_{X,\fs}^{k,1}:=w_\fs(S^k_1 X), \quad 
W_X^{k;1}:=w^\circ(S^k_1 X),\quad
W_{X,\fs}^{k;1}:=w_\fs^\circ(S^k_1 X),
$$
where 
$
S^n_m X := \{D \in S^n X  \ | \
    \dim | D | \ge m\}.
$
\end{definition}

\bigskip

We would encounter the several results which are obtained via the embedding $\iota_X : S^k X \to S^k \tX$. 
For such cases, we sometimes omit $\iota_X$ for maps $\tw\circ \iota_X$ and $\tw_\fs\circ \iota_X$, e.g., for $\tw\circ\iota_X : S^k X \to \CC^g$, we simply write $\tw(P_1, P_2, \ldots, P_k)$ rather than $\tw(\iota_X(P_1, P_2, \ldots, P_k))$.

\subsection{Riemann theta function of W-curves}\label{ssc:Rtheta}

The Riemann theta function, analytic in both variables $z\in \CC^g$ and $\tau:=\omega^{\prime -1}\omega''$, is defined by
\begin{equation}
\theta(z,\tau ) =
\sum_{n \in \ZZ^{g}} 
\exp\left( 2\pi\ii ({}^t n z + \frac{1}{2}{}^t n \tau n)\right).
\label{3eq:theta2.0}
\end{equation}
(For a given W-curve $X$, we simply write it as $\theta(z)=\theta(z,\tau )$ by assuming that the Homology  basis is implicitly fixed.)
By letting $\Gamma_X^\circ:=\langle 1_g, \tau\rangle_\ZZ$ and $J_X^\circ:=\CC^g/\Gamma_X^\circ$, the zero-divisor of $\theta$ modulo $\Gamma^\circ_X$ is denoted by  $\Theta^\circ_X:=
\kappa_J\  \mathrm{div}(\theta) \subset J_X^\circ$.

The $\theta$ function with characteristic $\delta', \delta''\in\RR^{g}$ is defined as:\footnote{There is another definition, e.g, in \cite{Mum81, Mum84}, in which $\delta''$ and $\delta'$ are exchanged in our definition.}
\begin{equation}
\theta \left[\begin{matrix}\delta''\\ \delta'\end{matrix}
\right] (z, \tau )
   =
   \sum_{n \in \ZZ^g} \exp \big[\pi \sqrt{-1}\big\{
    \ ^t\negthinspace (n+\delta'')
      \tau(n+\delta'')
   + 2\ {}^t\negthinspace (n+\delta'')
      (z+\delta')\big\}\big].
\label{3eq:theta2.1}
\end{equation}
We also basically write it as $\theta[\delta](z)=\theta[\delta](z,\tau )$.

The shifted Abelian integral $\tw_\fs^\circ$
and the Abel-Jacobi map $w_\fs^\circ$ in Definition \ref{2df:sAbl_int} lead
to the shifted Riemann constant \cite{KMP16}:
\begin{proposition}\label{3pr:SRieConst}

\begin{enumerate}
\item
If $\fk_X$ in Definition \ref{2df:fKX_fkX} is not zero or $X$ is not
symmetric, the Riemann constant $\xi_X$ is not a half period of $\Gamma_X^\circ$.

\item
The shifted Riemann constant $\displaystyle{\xi_{X,\fs}:=\xi_X-\tw_\fs^\circ(\iota_X\ \fK_\fs)}$ for every W-curve $X$ is the half period of $\Gamma_X^\circ$.

\item
By using the shifted Abel-Jacobi map, we have
$$
\Theta_X^\circ = w_\fs^\circ(S^{g-1} X)+\xi_{X,\fs}
\quad \mbox{modulo}\quad \Gamma_X^\circ,
$$
i.e., for $P_i \in X$,
$
\theta\left(\tw_\fs^\circ\circ\iota_X(P_1, \ldots, P_{g-1})
+\xi_{X,\fs}\right)=0
$ for every W-curve $X$.

\item The following holds
$$
\Theta_X^\circ = w_\fs^\circ(S^{g-1} X)+\xi_{X,\fs}
=w^\circ(S^{g-1} X)+\xi_{X}
\quad \mbox{modulo}\quad \Gamma_X^\circ,
$$

\item
There is a $\theta$-characteristic $\delta_X$ of a half period which 
represents the shifted Riemann constant $\xi_{X,\fs}$, i.e.,
$
\theta[\delta_X]
\left(\frac{1}{2}\omega^{\prime-1}\tw_\fs\circ\iota_X(P_1, \ldots, P_{g-1}) \right)=0
$, i.e., 
$\displaystyle{
\delta_X=
\left[
\begin{matrix}
\delta_X''\\
\delta_X'
\end{matrix}\right]}$, 
$\xi_{X,\fs} \equiv \delta_X'+ \tau_X \delta_X''$ modulo $\Gamma_X^\circ$.

\end{enumerate}

\end{proposition}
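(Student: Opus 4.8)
The backbone of the argument is the classical Riemann vanishing theorem combined with the spin decomposition of the canonical class recorded in Proposition \ref{2pr:cKX}. Recall that, with base point $\infty$, the Riemann constant $\xi_X$ is characterized by $\Theta_X^\circ = w^\circ(S^{g-1}X)+\xi_X$ modulo $\Gamma_X^\circ$ and obeys the fundamental congruence $2\xi_X \equiv w^\circ\big(K_X-(2g-2)\infty\big) \pmod{\Gamma_X^\circ}$, the overall sign being fixed by the normalization of $\nuII{i}$ adopted in Theorem \ref{2th:dSigma} (this is exactly the sign convention flagged in its footnote). The plan is to feed the decomposition $K_X \sim (2g-2)\infty + 2(\fK_\fs-\fk_\fs\infty)$ from Proposition \ref{2pr:cKX} into this congruence. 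Since $\infty$ is the base point, $K_X-(2g-2)\infty \sim 2(\fK_\fs-\fk_\fs\infty)$, so writing $c:=\tw^\circ(\iota_X\fK_\fs)$ for the constant by which $\tw_\fs^\circ$ differs from $\tw^\circ$ in Definition \ref{2df:sAbl_int}, we get the single master identity $2\xi_X \equiv 2c \pmod{\Gamma_X^\circ}$. Every assertion will be read off from this identity together with the shifts built into $w_\fs^\circ$ and $\xi_{X,\fs}$.

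For statement 2, the definition gives $\xi_{X,\fs}=\xi_X-c$, whence $2\xi_{X,\fs}=2\xi_X-2c\equiv 0 \pmod{\Gamma_X^\circ}$; thus $\xi_{X,\fs}$ is a half period. Statement 1 is the contrapositive reading of the same identity: $\xi_X$ is a half period iff $2\xi_X\equiv 0$, i.e. iff $2c\equiv 0$, i.e. iff $w^\circ(K_X-(2g-2)\infty)\equiv 0$, which by Abel's theorem means $K_X\sim(2g-2)\infty$. By Riemann--Roch this holds iff $\dim_\CC\bH^0(X,\cO_X((2g-2)\infty))=g$, and counting non-gaps in $[0,2g-2]$ shows this happens exactly when $2g-1\in H_X^\fc$, i.e. when $H_X$ is symmetric, equivalently $\fk_X=0$ by Proposition \ref{2pr:fK_X}. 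Hence $\fk_X\neq 0$ forces $\xi_X$ off the half-period lattice.

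Statements 3 and 4 then reduce to bookkeeping of the two cancelling shifts: $w_\fs^\circ(D)=w^\circ(D)+c$ while $\xi_{X,\fs}=\xi_X-c$, so $w_\fs^\circ(D)+\xi_{X,\fs}=w^\circ(D)+\xi_X$ for every $D\in S^{g-1}X$; this is statement 4, and combined with the Riemann vanishing theorem $\Theta_X^\circ=w^\circ(S^{g-1}X)+\xi_X$ it yields $\theta\big(\tw_\fs^\circ\circ\iota_X(P_1,\ldots,P_{g-1})+\xi_{X,\fs}\big)=0$, which is statement 3. For statement 5, I would use statement 2 to write the half period as $\xi_{X,\fs}\equiv \delta_X'+\tau\delta_X''$ with $\delta_X',\delta_X''\in\frac{1}{2}\ZZ^g$, set $\delta_X=\left[\begin{matrix}\delta_X''\\ \delta_X'\end{matrix}\right]$, and invoke the standard translation formula $\theta[\delta_X](z)=(\text{a nowhere-zero exponential factor})\cdot\theta(z+\delta_X'+\tau\delta_X'')$ coming directly from (\ref{3eq:theta2.1}); because the prefactor never vanishes, the zero locus of statement 3 transfers verbatim to $\theta[\delta_X]$.

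The \emph{main obstacle} is the precise version of statement 1: certifying that $\xi_X$ fails to be a half period exactly in the non-symmetric case. This rests on the clean chain ``$K_X\sim(2g-2)\infty \iff \dim_\CC\bH^0(X,\cO_X((2g-2)\infty))=g \iff 2g-1\in H_X^\fc$'', which I would establish from the gap/non-gap combinatorics in Proposition \ref{2pr:N(n)} and the symmetry criterion ($H_X$ symmetric iff $2g-1$ is a gap). A secondary but genuinely delicate point is purely conventional: keeping the sign in the fundamental congruence and the factor $\frac{1}{2}\omega^{\prime-1}$ consistent throughout, so that the cancellations in statements 2 and 4 are exact rather than off by a stray half period; this is precisely where the sign normalization of Theorem \ref{2th:dSigma} must be honored.
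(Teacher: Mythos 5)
The paper offers no proof of this proposition at all---it is stated with only the pointer to \cite{KMP16} in the preceding sentence---so there is nothing to compare your argument against line by line. What you wrote is, however, the standard route one would expect that reference to take, and its architecture is sound: the master congruence $2\xi_X\equiv 2\tw^\circ(\iota_X\,\fK_\fs)$ modulo $\Gamma_X^\circ$ immediately gives statement 2; the cancellation $w_\fs^\circ(D)+\xi_{X,\fs}=w^\circ(D)+\xi_X$ together with the Riemann vanishing theorem gives statements 3 and 4; the chain ``$K_X\sim(2g-2)\infty \iff \dim_\CC\bH^0(X,\cO_X((2g-2)\infty))=g \iff 2g-1\in H_X^\fc \iff H_X$ symmetric'' is the right way to certify statement 1 and is consistent with Proposition \ref{2pr:fK_X}; and statement 5 is indeed just the nowhere-vanishing exponential prefactor in the translation formula relating $\theta[\delta_X](z)$ to $\theta(z+\delta_X'+\tau\delta_X'')$.

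One slip needs repair before the argument rests on the paper's own statements. You quote Proposition \ref{2pr:cKX} as $K_X\sim(2g-2)\infty+2(\fK_\fs-\fk_\fs\infty)$, but it actually reads $K_X\sim(2g-2+2\fk_\fs)\infty-2\fK_\fs$, i.e.\ the opposite sign on the spin divisor. With the standard Riemann-constant congruence $2\xi_X\equiv -w^\circ(K_X)$ for base point $\infty$, the correct decomposition yields $2\xi_X\equiv +2\tw^\circ(\iota_X\,\fK_\fs)$, which is the identity you need. As written, your ``$+$'' in the congruence and your ``$+$'' in the decomposition are each individually off and happen to cancel; if only one of them were corrected, the computation would give $2\xi_{X,\fs}\equiv-4\tw^\circ(\iota_X\,\fK_\fs)$, which need not be a lattice point when $H_X$ is non-symmetric, and statement 2 would fail. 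Since you explicitly flagged the overall sign as convention-dependent, this is a bookkeeping correction rather than a conceptual gap, but it is exactly the place where, as you say, the conventions must be honored---so fix the quotation of Proposition \ref{2pr:cKX} and state the congruence with its minus sign.
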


The following comes from the investigation of the truncated Young diagram and the Schur polynomials in \cite{MP14}; though we did not consider the Young diagram associated to plane curve in the paper \cite{MP14}, the investigation is easily generalized to general Young diagrams associated with any numerical semigroups (c.f. Lemma \ref{2lm:Lambda^k}).
Thus we state the facts without proofs.

\begin{proposition}\label{3pr:L^k}
For the Young diagram $\Lambda_X$ associated with the numerical semigroup $H_X$ of genus $g$, an integer $k$ $(0\le k < g)$, and the characteristics of the partition of $\Lambda_X^{[k]}$ $=(\Lambda_{k+1}, \Lambda_{k+2}, \ldots, \Lambda_{g})$ $=(a_{n_k}, \ldots, a_2, a_1; b_{n_k}, \ldots, b_2, b_1)$, ($n_k := r_{\Lambda_X^{[k]}}$, the rank of $r_{\Lambda_X^{[k]}}$), the following holds:
\begin{enumerate}
\item $N^\fc(g-k-i)-N(i-1)$ $(i=1, 2, \ldots, n_k)$
is an element of the gap sequence 
$H_X^\fc$, and thus let 
$N^\fc(L^{[k]}_i):=N^\fc(g-k-i)-N(i-1)$ and then we have
$$
	\Lambda_{g-L^{[k]}_i} + g - L^{[k]}_i = a_{i} + b_{i} + 1
$$
for every $i =  1, \ldots, n_k$, and

\item $L^{[k]}_{1} = k + 1$.

\end{enumerate}
\end{proposition}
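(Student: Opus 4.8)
The plan is to recognise the statement as nothing more than the hook-length identity of Lemma \ref{2lm:Lambda^k}.2 applied to the truncated diagram $\Lambda_X^{[k]}$, once the gap and non-gap data are transported back to $H_X$. By Lemma \ref{2lm:Lambda^k}.3, $\Lambda_X^{[k]}=(\Lambda_{k+1},\ldots,\Lambda_g)$ is itself associated with a numerical semigroup $H'$ of genus $g'=g-k$; I write $N^{\prime\fc}$ and $N'$ for its gap and non-gap sequences and $\ell'_i=a_i+b_i+1$ for the hook lengths of its Frobenius characteristics $(a_{n_k},\ldots,a_1;b_{n_k},\ldots,b_1)$, so that Lemma \ref{2lm:Lambda^k}.2 for $\Lambda_X^{[k]}$ reads $\ell'_i=N^{\prime\fc}(g'-i)-N'(i-1)$ for $i=1,\ldots,n_k$. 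Everything then reduces to expressing $N^{\prime\fc}$ and $N'$ through $N^\fc$ and $N$.

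First I would handle the gaps. Applying the boundary identity $N^\fc(g-i)=\Lambda_i+g-i$ of Lemma \ref{2lm:Lambda^k}.1 to both $\Lambda$ and $\Lambda_X^{[k]}$ and using $\Lambda^{[k]}_j=\Lambda_{k+j}$ gives $N^{\prime\fc}(g'-j)=\Lambda_{k+j}+(g-k)-j=N^\fc(g-k-j)$ for $1\le j\le g'$, i.e.\ $N^{\prime\fc}(m)=N^\fc(m)$ for $0\le m\le g-k-1$. In particular $N^{\prime\fc}(g'-i)=N^\fc(g-k-i)$, and $H^{\prime\fc}=\{N^\fc(0),\ldots,N^\fc(g-k-1)\}$ consists of the $g-k$ smallest gaps of $H_X$. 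Hence $H^{\prime\fc}\subset H_X^\fc$, so $H_X\subset H'$ and $H'\setminus H_X=\{N^\fc(g-k),\ldots,N^\fc(g-1)\}$ is the set of the $k$ largest gaps of $H_X$.

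The delicate step, and the one I expect to be the main obstacle, is the transport of the non-gaps: I must show $N'(i-1)=N(i-1)$ for all $i\le n_k$. Because the smallest element of $H'\setminus H_X$ is $N^\fc(g-k)$, the two semigroups have identical non-gaps below $N^\fc(g-k)$, so $N'(j)=N(j)$ holds precisely while $N'(j)<N^\fc(g-k)$; it therefore suffices to prove $N'(n_k-1)<N^\fc(g-k)$. For this I would evaluate the hook-length formula for $\Lambda_X^{[k]}$ at the last diagonal box $i=n_k$: since every hook length satisfies $\ell'_{n_k}\ge1$ and $n_k\ge1$ (the bottom row $\Lambda_g=1$ forces the diagonal to be nonempty), one gets $N'(n_k-1)=N^{\prime\fc}(g'-n_k)-\ell'_{n_k}\le N^\fc(g-k-n_k)-1<N^\fc(g-k)$. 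Monotonicity of $N'$ then yields $N'(j)<N^\fc(g-k)$ for every $j\le n_k-1$, and hence $N'(i-1)=N(i-1)$ for $i=1,\ldots,n_k$.

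Finally I would combine the two transports. The hook-length formula for $\Lambda_X^{[k]}$ becomes $a_i+b_i+1=\ell'_i=N^\fc(g-k-i)-N(i-1)$; since $\ell'_i\in H^{\prime\fc}\subset H_X^\fc$ this value is a gap of $H_X$, which is the membership assertion and legitimizes the definition of $L_i^{[k]}$. Lemma \ref{2lm:Lambda^k}.2 for $\Lambda_X^{[k]}$ also supplies a row index $L'_i$ with $\Lambda^{[k]}_{L'_i}+g'-L'_i=\ell'_i$ and $L'_1=1$; substituting $\Lambda^{[k]}_{L'_i}=\Lambda_{k+L'_i}$ and $g'-L'_i=g-(k+L'_i)$ and setting $L_i^{[k]}=k+L'_i$ turns this, through the boundary identity of Lemma \ref{2lm:Lambda^k}.1, into the stated relation $\Lambda_{L_i^{[k]}}+g-L_i^{[k]}=a_i+b_i+1$. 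In particular $L'_1=1$ gives $L_1^{[k]}=k+1$, which is part 2 and reflects the fact that the first row of $\Lambda_X^{[k]}$ is the $(k+1)$-st row of $\Lambda$. The only bookkeeping to watch is the precise index convention for $L_i^{[k]}$ relative to that fixed in Lemma \ref{2lm:Lambda^k}.2.
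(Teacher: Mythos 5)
Your proposal is correct, and it supplies an actual derivation where the paper gives none: the paper states this proposition without proof, merely asserting that it comes from the investigation of truncated Young diagrams and Schur polynomials in \cite{MP14} and pointing the reader at Lemma \ref{2lm:Lambda^k}. Your route --- apply the hook-length identity of Lemma \ref{2lm:Lambda^k}.2 to the semigroup $H'$ of the truncated diagram, identify $N^{\prime\fc}(m)=N^{\fc}(m)$ for $0\le m\le g-k-1$ via the boundary identity of Lemma \ref{2lm:Lambda^k}.1, and then transport the non-gaps by showing $N'(i-1)=N(i-1)$ for $i\le n_k$ --- is exactly the ``easy generalization'' the paper gestures toward, and your treatment of the one genuinely non-obvious step (agreement of the non-gap sequences up to index $n_k-1$) is sound: $N'(n_k-1)=N^{\fc}(g-k-n_k)-\ell'_{n_k}\le N^{\fc}(g-k-1)-1<N^{\fc}(g-k)$, with $1\le n_k\le g-k$ guaranteed because $\Lambda_g=N^{\fc}(0)\ge 1$. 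Two remarks. First, you are right to flag the index convention: as printed, the definition $N^{\fc}(L_i^{[k]}):=N^{\fc}(g-k-i)-N(i-1)$ is incompatible with the proposition's own claim $L_1^{[k]}=k+1$ (already for $k=0$, $i=1$ it would force $L_1^{[0]}=g-1$); the internally consistent reading, matching the convention of Lemma \ref{2lm:Lambda^k}.2, is $N^{\fc}(g-L_i^{[k]})=N^{\fc}(g-k-i)-N(i-1)$ together with $\Lambda_{L_i^{[k]}}+g-L_i^{[k]}=a_i+b_i+1$, and that is precisely what your argument establishes. Second, your proof leans on Lemma \ref{2lm:Lambda^k} (itself stated in the paper as ``geometrically obvious'' without proof), in particular on item 3, that $\Lambda_X^{[k]}$ is associated with a numerical semigroup; this is a legitimate use of the paper's own toolkit and is the same dependency the paper intends.
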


\begin{definition}\label{3df:I_Xk}
Let $\mathrm{Index}(g,\ell):=\{1, \ldots, g\}^\ell$.
We define the sequences $\natural_{X,k}$ (simply $\natural_{k}$) and 
$\natural_{X,k}^{(i)}$ (simply $\natural_{k}^{(i)}$) as elements in $\mathrm{Index}(g,n_k)$ given by
$$
\natural_k=
\natural_{X,k} :=\{ L^{[k]}_1, L^{[k]}_2,
\ldots, L^{[k]}_{n_k}\},\qquad
\natural_k^{(i)}=
\natural_{X,k}^{(i)}:=(\natural_{X,k}\setminus\{k+1\})\bigcup \{i\}.
$$
\end{definition}

Let us consider $\CC[\fu_1, \ldots, \fu_g]$ and the symmetric polynomials, e.g., the power symmetric polynomials,
$\displaystyle{
T^{(\ell)}_k :=\frac{1}{k}\sum_{i=1}^\ell \fu_i^k
}$ for $\fu=\trp(\fu_1,\ldots, \fu_g)$, and 
$T_k = T^{(g)}_k$.

For a Young diagram $\Lambda=(\Lambda_1, \Lambda_2, \ldots, \Lambda_n)$, the Schur function $s_\Lambda$ is defined by the ratio of determinants of $n\times n$ matrices \cite{MP14},
$$
	s_{\Lambda}(\fu) = \frac{|\fu_{i}^{\Lambda_j+n-j}|}{|\fu_i^{j-1}|}.
$$
When $\Lambda$ associated with the semigroup $H$ as in Subsection \ref{2sc:WSG}, it can be also regarded as a function of
$\displaystyle{
          T= \trp(T_{\Lambda_1+n-1},\ldots,T_{\Lambda_n})}$ \cite{MP14}, 
and thus, we express it by
$$
	S_{\Lambda}(T) = s_{\Lambda}(\fu).
$$

\bigskip
We recall the truncated Young diagrams $\Lambda_X^{(k)}$ and $\Lambda_X^{[k]}$ for the Young diagram $\Lambda_X$ associated with the W-curve $X$ in Definition \ref{2df:Lambda^k}.
We define  $\fu^{[k]}=(\fu_{1}^{[k]}, \ldots, \fu_g^{[k]})$. where $\fu_i^{[k]} := T^{(k)}_{\Lambda_i+g-i}$ and let
$$
\bs_{\Lambda_X^{(k)}}(\fu^{[k]}) := 
S_{\Lambda_X^{(k)}}(T^{(k)})|_{T^{(k)}_{\Lambda_i+g-i} = u_i^{[k]}}.
$$
We also write the decomposition, $\fu^{[g]} = \fu^{[k]} + \fu^{[g;k]}\in \CC^g$.

\begin{proposition}\label{3pr:Schur_dSchur}
For $\natural_{X,k}$ in Definition \ref{3df:I_Xk},
\begin{equation}
\bs_{\Lambda^{(k)}_X}(\fu^{[k]}) = 
\varepsilon_{\Lambda,\natural_{X,k}}
\left(\prod_{i\in \natural_{X,k}}\frac{\partial}{\partial \fu_i^{[g]}}\right)
\bs_{\Lambda^{(g)}_X}(\fu^{[g]})
\Bigr|_{\fu^{[g]}=\fu^{[k]}}.
\label{3eq:bs_bs_gk}
\end{equation}
\end{proposition}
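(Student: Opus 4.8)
The plan is to regard both sides as polynomials in the Sato--KP time variables and to reduce everything to the one-row (elementary) Schur polynomials $S_{(n)}(T)$, defined by
\[
\sum_{n\ge 0} S_{(n)}(T)\, z^{n} = \exp\Bigl(\sum_{k\ge 1} T_k z^{k}\Bigr), \qquad \frac{\partial}{\partial T_j} S_{(n)}(T) = S_{(n-j)}(T),
\]
the derivative rule following at once by differentiating the generating function (recall $T_k=\frac1k\sum_i\fu_i^k$). I would first record the structural fact that $S_{\Lambda_X}(T)=\det\bigl(S_{(\Lambda_a-a+b)}(T)\bigr)_{a,b=1}^{g}$ is, by the Jacobi--Trudi form, a polynomial in the gap-indexed times $T_{N^\fc(j)}$ alone; this is what makes $\bs_{\Lambda^{(g)}_X}$ a bona fide function of the vector $\fu^{[g]}$ (with $\fu^{[g]}_i=T_{\Lambda_i+g-i}=T_{N^\fc(g-i)}$) and renders each operator $\partial/\partial\fu^{[g]}_i=\partial/\partial T_{N^\fc(g-i)}$ unambiguous. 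On symmetric functions $\partial/\partial T_m$ is, up to the factor $m$, the Hall-inner-product adjoint of multiplication by the $m$-th power sum, so by Murnaghan--Nakayama it removes a border strip (rim hook) of size $m$, weighted by $(-1)$ to the strip height.

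Next I would organise the differentiation through the additive splitting $\fu^{[g]}=\fu^{[k]}+\fu^{[g;k]}$, which corresponds to splitting the $g$ evaluation variables into the first $k$ and the remaining $g-k$, so that at fixed $\fu^{[k]}$ one has $\partial/\partial\fu^{[g]}_i=\partial/\partial\fu^{[g;k]}_i$. The Schur addition (coproduct) formula
\[
\bs_{\Lambda^{(g)}_X}(\fu^{[k]}\sqcup\fu^{[g;k]})=\sum_{\mu}\bs_{\mu}(\fu^{[k]})\, s_{\Lambda^{(g)}_X/\mu}(\fu^{[g;k]})
\]
then turns the operator $\prod_{i\in\natural_{X,k}}\partial/\partial\fu^{[g]}_i$ followed by the restriction $\fu^{[g]}=\fu^{[k]}$ (i.e. $\fu^{[g;k]}=0$) into the extraction, from each skew term $s_{\Lambda^{(g)}_X/\mu}$, of the coefficient of the squarefree monomial $\prod_{i\in\natural_{X,k}}T_{N^\fc(g-i)}$. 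That coefficient is a Murnaghan--Nakayama character value $\chi^{\Lambda^{(g)}_X/\mu}_{\rho}$ with $\rho=\{N^\fc(g-i)\}_{i\in\natural_{X,k}}$, hence nonzero only when the skew shape $\Lambda^{(g)}_X/\mu$ admits a border-strip tiling by strips of exactly these sizes.

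The combinatorial heart is Proposition \ref{3pr:L^k}: it identifies the numbers $N^\fc(g-k-i)-N(i-1)$ $(i=1,\dots,n_k)$ as the diagonal (principal) hook lengths of the truncated diagram $\Lambda^{[k]}_X$ and pins the index set $\natural_{X,k}$ so that $\{N^\fc(g-i):i\in\natural_{X,k}\}$ is precisely this multiset of hook lengths; in particular $\sum_{i\in\natural_{X,k}}N^\fc(g-i)=|\Lambda^{[k]}_X|=|\Lambda^{(g)}_X|-|\Lambda^{(k)}_X|$. Since $\Lambda^{(g)}_X/\Lambda^{(k)}_X$ is exactly the straight shape $\Lambda^{[k]}_X$ sitting in rows $k+1,\dots,g$, its principal-hook decomposition is a border-strip tiling by these very sizes, so $\mu=\Lambda^{(k)}_X$ contributes, with coefficient $\pm1$ equal to the product of the strip heights; this $\pm1$ is the asserted $\varepsilon_{\Lambda,\natural_{X,k}}$. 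The surviving term is $\bs_{\Lambda^{(k)}_X}(\fu^{[k]})$, using that every $\mu$ with more than $k$ rows is annihilated by evaluation on the $k$ variables carried by $\fu^{[k]}$.

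The main obstacle is twofold and purely combinatorial. First, the \emph{uniqueness}: I must show that $\mu=\Lambda^{(k)}_X$ is the only partition with at most $k$ rows (equivalently $|\mu|=|\Lambda^{(k)}_X|$ and $\Lambda^{(g)}_X/\mu$ tileable by the prescribed hook sizes) that contributes; this is where the arithmetic of the gap sequence $H^\fc_X$, via Lemma \ref{2lm:Lambda^k}, Lemma \ref{2lm:NSG1} and Proposition \ref{3pr:L^k}, must exclude every competing $\mu$, and it is exactly the step that genuinely generalises the $(n,s)$-curve computation of \cite{MP14}. Second, the \emph{sign}: tracking $\varepsilon_{\Lambda,\natural_{X,k}}$ amounts to matching the product of Murnaghan--Nakayama strip heights with the permutation sign incurred when the differentiated Jacobi--Trudi determinant is returned to the standard determinant for $\Lambda^{(k)}_X$. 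Both steps reduce to finite, diagram-by-diagram verifications controlled by Proposition \ref{3pr:L^k}, so the argument is routine once that correspondence is in hand; I would present it by induction on $g-k$, peeling off the outermost principal hook first (using $L^{[k]}_1=k+1\in\natural_{X,k}$ from Proposition \ref{3pr:L^k}) to set up the inductive step.
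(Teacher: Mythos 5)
The paper does not actually prove this proposition: it is stated (together with Proposition \ref{3pr:L^k}) explicitly ``without proofs,'' as an easy generalization of the $(n,s)$-curve computation in \cite{MP14}, so there is no argument in the text to compare yours against step by step. Your Murnaghan--Nakayama plan is a legitimate way to supply the missing proof, and the reduction of the right-hand side to the coefficient of the squarefree monomial $\prod_{i\in\natural_{X,k}}T_{N^\fc(g-i)}$ in the skew terms of the coproduct expansion is sound (with $T_m=p_m/m$ and the parts distinct, that coefficient is exactly the skew character value, as you claim). Two of the steps you defer close more easily than you fear, and neither is a ``finite, diagram-by-diagram verification.'' First, the uniqueness of the contributing $\mu$ needs no semigroup arithmetic and no induction: once you discard all $\mu$ with more than $k$ rows (killed by evaluation on $k$ variables), any surviving $\mu\subseteq\Lambda^{(g)}_X$ already has all of rows $k+1,\dots,g$ inside $\Lambda^{(g)}_X/\mu$, so the size constraint $|\Lambda^{(g)}_X/\mu|=\sum_i h_i=|\Lambda^{[k]}_X|$ forces $\mu_i=\Lambda_i$ for $i\le k$ by counting cells, i.e. $\mu=\Lambda^{(k)}_X$. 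Second, the sign is the classical Frobenius evaluation of a character at its own principal-hook class, $\chi^{\nu}(h_1,\dots,h_{n_k})=(-1)^{b_1+\cdots+b_{n_k}}$ with $\nu=\Lambda^{[k]}_X$, a uniform statement. The one genuine prerequisite you should not wave at is the well-definedness of the operators $\partial/\partial\fu^{[g]}_i$: that $s_{\Lambda_X}$ depends only on the gap-indexed times is not ``by the Jacobi--Trudi form,'' but follows from the fact that the first-column hook lengths (beta-numbers) of $\Lambda_X$ are exactly $H_X^\fc$, so for $m\in H_X\setminus\{0\}$ no border strip of size $m$ is removable (if $\beta\in H_X^\fc$ and $\beta-m\ge 0$ were a non-gap, then $\beta=(\beta-m)+m\in H_X$, a contradiction), whence $\chi^{\Lambda_X}(\rho)=0$ whenever $\rho$ has a non-gap part; the paper imports this from \cite{MP14} as well. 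With these three points made explicit, your outline becomes a complete proof, and is arguably more self-contained than the paper's citation.
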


Following Nakayashiki's results in \cite{Nak16}, we state the Riemann-Kempf theorem \cite{ACGH85, Onishi05} of the W-curves.

\begin{proposition}\label{3th:RieKempf_theta}
For $u\in \CC^g$ and a multi-index $J\subset 
\mathrm{Index}(g, \ell)$, we define
$\displaystyle{
\partial_I:=\prod_{j \in J}\frac{\partial}{\partial u_{j}}.
}$
Let $\Lambda_X$ be the Young diagram of the W-curve $X$ and for given $k \in \{0, 1, \ldots, g\}$, let $\Lambda_X = \Lambda_X^{[k]} \cup \Lambda_X^{(k)}$.
For every multi-index $I=\{\alpha_1, \ldots, \alpha_m\} \in\mathrm{Index}(g,m)$, $m < n_k=r_{\Lambda_X^{[k]}}$, $N_k:=|\Lambda_X^{[k]}|$, and $u \in \Theta_X^k$, 
\begin{enumerate}
\item
$
\partial_I \theta\left( (2\omega^{\prime})^{-1} u+ \xi_X,\tau\right) = 0,
$
whereas
$
\partial_{\natural_{X,k}}
 \theta\left( (2\omega^{\prime})^{-1} u+ \xi_X,\tau\right) \neq 0,
$

\item for $\ell < N_k$
$
\partial_{u_g}^\ell \theta\left( (2\omega^{\prime})^{-1} u+ \xi_X,\tau\right) = 0,
$
whereas
$
\partial_{u_g}^{N_k}
 \theta\left( (2\omega^{\prime})^{-1} u+ \xi_X,\tau\right) 
\neq 0.
$
\end{enumerate}
\end{proposition}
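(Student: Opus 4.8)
The plan is to reduce both assertions about the germ $f(u):=\theta\big((2\omega')^{-1}u+\xi_X,\tau\big)$ at a stratum point to the Schur-function description of that germ, and then to extract the two orders of vanishing combinatorially. First I would fix the meaning of $u\in\Theta_X^k$ using Proposition \ref{3pr:SRieConst}: such a $u$ is, up to the half period $\xi_{X,\fs}$, the image under the shifted Abel--Jacobi map of a divisor class whose Brill--Noether type is recorded by the truncated diagram $\Lambda_X^{[k]}$, so that the classical Riemann--Kempf (Riemann singularity) theorem \cite{ACGH85} gives $\mathrm{mult}_u(\Theta_X^\circ)=n_k=r_{\Lambda_X^{[k]}}$. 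This already settles the vanishing half of Part 1: every partial derivative $\partial_I f$ of order $|I|=m<n_k$ is zero, in all directions. What remains is to produce the two nonvanishing statements and to pin down the exact order $N_k$ along the single direction $u_g$.

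For these I would invoke Nakayashiki's expansion \cite{Nak16} of $f$ in the flat coordinates $\fu=(\fu_1,\dots,\fu_g)$ adapted to the Sato--Weierstrass weight of Section \ref{2ssc:W-normAD}: the lowest-weight term of the Taylor series of $f$ at the stratum point is proportional to the Schur function $\bs_{\Lambda_X^{(g)}}(\fu^{[g]})$ of the full diagram, every remaining term having strictly larger weight. Granting this, the nonvanishing in Part 1 is precisely Proposition \ref{3pr:Schur_dSchur}: applying the $n_k$ derivatives indexed by $\natural_{X,k}=\{L^{[k]}_1,\dots,L^{[k]}_{n_k}\}$ and restricting to the stratum $\fu^{[g]}=\fu^{[k]}$ gives
$$
\Big(\prod_{i\in\natural_{X,k}}\frac{\partial}{\partial\fu_i^{[g]}}\Big)\bs_{\Lambda_X^{(g)}}(\fu^{[g]})\Big|_{\fu^{[g]}=\fu^{[k]}}=\varepsilon_{\Lambda,\natural_{X,k}}^{-1}\,\bs_{\Lambda_X^{(k)}}(\fu^{[k]}),
$$
whose right-hand side is a nonzero polynomial; hence $\partial_{\natural_{X,k}}f\neq0$, and since $|\natural_{X,k}|=n_k$ this is consistent with the multiplicity just computed. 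Here Proposition \ref{3pr:L^k} guarantees that $\natural_{X,k}$ is well defined with $n_k$ entries and records $L^{[k]}_1=k+1$.

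For Part 2 I would restrict the leading term to the single line through $u$ in the $u_g$-direction. Under the identification $\fu_i=T^{(g)}_{\Lambda_i+g-i}$ this coordinate carries the deepest weight, and the weighted-homogeneous polynomial $\bs_{\Lambda_X^{(g)}}$, restricted first to the stratum and then to the $u_g$-axis, becomes a nonzero constant multiple of $u_g^{N_k}$, the exponent being the number of boxes $N_k=|\Lambda_X^{[k]}|$ of the surviving diagram. The higher-weight tail of the expansion contributes only powers of $u_g$ exceeding $N_k$. Consequently $\partial_{u_g}^{\ell}f=0$ for $\ell<N_k$ while $\partial_{u_g}^{N_k}f\neq0$, which is the second assertion.

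The main obstacle is the analytic bridge underlying the second paragraph: one must verify that the theta germ at the stratum point really has $\bs_{\Lambda_X^{(g)}}$ as its leading weighted-homogeneous part, \emph{and} that the weight filtration on the Jacobian coordinates is aligned both with the index set $\natural_{X,k}$ (so that Proposition \ref{3pr:Schur_dSchur} applies verbatim) and with the distinguished direction $u_g$ (so that the single-variable degree is exactly $N_k$). This alignment is the content of Nakayashiki's tau-function/UGM construction \cite{Nak16} rewritten in the W-normalized coordinates of Section \ref{2ssc:W-normAD}; once it is secured, Parts 1 and 2 follow from the Young-diagram identities of Propositions \ref{3pr:L^k} and \ref{3pr:Schur_dSchur} together with the Riemann--Kempf multiplicity count.
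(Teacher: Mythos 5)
Your proposal is essentially sound, but it is worth knowing that the paper's own ``proof'' of this proposition is a single line: ``See Corollary 3 in \cite{Nak16}.'' So the comparison is between a bare citation and your reconstruction of what lies behind it. Your decomposition --- (a) the Riemann--Kempf multiplicity count $\mathrm{mult}_u=n_k$ at a point of the $k$-th Abel--Jacobi stratum, which kills all derivatives of order $m<n_k$; (b) the Schur-function leading term of the theta germ in the W-normalized coordinates, which via Proposition \ref{3pr:Schur_dSchur} produces the nonvanishing of $\partial_{\natural_{X,k}}\theta$; (c) the weight count $\wt(u_g)=1$ giving the exact order $N_k=|\Lambda_X^{[k]}|$ along the $u_g$-axis --- is exactly the skeleton of Nakayashiki's argument, and it buys the reader an explanation of why the two different orders $n_k$ (rank) and $N_k$ (box count) appear, which the paper's citation does not. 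What it does not buy is self-containedness: the ``analytic bridge'' you flag in your last paragraph is precisely the content of the cited Corollary 3, so your proof is a reduction to the same external result rather than an independent derivation.

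One point to tighten: the leading weighted-homogeneous part of the theta germ \emph{at a point of the $k$-th stratum}, viewed as a function of the displacement, is governed by the Schur polynomial of the \emph{truncated} diagram $\Lambda_X^{[k]}$ (weighted degree $N_k$, lowest ordinary degree $n_k$), not by ``$\bs_{\Lambda_X^{(g)}}$ restricted to the stratum''; the latter phrasing conflates the expansion at the base point $u=0$ (the $k=0$ case, which is Theorem \ref{thm:sigma}(5)) with the expansion at a general stratum point. Proposition \ref{3pr:Schur_dSchur} is the combinatorial statement that relates the two, but invoking it ``verbatim'' presupposes that differentiation along the stratum directions commutes with extraction of leading terms --- which is again the alignment you defer to \cite{Nak16}. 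Finally, note that $\Theta_X^k$ is nowhere defined in the paper and the statement's $I$/$J$ notation is inconsistent; your reading of $\Theta_X^k$ as the image of $S^kX$ under the (shifted) Abel--Jacobi map is the only one that makes the proposition true, and the nonvanishing assertions must be read as ``not identically zero on the stratum,'' which is what your Schur-function argument actually delivers.
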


\begin{proof} See Corollary 3 in \cite{Nak16}.
\end{proof}

\subsection{Sigma function and W-curves}\label{ssc:SigmaF}

We now define the sigma function following Nakayashiki \cite[Definition 9]{Nak16}. We remark that due to the shifted Riemann constant, our definition differs from Nakayashiki's so that our sigma function has the natural properties, including the parity and Galois action and the fact that the point of expansion by Schur polynomials is also shifted as mentioned in Theorem \ref{thm:sigma}.
In other words, we employ some parts of the definition of the sigma function by Korotkin and Shramchenko \cite{KorotkinS} who defined the several sigma functions with spin structures based on Klein's transcendental approaches.

\begin{definition}\label{3df:sigma_func}
We define $\sigma$ as an entire function of (a column-vector) $u={}^t\negthinspace (u_1, u_2, $\break $\ldots, u_g) \in \mathbb{C}^g$,
\begin{equation}
\begin{split}
   \sigma{}(u) &=\sigma{}(u;M) %=\sigma{}(u_1, u_2, \ldots, u_g;M) \\
   =\frac{\varepsilon_{\Lambda,\natural_{X,0}}\exp(\tfrac{1}{2}{}\ ^t\negthinspace
u\etap{}{\omegap{}}^{-1}u)
   \vartheta\negthinspace
   \left[\begin{matrix}\delta''\\ \delta'\end{matrix}\right](\frac{1}{2}{\omegap{}}^{-1} u;\
{\omegap{}}^{-1}\omegapp{}) 
}{
\partial_{\natural_{X,0}}\vartheta\negthinspace
   \left[\begin{matrix}\delta''\\ \delta'\end{matrix}\right](\frac{1}{2}{\omegap{}}^{-1} u; {\omegap{}}^{-1}\omegapp{})\Bigr|_{u=\tw(\iota_X \fK_\fs)}}
\\
&=\frac{\varepsilon_{\Lambda,\natural_{X,0}}\exp(\tfrac{1}{2}\ ^t\negthinspace
u\etap{}{\omegap{}}^{-1}\  u)}
{\partial_{\natural_{X,0}}\vartheta\negthinspace
   \left[\begin{matrix}\delta''\\ \delta'\end{matrix}\right]
(\frac{1}{2}{\omegap{}}^{-1} u; {\omegap{}}^{-1}\omegapp{})|_{u=\tw(\iota_X \fK_\fs)}} \\
   &\hskip 20pt\times
   \sum_{n \in \ZZ^g} \exp \big[\pi \sqrt{-1}\big\{
    \ ^t\negthinspace (n+\delta'')
      {\omegap{}}^{-1}\omegapp{}(n+\delta'')
   + \ ^t\negthinspace (n+\delta'')
      ({\omegap{}}^{-1} u+2\delta')\big\}\big],
\end{split}
   \label{3eq:de_sigma}
\end{equation}
where  $\varepsilon_{\Lambda,\natural_{X,k}}$ is defined in 
(\ref{3eq:bs_bs_gk}) and $\natural_{X,0}$ is defined in
Definition \ref{3df:I_Xk}.
\end{definition}

Then we have the following theorem, i.e., Theorem \ref{thm:sigma}.

It is worthwhile noting that the following (\ref{3eq:RJFR}) obviously leads the Jacobi inversion formulae on the Jacobian $J_X$ and its strata as mentioned in \cite{KMP13, KMP19, MP08,MP14}; though we omit the inversion formulae for the reason of space, we can easily obtain them as its corollary following \cite{MP08,MP14}.
Since $\Pi_{P_i, P'_j}^{P, Q}$ in (\ref{3eq:RJFR}) can be expressed in terms of $R_X$ in (\ref{2eq:Omega}), we can represent the elements of $R_X$ by using the differentials of the sigma functions.
More explicitly, since the Jacobi inversion formulae on $J_X$ provide that the multi-variable differentials of the sigma are equal to the meromorphic functions of $\hR_X$ as predicted in \cite{KM2020}, they imply that if the formulae are integrable, the sigma function is, in principle, obtained by integrating the meromorphic functions on $S^g X$; since the integrability is obvious, the sigma function for every W-curve can be, in principle, algebraically obtained like the elliptic sigma function in Weierstrass' elliptic function theory.
They also show the equivalence between the algebraic and transcendental properties of the meromorphic functions on $X$.
The sigma function is defined for every compact Riemann surface by Nakayashiki following Klein's construction of his sigma functions \cite{Klein}. Klein defined his sigma functions using only the data of hyperelliptic Riemann surfaces, following Riemann's approach.
On the other hand, Weierstrass criticized Riemann's approach and insisted on the algebraic ways, associated with Weierstrass curves.\footnote{see Weierstrass's words in a letter to Schwarz (Werke II, 235) cited by Poincare \cite{Poincare10}:{\lq\lq}Plus je r\'efl\'echis aux principes de la th\'eorie des fonctions - et c'est ce que je fais sans cesse - plus je suis solidement convaincu qu'ils sont b\^atis sur le fondement des v\'erit\'es alg\'ebriques et que, par cons\'equent, ce n'est pas le v\'eritable chemin, si inverse ment ou fait appel au transcendant pour \'etablir les th\'eor\`emes simples et fondamentaux de l'Alg\'ebre ; et cela reste vrai, quelque p\'en\'etrantes que puissent para\^itre au premier abord les consid\'erations par lesquelles Riemann a d\'ecouvert tant d'importantes propri\'et\'es des fonctions alg\'ebriques.{\rq\rq}({\lq\lq}The more I think about the principles of function theory -- and I do continuously - the more I am convinced that this must be built on the foundations of algebraic truths [my emphasis], and that it is consequently not correct to resort on the contrary to the {\lq}transcendent{\rq}, to express myself briefly, as the basis of simple and fundamental algebraic propositions.
This view seems so attractive at first sight, in that through it Riemann was able to discover so many important properties of algebraic functions.{\rq\rq}\cite{Jah})}
Unifying Klein's and Weierstrass' views, Baker reformulated Klein's sigma functions after defining explicit algebra curves, and connected the sigma functions and  the meromorphic functions of the curves like Weierstrass' elliptic function theory \cite{Baker97}.
Thus we emphasize that the following theorem implies completing Weierstrass' program by succeeding Baker's approaches.

\begin{theorem}\label{thm:sigma}

$\sigma(u)$ has the following properties:

\begin{enumerate}

\item it is modular invariant,

\item it obeys the translational formula;
for $u$, $v\in\CC^g$, and $\ell$ $(=2\omegap{}\ell'+2\omegapp{}\ell'')$ $\in\Gamma_X)$, if we define
$
  L(u,v) :=2\ {}^t{u}(\etap{}v'+\etapp{}v''),\
  \chi(\ell):=\exp[\pi\sqrt{-1}\big(2({}^t {\ell'}\delta''-{}^t
  {\ell''}\delta') +{}^t {\ell'}\ell''\big)],$
the following holds
\begin{equation}
	\sigma{}(u + \ell) =
\sigma{}(u) \exp(L(u+\frac{1}{2}\ell, \ell)) \chi(\ell),
        \label{3eq:pperiod}
\end{equation}

\item its divisor is $\kappa_J^{-1} \Theta_{X,\fs} \subset \CC^g$, where $ \Theta_{X,\fs}:=w_\fs(S^{g-1} X) \subset J_X$,

\item it satisfies the Jacobi-Riemann fundamental relation,
For $(P, Q, P_i, P'_i) \in X^2 \times (S^g(X)\setminus S^g_1(X)) \times 
(S^g(X)\setminus S^g_1(X))$, 
%if
$$
	u  := \tw_\fs(P_1, \ldots, P_g), \quad
	v  := \tw_\fs(P'_1, \ldots, P'_g),  \quad
$$
%then 
\begin{equation}
\begin{split}
\exp\left( 
\sum_{i, j = 1}^g 
   \Pi_{P_i, P'_j}^{P, Q} \right)
&=
\frac{\sigma(\tw(P) - u) \sigma(\tw(Q) - v)}
     {\sigma(\tw(Q) - u) \sigma(\tw(P) - v)}\\
&=\frac{\sigma(\tw(P) - \tw_\fs(P_1, \cdots, P_g)) 
        \sigma(\tw(Q) - \tw_\fs(P'_1, \cdots, P'_g))}
     {\sigma((\tw(Q) - \tw_\fs(P_1, \cdots, P_g)) 
      \sigma(\tw(P) - \tw_\fs(P'_1, \cdots, P'_g))},
\end{split}
\label{3eq:RJFR}
\end{equation}
which generates the Jacobi inversion formulae for $S^k X$,

\item the leading term in the Taylor expansion of the $\sigma$ function associated with $X$, with normalized constant factor $c$, is expressed by the Schur function of $\Lambda_X$
$$
   \sigma{}(u+\tw(\iota_X \fK_\fs)) 
= S_{{\Lambda_X}}(T)|_{T_{\Lambda_i + g - i} = u_i}
            + \sum_{|\rw_g(\alpha)|>|\Lambda_X|} a_\alpha u^\alpha,
$$
where $a_\alpha\in \CC[\lambda_{ij}]$,
$\alpha = (\alpha_1,..., \alpha_g)$,
 $u^\alpha = u_1^{\alpha_1} \cdots u_g^{\alpha_g}$, 
and 
$\displaystyle{\rw_g(\alpha)=\sum\alpha_i\wt(u_i)}$.
Here $S_\Lambda (T)$ is the lowest-order
term in the w-degree of the $u_i$; 
$\sigma(u)$ is homogeneous of degree  $|\Lambda_X|$ 
 with respect to $\wt_\lambda$,

\item $
\sigma(-u) = \pm \sigma(u)$, and

\item If $\hzeta \in G_X$ satisfies $\hzeta^\ell = \mathrm{id}$, and $\hzeta[ \sigma(u+\tell)/\sigma(u)]=\sigma(u+\tell)/\sigma(u)$ for $\tell \in \Gamma_X$ and $u\in \CC^g$, the action gives the one-dimensional representation such that
$$
\hzeta \sigma(u) = \rho_{\hzeta} \sigma(u),
$$
where $\rho_{\hzeta}^\ell=1$.

\end{enumerate}
\end{theorem}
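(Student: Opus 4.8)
The plan is to prove the seven assertions by reducing each to the transformation theory of the Riemann theta function together with the structural results assembled in Sections~\ref{sec:WCF} and \ref{2ssc:W-normAD}, following Nakayashiki \cite{Nak16} while tracking the effect of the shift by $\tw(\iota_X\fK_\fs)$ throughout. For parts {\it 1} and {\it 2} I would start from the quasi-periodicity and modular transformation laws of $\vartheta[\delta]$. The generalized Legendre relation in Proposition~\ref{2pr:L-rel} is precisely what makes the exponential prefactor $\exp(\tfrac12\,{}^t u\,\etap{}{\omegap{}}^{-1}u)$ cancel the automorphy factor of the theta function under $u\mapsto u+\ell$, yielding (\ref{3eq:pperiod}), while the half-period characteristic $\delta_X$ from Proposition~\ref{3pr:SRieConst} supplies $\chi(\ell)$. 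Modular invariance then follows because both numerator and the normalizing denominator $\partial_{\natural_{X,0}}\vartheta[\delta]$ transform with the same $\Sp(2g,\ZZ)$-automorphy factor, so the quotient is unchanged; the period action is the one recorded in Lemma~\ref{2lm:GaloisAction_eta}.

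Parts {\it 3}, {\it 5}, {\it 6}, {\it 7} are then essentially corollaries of earlier statements. Part {\it 3} is immediate from the Riemann--Kempf description in Proposition~\ref{3th:RieKempf_theta} and the identity $\Theta_X^\circ=w_\fs^\circ(S^{g-1}X)+\xi_{X,\fs}$ of Proposition~\ref{3pr:SRieConst}: pulling back the theta divisor along $\tfrac12{\omegap{}}^{-1}u$ and undoing the $\fK_\fs$-shift produces $\kappa_J^{-1}\Theta_{X,\fs}$. For part {\it 5} I would transport Nakayashiki's Schur-function expansion of the sigma function to the shifted expansion point $\tw(\iota_X\fK_\fs)$, using Proposition~\ref{3pr:Schur_dSchur} to identify the leading coefficient $\varepsilon_{\Lambda,\natural_{X,0}}$ together with the derivative normalization $\partial_{\natural_{X,0}}$, and reading off homogeneity under $\wt_\lambda$ from the weight assignment (\ref{2eq:wt_lambda}). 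Part {\it 6} reduces to the parity of $\vartheta[\delta_X]$: since $\delta_X$ is a half-period characteristic, $\vartheta[\delta_X]$ is even or odd in $u$, whereas the exponential prefactor is even, giving $\sigma(-u)=\pm\sigma(u)$. Part {\it 7} combines modular invariance (part {\it 1}) with Lemma~\ref{2lm:GaloisAction_eta}: a Galois automorphism $\hzeta$ acts on $(\omegap{},\omegapp{},\etap{},\etapp{})$ through an element of $\Sp(2g,\ZZ)$, so $\sigma$ is carried to a constant multiple of itself, and $\hzeta^\ell=\mathrm{id}$ forces $\rho_{\hzeta}^\ell=1$.

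The main obstacle is part {\it 4}, the Jacobi--Riemann fundamental relation (\ref{3eq:RJFR}). The plan is to rewrite each summand $\Pi_{P_i,P'_j}^{P,Q}$ from (\ref{2eq:Omega}) via Lemma~\ref{2lm:Omega_nuIII}, separating the normalized third-kind differential $\nuIIIo{Q_1,Q_2}$ from the bilinear correction governed by $\gamma=\etap{}{\omegap{}}^{-1}$. Riemann's classical formula expresses $\int_{P_2}^{P_1}\nuIIIo{Q_1,Q_2}$ as the logarithm of a ratio of theta functions evaluated at Abel--Jacobi images, and the $\gamma$-term is exactly absorbed by the exponential prefactor of $\sigma$; the shifted Abel--Jacobi map $\tw_\fs$ and the shifted Riemann constant then guarantee that the theta arguments land at the correct points modulo $\Gamma_X^\circ$, so that after summing over $i,j$ the four theta factors reassemble into the stated ratio of four sigma values. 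The delicate points will be verifying that all lattice-translation ambiguities cancel (using parts {\it 1} and {\it 2}) and that the symmetry $\Pi^{P_1,P_2}_{Q_1,Q_2}=\Pi^{Q_1,Q_2}_{P_1,P_2}$ together with the sign behaviour in Lemma~\ref{2lm:Pi_Wn} is compatible with the interchange $P\leftrightarrow Q$ on the right-hand side.
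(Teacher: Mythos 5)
Your plan is correct and follows essentially the same route as the paper, which simply delegates each part to the literature (parts 1 and 5 to Nakayashiki's Theorem~13, part 3 to Proposition~\ref{3pr:SRieConst}, parts 2 and 4 to the direct theta-computations of \cite{MP08}, parts 6 and 7 to \cite{Onishi05}); your outline is precisely the content of those cited arguments — quasi-periodicity of $\vartheta[\delta]$ combined with the generalized Legendre relation for the prefactor, Riemann's theta formula for the third-kind integrals in part 4, parity of the half-integer characteristic for part 6, and the symplectic Galois action for part 7. The only cosmetic slip is invoking Lemma~\ref{2lm:GaloisAction_eta} for modular invariance in part 1 (that lemma concerns the Galois action rather than a change of homology basis), but the underlying $\Sp(2g,\ZZ)$-covariance argument you describe is the right one.
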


\begin{proof}
{\it{1}} and {\it{5}} are obtained by Theorem 13 in \cite{Nak16} by noting the difference of the definition of our sigma function in Definition \ref{3df:sigma_func} from Nakayashiki's \cite[Definition 9]{Nak16}.
{\it{3}} is due to Proposition \ref{3pr:SRieConst}.
{\it{2}} is standard and can be obtained by the direct computations \cite{MP08}.
{\it{4}} is the same as Proposition 4.4 in \cite{MP08}.
{\it{6}} and {\it{7}} are the same as Lemma 3.6, and Lemma 4.1 in \cite{Onishi05}.
\qed
\end{proof}

\bigskip

\subsection{Conclusion and Discussions}

We have considered the Weierstrass curves (W-curves), which are algebraic expression of compact Riemann surfaces; The set of W-curves represent the set of compact Riemann surfaces.

By using the algebraic tools we constructed in this paper,we have a connection between the sigma function for W-curve $X$ and the meromorphic functions on $X$ as in Theorem \ref{thm:sigma}.
Since the Jacobi inversion formulae via Theorem \ref{thm:sigma} 4 are given by the differential identity, by integrating it, it, in principle, provides that the sigma function is constructed by an integral formula of the meromorphic functions on the W-curve $X$. 
In other words, we give an algebraic construction of the sigma function, or so-called the EEL-construction \cite{EEL00} in this paper.

It is noted that this construction is based on our recent result on the trace structure of the affine ring $R_X$ \cite{KMP2022a}.

\bigskip

Further we also discuss mathematical meaning of our result as follows.
We also note that for an ordinary point $P$ in every W-curve $Y$ with $H_Y$ at $\infty \in Y$ of genus $g$, the Weierstrass gap sequence at $P$ is given by the numerical semigroup $H^\fc=\{1, 2, \cdots, g\}$ and there is a W-curves $X$ which is birationally equivalent to $X$ such that $\infty\in X$ corresponds to $P\in Y$ and $H_X^\fc=H^\fc$. 
Then there appear two sigma functions $\sigma_Y$ for the W-curve $Y$ with non-trivial Weierstrass semigroup $H_Y$ and $\sigma_X$ for the W-curve $X$ with $H_X=H$.
By some arguments on the both Jacobians $J_X$ and $J_Y$, we find that $\sigma_X$ and $\sigma_Y$, due to the translational formulae and so on, are the same functions, and the both sifted Abelian integrals agree. 
Then the above theorem (5) means that we have the expansions of the sigma function at the Abelian image of the ordinary point $P$ in $Y$.
It means that the problem of finding the expansion of the sigma function for a point $u$ in Jacobian $J_Y$ is reduced to the problem that we should find the birational curves associated with the preimage of the Abelian integral.
Since in Weierstrass' elliptic function theory, we often encounter the reductions of the transcendental problems to the algebraic problems, we also remark that this reduction has the same origin, i.e., the equivalence between algebraic objects and transcendental objects in the Abelian function theory. 

With Theorem \ref{thm:sigma}, we recognize that this theorem is the goal Weierstrass had in mind, and at the same time, with it, we also recognize that we finally reached the starting point for the development of the Weierstrass program to construct an Abelian function theory for every W-curve $X$ like his elliptic function theory.

\end{document}